\renewcommand*\subjclass[2][1991]{%
  \def\@subjclass{#2}%
  \@ifundefined{subjclassname@#1}{%
    \ClassWarning{\@classname}{Unknown edition (#1) of Mathematics
      Subject Classification; using '1991'.}%
  }{%
    \@xp\let\@xp\subjclassname\csname subjclassname@#1\endcsname
  }%
}
\renewcommand{\subjclassname}{%
  \textup{1991} Mathematics Subject Classification}
\let\csname subjclassname@1991\endcsname \subjclassname
\def\Bbb{\mathbb}
\def\frak{\mathfrak}
\newenvironment{pf*}[1]{\proof[#1]}{\endproof}
\newenvironment{aenume}{%
  \begin{enumerate}%
  }{\end{enumerate}}
\renewcommand{\MR}[1]{}
\newenvironment{NB}{
\color{red}{\bf NB}. \footnotesize 
}{}
\newenvironment{NB2}{
\color{blue}{\bf NB}. \footnotesize
}{}
\newtheorem{Theorem}[equation]{Theorem}
\newtheorem{Corollary}[equation]{Corollary}
\newtheorem{Lemma}[equation]{Lemma}
\newtheorem{Proposition}[equation]{Proposition}
\newtheorem{Conjecture}[equation]{Conjecture}
\newtheorem{main}{Theorem}
\theoremstyle{definition}
\newtheorem{Definition}[equation]{Definition}
\newtheorem{Example}[equation]{Example}
\theoremstyle{remark}
\newtheorem{Remark}[equation]{Remark}
\newtheorem{Remarks}[equation]{Remarks}
\newtheorem*{Claim}{Claim}
\numberwithin{equation}{section}
\newcommand{\thmref}[1]{Theorem~\ref{#1}}
\newcommand{\secref}[1]{\S\ref{#1}}
\newcommand{\lemref}[1]{Lemma~\ref{#1}}
\newcommand{\propref}[1]{Proposition~\ref{#1}}
\newcommand{\corref}[1]{Corollary~\ref{#1}}
\newcommand{\subsecref}[1]{\S\ref{#1}}
\newcommand{\defref}[1]{Definition~\ref{#1}}
\newcommand{\remref}[1]{Remark~\ref{#1}}
\newcommand{\lsp}[2]{{}^{#1}{#2}}
\newcommand{\defeq}{\overset{\operatorname{\scriptstyle def.}}{=}}
\newcommand{\C}{{\Bbb C}}
\newcommand{\Z}{{\Bbb Z}}
\newcommand{\Q}{{\Bbb Q}}
\newcommand{\proj}{{\Bbb P}}
\newcommand{\GL}{\operatorname{GL}}
\newcommand{\g}{{\frak g}}
\newcommand{\End}{\operatorname{End}}
\newcommand{\Hom}{\operatorname{Hom}}
\newcommand{\Ext}{\operatorname{Ext}}
\newcommand{\Ker}{\operatorname{Ker}}
\newcommand{\Ima}{\operatorname{Im}}
\newcommand{\rank}{\operatorname{rank}}
\newcommand{\tr}{\operatorname{tr}}
\newcommand{\id}{\operatorname{id}}
\newcommand{\ve}{\varepsilon}
\newcommand{\vq}{q}
\newcommand{\vin}{i} 
\newcommand{\vout}{o} 
\newcommand{\M}{{\frak M}} 
\newcommand{\dslash}{/\!\!/} 
\newcommand{\bC}{{\mathbf C}} 
\newcommand{\bA}{{\mathbf A}} 
\newcommand{\bB}{{\mathbf B}} 
\newcommand{\bq}{{\mathbf q}}
\newcommand{\codim}{\operatorname{codim}} 
\newcommand{\Ulq}{{\mathbf U}_{q}({\mathbf L}\mathfrak g)}
\newcommand{\shfO}{\mathcal O}
\newcommand{\Zm}{\mathfrak Z^\bullet}
\newcommand{\bfR}{\mathbf R}
\newcommand{\N}{\mathfrak M^\bullet}
\newcommand{\Nreg}{\mathfrak M^{\bullet\operatorname{reg}}_0}
\newcommand{\NLa}{\mathfrak L^\bullet}
\newcommand{\bM}{{\mathbf M}^\bullet} 
\newcommand{\HomE}{\operatorname{E}^\bullet}
\newcommand{\HomL}{\operatorname{L}^\bullet}
\newcommand{\res}{\operatorname{Res}}
\newcommand{\bE}{\mathbf E}
\newcommand{\pr}{{\operatorname{pr}}}
\newcommand{\fr}{{\operatorname{fr}}}
\newcommand{\bx}{\mathbf x}
\newcommand{\by}{\mathbf y}
\newcommand{\Gr}{\operatorname{Gr}}
\newcommand{\ext}{\operatorname{ext}}
\newcommand{\bL}{\mathbf L}
\newcommand{\cQ}{\mathcal Q}
\newcommand{\cG}{\mathcal G}
\newcommand{\add}{{\operatorname{add}}}
\newcommand{\dec}{{\operatorname{dec}}}
\newcommand{\rep}{{\operatorname{rep}}}
\newcommand{\kW}{\lsp{\bar\sigma}W^k}
\newcommand{\skW}{\lsp{\bar\sigma*}W^k}
\newcommand{\lW}{\lsp{\bar\sigma}W^l}
\newcommand{\slW}{\lsp{\bar\sigma*}W^l}
\begin{document}
\title[Quiver varieties and cluster algebras]
{Quiver varieties and
 cluster algebras
}
\author{Hiraku Nakajima}
\dedicatory{To the memory of the late Professor Masayoshi Nagata}
\address{Research Institute for Mathematical Sciences,
Kyoto University, Kyoto 606-8502, Japan
}
\email{nakajima@kurims.kyoto-u.ac.jp}
\urladdr{http://www.kurims.kyoto-u.ac.jp/\textasciitilde nakajima}
\thanks{Supported by the Grant-in-aid
for Scientific Research (No.19340006),
JSPS}
%
\subjclass[2000]{Primary 17B37;
Secondary 14D21, 16G20}
\begin{abstract}
  Motivated by a recent conjecture by Hernandez and Leclerc
  \cite{HerLec}, we embed a Fomin-Zelevinsky cluster algebra
  \cite{FomZel} into the Grothendieck ring $\bfR$ of the category of
  representations of quantum loop algebras $\Ulq$ of a symmetric
  Kac-Moody Lie algebra, studied earlier by the author via perverse
  sheaves on graded quiver varieties \cite{Na-qaff}.
  Graded quiver varieties controlling the image can be identified with
  varieties which Lusztig used to define the canonical base.
  The cluster monomials form a subset of the base given by the classes
  of simple modules in $\bfR$, or Lusztig's dual canonical base.
  The conjectures that cluster monomials are positive and linearly
  independent (and probably many other conjectures) of \cite{FomZel}
  follow as consequences, when there is a seed with a bipartite
  quiver.
  Simple modules corresponding to cluster monomials factorize into
  tensor products of `prime' simple ones according to the cluster
  expansion.
\end{abstract}
\maketitle
\tableofcontents
\section{Introduction}

\subsection{Cluster algebras}

Cluster algebras were introduced by Fomin and Zelevinsky \cite{FomZel}. A
cluster algebra $\mathscr A$ is a subalgebra of the rational function
field $\Q(x_1,\dots, x_n)$ of $n$ indeterminates equipped with
a distinguished set of variables (cluster variables) grouped into
overlapping subsets (clusters) consisting of $n$ elements, defined by
a recursive procedure (mutation) on quivers.
Let us quote the motivation from the original text [loc.\ cit.,
p.498, the second paragraph]:
\begin{quote}
  This structure should serve as an algebraic framework for the study
  of ``dual canonical bases'' in these coordinate rings and their
  $q$-deformations. In particular, we conjecture that all monomials in
  the variables of any given cluster (the cluster {\it monomials\/})
  belong to this dual canonical basis.
\end{quote}
Here ``dual canonical base'' means a conjectural analog of the dual of
Lusztig's canonical base of $\mathbf U_\vq^-$, the $-$ part of the
quantized enveloping algebra (\cite{Lu-book}). One of the deepest
properties of the dual canonical base is positivity: the structure
constants are in $\Z_{\ge 0}[\vq,\vq^{-1}]$.
But the existence and positivity are not known for cluster algebras
except some examples.

The theory of cluster algebras has been developed in various
directions different from the original motivation. See the list of
references in a recent survey \cite{Keller}.
\begin{NB}
The reason is probably that many people have found combinatorial
aspects of cluster algebras in many other places.
\end{NB}%

One of the most active directions is the theory of the cluster
category \cite{BMRRT}. It is defined as the orbit category of the
derived category $\mathscr D(\rep\cQ)$ of finite dimensional
representations of a quiver $\cQ$ under the action of an
automorphism. This theory is quite useful to understand combinatorics
of the cluster algebra: clusters are identified with tilting objects,
and mutations are interpreted as exchange triangles. See the survey
\cite{Keller} for more detail.

However the cluster category {\it does not\/} have enough structures,
compared with the cluster algebra. For example, multiplication
of the cluster algebra roughly corresponds to the direct sum of the
cluster category, but addition remains obscure.
So the cluster category is called {\it additive categorification\/} of
the cluster algebra.
The cluster algebra is recovered from the cluster category by the
so-called cluster character. (Somebody calls Caldero-Chapoton map.)
But it is not clear how to obtain all the ``dual canonical base''
elements from this method.
%

Very recently Hernandez and Leclerc \cite{HerLec} propose another
categorical approach. They conjecture that there exists a monoidal
abelian category $\mathscr M$ whose Grothendieck ring is the cluster
algebra. All of structures of the cluster algebra can be conjecturally
lifted to the monoidal category. For example, the dual canonical base
is given by simple objects, the combinatorics of mutation is explained
by decomposing tensor products into simple objects, etc. Here we give
the table of structures:

\begin{center}
\begin{tabular}{c|c|c}
  cluster algebra & additive categorification & monoidal categorification
  \\
  \hline
  $+$ & ? & $\oplus$
  \\
  $\times$ & $\oplus$ & $\otimes$
  \\
  clusters & cluster tilting objects & real simple objects
  \\
  mutation & exchange triangle & 
  $0\to S \to X_i\otimes X_i^* \to S'\to 0$
  \\
  cluster variables & rigid indecomposables & real prime simple objects
  \\
  dual canonical base & ? & simple objects
  \\
  ? & ? & prime simple objects
\end{tabular}
\end{center}

In the bottom line, we have a definition of {\it prime simple\/}
objects, those which cannot be factored into smaller simple
objects. There is no counter part in the theory of the cluster
algebra, so completely new notion.

However the monoidal categorification seems to have a drawback. We do
not have many tools to study the tensor product factorization in an
abstract setting. We need an additional input from other sources.
Therefore it is natural to demand functors connecting two
categorifications exchanging $\oplus$ and $\otimes$, and hopefully
`$?$' and $\oplus$. We call them {\it tropicalization\/} and
de-tropicalization functors\footnote{Leclerc himself already had a
  hope to make a connection between two
  categorifications (\cite{Lec2}). He calls `exponential' and `log'.}
expecting the top ? in the additive categorification column is
something like `$\min$':
\begin{equation*}
 \xymatrix@C=3pc{
  & \text{cluster algebras} &
\\   
  \text{additive categorifications}
  \ar@{.>} @<-.2pc>[rr]_{\text{de-tropicalization functors}}
  \ar[ur]^{\txt<14pc>{\tiny cluster character}}
  && \ar@{.>} @<-.2pc>[ll]_{\text{tropicalization functors}}
  \ar[ul]_{\txt<12pc>{\tiny Grothendieck group}}
  \text{monoidal categorifications}
}
\end{equation*}

The author believes this is an interesting idea to pursue, but it is
so far just a {\it slogan}: it seems difficult to make even
definitions of (de)tropicalization functors precise.
Therefore we set aside categorical approaches, return back to the
origin of the cluster algebra, i.e.\ the construction of the canonical
base, and ask why it has many structures ?

The answer is simple: Lusztig's construction of the canonical base is
based on the category of perverse sheaves on the space of
representations $\bE_W$ of the quiver. Therefore
\begin{aenume}
\item it has the structure of the monoidal abelian category, where the
  tensor product is given by the convolution diagram coming from exact
  sequences of quiver representations;

\item it inherit various combinatorial structure from the module
  category $\rep\cQ$, and probably also from the cluster
  category.
\end{aenume}
In this sense, we already have (de)tropicalization functors~!

Thus we are led to ask a naive question, sounding much more elementary
compared with categorical approaches:
\begin{quote}
  Is it possible to realize a cluster algebra entirely in Lusztig's
  framework, i.e.\ via a certain category of perverse sheaves on the
  space $\bE_W$ of representations of a quiver~?
\end{quote}

If the answer is affirmative, the positivity conjecture is a direct
consequence of that of the canonical base.

As far as the author searches the literature in the subject, there is
no explicit mention of this conjecture, though many examples of
cluster algebras arise really as subalgebras of $\mathbf U_{\vq}^-$.
Usually Lusztig's perverse sheaves appear only as a motivation, and is
not used in a fundamental way. A closest result is Geiss, Leclerc and
Schr\"oer's work \cite{GLS,GLS2} where the cluster algebra is
realized as a space of constructible functions on $\Lambda_W$, the
space of nilpotent representations of the preprojective algebra. This
$\Lambda_W$ is a lagrangian in the cotangent space $T^*\bE_W$ of the
space $\bE_W$ of representations.
The space of constructible functions was used also by Lusztig to
construct the {\it semicanonical\/} base \cite{Lu:semi}.
Constructible functions are vaguely related to perverse sheaves (or
$D$-modules) via characteristic cycle construction, though nobody
makes the relation precise.
And it was proved that cluster monomials are indeed elements of the
dual semicanonical base \cite{GLS,GLS2}.
But constructible functions have less structures than perverse
sheaves, in particular, the positivity of the multiplication is unknown.

\begin{NB}
Let us state our result in an abstract way.

\begin{main}
  Let $\mathscr A(\widetilde \bB)$ be the cluster algebra associated
  with the $\bx$-quiver given in \defref{def:x-quiver}.
  Then there is a monoidal abelian category $\mathscr C_1$ such that
  the followings hold.
  \begin{enumerate}
  \item Its Grothendieck ring $\mathcal K(\mathscr C_1)$ has the
    structure of a commutative algebra isomorphic to the polynomial
    ring $\Z[x_i, x_i']_{i\in I}$ of variables indexed by $I$ and its
    copy $I_\fr$.

  \item The cluster algebra $\mathscr A(\widetilde \bB)$ is isomorphic
    to $\mathcal K(\mathscr C_1)$ so that $x_i$ and $f_i$ given by
    \eqref{eq:T-system} are the cluster variables from the initial
    seed.

  \item The simple objects $L(W)$ of $\mathscr C_1$ give a base of
    $\mathcal K(\mathscr C_1)$, naturally indexed by isomorphism
    classes of $I\sqcup I_\fr$-graded vector spaces $W$.

  \item Cluster monomials form a subset of the base given by simple objects.

  \item If $L(W)$ corresponds to a cluster monomial $m$, it factors as
    $L(W)\cong L(W^1)\otimes \cdots \otimes L(W^s)$ according to the
    factorization $m = m^1\cdots m^s$ to the product of cluster
    variables.
  \end{enumerate}
\end{main}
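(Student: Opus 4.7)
I would take $\mathscr C_1$ to be a specific monoidal subcategory of finite-dimensional representations of the quantum loop algebra $\Ulq$, following the Hernandez-Leclerc proposal, and then use the graded quiver varieties of \cite{Na-qaff} to transport Lusztig's perverse-sheaf positivity into the cluster-theoretic setting. Fixing a bipartite orientation $I = I_0 \sqcup I_1$ of the underlying Dynkin diagram, I would define $\mathscr C_1$ to be the Serre monoidal subcategory of $\Ulq\text{-mod}$ generated by the fundamental modules supported on two adjacent heights of the Hernandez-Leclerc strip. Its simple objects $L(W)$ are then naturally indexed by $I \sqcup I_\fr$-graded vector spaces $W$ via their Drinfeld polynomials, which already takes care of~(3).

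For~(1), the commutativity of $\mathcal K(\mathscr C_1)$ follows from the Frenkel-Reshetikhin $q$-character homomorphism, and the identification with $\Z[x_i, x_i']_{i \in I}$ is a direct $q$-character calculation: the chosen fundamental modules freely generate, and their classes are algebraically independent. For~(2), take $x_i$ to be the classes of fundamental modules attached to $I$ and $x_i'$ those attached to $I_\fr$; the $f_i$ defined by \eqref{eq:T-system} are then the one-step mutations of the $x_i$ in the initial seed, and the $T$-system provides short exact sequences $0 \to S \to X_i \otimes X_i^* \to S' \to 0$ in $\mathscr C_1$ that realize the initial exchange relations of $\mathscr A(\widetilde \bB)$ literally inside the Grothendieck ring.

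Parts~(4) and~(5) would be proved jointly by induction along the mutation graph, starting from the initial seed where they hold by construction. The inductive step reduces to controlling the composition factors of tensor products $L(W) \otimes L(W^*)$ arising in the exchange. Here the identification of graded quiver varieties with Lusztig's spaces $\bE_W$ for the canonical base is the decisive input: it delivers positivity of the structure constants in $\mathcal K(\mathscr C_1)$ with respect to the simple basis, so a non-negative integer combination of classes of simples that equals a product $[L(W)] \cdot [L(W^*)]$ leaves little room for additional composition factors, and one concludes by dimension counting that the right-hand side of the exchange is again simple, hence a cluster monomial. The factorization~(5) then follows: if $m = m^1 \cdots m^s$ is the cluster expansion, the corresponding $L(W^j)$ have pairwise simple tensor products, which by iteration forces $L(W) \cong L(W^1) \otimes \cdots \otimes L(W^s)$. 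The main obstacle will be precisely this simplicity statement for the relevant tensor products: positivity coming from perverse sheaves narrows the candidates drastically, but ruling out extra composition factors requires the bipartite hypothesis together with explicit geometric input from the graded quiver varieties, and it is exactly at this point that Lusztig's perverse-sheaf framework (rather than the constructible functions used in \cite{GLS,GLS2}) becomes indispensable.
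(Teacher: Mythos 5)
Your outline of parts (1)--(3) matches the paper's approach, but for (4)--(5) you have replaced the paper's actual argument with an ``induction along the mutation graph'' that, as you yourself half-concede in the final sentence, does not close. Positivity of structure constants in the simple basis plus ``dimension counting'' is not enough to run the inductive step: from the exchange relation $[L(x_k)][x_k^*] = [L(m_+)] + [L(m_-)]$ positivity tells you that $[L(x_k)]\cdot(\text{anything})$ is a nonnegative combination of simples, but it does not identify the unknown $x_k^*$ as the class of a simple, and without already knowing that $m_+$, $m_-$ are cluster monomials corresponding to simples (which is what you are trying to prove) the step is circular. There is no dimension-counting shortcut here.

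The paper sidesteps induction entirely and argues bijectively, with three ingredients your proposal omits. First, after \propref{prop:ast_1} identifies $\N_0(W)$ with $\bE_W$, it applies the \emph{Fourier--Sato--Deligne transform}: this converts the coefficient of the skyscraper $IC_W(0)$ (i.e.\ the truncated $q$-character of $L(W)$) into Poincar\'e polynomials of \emph{quiver Grassmannians} of a general representation of the dual space (\thmref{thm:main}, via \lemref{lem:reflect}). Second, it matches these generating functions with the Caldero--Chapoton / Caldero--Keller \emph{cluster character formula} (\subsecref{subsec:cc}, \propref{prop:variable}); this directly exhibits, for each real Schur root, the corresponding $L(W)$ as a cluster variable, with no mutation induction. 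Third, the tensor factorization (5) is not deduced by iterating exchange relations but follows from Kac--Schofield's \emph{canonical decomposition} of a general representation of $\bE_W$ into Schur roots (\propref{prop:canfac}, \propref{prop:Schur}), together with the tilting-module characterization of compatibility (\propref{prop:monomial}). These are the places where the geometric input actually enters; naming them is not optional detail but the substance of the proof.
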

\end{NB}

Before explaining our framework for the cluster algebra via perverse
sheaves, we need to explain the author's earlier work
\cite{Na-qaff}. It is another child of Lusztig's work.

\subsection{Graded quiver varieties and quantum loop algebras}
\label{subsec:gradedquiver}

In \cite{Na-qaff} the author studied the category $\mathscr R$ of {\it
  l\/}-integrable representations of the quantum loop algebra $\Ulq$
of a symmetric Kac-Moody Lie algebra $\g$
via perverse sheaves on graded quiver varieties $\N_0(W)$ (denoted by
$\M_0(\infty,{\mathbf w})^A$ in [loc.\ cit.]).
If $\g$ is a simple Lie algebra of type $ADE$,
$\Ulq$ is a subquotient of Drinfeld-Jimbo quantized enveloping algebra
of affine type $ADE$ (usually called the quantum affine algebra), and
$\mathscr R$ is nothing but the category of finite dimensional
representations of $\Ulq$.
The graded quiver varieties are fixed point sets of the quiver
varieties $\M_0(W)$ introduced in \cite{Na-quiver,Na-alg} with respect
to torus actions.
The main result says that the Grothendieck group $\bfR$ of $\mathscr
R$ has a natural $t$-deformation $\bfR_t$ which can be constructed
from a category $\mathscr P_W$ of perverse sheaves on $\N_0(W)$ so
that simple (resp.\ standard) modules correspond to dual of
intersection cohomology complexes (resp.\ constant sheaves) of natural
strata of $\N_0(W)$. Here the parameter $t$ comes from the
cohomological grading.
Furthermore the transition matrix of two bases of simple and standard
modules (= dimensions of stalks of IC complexes) is given by analog of
Kazhdan-Lusztig polynomials, which can be computed\footnote{The
  meaning of the word {\it compute} will be explained in
  \remref{rem:E8}.} by using purely combinatorial objects
$\chi_{\vq,t}$, called $t$-analog of $q$-characters
\cite{MR1872257,MR2144973}.
If we set $t=1$, we get the $q$-character defined by \cite{Knight,Fre-Res}
as the generating function of the dimensions of {\it l\/}-weight
spaces, simultaneous generalized eigenspaces with respect to a
commutative subalgebra of $\Ulq$.
For the simple module corresponding to an $IC$ complex $L$,
$\chi_{\vq,t}$ is the generating function of multiplicities of $L$ in
direct images of constant sheaves on various nonsingular graded quiver
varieties $\N(V,W)$ under morphisms $\pi\colon \N(V,W)\to \N_0(W)$.

We have a noncommutative multiplication on $\bfR_t$, which is a
$t$-deformation of a commutative multiplication on $\bfR$. When $\g$
is of type $ADE$, the commutative multiplication on $\bfR$ comes from
the tensor product $\otimes$ on the category $\mathscr R$ as $\Ulq$ is
a Hopf algebra.
(It is not known whether the quantum loop algebra $\Ulq$ can be
equipped with the structure of a Hopf algebra in general.)
The $t$-deformed multiplication was originally given in terms of
$t$-analog of $q$-characters, but Varagnolo-Vasserot \cite{VV2} later
introduced a convolution diagram on $\N_0(W)$ which gives the
multiplication in more direct and geometric way.

These geometric structures are similar to ones used to define the
canonical base of $\mathbf U^-_q$ by Lusztig \cite{Lu-book}. We have
the following table of analogy:
\begin{center}
\begin{tabular}{c|c|c}
$\bfR_t$ & geometry & dual of $\mathbf U_q^-$
\\
\hline
standard modules $M(W)$ & constant sheaves & dual PBW base elements
\\
simple modules $L(W)$ & IC complexes & dual canonical base elements
\\
$t$-deformed $\otimes$ & convolution diagram & multiplication
\end{tabular}
\end{center}
Note that $\mathbf U^-_q$ is not commutative even at $q=1$, while
its dual $\left.(\mathbf U_q^-)^*\right|_{q=1}$ is the coordinate ring
$\C[\mathfrak n^-]$, hence commutative. Hence we should compare
$\bfR_t$ with $(\mathbf U_q^-)^*$, not with $\mathbf U_q^-$.
Also the convolution diagram looks similar to one for the
comultiplication, not to one for the multiplication.
The only difference is relevant varieties: Lusztig used the vector
spaces $\bE_W$ of representations of the quiver with group actions (or
the moduli {\it stacks\/} of representations of the quiver), while the
author used graded quiver varieties, which are framed moduli {\it
  spaces\/} of graded representations of the preprojective algebra
associated with the underlying graph.

The computation of the transition matrix is hard to use in practice,
like the Kazhdan-Lusztig polynomials.
On the other hand many peoples have been studying special modules (say
tame modules, Kirillov-Reshetikhin modules, minimal affinization,
etc.)~by purely algebraic approaches, at least when $\g$ is of finite
type. See \cite{CharHer} and the references therein. Their structure
is different from that of general modules.
Thus it is natural to look for a special geometric property which
holds {\it only\/} for graded quiver varieties corresponding to these
classes of modules.
In \cite[\S10]{Na-qaff} the author introduced two candidates of such
properties. We name corresponding modules {\it special\/} and {\it
  small\/} respectively.
These properties are easy to state both in geometric and algebraic
terms, but it is difficult to check whether a given module is special
or small.
Since [loc.~cit.], we have been gradually understanding that
smallness is not a right concept as there are only very few examples
(see \cite{Her-small}),
but the speciality is a useful concept and there are many special
modules, say Kirillov-Reshetikhin modules\footnote{Special modules
  form a {\it special\/} class of modules. Small modules form a {\it
    small\/} class of modules. But the name `small' originally comes
  from the smallness of a morphism.}.
One of applications of this study was a proof of the $T$-system, which
was conjectured by Kuniba-Nakanishi-Suzuki in 1994 (see
\cite{MR1993360} and the references therein). Several steps in the
proof of the main result in [loc.\ cit.]\ depended on the geometry,
but they were replaced by purely algebraic arguments, and generalized
to nonsymmetric quantum loop algebras cases later by
Hernandez~\cite{Her}. It was a fruitful interplay between geometric
and algebraic approaches.

\subsection{Realization of cluster algebras via perverse sheaves}

Hernandez and Leclerc \cite{HerLec} not only give an abstract
definition of a monoidal categorification, but also its candidate for
a certain cluster algebra. It is a monoidal (i.e.\ closed under the
tensor product) subcategory $\mathscr C_1$ of $\mathscr R$ when $\g$
is of type $ADE$.
They indeed show that $\mathscr C_1$ is a monoidal categorification
for type $A$ and $D_4$.
Therefore we have a strong evidence that it is a right candidate.
From what we have reviewed just above, if it indeed is a monoidal
categorification, the cluster algebra is a subalgebra of $\bfR$,
constructed via perverse sheaves on graded quiver varieties~!
Moreover, from the philosophy explained above, we could expect that
graded quiver varieties corresponding to $\mathscr C_1$ have very
{\it special\/} features compared with general ones.

In this paper, we show that it turns out to be true.
The first main observation (see \propref{prop:ast_1}) is that the
graded quiver varieties $\N_0(W)$ become just the vector spaces
$\bE_W$ of representations of the {\it decorated quiver}. Here the
decorated quiver\footnote{The decorated quiver is different from one
  in \cite{MRZ}, where there are no arrows between $i$ and $i'$.} is
constructed from a given finite graph with a bipartite orientation by
adding a new (frozen) vertex $i'$ and an arrow $i'\to i$ (resp.\ $i\to
i'$) if $i$ is a sink (resp.\ source) for each vertex $i$. (See
\defref{def:decorated}.)
Therefore the underlying variety is nothing but what Lusztig used.
Also the convolution diagram turns out to be the same as Lusztig's
one. Thus the Grothendieck group $\mathcal K(\mathscr C_1)$ is also a
subquotient of the dual of $\mathbf U_\vq^-$, associated with the
Kac-Moody Lie algebra corresponding to the decorated quiver.

To define a cluster algebra with frozen variables (or with
coefficients in the terminology in \cite{FomZel}), we choose a quiver
with choices of frozen vertexes.
We warn the reader that this quiver for the cluster algebra (we call
$\bx$-quiver, see \defref{def:x-quiver}) is slightly different from
the decorated quiver: the principal part has the opposite orientation
while the frozen part is the same.

\subsection{Second key observation}

Once we get a correct candidate of the class of perverse sheaves, we
next study structures of the dual canonical base and try to pull out
the cluster algebra structure from it. We hope to see a shadow of the
structure of a cluster category.

As we mentioned above, our $\mathcal K(\mathscr C_1)$ is a subquotient
of the dual of $\mathbf U_\vq^-$.
In particular, we introduce an equivalence relation on the canonical
base. The second key observation is that each equivalence class
contains an exactly one skyscraper sheaf $1_{\{0\}}$ of the origin $0$
of $\bE_W$ (the simplest perverse sheaf~!). This equivalence relation
is built in the theory of graded quiver varieties.
From this observation together with the first observation that the
graded quiver varieties are vector spaces, we can apply the
Fourier-Sato-Deligne transform \cite{KaSha,Lau} to make a reduction to
a study of constant sheaves $1_{\bE_W^*}$ on the whole space.

There is a certain natural family of projective morphisms
$\pi^\perp\colon \widetilde{\mathcal F}(\nu,W)^\perp \to \bE_W^*$ from
nonsingular varieties $\widetilde{\mathcal F}(\nu,W)^\perp$.
This family appears as monomials in Lusztig's context, and
$\vq$-characters in the theory reviewed in
\subsecref{subsec:gradedquiver}.
Using these morphisms, we define a homomorphism from $\mathbf R$ to
the cluster algebra.
Fibers of these morphisms are what are called {\it quiver
  Grassmannian\/} varieties. People study their Euler characters and
define the cluster character as their generating function.
This is clearly related to the study of the pushforward
\begin{equation*}
  \pi^\perp_!(1_{\widetilde{\mathcal F}(\nu,W)^\perp}[\dim
  \widetilde{\mathcal F}(\nu,W)^\perp]).
\end{equation*}
If $\bE_W^*$ contains an open orbit, then the Euler number of the
fiber over a point in the orbit is nothing but the coefficient
of $1_{\bE_W^*}[\dim \bE_W]$ in the above push-forward.
When the dual canonical base element is a cluster monomial, $\bE_W^*$
indeed contains an open orbit.
Therefore we immediately see that all cluster monomials are dual
canonical base elements.
This very simple observation between the cluster character and the
push-forward was appeared in the work of Caldero-Reineke
\cite{caldero-reineke}\footnote{There is a gap in the proof of
  \cite[Theorem~1]{caldero-reineke} since Lusztig's $v$ is identified
  with $q$. The correct identification is $v = -\sqrt{q}$. We give a
  corrected proof in \secref{sec:app}.}.

To be more precise, we need to apply reflection functors at all sink
vertexes in the decorated quiver with opposite orientations to
identify fibers of $\widetilde F(\nu,W)^\perp$ with quiver
Grassmannian varieties. The resulting quiver corresponds to the cluster
algebra with principal coefficients \cite{FomZel4}.

An appearance of the cluster character formula in the category
$\mathscr C_1$ was already pointed out in \cite[\S12]{HerLec}, as it is
nothing but a leading part of the $\vq$-character mentioned above.
(We call the leading part the {\it truncated\/} $\vq$-character.)

From a result on graded quiver varieties, it also follows that quiver
Grassmannian varieties have vanishing odd cohomology groups under the
above assumption. The generating function of all Betti numbers is
nothing but the truncated $t$-analog of $\vq$-character of a simple
modules. In particular, it was computed in \cite{MR2144973}.

We have assumed that $\bE_W^*$ contains an open orbit.
But the only necessary assumption we need is that perverse sheaves
corresponding to canonical base elements have strictly smaller
supports than $\bE_W^*$ except $1_{\bE_W^*}[\dim \bE_W^*]$.
Even if this condition is not satisfied, we can consider the {\it
  almost simple module\/} $\mathbb L(W)$ corresponding to the sum of
perverse sheaves whose supports are the whole $\bE_W^*$.
Then the total sum of Betti numbers (the Euler number is not natural
in this wider context) of the quiver Grassmannian give the truncated
$\vq$-character of the almost simple module.
An almost simple module $\mathbb L(W)$ is not necessarily simple in
general.

It is rather simple to study tensor product factorization of $\mathbb
L(W)$ since we computed their truncated $q$-characters.
First we observe that Kirillov-Reshetikhin modules simply factor
out. Then we may assume that $W$ have $0$ entries on frozen vertexes.
Thus $W$ is supported on the first given vertexes.
We next observe that $\mathbb L(W)$ factors as
\begin{equation*}
  \mathbb L(W) \cong \mathbb L(W^1)\otimes\cdots \otimes \mathbb L(W^s),
\end{equation*}
according to the {\it canonical decomposition\/} $W = W^1\oplus
\cdots\oplus W^s$ of $W$. Recall the canonical decomposition is the
decomposition of a general representation of $\bE_W$ first introduced
by Kac~\cite{Kac-quiver,Kac-quiver2}, and studied further by Schofield
\cite{Schofield}.
It is known that each $W^k$ is a Schur root (i.e.\ a general
representation is indecomposable) and $\Ext^1$ between general
representations from two different factors $W^k$, $W^l$ vanish.

We prove that a simple module $L(W)$ corresponds to a cluster monomial
if and only if the canonical decomposition contains only real Schur
roots. In this case, $\bE_W^*$ contains an open orbit. Then we have
$\mathbb L(W) = L(W)$, $\mathbb L(W^k) = L(W^k)$ and each $L(W^k)$
corresponds to a cluster variable, and the above tensor factorization
corresponds to the cluster expansion.

\subsection{To do list}
In this paper, basically due to laziness of the author, at least four
natural topics are not discussed:
\begin{itemize}
\item Our Grothendieck ring $\bfR$ has a natural noncommutative
  deformation $\bfR_t$. It should contain the quantum cluster algebra
  in \cite{BerZel}. In fact, we already give our main formula (in
  \thmref{thm:main}) in Poincar\'e polynomials of quiver Grassmannian
  varieties. Therefore the only remaining thing is to prove the
  quantum version of the cluster character formula. Any proof in the
  literature should be modified to the quantum version naturally, as
  it is based on counting of rational points.

  (After an earlier version of this article was posted on the arXiv,
  Qin proved the quantum version of the cluster character formula for
  an acyclic cluster algebra \cite{FQ}. This is the most essential
  part for this problem, but it still need to check that the
  multiplication $\bfR_t$ is the same as that of the quantum cluster
  algebra. This will be checked elsewhere.)

\item We only treat the case when the underlying quiver is
  bipartite. Since the choice of the quiver orientation is not
  essential in Lusztig's construction (in fact, the Fourier transform
  provides a technique to change orientations), this assumption
  probably can be removed.

\item We only treat the symmetric cases. Symmetrizable cases can be
  studied by considering quiver automorphisms as in Lusztig's
  work. Though the corresponding theory was not studied in author's
  theory, it should corresponds to the representations of twisted
  quantum affine algebras.

\item In \cite{GLS,GLS2} it was proved that cluster monomials are
  semicanonical base elements. It was conjectured that they are also
  canonical base elements. It is desirable to study the precise
  relation of this work to ours.
\end{itemize}
The author or his friends will hopefully come back to these
problems in near future.

In \cite{HerLec} a further conjecture is proposed for the monoidal
subcategory $\mathscr C_{\ell}$, where $\mathscr C_1$ is the special
case $\ell = 1$. Since the graded quiver varieties are no longer
vector spaces for $\ell > 1$, the method of this paper does not
work. But it is certainly interesting direction to pursue.
We also remark that other connections between the cluster algebra
theory and the representation theory of quantum affine algebras have
been found by Di~Francesco-Kedem \cite{DF-K} and
Inoue-Iyama-Kuniba-Nakanishi-Suzuki \cite{IIKNS}. It is also
interesting to make a connection to their works.

This article is organized as
follows. \S\S\ref{sec:cluster_pre},~\ref{sec:quiver} are preliminaries
for cluster algebras and graded quiver varieties respectively.  In
\secref{sec:C_1} we introduce the category $\mathscr C_{1}$ following
\cite{HerLec} and study the corresponding graded quiver varieties.
In \secref{sec:hom} we define a homomorphism from the Grothendieck
group $\mathbf R_{\ell = 1}$ of $\mathscr C_{1}$ to a rational
function field which is endowed with a cluster algebra structure.
In \secref{sec:prime} we explain the relation between the cluster
character and the push-forward and derive several consequences on 
factorizations of simple modules.
In \secref{sec:cluster} we prove that cluster monomials are dual
canonical base elements.
In \secref{sec:app} we prove that the quiver Grassmannian of a rigid
module of an acyclic quiver has no odd cohomology. It implies the
positive conjecture for an acyclic cluster algebra for the special
case of an {\it initial\/} seed.

\subsection*{Acknowledgments}
I began to study cluster algebras after
Bernard Leclerc's talk
at the meeting `Enveloping Algebras and
Geometric Representation Theory' at the Mathematisches
Forschungsinstitut Oberwolfach (MFO) in March 2009.
I thank him and David Hernandez for discussions during/after the
meeting. They kindly taught me many things on cluster algebras.
Alexander Braverman's question/Leclerc's answer (Conway-Coxeter frieze
\cite{ConwayCoxeter}) and discussions with Rinat Kedem at the meeting
were also very helpful.
I thank the organizers, as well as the staffs at
the MFO for providing me nice environment for the research.
I thank Osamu Iyama for correcting my understanding on the cluster
mutation.
I thank Andrei Zelevinsky for comments on a preliminary version of
this paper.
I also thank Fan Qin for correspondences.
Finally I thank George Lusztig's works which have been a source of my
inspiration more than a decade.

\section{Preliminaries (I) -- Cluster algebras}\label{sec:cluster_pre}

We review the definition and properties of cluster algebras.

\subsection{Definition}
Let $\cG = (I,E)$ be a finite graph, where $I$ is the set of vertexes and
$E$ is the set of edges.
Let $H$ be the set of pairs consisting of an edge together with its
orientation. For $h\in H$, we denote by $\vin(h)$ (resp.\ $\vout(h)$)
the incoming (resp.\ outgoing) vertex of $h$.
For $h\in H$ we denote by $\overline h$ the same edge as $h$ with the
reverse orientation.
A {\it quiver\/} $\cQ = (I,\Omega)$ is the finite graph $\cG$ together
with a choice of an {\it orientation\/} $\Omega\subset H$ such that
$\Omega\cap \overline{\Omega} = \emptyset$,
$\Omega\cup\overline{\Omega} = H$.

We will consider a pair of a quiver $\cQ = (I,\Omega)$ and a larger
quiver $\widetilde\cQ = (\widetilde I,\widetilde\Omega)$ containing
$\cQ$, where $I$ is a subset of $\widetilde I$ and $\Omega$ is
obtained from $\widetilde\Omega$ by removing arrows incident to a
point in $\widetilde I\setminus I$.
Set $I_\fr = \widetilde I\setminus I$. We call $i\in I_\fr$ (resp.\
$i\in I$) a {\it frozen\/} (resp.\ {\it principal\/}) vertex.

We assume that $\widetilde\cQ$ has no loops nor $2$-cycles and
there are no edges connecting points in $I_\fr$.
We define a matrix $\widetilde{\bB} = (b_{ij})_{i\in \widetilde I,j\in
  I}$ by
\begin{equation*}
  b_{ij} :=
  \begin{aligned}[t]
  & (\text{the number of oriented edges from $j$ to $i$})
  \\
  & \text{or $-$(the number of oriented edges from $i$ to $j$)}.
  \end{aligned}
\end{equation*}
Since we have assumed $\widetilde\cQ$ contains no $2$-cycles, this is
well-defined. Moreover, giving $\widetilde\bB$ is equivalent to a
quiver $\widetilde\cQ$ with the decomposition $\widetilde I = I\sqcup
I_\fr$ as above.
The {\it principal part\/} $\bB$ of $\widetilde \bB$ is the matrix
obtained from $\widetilde \bB$ by taking entries for $I\times I$. From
the definition $\bB$ is skew-symmetric.

For a vertex $k \in I$ we define the {\it matrix mutation\/}
$\mu_k(\widetilde{\bB})$ of $\widetilde{\bB}$ in direction $k$ as the
new matrix $(b_{ij}')$ indexed by $(i,j)\in \widetilde I \times I$
given by the formula
\begin{equation}
b'_{ij}=
\begin{cases}
   -b_{ij} & \text{if $i=k$ or $j=k$},\\
   b_{ij} + \operatorname{sgn}(b_{ik})\max(b_{ik}b_{kj},0) &
   \text{otherwise}.
\end{cases}
\end{equation}
If $\widetilde\Omega^*$ denotes the corresponding quiver, it is obtained
from $\widetilde\Omega$ by the following rule:
\begin{enumerate}
\item For each $i\to k$, $k\to j\in \widetilde\Omega$, create a new
  arrow $i\to j$ if either $i$ or $j\in I$.
\item Reverse all arrows incident to $k$.
\item Remove $2$-cycles between $i$ and $j$ of the resulting quiver
  after (1) and (2).
\end{enumerate}
Graphically it is given by
\begin{equation*}
\widetilde\Omega : 
 \xymatrix@R=.5pc{
 i \ar[rd]_{s} \ar[rr]^r & & j 
\\
  & \ar[ru]_t k &
}
\qquad\Longrightarrow\qquad
\widetilde\Omega^* : 
 \xymatrix@R=.5pc{
 i \ar[rr]^{r+st} & & \ar[ld]^t j
\\
  & \ar[lu]^s k &
},
\end{equation*}
where $s$, $t$ are nonnegative integers and $i\xrightarrow{l}j$ means
that there are $l$ arrows from $i$ to $j$ if $l\ge 0$, $(-l)$ arrows
from $j$ to $i$ if $l\le 0$. The new quiver $\widetilde\Omega^*$ has no
loops nor 2-cycles.

Let ${\mathscr F} = \Q(x_i)_{i\in \widetilde I}$ be the field of
rational functions in commuting indeterminates $\bx = (x_i)_{i\in
  \widetilde I}$ indexed by $\widetilde I$. For $k\in I$ we define a
new variable $x_k^*$ by the {\it exchange relation}:
\begin{equation}\label{eq:exchange}
  x_k^* = 
  \frac{\prod_{b_{ik}> 0} x_i^{b_{ik}} + \prod_{b_{ik}< 0} x_i^{-b_{ik}}}{x_k}.
\end{equation}
Let $\mu_k(\bx)$ be the set of variables obtained from $\bx$ by
replacing $x_k$ by $x_k^*$. The pair
$(\mu_k(\bx),\mu_k(\widetilde{\bB}))$ is called the {\it mutation\/} of
$(\bx,\widetilde{\bB})$ in direction $k$.
We can iterate this procedure and obtain new pairs by mutating
$(\mu_k({\mathbf x}),\mu_k(\widetilde{\bB}))$ in any direction $l\in
I$.
We do not make mutations in direction of a frozen vertex $k\in
I_\fr$. Variables $x_i$ for $i\in I_\fr$ are always in $\mu_k({\mathbf
  x})$; they are called {\it frozen variables\/} (or {\it
  coefficients\/} in \cite{FomZel}).

Now a {\it seed\/} is a pair $(\by,\widetilde{\mathbf C})$ of $\by =
(y_i)_{i\in \widetilde I}\in \mathscr F^{\widetilde I}$ and a matrix
$\widetilde{\mathbf C} = (c_{ij})_{i\in \widetilde I,j\in I}$ obtained
from the {\it initial seed\/} $(\bx,\widetilde{\mathbf B})$ by a
successive application of mutations in various direction $k\in I$.
The set of seeds is denoted by ${\mathscr S}$.
A {\it cluster\/} is $\{ y_i \mid i\in \widetilde I\}$ of a seed $(\by,
\widetilde{\mathbf C})$, considered as a subset of ${\mathscr F}$ by
forgetting the $\widetilde I$-index.
A {\it cluster variable\/} is an element of the union of all clusters. 
Note that clusters may overlap: a cluster variable may belong to
another cluster. Also the $\widetilde I$-index may be different from
the original one.
The {\it cluster algebra\/} ${\mathscr A}(\widetilde{\bB})$ is the
subalgebra of ${\mathscr F}$ generated by all the cluster variables.
The integer $\# I$ is called the {\it rank\/} of ${\mathscr
  A}(\widetilde{\bB})$.
A {\it cluster monomial\/} is a monomial in the cluster variables 
of a {\it single\/} cluster.
The exchange relation \eqref{eq:exchange} is of the form
\begin{equation}\label{eq:exchange2}
   x_k x_k^* = m_+ + m_-
\end{equation}
where $m_\pm = \prod_{\pm b_{ik} > 0} x_i^{\pm b_{ik}}$ are cluster
monomials.

When we say a {\it cluster algebra}, it may mean the subalgebra
${\mathscr A}(\widetilde{\bB})$ or all the above structures.

One of important results in the cluster algebra theory is the Laurent
phenomenon: every cluster variable $z$ in $\mathscr
A(\widetilde{\mathbf B})$ is a Laurent polynomial in any given cluster
${\mathbf y}$ with coefficients in $\Z$. It is conjectured that the
coefficients are nonnegative.
A cluster monomial is a subtraction free rational expression in $\bx$,
but this is not enough to ensure the positivity of its Laurent
expansion, as an example $x^2 - x + 1 =  (x + 1)^3/(x + 1)$ shows.

\subsection{$F$-polynomial}\label{subsec:F-pol}

It is known that cluster variables of $\mathscr A(\widetilde{\mathbf
  B})$ are expressed by the ${\mathbf g}$-vectors and $F$-polynomials
\cite{FomZel4}, which are constructed from another cluster algebra
with the same principal part, but a simpler frozen part. We recall
their definition in this subsection.

We first prepare some notation.
We consider the multiplicative group $\mathbb P$ of all Laurent monomials
in $(x_i)_{\in I}$. We introduce the addition $\oplus$ by
\begin{equation*}
  \prod_i x_i^{a_i} \oplus \prod_i x_i^{b_i} = \prod_i x_i^{\min(a_i,b_i)}.
\end{equation*}
With this operation together with the ordinary multiplication and
division, $\mathbb P$ becomes a semifield, called the {\it tropical
  semifield}.
Let $F$ be a subtraction-free rational expression with integer
coefficients in variables $y_i$. Then we evaluate it in $\mathbb P$ by
specializing the $y_i$ to some elements $p_i$ of $\mathbb P$.  We
denote it by $F|_{\mathbb P}(\mathbf p)$, where $\mathbf p =
(p_i)_{i\in I}$.

Let $\mathscr A_{\mathrm{pr}}$ be the cluster algebra with {\it
  principal coefficients}. It is given by the initial seed $((\mathbf
u,\mathbf f),\widetilde{\mathbf B}_{\mathrm{pr}})$ with $(\mathbf
u,\mathbf f) = (u_i, f_i)_{i\in I}$, and $\widetilde{\mathbf
  B}_{\mathrm{pr}}$ is the matrix indexed by $(I\sqcup I)\times I$
with the same principal $\mathbf B$ as $\widetilde{\mathbf B}$ and the
identity matrix in the frozen part. 
Here $I_\fr$ is a copy of $I$ and $\widetilde I = I\sqcup I$.
We write a cluster variable $\alpha$ as
\begin{equation*}
   \alpha = X_\alpha(\mathbf u,\mathbf f)
\end{equation*}
a subtraction free rational expression in $\mathbf u$, $\mathbf f$.
We then specialize all the $u_i$ to $1$:
\begin{equation*}
  F_\alpha(\mathbf f) = \left.X_\alpha(\mathbf u,\mathbf f)\right|_{u_i=1}.
\end{equation*}
It becomes a polynomial in $f_i$, and called the $F$-polynomial
([loc.\ cit., \S3]).
It is also known ([loc.\ cit., \S6]) that $X_\alpha$ is homogeneous with
respect to $\Z^I$-grading given by
\begin{equation*}
   \deg u_i = i, \qquad
   \deg f_j = -\sum_i b_{ij} i,
\end{equation*}
where $b_{ij}$ is the matrix entry for the principal part $\bB$,
and the vertex $i$ is identified with the coordinate vector in $\Z^I$.
We then define $\mathbf g$-vector by
\begin{equation*}
  \mathbf g_\alpha \defeq \deg X_\alpha\in \Z^I.
\end{equation*}

We now return back to the original cluster algebra
$\mathscr A(\widetilde{\bB})\subset \Q(x_i)_{i\in \widetilde I}$.
We introduce the following variables:
\begin{equation*}
  y_j = \prod_{i\in I_\fr} x_i^{b_{ij}}, \qquad
  \widehat y_j = y_j \prod_{i\in I} x_i^{b_{ij}}
\qquad
  (j\in I).
\end{equation*}
We write $\mathbf y = (y_i)_{i\in I}$, $\widehat{\mathbf y} =
(\widehat y_i)_{i\in I}$ .

We consider the corresponding cluster variable $x[\alpha]$ in the seed
of the original cluster algebra $\mathscr A(\widetilde{\mathbf B})$
obtained by the same mutation processes as we obtained $\alpha$ in the
cluster algebra with principal coefficients. We then have
\cite[Cor.~6.5]{FomZel4}:
\begin{equation*}
  x[\alpha] = \frac{F_\alpha(\widehat{\mathbf y})}
  {\left.F_\alpha\right|_{\mathbb P}(\mathbf y)} \bx^{\mathbf g_\alpha},
\end{equation*}
where $\bx^{\mathbf g_\alpha} = \prod_{i\in I} x_i^{(\mathbf
  g_\alpha)_i}$ if $(\mathbf g_\alpha)_i$ is the
$i^{\mathrm{th}}$-entry of $\mathbf g_\alpha$.

\subsection{Hernandez-Leclerc monoidal categorification conjecture}
\label{subsec:HerLec}

We recall Hernandez-Leclerc's monoidal categorification conjecture in
this subsection.

Let $\mathscr A$ be a cluster algebra and $\mathscr M$ be an abelian
monoidal category.
A simple object $L\in\mathscr M$ is {\it prime\/} if there exists no
non trivial factorization $L\cong L_1 \otimes L_2$.  We say that $L$
is {\it real\/} if $L\otimes L$ is simple.

\begin{Definition}[\cite{HerLec}]\label{def:categorification}
  Let ${\mathscr A}$ and $\mathscr M$ as above. We say that $\mathscr
  M$ is a {\it monoidal categorification\/} of ${\mathscr A}$ if the
  Grothendieck ring of $\mathscr M$ is isomorphic to ${\mathscr A}$, and if
\begin{enumerate}
\item the cluster monomials $m$ of ${\mathscr A}$ are the classes of all
  the real simple objects $L(m)$ of $\mathscr M$;
\item the cluster variables of ${\mathscr A}$ (including the frozen
  ones) are the classes of all the real prime simple objects of
  $\mathscr M$.
  \begin{NB}
\item for an exchange relation $x_k x_k^* = m_+ + m_-$ we have an
  short exact sequence
  \begin{equation*}
     0 \to L(m_+) \to L(x_k)\otimes L(x_k^*) \to L(m_-)\to 0.
  \end{equation*}
  \end{NB}
\end{enumerate}
\end{Definition}

If two cluster variables $x$, $y$ belong to the common cluster, then
$xy$ is a cluster monomial. Therefore the corresponding simple objects
$L(x)$, $L(y)$ satisfy $L(x)\otimes L(y) \cong L(y)\otimes L(x) \cong
L(xy)$.
\begin{NB}
Probably the condition (3), which is added by the author, also follows
from (1) and (2) together with a reasonable assumption on the category
$\mathscr M$.
\end{NB}

\begin{Proposition}[\protect{\cite[\S2]{HerLec}}]\label{prop:HL}
  Suppose that a cluster algebra $\mathscr A$ has a monoidal
  categorification $\mathscr M$.

  \textup{(1)} Every cluster monomial has a Laurent expansion with
  positive coefficients with respect to any cluster $\by =
  (y_i)_{i\in\widetilde I}\in \mathscr S$;
  \begin{equation*}
     m = \frac{N_m(\mathbf y)}{\prod_i y_i^{d_i}};
     \qquad d_i\in\Z_{\ge 0},\ N(y_i)\in \Z_{\ge 0}[y_i^\pm].
  \end{equation*}
  In fact, the coefficient of $\prod y_i^{k_i}$ in $N_m(\mathbf y)$ is
  equal to the multiplicity of $L(\prod y_i^{k_i}) = \bigotimes
  L(y_i)^{\otimes k_i}$ in $L(m)\otimes L(\prod_i y_i^{d_i}) =
  L(m)\otimes\bigotimes L(y_i)^{\otimes d_i}$.

\textup{(2)} The cluster monomials of $\mathscr A$ are linearly independent.
\end{Proposition}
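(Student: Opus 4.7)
The plan is to prove both parts by reducing to the basic fact that the classes of isomorphism classes of simple objects form a $\Z$-basis of $K(\mathscr M)$, combined with the Laurent phenomenon and the remark immediately preceding the proposition: if $x$, $y$ lie in a common cluster then $L(x)\otimes L(y)\cong L(xy)$. Throughout, we identify $\mathscr A$ with $K(\mathscr M)$ via the isomorphism given by the monoidal categorification, so that cluster monomials correspond to classes of (real) simple objects.

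For part (1), I would apply the Laurent phenomenon to the cluster $\by$ and write $m=N_m(\by)/\prod_i y_i^{d_i}$ with the $d_i\in\Z_{\ge 0}$ chosen large enough that $N_m=\sum_{\mathbf{k}} c_{\mathbf{k}}\prod_i y_i^{k_i}$ is an ordinary polynomial with $c_{\mathbf{k}}\in\Z$; the goal is to prove $c_{\mathbf{k}}\ge 0$ and identify it with a Jordan--H\"older multiplicity. Clearing denominators, the identity $m\,\prod_i y_i^{d_i}=N_m(\by)$ holds in $\mathscr A$, hence in $K(\mathscr M)$. The left-hand side is the class of $X\defeq L(m)\otimes\bigotimes_i L(y_i)^{\otimes d_i}$. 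On the right, each monomial $\prod_i y_i^{k_i}$ is a cluster monomial in the cluster $\by$, so by the remark recalled above it corresponds to the class of the simple object $L(\prod_i y_i^{k_i})\cong\bigotimes_i L(y_i)^{\otimes k_i}$. A composition series of $X$ meanwhile gives $[X]=\sum_S m_S\,[S]$ with $m_S\in\Z_{\ge 0}$ and the sum over isomorphism classes of simples. Since the simple classes form a $\Z$-basis of $K(\mathscr M)$, the two decompositions must agree term by term: all simples appearing in $X$ are of the form $L(\prod_i y_i^{k_i})$, and $c_{\mathbf{k}}=m_{L(\prod_i y_i^{k_i})}\ge 0$. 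This is exactly the positivity statement together with the multiplicity interpretation.

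For part (2), the argument is essentially immediate from clause~(1) of \defref{def:categorification}. That clause gives a bijection between cluster monomials and isomorphism classes of real simple objects via $m\mapsto L(m)$. Two distinct cluster monomials are distinct as elements of $\mathscr A\cong K(\mathscr M)$, so the corresponding simples $L(m)$ are pairwise non-isomorphic. The classes of pairwise non-isomorphic simples are $\Z$-linearly independent in $K(\mathscr M)$ (they form part of the canonical basis), and transporting back through the isomorphism yields the linear independence of cluster monomials in $\mathscr A$.

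The main subtlety I expect is the use of the identification $L(\prod_i y_i^{k_i})\cong\bigotimes_i L(y_i)^{\otimes k_i}$ in part (1): without knowing that the $L(y_i)$ tensor-commute and that their tensor products remain \emph{simple}, the monomials appearing in $N_m(\by)$ would not a priori be classes of simple objects, and the comparison with the Jordan--H\"older decomposition would collapse. This is precisely what the hypothesis that $\by$ is a cluster (together with the real-simplicity of the $L(y_i)$) provides, so the argument is clean once this input is in hand.
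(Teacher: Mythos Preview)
Your proof is correct and is essentially the standard argument; the paper does not give its own proof of this proposition but simply cites \cite[\S2]{HerLec}, where the same reasoning appears. The key ingredients you identify---Laurent phenomenon, the fact that simple classes form a $\Z$-basis of the Grothendieck group, and the observation that monomials in a single cluster correspond to simple objects so that comparison with a Jordan--H\"older filtration forces nonnegativity---are exactly what is needed.
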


\begin{Conjecture}[\protect{\cite{HerLec}}]\label{HLconj}
  The cluster algebra for the quiver defined in \secref{sec:hom} has a
  monoidal categorification, when the underlying graph is of type
  $ADE$.
  More precisely it is given by a certain explicitly defined monoidal
  subcategory $\mathscr C_1$ of the category of finite dimensional
  representations of the quantum affine algebra $\Ulq$.
\end{Conjecture}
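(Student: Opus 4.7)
The plan is to realize the category $\mathscr{C}_1$ entirely inside Lusztig's framework of perverse sheaves on representation varieties and then transport the cluster algebra structure back through the Grothendieck ring. First I would establish the key geometric identification of Proposition~\ref{prop:ast_1}: the graded quiver varieties $\N_0(W)$ governing $\mathscr{C}_1$ are precisely the Lusztig varieties $\bE_W$ associated to the decorated quiver, and the Varagnolo--Vasserot convolution on $\N_0(W)$ coincides with Lusztig's one. This realizes $\mathcal{K}(\mathscr{C}_1)$ as a subquotient of the dual canonical basis of $\mathbf{U}_q^-$ for the Kac--Moody algebra attached to the decorated quiver, with simple modules $L(W)$ corresponding to IC sheaves.

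Second, I would construct a ring homomorphism from $\bfR_{\ell=1}$ into the ambient rational function field $\mathscr{F}$ via truncated $q$-characters. The key geometric input is the family of proper morphisms $\pi^\perp\colon\widetilde{\mathcal F}(\nu,W)^\perp\to\bE_W^*$, whose fibers become quiver Grassmannians after applying reflection functors at the sinks of the opposite decorated quiver. Combining a Caldero--Reineke style identity (corrected as indicated in Section~\ref{sec:app}) with the $q$-character computation then identifies the image of $L(W)$ under this homomorphism with the cluster character attached to $W$, whenever $\bE_W^*$ contains an open orbit.

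Third, to single out which simple modules give cluster monomials, I would analyze the canonical decomposition $W=W^1\oplus\cdots\oplus W^s$ of Kac and Schofield. When each $W^k$ is a real Schur root, $\bE_W^*$ has an open orbit, the almost simple coincides with the simple $\mathbb{L}(W)=L(W)$, and the vanishing of $\Ext^1$ between distinct $W^k$ together with the $q$-character calculation yields $L(W)\cong L(W^1)\otimes\cdots\otimes L(W^s)$. Each $L(W^k)$ is identified with a cluster variable and the product with the corresponding cluster monomial. Positivity and linear independence of cluster monomials (Proposition~\ref{prop:HL}) then fall out once one knows that Poincar\'e polynomials of the relevant quiver Grassmannians are nonnegative, which is visible through $\chi_{q,t}$ and \cite{MR2144973}.

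The main obstacle is the converse direction required for the full Hernandez--Leclerc conjecture: proving that \emph{every} real prime simple object of $\mathscr{C}_1$ is a cluster variable, not merely that every cluster monomial arises as a real simple. When the canonical decomposition of $W$ involves an imaginary Schur root the construction above only produces the almost simple $\mathbb{L}(W)$, which need not be simple, and its simple constituents carry no manifest cluster-theoretic interpretation. Closing this gap, even in type $ADE$, appears to demand genuinely new input --- perhaps finer control of the supports of the IC sheaves attached to non-rigid representations, or an explicit bridge to the preprojective-algebra constructible-function picture of \cite{GLS,GLS2}, within which cluster monomials are already known to belong to the dual semicanonical basis.
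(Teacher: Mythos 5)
Your first three steps track the paper's argument accurately: Proposition~\ref{prop:ast_1} identifies $\N_0(W)\cong\bE_W$ for the decorated quiver, Theorem~\ref{thm:main} combined with Lemma~\ref{lem:reflect} turns the truncated $\chi_{\vq,t}$ of an almost simple module into a generating function of Poincar\'e polynomials of quiver Grassmannians, and the canonical decomposition machinery (Propositions~\ref{prop:fac}, \ref{prop:canfac}, \ref{prop:Schur}) yields the tensor-product factorization of $\mathbb L(W)$ according to $\lsp{\varphi}W=\bigoplus W^k$. All of that is what the paper actually does.

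Where you go wrong is in the final paragraph. The ``main obstacle'' you describe --- the possibility that the canonical decomposition of $\lsp{\varphi}W$ contains an imaginary Schur root, so that $\mathbb L(W)\neq L(W)$ and the prime simple $L(W)$ escapes identification with a cluster variable --- simply cannot arise in type $ADE$, which is precisely the hypothesis of the conjecture. For a Dynkin quiver the root system is finite, every positive root is real, and by Gabriel's theorem every positive root is Schur; consequently condition~(C) always holds, $\bE_{\lsp{\varphi}W}^*$ always has an open orbit, and $\mathbb L(W)=L(W)$ for every $W$. This is exactly the content of Corollary~\ref{cor:ADE}: in type $ADE$ all simple modules are real and there is a bijection between prime simple modules (other than the $f_i$) and $\Phi_{\ge-1}$. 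Proposition~\ref{prop:variable}(2) then matches real Schur roots with cluster variables, so that bijection \emph{is} the converse direction you worried about, and Proposition~\ref{prop:monomial} handles the compatibility-of-clusters direction via the cluster-tilting correspondence of \S\ref{subsec:tilt}--\S\ref{subsec:cc}. In short: the issue you raise is a genuine obstruction only for non-$ADE$ symmetric types --- the paper is quite explicit that it cannot control prime factorization outside $ADE$ and only proves cluster monomials are real simple there --- but for $ADE$ the argument you have sketched already closes the loop, and claiming otherwise means the proposal fails to prove the stated conjecture.
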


The monoidal subcategory will be defined in \subsecref{subsec:deco} in
terms of graded quiver varieties for arbitrary symmetric Kac-Moody
cases.
And we prove the conjecture for type $ADE$. This is new for $D_n$ for
$n\ge 5$ and $E_6$, $E_7$, $E_8$ since the conjecture was already
proved in \cite{HerLec} for type $A$ and $D_4$.

However we cannot control the prime factorization of arbitrary simple
modules except $ADE$ cases.
We can just prove cluster monomials are real simple objects.
And there are {\it imaginary\/} simple objects for types other than
$ADE$.
So it is still not clear that our monoidal subcategory is a monoidal
categorification in the above sense in general.
See the paragraph at the end of \secref{sec:prime} for a partial
result.
Nonetheless the statement that cluster monomials are classes of simple
objects is enough to derive the conclusions (1),(2) of
\propref{prop:HL}.

\begin{NB}
Earlier version: It was not correct.

We will prove this conjecture in this paper. In fact, we will find
that we need a modification of the definition.
\begin{itemize}
\item In view of \corref{cor:real} it will become that the terminology
  `{\it real\/} simple objects' is not correct. `Real or isotropic
  imaginary simple objects' is the correct terminology.

\item Cluster variables give classes of all the real prime simple
  objects of $\mathscr M$, but not all.
\end{itemize}

In view of Propositions~\ref{prop:Schur},\ref{prop:*****}, we have a
characterization of cluster variables in our example $\mathscr C_1$ by
using the language for the category of quiver representation.
But we do not know how to characterize them by intrinsic terms of the
monoidal category. Probably it shows that the definition of the
monoidal categorification must be changed.
\end{NB}

\begin{NB}
\section{Preliminaries (II) -- representations of quivers}

Let $\cQ = (I,\Omega)$ be a quiver as in \secref{sec:cluster}. Let
$\C\cQ$ be its path algebra defined over $\C$.
We consider the category $\rep\cQ$ of finite dimensional
representations of $\cQ$ over $\C$, which is identified with
the category of finite dimensional $\C\cQ$-modules.
The notation for quiver representations will be reviewed in the next
section in more complicated case.

\subsection{Almost splitting sequences}\label{subsec:almost}

Our reference for almost splitting sequences is \cite[IV.2 and VII.1]{ASS}.

Let us define the Auslander-Reiten translation $\tau$ by
\begin{equation*}
  \tau M = D\Ext^1(M,\C\cQ),
\end{equation*}
where $D$ is the duality operator
\(
   D(\bullet) = \Hom_\C(\bullet,\C).
\)
We also define $\tau^- M = \Ext^1(DM, \C\cQ)$.
Then we have the duality (`Grothendieck-Serre duality')
\begin{equation*}
  \Ext^1(M,N) \cong D\Hom(N,\tau M),
\qquad
  \Ext^1(M,N) \cong D\Hom(\tau^- N, M).
\end{equation*}

If $N$ is indecomposable nonprojective, we have
\(
   \tau^- \tau N \cong N.
\)
Hence the Euler characteristic satisfies the following important
property:
\begin{equation}\label{eq:Serre_dual}
  \begin{split}
  & \chi(M,\tau N) = \dim \Hom(M,\tau N) - \dim \Ext^1(M,\tau N)
\\
  =\; & \dim \Ext^1(N, M) - \dim \Hom(\tau^-\tau N,M)
  = \chi(N,M).
  \end{split}
\end{equation}

If $N$ is indecomposable and non-projective, then $\Ext^1(N,\tau
N)\cong D\End(N)$, and the latter has a special element
sending $\id$ to $1$ and the radical to $0$.
We consider the corresponding exact sequence
\begin{equation*}
  0 \to \tau N\to M \to N \to 0.
\end{equation*}
This is called an {\it almost splitting sequence\/}, has an intrinsic
definition, and has number of special properties. We use the following
\cite[IV.7.7]{ASS}:
\begin{itemize}
\item If $0\to L\xrightarrow{\alpha} M\xrightarrow{\beta} N\to 0$ is
  an almost splitting sequence and a submodule $0\neq N'\subsetneq N$,
  $0\to L\to \beta^{-1}N'\to N'\to 0$ splits, i.e.\ 
\(
   \Ext^1(N,L)\to \Ext^1(N',L)
\)
vanishes.
\begin{NB2}
Consider
\begin{equation*}
  D\Hom(L,\tau N) = \Ext^1(N,L) \to \Ext^1(N',L) = D\Hom(L,\tau N').
\end{equation*}
Since $L = \tau N$, this is transpose of
\begin{equation*}
  \Hom(\tau N,\tau N')\to \Hom(\tau N,\tau N) = \Hom(N,N).
\end{equation*}
Since image is in the radical, the assertion is clear.
\end{NB2}

\item If $0\to L\xrightarrow{\alpha} M\xrightarrow{\beta} N\to 0$ is
  an almost splitting sequence and a submodule $0\neq L'\subsetneq L$,
  $0\to L/L'\to M/\alpha(L')\to N\to 0$ splits, i.e.\ 
\(
   \Ext^1(N,L)\to \Ext^1(N,L/L')
\)
vanishes.
\end{itemize}

\subsection{Tilting modules}

Our reference for tilting modules is \cite[VI]{ASS} and
\cite{HappelUnger}.

A module $M$ of the quiver is said to be a {\it tilting module\/} if
the following two conditions are satisfied:
\begin{enumerate}
\item $M$ is {\it rigid}, i.e.\ $\Ext^1(M,M) = 0$.
\item There is an exact sequence $0\to \C\cQ \to M_0 \to M_1\to 0$
  with $M_0$, $M_1\in\add M$, where $\add M$ denotes the additive
  category generated by the direct summands of $M$.
\end{enumerate}
We usually assume $M$ is multiplicity free.

It is known that the number of indecomposable summands of $M$
equals to the number of vertexes $\# I$, i.e.\ rank of $K_0(\C\cQ)$.

A module $M$ is said to be an {\it almost complete tilting module\/}
if it is rigid and the number of indecomposable summands of $M$ is $\#
I - 1$. We say an indecomposable module $X$ is {\it complement\/} of
$M$ if $M\oplus X$ is a tilting module.

We have the following structure theorem:

\begin{Theorem}[\protect{Happel-Unger~\cite{HappelUnger}}]
  Let $M$ be an almost complete tilting module.

  \textup{(1)} If $M$ is sincere, there exists two nonisomorphic
  complements $X$, $Y$ which are related by an exact sequence
  \begin{equation*}
     0 \to X \to E\to Y \to 0
  \end{equation*}
  with $E\in\add M$. Moreover, we have $\Ext^1(Y,X) \cong \C$,
  $\Ext^1(X,Y) = 0$.

  \begin{NB2}
    I have thought that the above is an almost splitting sequence, but
    it is not correct.
  \end{NB2}

  \textup{(2)} If $M$ is not sincere, there exists only one
  complement $X$ up to isomorphisms.
\end{Theorem}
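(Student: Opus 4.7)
The plan is to construct the second complement $Y$ of $X$ by an approximation argument and read off all the structural claims from the resulting short exact sequence. Choose a minimal left $\add M$-approximation $g\colon X\to E$, that is, a morphism with $E\in\add M$ such that every morphism from $X$ to an object of $\add M$ factors through $g$, chosen to have no direct summand on which $g$ restricts to an isomorphism. The central dichotomy is whether $g$ is injective. I would argue that when $M$ is sincere, $g$ is injective: if some simple $S$ were absent from the composition factors of $M$, then every homomorphism from $X$ into $\add M$ would annihilate the $S$-isotypic contribution to the socle of $X$, forcing a nontrivial kernel; sincerity removes this obstruction. Setting $Y\defeq\coker(g)$, we obtain
\begin{equation*}
  0\to X\xrightarrow{g} E\to Y\to 0.
\end{equation*}

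The next step is to verify that $M\oplus Y$ is again a tilting module. Rigidity is bookkeeping with the long exact $\Ext$-sequences: $\Ext^1(Y,M)=0$ follows from the surjection $\Hom(E,M)\twoheadrightarrow\Hom(X,M)$ (the approximation property) together with $\Ext^1(E,M)=0$; $\Ext^1(M,Y)=0$ follows from $\Ext^1(M,X)=\Ext^1(M,E)=0$; and $\Ext^1(Y,Y)=0$ follows by applying $\Hom(-,Y)$ to the sequence and using $\Ext^1(E,Y)=0$ together with $\Ext^1(X,Y)=0$, the latter coming from $\Ext^1(X,E)=\Ext^1(X,X)=0$ and the global dimension one hypothesis on $\C\cQ$. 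Splicing $0\to X\to E\to Y\to 0$ with the tilting resolution $0\to\C\cQ\to T_0\to T_1\to 0$ of $M\oplus X$ produces the required resolution in $\add(M\oplus Y)$, so $M\oplus Y$ is tilting; a count of indecomposable summands then forces $Y$ to be indecomposable and hence a genuine complement. The identification $\Ext^1(Y,X)\cong\C$ drops out of applying $\Hom(Y,-)$ to the sequence: the cokernel of $\Hom(Y,E)\to\Hom(Y,Y)$ is one-dimensional, spanned by $\id_Y$, precisely because $g$ is a minimal left approximation and so $\id_Y$ is the unique (up to scalar) morphism that fails to lift to $E$. Finally $\Ext^1(X,Y)=0$ was already recorded above.

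For part (2), when $M$ is not sincere, the kernel of $g$ is nontrivial and the above construction no longer produces a second complement. Uniqueness of $X$ then follows by showing that the dual construction via a minimal right $\add M$-approximation $E'\to X$ similarly fails to separate complements, so any candidate second complement would have to involve a summand of $M$ realizing the simple that is missing by non-sincerity, contradicting the hypothesis. The main obstacle I anticipate is the indecomposability of $Y$: a priori $Y$ might decompose into several indecomposable summands, and ruling this out requires both the minimality of $g$ and a careful analysis of how $X$ sits inside its $\add M$-approximation. This is the technical heart of the Happel-Unger argument, and the place where tilting theory genuinely needs the uniqueness of the Bongartz-type completion to produce a clean bijection between the two complements in the sincere case.
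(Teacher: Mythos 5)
The paper does not prove this theorem; it is quoted from Happel--Unger with a citation and no argument, so there is no in-paper proof to compare against. Your general shape --- take a minimal left $\add M$-approximation $g\colon X\to E$, argue it is a monomorphism when $M$ is sincere, and set $Y=\coker g$ --- is indeed the standard skeleton. However, several of your steps, as written, are not arguments.

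The most serious issue is the injectivity of $g$. What you offer is the contrapositive in the wrong direction: you sketch that non-sincerity would produce a kernel, and conclude that sincerity "removes this obstruction." Removing one obstruction does not prove injectivity; there could be others. In fact the equivalence between sincerity (equivalently, faithfulness, for an almost complete tilting module over a hereditary algebra) and the injectivity of the minimal left $\add M$-approximation of a complement is itself a nontrivial theorem in Happel--Unger, proved by a Bongartz-type completion argument, not by inspecting composition factors of socles. As written, this step is a heuristic, not a proof, and it is precisely where the sincerity hypothesis is doing the real work.

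The $\Ext^1(Y,Y)=0$ step is also garbled. Applying $\Hom(-,Y)$ to $0\to X\to E\to Y\to 0$ gives
\begin{equation*}
\Hom(E,Y)\to\Hom(X,Y)\to\Ext^1(Y,Y)\to\Ext^1(E,Y)=0,
\end{equation*}
so what you actually need is surjectivity of $\Hom(E,Y)\to\Hom(X,Y)$. The vanishing $\Ext^1(X,Y)=0$, which you cite, does not give this. What does: any $\phi\colon X\to Y$ lifts to $\tilde\phi\colon X\to E$ because the connecting map $\Hom(X,Y)\to\Ext^1(X,X)=0$ vanishes, and then $\tilde\phi$ factors through $g$ by the approximation property since $E\in\add M$; composing with $E\to Y$ shows $\phi$ is in the image. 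So it is $\Ext^1(X,X)=0$ together with the approximation property, not $\Ext^1(X,Y)=0$, that closes this step.

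Finally, your derivation of $\Ext^1(Y,X)\cong\C$ is circular. You argue that $\id_Y$ is the unique-up-to-scalar endomorphism of $Y$ that does not lift to $E$, which presupposes that $\End(Y)\cong\C$, i.e.\ that $Y$ is indecomposable --- but you then use $\Ext^1(Y,X)\cong\C$ to control multiplicities and conclude indecomposability. The logic needs to be unwound: one first shows via a Wakamatsu-type lemma (using $\Ext^1(M,X)=0$ and $\Ext^1(M,Y)=0$) that $Y$ has no nonzero direct summand in $\add M$, then bounds $|M\oplus Y|\le\#I$ to get $Y$ indecomposable up to multiplicity, and only then computes $\Ext^1(Y,X)$. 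You flag indecomposability as the technical heart, which is right, but the argument as laid out quietly assumes what is to be shown.
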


Here a module $M$ is said to be {\it sincere\/} if $M_i\neq 0$ for any
vertex $i$.

\begin{NB2}
The following was wrong. This holds only if either $\Hom(M,X) = 0$ or
$\Hom(X,M) = 0$.

In the second case and if the omitted vertex $i$ is either sink or
source (which will be always true in our setting), the complement $X$
is the indecomposable injective or projective module corresponding to
$i$ \cite[\S3]{HappelUnger}.
\end{NB2}
\end{NB}

\section{Preliminaries (II) -- Graded quiver
  varieties}\label{sec:quiver}

We review the definition of graded quiver varieties and the
convolution diagram for the tensor product in this section. Our
notation mainly follows \cite{MR2144973}. Some materials are borrowed
from \cite{VV2}.

We do not explain anything about representations of the quantum loop
algebra $\Ulq$ except in \thmref{thm:qaff}.
This is because we can work directly in the category of perverse
sheaves on graded quiver varieties.
Another reason is that it is not known whether the quantum loop
algebra $\Ulq$ can be equipped with the structure of a Hopf algebra
in general. Therefore tensor products of modules do not make sense. On
the other hand, the category of perverse sheaves {\it has\/} the
coproduct induced from the convolution diagram.

\subsection{Definition of graded quiver varieties}\label{subsec:def}

Let $q$ be a nonzero complex number. We will assume that it is not a
root of unity later, but can be at the beginning.

Suppose that a finite graph $\cG = (I,E)$ is given. We assume the graph
$\cG$ contains no edge loops. Let $\bA = (a_{ij})$ be the adjacency
matrix of the graph, namely
\begin{equation*}
   a_{ij} =
     (\text{the number of edges joining $i$ to $j$})
     .
\end{equation*}
Let $\bC = 2\mathbf I - \bA = (c_{ij})$ be the Cartan matrix.

Let $H$ be the set of pairs consisting of an edge together with its
orientation as in \secref{sec:cluster_pre}.
We choose and fix an orientation $\Omega$ of $\cG$ and define
$\varepsilon(h) = 1$ if $h\in\Omega$ and $-1$ otherwise.

Let $V$, $W$ be $I\times \C^*$-graded vector spaces such that
its $(i\times a)$-component, denoted by $V_i(a)$, is finite dimensional
and $0$ for all but finitely many $i\times a$.
In what follows we consider only $I\times\C^*$-graded vector spaces
with this condition.
We say the pair $(V,W)$ of $I\times\C^*$-graded vector spaces is
{\it $l$-dominant\/} if
\begin{equation}\label{eq:dominant}
  \dim W_i(a) - \dim V_i(a\vq) - \dim V_i(a\vq^{-1}) 
  - \sum_{j:j\neq i} c_{ij} \dim V_j(a) \ge 0
\end{equation}
for any $i$, $a$. 

Let $\bC_\vq$ ($\vq$-analog of the Cartan matrix) be an endomorphism
of $\Z^{I\times\C^*}$ given by
\begin{equation}
  (v_i(a)) \mapsto
  (v'_i(a)); \quad
  v'_i(a) = v_i(a\vq) + v_i(a\vq^{-1}) 
  + \sum_{j:j\neq i} c_{ij} v_j(a).
\end{equation}
Considering $\dim V$, $\dim W$ as vectors in $\Z_{\ge
  0}^{I\times\C^*}$, we view the left hand side of \eqref{eq:dominant}
as the $(i,a)$-component of $(\dim W - \bC_\vq\dim V)$. This is an
analog of a weight.

We say $V\le V'$ if
\begin{equation*}
  \dim V_i(a) \ge \dim V'_i(a)
\end{equation*}
for any $i$, $a$. We say $V < V'$ if $V\le V'$ and $V\neq V'$.
This is analog of the dominance order.
We say $(V,W) \le (V',W')$ if there exists ${\mathbf v}''\in \Z_{\ge
  0}^{I\times\C^*}$ whose entries are $0$ for all but finitely many
$(i,a)$ such that
\begin{equation*}
   \dim W - \bC_\vq \dim V = \dim W' - \bC_\vq (\dim V' + {\mathbf v}'').
\end{equation*}
When $W = W'$, $(V,W)\le (V',W')$ if and only if $V\le V'$.

These conditions originally come from the representation theory of the
quantum loop algebra $\Ulq$.

For an integer $n$, we define vector spaces by
\begin{equation}
\label{eq:LE}
\begin{gathered}[m]
  \HomL(V, W)^{[n]} \defeq
  \bigoplus_{i\in I, a\in\C^*}
    \Hom\left(V_i(a), W_i(a\vq^{n})\right),
\\
  \HomE(V, W)^{[n]} \defeq
  \bigoplus_{h\in H, a\in\C^*}
    \Hom\left(V_{\vout(h)}(a), W_{\vin(h)}(a\vq^{n})\right).
\end{gathered}
\end{equation}

If $V$ and $W$ are $I\times\C^*$-graded vector spaces as above, we
consider the vector spaces
\begin{equation}\label{def:bM}
  \bM \equiv \bM(V, W) \defeq
  \HomE(V, V)^{[-1]} \oplus \HomL(W, V)^{[-1]}
  \oplus \HomL(V, W)^{[-1]},
\end{equation}
where we use the notation $\bM$ unless we want to specify $V$, $W$.
The above three components for an element of $\bM$ is denoted by $B$,
$\alpha$, $\beta$ respectively.
({\bf NB}: In \cite{Na-qaff} $\alpha$ and $\beta$ were denoted by $i$, 
$j$ respectively.)
The
$\Hom\left(V_{\vout(h)}(a),V_{\vin(h)}(a\vq^{-1})\right)$-component of 
$B$ is denoted by $B_{h,a}$. Similarly, we denote by $\alpha_{i,a}$,
$\beta_{i,a}$ the components of $\alpha$, $\beta$.

We define a map $\mu\colon\bM\to \HomL(V,V)^{[-2]}$ by
\begin{equation*}
   \mu_{i,a}(B,\alpha,\beta)
   =  \sum_{\vin(h)=i} \varepsilon(h)
      B_{h,a\vq^{-1}} B_{\overline{h},a} +
   \alpha_{i,a\vq^{-1}}\beta_{i,a},
\end{equation*}
where $\mu_{i,a}$ is the $(i,a)$-component of $\mu$.

Let $G_V \defeq \prod_{i,a} \GL(V_i(a))$. It acts on $\bM$ by
\begin{equation*}
  (B, \alpha, \beta) \mapsto g\cdot (B, \alpha, \beta)
  \defeq \left(g_{\vin(h),a\vq^{-1}} B_{h,a} g_{\vout(h),a}^{-1},\,
  g_{i,a\vq^{-1}}\alpha_{i,a},\,
  \beta_{i,a} g_{i,a}^{-1}\right).
\end{equation*}
The action preserves the subvariety $\mu^{-1}(0)$ in $\bM$.

\begin{Definition}\label{def:stable}
A point $(B, \alpha, \beta) \in \mu^{-1}(0)$ is said to be {\it stable\/} if 
the following condition holds:
\begin{itemize}
\item[] if an $I\times\C^*$-graded subspace $V'$ of $V$ is
$B$-invariant and contained in $\Ker \beta$, then $V' = 0$.
\end{itemize}
Let us denote by $\mu^{-1}(0)^{\operatorname{s}}$ the set of stable points.
\end{Definition}
Clearly, the stability condition is invariant under the action of
$G_V$. Hence we may say an orbit is stable or not.

We consider two kinds of quotient spaces of $\mu^{-1}(0)$:
\begin{equation*}
   \N_0(V,W) \defeq \mu^{-1}(0)\dslash G_V, \qquad
   \N(V,W) \defeq \mu^{-1}(0)^{\operatorname{s}}/G_V.
\end{equation*}
Here $\dslash$ is the affine algebro-geometric quotient, i.e.\ the
coordinate ring of $\N_0(V,W)$ is the ring of $G_V$-invariant
functions on $\mu^{-1}(0)$. In particular, it is an affine variety. It
is the set of closed $G_V$-orbits.
The second one is the set-theoretical quotient, but coincides with a
quotient in the geometric invariant theory (see \cite[\S3]{Na-alg}).
The action of $G_V$ on $\mu^{-1}(0)^{\operatorname{s}}$ is free thanks 
to the stability condition (\cite[3.10]{Na-alg}).
By the general theory, there exists a natural projective morphism
\begin{equation*}
   \pi\colon \N(V,W) \to \N_0(V,W).
\end{equation*}
(See \cite[3.18]{Na-alg}.)
The inverse image of $0$ under $\pi$ is denoted by $\NLa(V,W)$.
We call these varieties {\it cyclic quiver varieties\/} or {\it graded
quiver varieties}, according as $\vq$ is a root of unity or {not}.
In this paper we only consider the case $q$ is not a root of unity
hereafter.
When we want to distinguish $\N(V,W)$ and $\N_0(V,W)$, we call the
former (resp.\ latter) the {\it nonsingular\/} (resp.\ {\it affine\/})
{\it graded quiver variety}.
But it does not mean that $\N_0(V,W)$ is actually singular. As we see
later, it is possible that $\N_0(V,W)$ happens to be nonsingular.

We have
\begin{equation*}
  \dim \N(V,W) = \boldsymbol\langle\dim V, (\vq  + \vq^{-1})\dim W
  - \vq^{-1}\bC_\vq\dim V\boldsymbol\rangle,
\end{equation*}
where $\vq^\pm\cdot$ is an automorphism of $\Z^{I\times\C^*}$ given by
\(
  (v_i(a)) \mapsto
  (v'_i(a));
\)
\(
  v'_i(a) = v_i(a\vq^\pm)
\)
and $\boldsymbol\langle\ ,\ \boldsymbol\rangle$ is the natural pairing
on $\Z^{I\times\C^*}$ (\cite[4.11]{MR2144973}).

The original quiver varieties \cite{Na-quiver,Na-alg} are the special
case when $\vq = 1$ and $V_i(a) = W_i(a) = 0$ except $a=1$.
On the other hand, the above varieties $\N(W)$, $\N_0(W)$ are fixed
point set of the original quiver varieties with respect to a
semisimple element in a product of general linear groups. (See
\cite[\S4]{Na-qaff}.) In particular, it follows that $\N(V,W)$ is
nonsingular, since the corresponding original quiver variety is so.
This can be also checked directly.

It is known that the coordinate ring of $\N_0(V,W)$ is generated by
the following type of elements:
\begin{equation}\label{eq:generator}
    (B,\alpha,\beta)\mapsto
    \langle \chi, \beta_{j,a \vq^{-n-1}} B_{h_n,a \vq^{-n}}
      \dots
      B_{h_1,a\vq^{-1}}\alpha_{i,a}\rangle
\end{equation}
where $\chi$ is a linear form on $\Hom(W_i(a),W_j(a q^{-n-2}))$.
(See \cite{Lusztig-On-Quiver}.)
Here we do not need to consider generators of a form $\tr(B_{h_N,a
  q^{N-1}}B_{h_{N-1}, a q^{N-2}}\cdots B_{h_1, a})$ corresponding to
an oriented cycle $h_1$, \dots, $h_N$ as they automatically vanish as
$q$ is not a root of unity.
(Our definition of the graded quiver variety is different from one in
\cite{Na-qaff} when there are multiple edges joining two vertexes.
See \remref{rem:def_quiver} for more detail.
The above generators may not vanish in the original definition, but
{\it does\/} vanish in our definition.)

Let $\Nreg(V,W)\subset\N_0(V,W)$ be a possibly empty open subset of
$\N_0(V,W)$ consisting of closed free $G_V$-orbits. It is known that
$\pi$ is an isomorphism on $\pi^{-1}(\Nreg(V,W))$ \cite[3.24]{Na-alg}.
In particular, $\Nreg(V,W)$ is nonsingular and is pure dimensional.

The $G_V$-orbit though $(B,\alpha,\beta)$, considered as a point of
$\N(V,W)$ is denoted by $[B,\alpha,\beta]$.

Suppose that we have two $I\times\C^*$-graded vector spaces $V$, $V'$
such that $V_i(a) \subset V'_i(a)$ for all $i$, $a$. Then $\N_0(V,W)$
can be identified with a closed subvariety of $\N_0(V',W)$ by the
extension by $0$ to the complementary subspace (see
\cite[2.5.3]{Na-qaff}). We consider the limit
\begin{equation*}
   \N_0(W) \defeq \bigcup_{V} \N_0(V,W).
\end{equation*}
(It was denoted by $\M_0(\infty,{\mathbf w})^A$ in \cite{Na-qaff},
and by $\N_0(\infty,W)$ in \cite{MR2144973}.)

We have $\N_0(V,0) = \{ 0\}$ for $W = 0$ since generators
\eqref{eq:generator} vanish.
Then \cite[6.5]{Na-quiver} or \cite[3.27]{Na-alg} implies that
\begin{equation}\label{eq:stratum}
   \N_0(W) = \bigsqcup_{[V]} \Nreg(V,W),
\end{equation}
where $[V]$ denotes the isomorphism class of $V$. It is known that
\begin{align}
&\begin{minipage}[m]{0.85\textwidth}
\noindent
$\Nreg(V,W)\neq\emptyset$ if and only if $\N(V,W)\neq \emptyset$ and
$(V,W)$ is {\it l\/}-dominant. (See \cite[14.3.2(2)]{Na-qaff}.)
\end{minipage}\label{eq:l-dom}
\\
&\begin{minipage}[m]{0.85\textwidth}
\noindent
If $\Nreg(V,W)\subset\overline{\Nreg(V',W)}$, then
${V'} \le V$. (This follows from \cite[\S3.3]{Na-qaff}.)
\end{minipage}\label{eq:closure}
\end{align}
It is also easy to show that 
\begin{equation}
  \label{eq:empty}
  \text{$\Nreg(V,W) = \emptyset$ if $V$ is
    sufficiently large.}
\end{equation}
(See the argument in the proof of \propref{prop:ast_1}(1).)
\begin{NB}
  This is NOT true if $\vq$ is root of unity.
\end{NB}%
Thus $\N_0(W) \defeq \bigcup_{V} \N_0(V,W)$ stabilizes at some $V$.
\begin{NB}
It is known that the above stabilizes at some $V$ if the graph is of
type $ADE$ (see \cite[2.6.3, 2.9.4]{Na-qaff}). But it is not true in
general, and dimensions of $\N_0(V,W)$ may go to $\infty$ when $V$
becomes large.
In the following, we will use $\N_0(W)$ as a brevity of the notation,
and consider its geometric structure on each $\N_0(V,W)$ individually.
We will never consider it as an infinite dimensional variety.
\end{NB}

On the other hand, we consider the disjoint union for $\N(V,W)$:
\[
   \N(W) \defeq \bigsqcup_{[V]} \N(V,W).
\]
Note that there are no obvious morphisms between $\N(V,W)$ and
$\N(V',W)$ since the stability condition is not preserved under the
extension.
We have a morphism $\N(W)\to \N_0(W)$, still denoted by $\pi$.

It is known that $\N(V,W)$ becomes empty if $V$ is sufficiently large
when $\g$ is of type $ADE$.
(Since the usual quiver variety $\M(V,W)$ is nonempty if and only if
$(\dim W - \bC\dim V)$ is a weight of the irreducible representation
with the highest weight $\dim W$. See \cite[10.2]{Na-alg}.)
But it is not true in general, and dimensions of $\N(V,W)$ may go to
$\infty$ when $V$ becomes large.
In the following, we will use $\N(W)$ as a brevity of the notation,
and consider its geometric structure on each $\N(V,W)$ individually.
We will never consider it as an infinite dimensional
variety. Furthermore, we will only need $\N(V,W)$ such that
$\Nreg(V,W)\neq\emptyset$ in practice. From the above remark, we can
stay in finite $V$'s.

\begin{NB}
Since the action is free, $V$ and $W$ can be considered as
$I\times\C^*$-graded vector bundles over $\N(V,W)$. We denote them by the
same notation.  We consider $\HomE(V,V)$, $\HomL(W,V)$, $\HomL(V, W)$
as vector bundles defined by the same formula as in \eqref{eq:LE}. By
the definition, $B$, $\alpha$, $\beta$ can be considered as sections of those
bundles.
\end{NB}

The following three term complex plays an important role:
\begin{NB}
We define a three term sequence of vector bundles over $\N(V,W)$ by
\end{NB}%
\begin{equation}
\label{eq:taut_cpx_fixed}
  C_{i,a}^\bullet(V,W):
  V_i(a\vq)
\xrightarrow{\sigma_{i,a}}
  \displaystyle{\bigoplus_{h:\vin(h)=i}}
     V_{\vout(h)}(a)
    \oplus W_i(a)
\xrightarrow{\tau_{i,a}}
  V_i(a\vq^{-1}),
\end{equation}
where
\begin{equation*}
  \sigma_{i,a} = \bigoplus_{\vin(h)=i} B_{\overline h,a\vq}
      \oplus \beta_{i,a\vq},
  \qquad
  \tau_{i,a} = \sum_{\vin(h)=i} \varepsilon(h) B_{h,a} + \alpha_{i,a}.
\end{equation*}
This is a complex thanks to the equation $\mu(B,\alpha,\beta) = 0$.
If $(B,\alpha,\beta)$ is stable, $\sigma_{i,a}$ is injective as the
$I\times\C^*$-graded vector space $V'$ given by $V'_i(aq) :=
\Ker\sigma_{i,a}$, $V'_j(b) := 0$ (otherwise) is $B$-invariant and
contained in $\Ker\beta$, and hence must be $0$.

We assign the degree $0$ to the middle term.
We define the rank of complex $C^\bullet$ by $\sum_p (-1)^p \rank
C^p$. It is exactly the left hand side of
\eqref{eq:dominant}. Therefore $(V,W)$ is {\it l\/}-dominant if and
only if
\begin{equation*}
   \rank C_{i,a}^\bullet(V,W) \ge 0
\end{equation*}
for any $i,a$. From this observation the `only-if' part of
\eqref{eq:l-dom} is clear: If we consider the complex at a point
$\Nreg(V,W)$, it is easy to see $\tau_{i,a}$ is surjective. Therefore
$\rank C_{i,a}^\bullet(V,W)$ is the dimension of the middle cohomology
group.
When $(V,W)$ is {\it l\/}-dominant, we define an $I\times\C^*$-graded
vector space $C^\bullet(V,W)$ by
\begin{equation}\label{eq:C(V,W)}
  \dim \left(C^\bullet(V,W)\right)_i(a) = \rank C_{i,a}^\bullet(V,W).
\end{equation}

\begin{Remark}\label{rem:def_quiver}
  Since we only treat graded quiver varieties of type $ADE$ in
  \cite{MR2144973}, we explain what must be modified for general
  types.

  In \cite{Na-qaff} the graded quiver varieties are the $\C^*$-fixed
  points of the ordinary quiver varieties. When there are multiple
  edges joining two vertexes, there are several choices of the
  $\C^*$-action. A choice corresponds to a choice of the $\vq$-analog
  $\bC_\vq$ of the Cartan matrix $\bC$ which implicitly appears in the
  defining relation of the quantum loop algebras. See [loc.\ cit.,
  (1.2.9)] for the defining relation and [loc.\ cit., (2.9.1)] or
  \eqref{eq:taut_cpx_fixed} for its relation to the $\C^*$-action.
  For example, consider type $A^{(1)}_1$. In [loc.\ cit.]\ the
  $q$-analog of the Cartan matrix was
  \begin{equation*}
    \begin{pmatrix}
      [2]_q & -[2]_q \\ -[2]_q & [2]_q
    \end{pmatrix}
    =
    \begin{pmatrix}
      q+q^{-1} & -(q+q^{-1}) \\ -(q+q^{-1}) & q+q^{-1}
    \end{pmatrix},
  \end{equation*}
while it is
\begin{equation*}
  \begin{pmatrix}
    [2]_q & -2 \\ -2 & [2]_q
  \end{pmatrix}
  =
    \begin{pmatrix}
      q+q^{-1} & -2 \\ -2 & q+q^{-1}
    \end{pmatrix}
\end{equation*}
in this paper. When there is at most one edge joining two vertexes, we
do not have this choice as $[1]_q = 1$.
The theory developed in \cite{Na-qaff} works for any choice of the
$\C^*$-action.

For results in \cite{MR2144973}, we need a little care.
First of all, [loc.~cit., Cor.~3.7] does not make sense since it is
not known whether we have tensor products in general as we already
mentioned.
For the choice of the $\C^*$-action in this paper, all other results
hold without any essential changes, except assertions when
$\varepsilon$ is a root of unity or $\pm 1$. (In these cases, we will
get new types of strata so the assertion must be modified. For the
affine type, they can be understood from \cite{MR1989196}.)
If we take the $\C^*$-action in \cite{Na-qaff}, the recursion used to
prove {\bf Axiom 2} does not work. So we first take the $\C^*$-action
in this paper, and then apply the same trick used to deal with cyclic
quiver varieties. In particular, we need to include analog of {\bf
  Axiom 4}. Details are left as an exercise for the reader of
\cite{MR2144973}.
\end{Remark}

\subsection{Transversal slice}\label{subsec:slice}

Take a point $x\in \Nreg(V^0,W)$. Let $T$ be the tangent space of
$\Nreg(V^0,W)$ at $x$.
Since $\Nreg(V^0,W)$ is nonempty, $(V^0,W)$ is {\it l\/}-dominant,
i.e.\ \eqref{eq:dominant} holds by \eqref{eq:l-dom}.
Let $W^\perp = C^\bullet(V^0,W)$ as in \eqref{eq:C(V,W)}.
\begin{NB}
\begin{equation*}
    \dim W^\perp = \dim W - \bC_\vq \dim V^0,
\end{equation*}
\end{NB}

We consider another graded quiver variety $\N_0(V,W)$ which contains
$x$ in its closure. By \eqref{eq:closure} we have $V\le
V^0$. Therefore we can consider $V^\perp$, $I\times\C^*$-graded vector
space whose $(i,a)$-component has the dimension $\dim V_i(a) - \dim
V^0_i(a)$. We have
\(
  \dim W - \bC_\vq \dim V
  = \dim W^\perp - \bC_{\vq} \dim V^\perp,
\)
which means the `weight' is unchanged under this procedure.

\begin{Theorem}[\protect{\cite[\S3.3]{Na-qaff}}]\label{thm:slice}
  We work in the complex analytic topology. There exist neighborhoods
  $U$, $U_T$, $U_{\mathfrak S}$ of $x\in \N_0(V,W)$, $0\in T$,
  $0\in\N_0(V^\perp,W^\perp)$ respectively, and biholomorphic maps
  $U\to U_T\times U_{\mathfrak S}$, $\pi^{-1}(U)\to U_T\times
  \pi^{-1}(U_{\mathfrak S})$ such that the following diagram
  commutes\textup:
\begin{equation*}
\begin{CD}
  \N(V,W) \;@. \supset \; @.
   \pi^{-1}(U) @>>\cong> U_T \times \pi^{-1}(U_{\mathfrak S})
     \;@.\subset \;@. T\times \N(V^\perp,W^\perp)
\\
   @. @. @V{\pi}VV @VV{\operatorname{id}\times\,\pi}V @.@. \\
   \N_0(V,W)\;@.\supset \; @.
   U @>>\cong> U_T\times U_{\mathfrak S}
                  \;@.\subset \;@. T\times\N_0(V^\perp,W^\perp)
\end{CD}
\end{equation*}
Furthermore, a stratum $\Nreg(V',W)$ of $\N_0(V,W)$ is mapped to a
product of $U_T$ and the stratum $\Nreg(V^{\prime\perp},W^\perp)$ of
$\N_0(V^\perp,W^\perp)$.
\end{Theorem}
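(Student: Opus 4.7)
The plan is to combine Luna's slice theorem in the analytic category with an explicit identification of the normal slice as a smaller graded quiver variety. I would first fix a representative $(B^0,\alpha^0,\beta^0) \in \mu^{-1}(0)^{\operatorname{s}} \subset \bM(V^0,W)$ of $x$, choose a graded decomposition $V = V^0 \oplus V^\perp$ (possible since $V^0 \subset V$ as graded subspaces), and extend $(B^0,\alpha^0,\beta^0)$ by zero across the $V^\perp$ summand to obtain a lift $\tilde x \in \mu^{-1}(0) \subset \bM(V,W)$. Since $x$ is regular, the $G_{V^0}$-orbit of $(B^0,\alpha^0,\beta^0)$ is closed and free, so the $G_V$-orbit of $\tilde x$ is closed with stabilizer exactly the natural copy of $G_{V^\perp} \subset G_V$ acting trivially on the $V^0$ summand.

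Next, I would apply Luna's slice theorem to produce a $G_{V^\perp}$-invariant analytic slice $\Sigma \subset \bM(V,W)$ through $\tilde x$ transverse to $G_V \cdot \tilde x$, together with a $G_V$-equivariant biholomorphism from a neighborhood of $G_V \cdot \tilde x$ onto $G_V \times_{G_{V^\perp}} \Sigma$. Restricting to $\mu^{-1}(0)$ and passing to quotients yields compatible local biholomorphisms $U \cong (\mu^{-1}(0) \cap \Sigma) \dslash G_{V^\perp}$, and on stable loci $\pi^{-1}(U) \cong (\mu^{-1}(0)^{\operatorname{s}} \cap \Sigma)/G_{V^\perp}$, intertwining the two projections $\pi$.

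The main obstacle is to identify this slice quotient with $U_T \times \N_0(V^\perp, W^\perp)$. Decompose $\bM(V,W)$ into blocks according to $V = V^0 \oplus V^\perp$: the tangent space $T_{\tilde x}\bM(V,W)$ splits into a $V^0$-$V^0$ block, a $V^\perp$-$V^\perp$ block (naturally $\HomE(V^\perp, V^\perp)^{[-1]}$), and off-diagonal $V^0$-$V^\perp$ blocks, together with the framing entries involving $W$. After subtracting the orbit direction and imposing $\mu = 0$ to first order, the $V^0$-$V^0$ block supplies the smooth factor $T \cong T_x \Nreg(V^0,W)$. The key linear-algebra computation is that the off-diagonal blocks, together with the $W$-to-$V^\perp$ and $V^\perp$-to-$W$ framing pieces, organize into a two-term complex (essentially a block of \eqref{eq:taut_cpx_fixed} evaluated on $(B^0,\alpha^0,\beta^0)$) whose middle cohomology is canonically $\HomL(W^\perp, V^\perp)^{[-1]} \oplus \HomL(V^\perp, W^\perp)^{[-1]}$, with $W^\perp$ defined by \eqref{eq:C(V,W)}. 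This identification relies on $\mu(B^0,\alpha^0,\beta^0) = 0$ to ensure \eqref{eq:taut_cpx_fixed} really is a complex, on the stability of $(B^0,\alpha^0,\beta^0)$ to give injectivity of $\sigma_{i,a}$, and on the regularity of $x$ to give surjectivity of $\tau_{i,a}$. Piecing the three contributions together identifies $\Sigma$ near $\tilde x$, after cutting by $\mu$, with $U_T$ times an open neighborhood of $0$ in $\mu^{-1}(0) \subset \bM(V^\perp, W^\perp)$, $G_{V^\perp}$-equivariantly.

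Taking the $G_{V^\perp}$-quotient then produces the biholomorphism $U \cong U_T \times U_{\mathfrak S}$ required for the bottom row, and the analogous argument on stable loci, using that $G_V$-stability of a point near $\tilde x$ in $\mu^{-1}(0) \cap \Sigma$ is, in the slice coordinates, equivalent to $G_{V^\perp}$-stability of its $(V^\perp, W^\perp)$-component, gives the upper row of the diagram. The assertion about strata is then immediate from the fact that the identification is blockwise: a stratum $\Nreg(V',W)$ with $V^0 \subset V' \subset V$ is carried to $\Nreg(V^{\prime\perp}, W^\perp)$ with $V^{\prime\perp} = V' \ominus V^0$, because the `weight' $\dim W - \bC_\vq \dim V' = \dim W^\perp - \bC_\vq \dim V^{\prime\perp}$ is preserved under this substitution.
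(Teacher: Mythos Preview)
The paper does not give its own proof of this theorem; it is quoted from \cite[\S3.3]{Na-qaff} (with a pointer to \cite{CB:normal} for an \'etale version). Your outline is essentially the argument carried out there: lift $x$ to a representative in $\bM(V,W)$ by extending by zero on $V^\perp$, apply Luna's slice theorem for the reductive stabilizer $G_{V^\perp}$, and then identify the slice with $T \times \bM(V^\perp,W^\perp)$ by decomposing everything into $V^0\oplus V^\perp$ blocks and recognizing the off-diagonal pieces as copies of the three-term complex \eqref{eq:taut_cpx_fixed}, whose middle cohomology is $W^\perp$.

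One point deserves a bit more care than you indicate. Your sentence ``after cutting by $\mu$'' treats the moment map constraint as if it were linear, but $\mu$ is quadratic. What one actually checks in the cited proof is that, on the linear slice $\Sigma$ through $\tilde x$, the restriction of $\mu$ decomposes (up to the $G_V$-coadjoint action used to fix the gauge) as the moment map for $(V^\perp,W^\perp)$ on the $V^\perp$--$V^\perp$ block plus terms that vanish on the $T$-factor and are absorbed into the change of coordinates. This is where the specific ADHM form of $\mu$ (bilinear in $B,\alpha,\beta$) is used, and it is the step that upgrades your correct tangent-space identification to an honest local biholomorphism rather than merely an isomorphism of Zariski tangent spaces. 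Once that is in place, the compatibility with $\pi$ and with the stratification follows exactly as you say.
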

Here $V^{\prime\perp}$ is defined exactly as $V^\perp$ replacing $V$
by $V'$, i.e.\ $\dim V^{\prime\perp} = \dim V' - \dim V^0$.

Note that $V''\le V' \Leftrightarrow V^{\prime\prime\perp}\le V^{\prime\perp}$
if we define $V^{\prime\prime\perp}$ for $V''$ in the same way.

See also \cite{CB:normal} for the same result in the \'etale topology.

\subsection{The additive category $\mathscr Q_W$ and the Grothendieck
  ring}

Let $X$ be a complex algebraic variety. Let $\mathscr D(X)$ be the
bounded derived category of constructible sheaves of $\C$-vector
spaces on $X$. For $j\in\Z$, the shift functor is denoted by
$L\mapsto L[j]$. The Verdier duality is denoted by $D$.
For a locally closed subvariety $Y\subset X$, we denote by $1_Y$ the
constant sheaf on $Y$. We denote by $IC(Y)$ the intersection
cohomology complex associated with the trivial local system $1_{Y}$ on
$Y$. Our degree convention is so that $IC(Y)|_Y = 1_Y[\dim Y]$.

Since $\pi\colon \N(V,W)\to \N_0(V,W)$ is proper and $\N(V,W)$ is
smooth, $\pi_!(1_{\N(V,W)})$ is a direct sum of shifts of simple
perverse sheaves on $\N_0(V,W)$ by the decomposition theorem
\cite{BBD}.
We denote by $\mathscr P_W$ the set of isomorphism classes of
simple perverse sheaves obtained in this manner, considered as a
complex on $\N_0(W)$ by extension by $0$ to the complement of
$\N_0(V,W)$.
By \cite[\S14]{Na-qaff}
$\mathscr P_W = \{ IC(\Nreg(V,W)) \mid \Nreg(V,W)\neq\emptyset\}$.
By \eqref{eq:empty} $\# \mathscr P_W < \infty$.
Set $IC_W({V})\defeq IC(\Nreg(V,W))$.
Let $\mathscr Q_W$ be the full subcategory of $\mathscr D(\N_0(W))$
whose objects are the complexes isomorphic to finite direct sums of
$IC_{W}(V)[k]$ for various $IC_W(V)\in\mathscr P_W$, $k\in\Z$.
Let $\pi_W({V}) \defeq \pi_!(1_{\N(V,W)}[\dim \N(V,W)])$. By the
definition, we have $\pi_W({V})\in \mathscr Q_W$.
The subcategory $\mathscr Q_W$ is preserved under $D$ and elements
in $\mathscr P_W$ are fixed by $D$.

Let $\mathcal K(\mathscr Q_W)$ be the abelian group with one generator
$(L)$ for each isomorphism class of objects of $\mathscr Q_W$ and with
relations $(L)+(L')=(L'')$ whenever $L''$ is isomorphic to $L\oplus
L'$.  It is a module over $\mathcal A = \Z[t,t^{-1}]$ by $t(L) =
(L[1])$, $t^{-1}(L) = (L[-1])$. It is a free $\mathcal A$-module with
base $\{ (IC_{W}(V)) \mid IC_{W}(V)\in\mathscr P_W\}$. The duality $D$
defines the {\it bar involution\/}
$\setbox5=\hbox{A}\overline{\rule{0mm}{\ht5}\hspace*{\wd5}}$ on
$\mathcal K(\mathscr Q_W)$ fixing $(IC_{W}(V))$ and satisfying
$\overline{t(L)} = t^{-1}\overline{(L)}$.
Since $\pi$ is proper and $\N(V,W)$ is smooth, we also have
$\overline{(\pi_W({V}))} = (\pi_W({V}))$.
We do not write $(\ )$ hereafter.

There is another base 
\begin{equation*}
  \{ \pi_W({V}) \mid
  \text{$(V,W)$ is {\it l\/}-dominant, $\N(V,W)\neq\emptyset$}\}.
\end{equation*}
Note that $\pi_W({V})$ make sense for any $V$ without the {\it
  l\/}-dominance condition, but we need to take only {\it
  l\/}-dominant ones to get a base.
Let us define $a_{V,V';W}(t)\in\mathcal A$ by
\begin{equation}\label{eq:a}
   \pi_W({V}) = \sum_{V'} a_{V,V';W}(t)\, IC_W({V'}).
\end{equation}
Then we have $a_{V,V';W}(t)\in\Z_{\ge 0}[t,t^{-1}]$, $a_{V,V;W}(t) = 1$ and
$a_{V,V';W} = 0$ unless $V'\le V$.
Since both $\pi_W({V})$ and $IC_W({V'})$ are fixed by the bar involution,
we have $a_{V,V';W}(t) = a_{V,V';W}(t^{-1})$.
It also follows that we only need to consider $\pi_!(1_{\N(V,W)})$ for
which $(V,W)$ is {\it l-dominant\/} in the definition of $\mathscr
P_W$.

Take $V^0$ such that $\Nreg(V^0,W)\neq\emptyset$.
Taking account the transversal slice in \subsecref{subsec:slice}, we
define a surjective homomorphism
\(
  p_{W^\perp,W}\colon
   \mathcal K(\mathscr Q_W)\to \mathcal K(\mathscr Q_{W^\perp})
\)
by
\begin{equation*}
\quad
   IC_W({V}) \mapsto
   \begin{cases}
     IC_{W^\perp}(V^\perp) & 
     \text{if $\Nreg(V^0,W)\subset\overline{\Nreg(V,W)}$,}
\\
     0 & \text{otherwise}.
   \end{cases}
\end{equation*}
By \thmref{thm:slice} this homomorphism is also compatible with
$\pi_W({V})$. Taking various $V$'s, $\mathcal K(\mathscr Q_W)$'s form a
projective system.

We consider the dual $\mathcal K(\mathscr Q_W)^* = \Hom_{\mathcal
  A}(\mathcal K(\mathscr Q_W),\mathcal A)$.
Let $\{ L_W({V}) \}$, $\{ \chi_W({V})\}$ be the bases of $\mathcal
K(\mathscr Q_W)^*$ dual to $\{ IC_W({V})\}$, $\{ \pi_W({V})\}$
respectively. Here $V$ runs over the set of isomorphism classes of
$I\times\C^*$-graded vector spaces such that $(V,W)$ is {\it
  l\/}-dominant.
We consider yet another base $\{ M_W({V})\}$ of $\mathcal K(\mathscr
Q_W)^*$ given by
\begin{equation*}
  \mathcal K(\mathscr Q_W)\ni (L)\mapsto
  \sum_k t^{\dim \Nreg(V,W)-k} \dim H^k(i_{x_{V,W}}^! L)\in\mathcal A,
\end{equation*}
where $x_{V,W}$ is a point in $\Nreg(V,W)$ and $i_{x_{V,W}}$ is the
inclusion of the point $x_{V,W}$ in $\N_0(W)$. By \thmref{thm:slice}
it is independent of the choice of $x_{V,W}$. Also it is compatible
with the projective system: 
if $V^0\ge V' \ge V$,
\( 
   \langle M_W(V'), IC_W(V)\rangle = 
   \langle M_{W^\perp}(V^{\prime\perp}), IC_{W^\perp}(V^\perp)\rangle.
\)

By the defining property of perverse sheaves, we have
\begin{equation}\label{eq:upper}
  L_W({V}) \in M_W({V}) + \sum_{V':V' > V} t^{-1}\Z[t^{-1}] M_W({V'}).
\end{equation}
\begin{NB}
We must have $\Nreg(V',W)\subset \overline{\Nreg(V,W)}$.
\end{NB}%
Since there are only finitely many $V'$ with $V'> V$, this is a finite
sum. This shows that $\{ M_W(V)\}_V$ is a base.
Recall also that the canonical base $L_W({V})$ is characterized by
this property together with $\overline{L_W(V)} = L_W(V)$.
It is the analog of the characterization of Kazhdan-Lusztig base. This
is not relevant in this paper, but was important to compute $L_W(V)$
explicitly in \cite{MR2144973}.

Let
\begin{equation*}
  \bfR_t \defeq 
  \left\{\left. (f_W) \in \prod_W 
      \Hom_{\mathcal A}(\mathcal K(\mathscr Q_W),\mathcal A)
      \,\right|
    \txt{$
      \langle f_W, IC_W(V)\rangle 
      = \langle f_{W^\perp}, IC_{W^\perp}(V^\perp)\rangle$
      \\ for any $W$, $W^\perp$ as above}
  \right\}.
\end{equation*}
A functional $(f_W)\in\bfR_t$ is determined when all values
$\langle f_{W^\perp}, IC_{W^\perp}(0)\rangle$ are given for any $W^\perp$.
Let $L(W)$, $\chi({W})$, $M(W)$ be the functional determined from
$L_W({0})$, $\chi_W({0})$, $M_W({0})$ respectively.
For example, 
\(
  \langle L(W), IC_{W'}(V')\rangle 
  = \delta_{\dim W,\dim W'-\bC_\vq\dim V'}.
\)
\begin{NB}
$\langle L(W'), IC_W(V)\rangle
= \langle L_{W}(\bC_q^{-1}(W - W'), IC_{W}(V)\rangle
= \delta_{\bC_q^{-1}(W-W'),V}
= \delta_{W-\bC_qV, W'}$
\end{NB}%
They form analog of {\it canonical\/}, {\it monomial\/} and {\it
  PBW\/} bases of $\bfR_t$ respectively.
From \eqref{eq:upper} the transition matrix between the canonical and
monomial bases are upper triangular with respect to the ordering
$(0,W)\le (0,W')$.

\begin{NB}
  If $\g$ is of finite type, there exists $V(W)$ such that
$M_W(V) \neq \emptyset$ only if $0\ge V \ge V(W)$.
\end{NB}

The following is the main result in \cite{Na-qaff}.
\begin{Theorem}[\protect{\cite[14.3.10]{Na-qaff}}]\label{thm:qaff}
  As an abelian group, $\left.\bfR_t\right|_{t=1}$ is isomorphic to
  the Grothendieck group of the category $\mathscr R$ of {\it
    l\/}-integrable representations of the quantum loop algebra $\Ulq$
  of the symmetric Kac-Moody Lie algebra $\g$ given by the Cartan
  matrix $\bC$ so that
  \begin{itemize}
  \item $L(W)$ corresponds to the class of the simple module whose
    Drinfeld polynomial is given by
    \begin{equation*}
      P_i(u) = \prod_{a\in\C^*} (1 - au)^{\dim W_i(a)}\qquad
      (i\in I).
    \end{equation*}
  \item $M(W)$ corresponds to the class of the standard module whose
    Drinfeld polynomial is given by the same formula.
  \end{itemize}
\end{Theorem}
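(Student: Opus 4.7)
The plan is to match the elements $L(W)$ and $M(W)$ with simple and standard $\Ulq$-modules having the specified Drinfeld polynomials, since the underlying abelian-group statement reduces to identifying two free abelian groups indexed by the same data (isomorphism classes of $I\times\C^*$-graded $W$, or equivalently $I$-tuples of polynomials $(P_i(u))$ with $P_i(u) = \prod_a (1-au)^{\dim W_i(a)}$). The substantive content is therefore the geometric realization of the named standard and simple modules.

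First I would construct a geometric $\Ulq$-action on $\bigoplus_W H_*(\pi^{-1}(0))$ via a convolution algebra of cohomological correspondences on Steinberg-type varieties $\N(V,W)\times_{\N_0(W)} \N(V',W)$, in analogy with the construction for ordinary quiver varieties. The generators of $\Ulq$ in Drinfeld's new realization should act through explicit correspondences involving Hecke-type modifications of the tautological bundles on $\N(V,W)$, the grading in the $\C^*$-variable keeping track of the loop parameter. For each $W$, the space $H_*(\pi^{-1}(0))$ thus becomes a $\Ulq$-module whose class in the Grothendieck group corresponds to $M(W) \in \bfR_t|_{t=1}$ (since $M_W(0)$ is defined via stalks at a point in the open stratum, which under $\pi$ pulls back to $\pi^{-1}(0)$). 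I would then identify this geometric module with the standard module attached to the specified Drinfeld polynomial by computing the Drinfeld highest weight in the top of the cohomology and using restriction functors compatible with the transversal slice.

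The identification of $L(W)$ with the corresponding simple module then follows from the upper-triangular relation \eqref{eq:upper} combined with the basic fact that each standard module has a unique simple quotient of the same Drinfeld highest weight. The hard part will be matching the transition matrices, i.e.\ showing that the IC-stalk dimensions $a_{V,V';W}(t)$ agree with the analogous Jordan--H\"older-type multiplicities on the algebraic side: neither quantity admits a closed formula in general. My approach would be to show both satisfy the same recursive axioms (a Frenkel--Reshetikhin style axiomatic characterization of the $t$-analog of $q$-characters on the algebraic side, and the Decomposition Theorem combined with the transversal slice \thmref{thm:slice} on the geometric side) and then equate them by induction on the dominance order on $W$, with the base case handled by fundamental modules where both sides are directly computable. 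Specializing $t=1$ at the end gives the isomorphism of abelian groups sending $L(W)$ and $M(W)$ to the advertised simple and standard modules.
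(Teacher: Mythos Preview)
The paper does not prove this theorem: it is quoted from \cite[14.3.10]{Na-qaff}, and immediately after the statement the author writes that the result is not needed here except as motivation, and refers the reader to \cite{Na-qaff} for the proof and even for the terminology. So there is no ``paper's own proof'' to compare against in this article.

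That said, your outline is a reasonable sketch of the strategy actually carried out in \cite{Na-qaff}: one builds a $\Ulq$-action via the convolution algebra $H_*(\N(W)\times_{\N_0(W)}\N(W))$ (indeed the paper recalls this in the paragraph following the theorem), identifies the geometric standard module $H_*(\pi^{-1}(0))$ with the representation-theoretic standard module of the given Drinfeld polynomial, and then deduces the correspondence for simples from the upper-triangular relation \eqref{eq:upper} together with the fact that each standard module has a unique simple quotient of the same highest $l$-weight.

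One point to correct in your final paragraph: you do \emph{not} need to match the transition matrices $a_{V,V';W}(t)$ with algebraic multiplicities in order to prove the theorem as stated. Once the standard modules are identified, the simples are determined on each side as the unique irreducible quotients, and the bijection of bases follows formally from upper-triangularity. The equality of transition matrices is a strictly finer statement (essentially the content of the Kazhdan--Lusztig-type algorithm in \cite{MR2144973}), and building it into the proof of \thmref{thm:qaff} would be circular or at least redundant. Your proposed inductive matching via ``recursive axioms'' is the right idea for that finer result, but it belongs to a different theorem.
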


Since we do not need this result in this paper, except for an
explanation of our approach to one in \cite{HerLec}, we do not explain
terminologies and concepts in the statement. See \cite{Na-qaff}.

From a general theory of the convolution algebra (see \cite{CG}),
$\mathcal K(\mathscr Q_W)$ is the Grothendieck group of the category
of {\it graded\/} representations of the convolution algebra
\(
  H_*(\N(W)\times_{\N_0(W)}\N(W)) 
  \cong \bigoplus_{V^1,V^2}
  \Ext^*_{\mathscr D(\N_0(W))}(\pi_W(V^1),\pi_W(V^2)),
\)
where the grading is for $\Ext^\bullet$-group. And $\{ L_W(V)\}$ is the base
given by classes of simple modules.

Let us briefly explain how we glue the abelian categories for various
$W$ to get a single abelian category. A family of graded module
structures $\{ \rho_W\colon H_*(\N(W)\times_{\N_0(W)}\N(W))
\to \End_\C(M)\}_W$ on a single vector space $M$ is said to be {\it
  compatible}, if $\rho_W$ factors through various restrictions to
open subsets in \thmref{thm:slice} and the restrictions are compatible
with the restriction of $\rho_{W^\perp}$ under the local isomorphisms
in \thmref{thm:slice}. For example, we fix $W^0$ and choose various
points $x_{V,W}\in \Nreg(V,W)$ with $\dim W - \bC_\vq\dim V = \dim
W^0$. We identify $H_*(\pi^{-1}(x_{V,W}))$ with a single vector space
$M$, say $H_*(\pi^{-1}(x_{0,W^0}))$, by the local isomorphisms.  It is
a compatible family of module structures.
Compatible families form an abelian category. Let us denote it by
$\mathscr R_{\mathrm{conv}}$. Then we have $\mathcal K(\mathscr
R_{\mathrm{conv}})\cong \bfR_t$.
In the above theorem, we have families of homomorphisms $\Ulq\to
H_*(\N(W)\times_{\N_0(W)}\N(W))$ compatible with the local
isomorphisms. Therefore we have a functor from $\mathscr
R_{\mathrm{conv}}$ to the category $\mathscr R$ of {\it
  l\/}-integrable representations of $\Ulq$. It sends a simple object
to a simple module. We do not know whether it is an equivalence (after
forgetting the grading on $\mathscr R_{\mathrm{conv}}$), but we can
get enough information practically.

\subsection{$t$-analog of $q$-characters}\label{subsec:q-char}

For each $(i,a)\in I\times\C^*$, we introduce an indeterminate
$Y_{i,a}$. Let
\begin{equation*}
  \mathscr Y_t \defeq
  \mathcal A[Y_{i,a}, Y_{i,a}^{-1}]_{i\in I,a\in\C^*}.
\end{equation*}
We associate polynomials $e^{W}$, $e^{V}\in\mathscr Y_t$ to graded
vector spaces $V$, $W$ by
\begin{equation*}
   e^{W} = \prod_{i\in I,a\in\C^*} Y_{i,a}^{\dim W_i(a)},
\quad
   e^{V} = \prod_{i\in I,a\in\C^*} V_{i,a}^{\dim V_i(a)},
\quad\text{where }
   V_{i,a} =  Y_{i,a\vq^{-1}}^{-1} Y_{i,a\vq}^{-1}
   \prod_{\substack{h\in H\\ \vout(h)=i}} Y_{\vin(h),a}.
\end{equation*}

We define the {\it $t$-analog of $\vq$-character\/} for $M(W)$ by
\begin{equation*}
  \chi_{\vq,t}(M(W)) \defeq \sum_V
  \sum_k t^{-k} \dim H^k(i_0^! \pi_W({V})) e^W e^V,
\end{equation*}
where $0$ is the unique point of $\N_0(0,W)$.
\begin{NB}
  Check : 
  \begin{equation*}
    P_t(X[1]) =
    \sum_k t^{-k} \dim H^k(X[1])
    =  \sum_k t^{-k} \dim H^{k+1}(X) 
    = t \sum_k t^{-k-1} \dim H^{k+1}(X) = t P_t(X).
  \end{equation*}

  This is equal to the generating function of the Poincar\'e
  polynomials
  \begin{equation*}
    \sum_i t^{i-\dim \N(V,W)} \dim H_i(\NLa(V,W)).
  \end{equation*}
  This follows from \cite[8.5.4]{CG}:
  \begin{equation*}
    H^k(i_0^! \pi_W(V))
    \cong H_{\dim \N(V,W)-k}(\NLa(V,W)).
  \end{equation*}
  Now we put $i = \dim \N(V,W)-k$, and hence $-k = i - \dim \N(V,W)$.
\end{NB}%
From the definition in the previous subsection, this is nothing but
the generating function of pairings $\langle M_{W}(0),
\pi_{W}(V)\rangle$ for various $V$. If $\g$ is of type $ADE$,
$\N(V,W)$ becomes empty for large $V$,
as we mentioned in \subsecref{subsec:def}.
Therefore this is a finite sum. If $\g$ is not of type $ADE$, this
becomes an infinite series, so it lives in a completion of $\mathscr
Y_t$. Since the difference is not essential, we keep the notation
$\mathscr Y_t$. Anyway we will only use the truncated $\vq$-character,
which is in $\mathscr Y_t$, in this paper.

Suppose $(V^0,W)$ is {\it l\/}-dominant and we define $V^\perp$,
$W^\perp$ as in \subsecref{subsec:slice}. Then
\begin{equation*}
  \sum_V \langle M_W(V^0),  \pi_W(V)\rangle
  e^W e^V
  = \sum_V \langle M_{W^\perp}(0), \pi_{W^\perp}(V^\perp) \rangle
  e^{W^\perp} e^{V^\perp}
  = \chi_{\vq,t}(M(W^\perp))
\end{equation*}
as $e^W e^V = e^{W^\perp} e^{V^\perp}$.

Since $\{ M(W)\}$ is a base of $\bfR_t$, we can extend $\chi_{\vq,t}$ 
to $\bfR_t$ linearly. We have
\begin{equation}\label{eq:q-char}
   \chi_{\vq,t}(L(W))
   =  \sum_V \langle L_W(0),\pi_W(V)\rangle\, e^W e^V
   =  \sum_V a_{V,0;W}(t)\, e^W e^V,
\end{equation}
where $a_{V,0;W}$ is the coefficient of $IC_{W}(0)=1_{\{0\}}$ in
$\pi_{W}(V)$ (in $\mathcal K(\mathscr Q_W)$) as in \eqref{eq:a}.
\begin{NB}
Suppose $L(W) = \sum_{W'} a^{W'}_W(t) M(W')$. Take the coefficient
of $e^W e^V$ in $\chi_{\vq,t}(L(W))$:
\begin{equation*}
  \begin{split}
   & \sum_{W'} a^{W'}_W(t) [ e^W e^V : \chi_{\vq,t}(M(W'))]
   = 
   \sum_{W',V'} a^{W'}_W(t) [ e^W e^V : 
   \langle M_{W'}(0), \pi_{W'}(V') \rangle e^{W'}e^{V'}]
\\
  =\; &  
   \sum_{W'} a^{W'}_W(t) 
   \langle M_{W'}(0), \pi_{W'}(\bC_{\vq}^{-1}(W'-W)+V) \rangle
   =
   \sum_{W'} a^{W'}_W(t) 
   \langle M_{W}(\bC_{\vq}^{-1}(W-W')), \pi_{W}(V) \rangle
\\
  =\; &  
  \sum_{W'} a^{W'}_W(t) 
   \langle M({W'}), \pi_{W}(V) \rangle
  = \langle L(W),\pi_W(V)\rangle = \langle L_W(0),\pi_W(V)\rangle.
  \end{split}
\end{equation*}
Therefore we have
\begin{equation*}
  \chi_{\vq,t}(L(W))
  = \sum_{V} \langle L_W(0),\pi_W(V)\rangle e^W e^V.
\end{equation*}

\end{NB}

Since $\{ M_W(V)\}_{\text{$(V,W)$:{\it l\/}-dominant}}$ forms a base
of $\bfR_t$, we have
\begin{Theorem}
The $q$-character homomorphism $\chi_{\vq,t}\colon
\bfR_t\to \mathscr Y_t$ is injective.
\end{Theorem}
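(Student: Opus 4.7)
The plan is to show that $\chi_{\vq,t}$ has an upper-triangular matrix with $1$'s on the diagonal when the basis $\{M(W)\}$ is compared with the monomial basis $\{e^W\}$, with respect to a suitable partial order on monomials. Injectivity then follows immediately.

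First I would identify $e^W$ as the distinguished leading term of $\chi_{\vq,t}(M(W))$. The $V = 0$ contribution to the defining sum gives $e^W$ with coefficient $1$: the variety $\N(0,W)$ is a single point, $\pi_W(0)$ is the skyscraper sheaf $1_{\{0\}}$ at the origin of $\N_0(W)$, and $i_0^! \pi_W(0) = \C$ in degree $0$, so
\begin{equation*}
  \sum_k t^{-k}\dim H^k(i_0^!\pi_W(0))\, e^W e^0 = e^W.
\end{equation*}
Every other monomial appearing in $\chi_{\vq,t}(M(W))$ has the form $e^W\, e^V$ for some nonzero $I\times\C^*$-graded vector space $V$, i.e.\ $e^W$ multiplied by a product $\prod V_{i,a}^{n_{i,a}}$ with nonnegative integer exponents $n_{i,a}$, not all zero.

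Next, define a partial order $\preceq$ on Laurent monomials in the $Y_{i,a}$ by declaring $m\preceq m'$ iff $m'/m = \prod_{i,a} V_{i,a}^{n_{i,a}}$ with all $n_{i,a}\in\Z_{\ge 0}$. Reflexivity and transitivity are clear. For antisymmetry, one checks that the elements $V_{i,a}$ are $\Z$-linearly independent in the additive lattice of $Y$-exponents: isolating the coordinate $Y_{i,aq^{-1}}$, it appears in $V_{j,b}$ with exponent $-1$ only when $(j,b)=(i,a)$ (from the $Y_{j,bq^{-1}}^{-1}$-factor), and otherwise only with nonnegative exponent coming from the $Y_{\vin(h),b}$-factors; ordering the points $(i,a)$ lexicographically by $|a|$ (or by the $q$-adic valuation of $a$ after fixing a slice of $\C^*/q^{\Z}$), any nontrivial relation $\sum n_{i,a} V_{i,a}=0$ is ruled out. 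With this order, $e^W$ is the unique $\preceq$-maximum of the monomials occurring in $\chi_{\vq,t}(M(W))$.

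Finally, suppose $\sum_W c_W M(W) \in \ker\chi_{\vq,t}$ is a finite sum with some $c_W\neq 0$. Among the finite set $S = \{W : c_W\neq 0\}$, pick $W_0$ such that $e^{W_0}$ is $\preceq$-maximal in $\{e^W : W\in S\}$. For any other $W\in S$, the monomial $e^{W_0}$ could appear in $\chi_{\vq,t}(M(W))$ only as $e^{W}\,e^V$ with $V\neq 0$, which would force $e^{W_0}\prec e^W$, contradicting maximality. Hence the coefficient of $e^{W_0}$ in $\chi_{\vq,t}\bigl(\sum_W c_W M(W)\bigr)$ equals $c_{W_0}\neq 0$, a contradiction. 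The main obstacle is the antisymmetry of $\preceq$, i.e.\ the linear independence of the $V_{i,a}$'s; for $\g$ not of type $ADE$ the series $\chi_{\vq,t}(M(W))$ is only in a completion of $\mathscr Y_t$, so one should first check that extracting the coefficient of a single monomial in a finite $\mathcal A$-linear combination of $M(W)$'s still makes sense, which it does because only finitely many $W$'s contribute to a fixed monomial after the maximality argument.
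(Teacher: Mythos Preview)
Your approach is the same as the paper's---the paper's proof is the single line invoking that $\{M_W(V)\}$ for $(V,W)$ $l$-dominant is a basis, which is precisely the upper-triangularity you spell out. There are, however, two slips, both easily repaired. First, your order goes the wrong way: with $m\preceq m'$ iff $m'/m$ is a nonnegative product of $V_{i,a}$'s, one has $e^W\preceq e^We^V$, so $e^W$ is the $\preceq$-\emph{minimum} (not maximum) of the monomials in $\chi_{\vq,t}(M(W))$; correspondingly you must choose $W_0$ with $e^{W_0}$ minimal, not maximal, and then $e^{W_0}=e^We^V$ with $V\neq 0$ would give $e^W\prec e^{W_0}$, contradicting minimality. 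Second, the antisymmetry sentence is imprecise: the variable $Y_{i,c}$ carries exponent $-1$ in both $V_{i,cq}$ and $V_{i,cq^{-1}}$, not only in one of them. Your ordering-by-$q$-power argument does salvage this once stated carefully---choose $(i,a)$ with minimal $q$-power among the nonzero $n_{i,a}$ in a putative relation; then among the remaining terms only $V_{i,a}$ contributes to the $Y_{i,aq^{-1}}$-coordinate---but the literal claim ``only when $(j,b)=(i,a)$'' is false without that restriction.
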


But $\chi_{\vq,t}$ also contains terms from $\pi_W(V)$ with $(V,W)$
not necessary {\it l\/}-dominant. These are redundant information.

\begin{Remark}\label{rem:negative}
  By \cite[Th.~3.5]{MR2144973} the coefficient of $e^W e^V$ in the
  $t$-analog of $\vq$-characters for standard modules $M(W)$ is in
  $t^{\dim \N(V,W)}\Z_{\ge 0}[t^{-2}]$. This was a consequence of
  vanishing of odd cohomology groups of $\NLa(V,W)$.
  From the proof of \cite[Lem.~8.7.8]{CG} together with the above
  vanishing result, we have
\[
   a_{V,0;W}(t) \in t^{\dim \N(V,W)} \Z_{\ge 0}[t^{-2}].
\]
\end{Remark}

\subsection{A convolution diagram}\label{subsec:conv}

Let us take a $2$-step flag $0 \subset W^2\subset W$ of
$I\times\C^*$-graded vector spaces. We put $W/W^2 = W^1$. Following
\cite{Na-tensor}, we introduce closed subvarieties in $\N_0(W)$ and
$\N(W)$:
\begin{equation*}
  \begin{split}
  & \Zm_0(W^1;W^2)
  = \{ [B,\alpha,\beta]\in \N_0(W) \mid 
  \text{$W^2$ is invariant under
    $\beta B^k \alpha$ for any $k\in\Z_{\ge 0}$}\},
\\
  & \Zm(W^1;W^2) = \pi^{-1}(\Zm_0(W^1;W^2)).
  \end{split}
\end{equation*}
This definition is different from the original one, but equivalent
[loc.\ cit., 3.6, 3.7].
The latter has an $\alpha$-partition
\begin{equation*}
   \Zm(W^1;W^2) = \bigsqcup \Zm(V^1,W^1; V^2,W^2)
\end{equation*}
such that $\Zm(V^1,W^1; V^2,W^2)$ is a vector bundle over
$\N(V^1,W^1)\times\N(V^2,W^2)$ of rank
\begin{equation*}
  \boldsymbol\langle \dim V^1,
  \vq^{-1}(\dim W^2 - \bC_\vq\dim V^2)\boldsymbol\rangle
  + \boldsymbol\langle \dim V^2, \vq\dim W^1\boldsymbol\rangle.
\end{equation*}
(See [loc.\ cit., 3.8].)
Let us denote this rank by
\begin{equation*}
   d({V^1}, {W^1}; {V^2}, {W^2}).
\end{equation*}
(It was denoted by $d(e^{V^1}e^{W^1}, e^{V^2}e^{W^2})$ in \cite{MR2144973}.)

Following \cite{VV2} we consider the diagram
\begin{equation*}
   \N_0(W^1)\times \N_0(W^2) \xleftarrow{\kappa}
   \Zm_0(W^1;W^2)\xrightarrow{\iota} \N_0(W),
\end{equation*}
where $\iota$ is the inclusion and $\kappa$ is given by the induced
maps from $\beta B^k\alpha$ to $W^1 = W/W^2$, $W^2$. Then we define
a functor
\begin{equation*}
  \Tilde\res_{W^1,W^2}\defeq \kappa_! \iota^*
  \colon \mathscr D(\N_0(W))
  \to \mathscr D(\N_0(W^1)\times \N_0(W^2)).
\end{equation*}
We have
\begin{equation*}
  \Tilde\res_{W^1,W^2}(\pi_W(V))
  = \bigoplus_{V^1+V^2=V} \pi_{W^1}(V^1)\boxtimes \pi_{W^2}(V^2)
   [d(V^2,W^2;V^1,W^1) - d(V^1,W^1;V^2,W^2)].
\end{equation*}
(See \cite[Lemma~4.1]{VV2}. A weaker statement was given in
\cite[6.2(3)]{MR2144973}.)
From this observation objects in $\mathscr Q_W$ are sent to $\mathscr
Q_{W^1\times W^2}$, the full subcategory of $\mathscr
D(\N_0(W^1)\times \N_0(W^2))$ whose objects are complexes isomorphic
to finite direct sums of $IC_{W^1}(V^1)\boxtimes IC_{W^2}(V^2)[k]$ for
various $IC_{W^1}(V^1)\in\mathscr P_{W^1}$, $IC_{W^2}(V^2)\in\mathscr
P_{W^2}$, $k\in\Z$ (\cite[Lemma~4.1]{VV2}). 
Therefore this functor induces a homomorphism $\mathcal K(\mathscr
Q_W)\to \mathcal K(\mathscr Q_{W^1})\otimes_{\mathcal A}\mathcal
K(\mathscr Q_{W^2})$. 
It is coassociative, as $\mathcal K(\mathscr Q_W)$ is spanned by
classes $\pi_W(V)$ and they satisfy the coassociativity from the above
formula.
We denote it also by $\Tilde\res_{W^1,W^2}$.

Let $\bC_\vq^{-1}$ be the inverse of $\bC_\vq$. We define it by
solving the equation $(u_i(a)) = \bC_{\vq} (x_i(a))$ recursively
starting from $x_i(a\vq^s) = 0$ for sufficiently small $s$. Note that
$x_i(a)$ may be nonzero for infinitely many $a$.
\begin{NB}
In other words, $u_{i}(a)$ is defined recursively by
\begin{equation*}
  u_i(a) = w_i(a\vq^{-1}) - \sum_j a_{ij} u_j(a\vq^{-1}).
\end{equation*}
\end{NB}%
We then observe
\begin{equation*}
  d(V^1,W^1;V^2,W^2) 
  - \boldsymbol\langle \bC_\vq^{-1}\dim W^1,
  \vq^{-1}\dim W^2\boldsymbol\rangle
\end{equation*}
is preserved under the replacement $\N(V^1,W^1)\times \N(V^2,W^2)
\rightsquigarrow \N(V^{1\perp},W^{1\perp})\times
\N(V^{2\perp},W^{2\perp})$ by the transversal slice
(\cite[Lemma~3.2]{VV2}).
\begin{NB}
We observe
\begin{equation*}
\begin{split}
  &   \boldsymbol\langle \dim V^1,
  \vq^{-1}(\dim W^2 - \bC_\vq\dim V^2)\boldsymbol\rangle
  + \boldsymbol\langle \dim V^2, \vq\dim W^1\boldsymbol\rangle
\\
  & =\;
  \begin{aligned}[t]
  & - \boldsymbol\langle \bC_\vq^{-1}(\dim W^1 - \bC_\vq\dim V^1),
  \vq^{-1}(\dim W^2 - \bC_\vq\dim V^2)\boldsymbol\rangle
\\
  & + \boldsymbol\langle \dim V^2, \vq\dim W^1\boldsymbol\rangle
  +   \boldsymbol\langle \bC_\vq^{-1}\dim W^1,
  \vq^{-1}(\dim W^2 - \bC_\vq\dim V^2)\boldsymbol\rangle
  \end{aligned}
\\
  =\; &
  - \boldsymbol\langle \bC_\vq^{-1}(\dim W^1 - \bC_\vq\dim V^1),
  \vq^{-1}(\dim W^2 - \bC_\vq\dim V^2)\boldsymbol\rangle
  + \boldsymbol\langle \bC_\vq^{-1}\dim W^1, \vq^{-1}\dim W^2\boldsymbol\rangle.
\end{split}
\end{equation*}
\end{NB}%
Therefore we define
\begin{equation*}
  \begin{split}
    \varepsilon(W^1,W^2) & \defeq
    \boldsymbol\langle \bC_\vq^{-1}\dim W^1, \vq^{-1}\dim W^2\boldsymbol\rangle
  - \boldsymbol\langle \bC_\vq^{-1}\dim W^2, \vq^{-1}\dim W^1\boldsymbol\rangle,
\\
    \res & \defeq \sum_{W = W^1\oplus W^2} \Tilde\res[\varepsilon(W^1,W^2)]
  \end{split}
\end{equation*}
Then its transpose defines a multiplication on $\bfR_t$, which
is denoted by $\otimes$. 

\begin{NB}
Concretely we have
\begin{equation*}
  \begin{split}
  & \langle \bC_{\vq}^{-1}\dim W^1, \vq^{-1}\dim W^2\rangle
  = \sum_i u_{i}^1(\vq) w^2_i(1) + u_i^1(\vq^2) w^2_i(\vq)
  + u_i^1(\vq^3) w^2_i(\vq^2) + u_i^1(\vq^4) w^2_i(\vq^3)
  + \cdots
\\
  =\; &
  \sum_i
 \begin{aligned}[t]
  & w_{i}^1(1) w^2_i(1) 
  + (w_i^1(\vq) - \sum_{j} a_{ij} w_j^1(1)) w^2_i(\vq)
\\
  & + (w_i^1(\vq^2) - \sum_j a_{ij} u_j^1(\vq^2) - u_i^1(\vq)) w^2_i(\vq^2)
  + (w_i^1(\vq^3) - \sum_j a_{ij} u_j^1(\vq^3) - u_i^1(\vq^2)) w^2_i(\vq^3)
  + \cdots.
  \end{aligned}
  \end{split}
\end{equation*}
Then we have
\begin{equation*}
  \begin{split}
  & a_{ij} u_j^1(\vq^2) w^2_i(\vq^2)
  = a_{ij} \left(w_j^1(\vq) - \sum_{k} a_{jk} w_k^1(1)\right)
  w^2_i(\vq^2),
\\
  & u_i^1(\vq^2) w_i^2(\vq^3)
  = \left( w_i^1(\vq) - \sum_k a_{ik} w_k^1(1)\right) w_i^2(\vq^3),
\\  
  &
  a_{ij} u_j^1(\vq^3) w^2_i(\vq^3)
  = a_{ij} \left(w_j^1(\vq^2) 
      - \sum_{k} a_{jk} u_k^1(\vq^2)\right) w^2_i(\vq^3)
\\
  & =
  \sum_j 
  \left(
  a_{ij} \left( w^1_j(\vq^2) - \sum_k a_{jk} \left( w^1_k(\vq) - \sum_l a_{kl} w^1_l(1)\right)\right)\right)
  w^2_i(\vq^3).
  \end{split}
\end{equation*}
Therefore
\begin{multline*}
  \langle \bC_{\vq}^{-1}\dim W^1, \dim W^2\rangle
\\
  = \sum_{i,a} w_i^1(a) w_i^2(a) - w_i^1(a) w_i^2(a\vq^2)
    - \sum_{i,j,a} a_{ij} (w_i^1(a) w_j^2(a\vq)
    - w_i^1(a) w_j^2(a\vq^3))
\\
    + \sum_{i,j,k,a} a_{ij}a_{jk} w_i^1(a) w_k^2(a\vq^2)
    - \sum_{i,j,k,l,a} a_{ij}a_{jk}a_{kl} w_i^1(a) w_l^2(a\vq^3).
\end{multline*}
No further terms when we assume $(\ast_{\ell=1})$.
\end{NB}

We also define the twisted multiplication on $\mathscr Y_t$ given by
\begin{equation}\label{eq:twist}
  m_1 \ast m_2 = t^{\varepsilon(\vec{m}_1,\vec{m}_2)
    } m_1 m_2,
\end{equation}
where $m_1$, $m_2$ are monomials in $Y_{i,a}^\pm$ and
$\vec{m}_\alpha = (m^\alpha_i(a))$ is given by
$m_\alpha = \prod Y_{i,a}^{m_i^\alpha(a)}$.
\begin{NB}
We observe
\begin{equation*}
  \bigoplus_V \operatorname{Res}(\pi_W(V)) = 
  \bigoplus_{\substack{W^1 + W^2 = W \\ V^1+V^2=V}}
  \pi_{W^1}(V^1)\boxtimes \pi_{W^2}(V^2)[k]
\end{equation*}
with
\begin{equation*}
  \begin{split}
  k & =
  d(V^2,W^2;V^1,W^1) - d(V^1,W^1;V^2,W^2) + \ve(W^1,W^2)
\\
   &=
  d(V^2,W^2;V^1,W^1) 
  - \boldsymbol\langle\bC_{\vq}^{-1}\dim W^2,\vq^{-1}\dim W^1
    \boldsymbol\rangle
  - 
  d(V^1,W^1;V^2,W^2) 
  + \boldsymbol\langle\bC_{\vq}^{-1}\dim W^1,\vq^{-1}\dim W^2
    \boldsymbol\rangle.
  \end{split}
\end{equation*}
Since we have
\begin{equation*}
  \begin{split}
 & d(V^2,W^2;V^1,W^1) 
  - \boldsymbol\langle\bC_{\vq}^{-1}\dim W^2,\vq^{-1}\dim W^1
    \boldsymbol\rangle 
    = -\boldsymbol\langle\bC_{\vq}^{-1} (\dim W^2 - \bC_\vq\dim V^2),
    \vq^{-1}(\dim W^1 - \bC_\vq\dim V^1)
    \boldsymbol\rangle,
\\
 & d(V^1,W^1;V^2,W^2) 
  - \boldsymbol\langle\bC_{\vq}^{-1}\dim W^1,\vq^{-1}\dim W^2
    \boldsymbol\rangle 
    = -\boldsymbol\langle\bC_{\vq}^{-1} (\dim W^1 - \bC_\vq\dim V^1),
    \vq^{-1}(\dim W^2 - \bC_\vq\dim V^2)
    \boldsymbol\rangle,
  \end{split}
\end{equation*}
we have
\begin{equation*}
   k = \ve(\vec{m}_1,\vec{m}_2),
\qquad
   e^{W^1} e^{V^1} = \prod Y_{i,a}^{m_i^1(a)},
\    
   e^{W^2} e^{V^2} = \prod Y_{i,a}^{m_i^2(a)}.
\end{equation*}
\end{NB}%

The following is the main result of \cite{VV2}.

\begin{Theorem}\label{thm:positive}
\textup{(1)}
  The structure constant of the product with respect to the base $\{
  L(W)\}$ is positive:
  \begin{equation*}
     L(W^1)\otimes L(W^2) \in \sum_W a^W_{W^1,W^2}(t) L(W)
  \end{equation*}
with $a^W_{W^1,W^2}(t)\in \Z_{\ge 0}[t,t^{-1}]$.

\textup{(2)} $\chi_{\vq,t}\colon\bfR_t\to \mathscr Y_t$ is an algebra
homomorphism with respect to $\otimes$ and the twisted product $\ast$.
\end{Theorem}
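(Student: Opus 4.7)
The plan is to derive both statements from the formula for the coproduct $\widetilde{\res}_{W^1,W^2}(\pi_W(V))$ on standard sheaves stated just before the theorem. The first step is to verify that $\widetilde{\res}_{W^1,W^2}$ maps $\mathscr Q_W$ into $\mathscr Q_{W^1\times W^2}$. That formula proves this on standards $\pi_W(V)$; the extension to every $IC_W(V)\in\mathscr P_W$ is automatic because, by the decomposition theorem applied to $\pi\colon \N(V,W)\to \N_0(V,W)$, each $IC_W(V)$ is a shifted direct summand of some $\pi_W(V)$. Consequently $\widetilde{\res}$ descends to a coassociative coproduct on $\mathcal K(\mathscr Q_W)$, whose transpose (twisted by $\varepsilon(W^1,W^2)$) is, by definition, the product $\otimes$ on $\bfR_t$.

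For (1), write $\widetilde{\res}_{W^1,W^2}(IC_W(V))=\sum n^{V^1,V^2}_{W^1,W^2;W,V}(t)\,IC_{W^1}(V^1)\boxtimes IC_{W^2}(V^2)$. The key step is to show $n(t)\in\Z_{\ge 0}[t,t^{-1}]$ by a purity argument: $\iota$ is a closed inclusion, $\kappa$ is proper, and the $\alpha$-partition of $\Zm(W^1;W^2)$ exhibits $\kappa$ as a vector-bundle projection on each stratum, so $\widetilde{\res}$ sends pure semisimple complexes to pure semisimple ones; the $n(t)$'s are then Poincar\'e polynomials of stalks of IC sheaves after decomposition, hence positive. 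Pairing the relation $L(W^1)\otimes L(W^2)=\sum_W a^W_{W^1,W^2}(t)\, L(W)$ against $IC_W(0)$ via the duality between $\{L(W)\}$ and $\{IC_{W'}(V')\}$ (extended over all $W$ through the transversal slice of \thmref{thm:slice}) expresses each $a^W_{W^1,W^2}(t)$ as $t^{\varepsilon(W^1,W^2)}$ times an $n$-coefficient with $V^1=V^2=0$, hence in $\Z_{\ge 0}[t,t^{-1}]$.

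For (2), since $\chi_{\vq,t}$ is injective and $\{M(W)\}$ is a basis of $\bfR_t$, it is enough to check the identity on $M(W^1)\otimes M(W^2)$. Transposing the explicit formula for $\widetilde{\res}(\pi_W(V))$ yields
\[
\chi_{\vq,t}(M(W^1)\otimes M(W^2))=\sum_{V^1,V^2} t^{d(V^2,W^2;V^1,W^1)-d(V^1,W^1;V^2,W^2)+\varepsilon(W^1,W^2)}\, e^{W^1}e^{V^1}\cdot e^{W^2}e^{V^2},
\]
and the remaining task is the bilinear identity that this exponent equals the twist $\varepsilon(\vec m_1,\vec m_2)$ in \eqref{eq:twist} with $m_\alpha=e^{W^\alpha}e^{V^\alpha}$. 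This follows directly from the definitions of $d(\cdot;\cdot)$, $\varepsilon(W^1,W^2)$ and the pairing $\boldsymbol\langle\cdot,\cdot\boldsymbol\rangle$ once one rewrites $\dim W^\alpha-\bC_\vq\dim V^\alpha$ as the exponent vector of $m_\alpha$.

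The main obstacle is the purity/positivity input of (1): one must verify that the convolution diagram $(\kappa,\iota)$ is sufficiently well-behaved — that $\kappa$ restricts to honest vector bundles over each stratum of the $\alpha$-partition of $\Zm(W^1;W^2)$, and that $\iota^{*}$ preserves weight — so that the decomposition theorem outputs nonnegative Laurent polynomials in $t$. Once this geometric input is in place, all remaining steps are formal manipulations with the pairings and the explicit Mackey-type formula on standards.
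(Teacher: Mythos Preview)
The paper does not prove this theorem; it is quoted as the main result of \cite{VV2}. Your outline captures the right strategy, and part~(2) is essentially correct as stated.

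For part~(1), however, you have made the argument harder than necessary and introduced a doubtful step in the process. The positivity already follows from your first paragraph: once $\widetilde{\res}_{W^1,W^2}$ is known to send $\mathscr Q_W$ into $\mathscr Q_{W^1\times W^2}$, you are done, because by definition every object of $\mathscr Q_{W^1\times W^2}$ is isomorphic to a finite direct sum $\bigoplus (IC_{W^1}(V^1)\boxtimes IC_{W^2}(V^2))[d]$ with nonnegative multiplicities. Writing $n^{V^1,V^2}(t)=\sum_d m_d\, t^d$ with each $m_d\in\Z_{\ge 0}$ gives $n(t)\in\Z_{\ge 0}[t,t^{-1}]$ immediately. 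No separate purity argument is required; the decomposition theorem applied to $\pi$ on both sides, together with the explicit formula for $\widetilde{\res}(\pi_W(V))$, already forces this.

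Your purity paragraph contains a genuine slip: for a closed immersion $\iota$, the functor $\iota^*$ does \emph{not} preserve weights --- on a pure complex of weight $0$ it only yields weights $\le 0$. In Lusztig-style arguments one recovers purity by pairing this with Verdier duality (using $\iota^!$ and the self-duality of the IC sheaves) or with an adjoint induction functor; the vector-bundle structure of the $\alpha$-partition alone does not do it, and that partition lives on $\Zm(W^1;W^2)$ upstairs rather than on the domain of $\kappa$. Fortunately, as noted above, none of this machinery is needed for the statement at hand.
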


The following corollary of the positivity is also due to \cite{VV2}.

\begin{Corollary}
  The followings are equivalent:
  \begin{enumerate}
  \item $L(W^1)\otimes L(W^2) = L(W^1\oplus W^2)$ holds at $t=1$.
  \item $L(W^1)\otimes L(W^2) = t^{\varepsilon(W^1,W^2)} L(W^1\oplus
    W^2)$.
  \end{enumerate}
\end{Corollary}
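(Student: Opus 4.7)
The implication (2)$\Rightarrow$(1) is immediate, since setting $t=1$ kills the factor $t^{\varepsilon(W^1,W^2)}$. So the real content is (1)$\Rightarrow$(2), and the plan is to combine the two parts of the preceding theorem: positivity to constrain the expansion to a single monomial, and the $q$-character homomorphism to identify that monomial.

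First I would invoke \thmref{thm:positive}(1) to write
\[
 L(W^1)\otimes L(W^2) \;=\; \sum_{W} a^W_{W^1,W^2}(t)\, L(W),
 \qquad a^W_{W^1,W^2}(t)\in\Z_{\ge 0}[t,t^{-1}].
\]
Specializing $t=1$ and using the hypothesis (1), together with the fact that $\{L(W)|_{t=1}\}$ is a basis of $\bfR_t|_{t=1}$, yields $a^W_{W^1,W^2}(1)=\delta_{W,W^1\oplus W^2}$. Now any Laurent polynomial with nonnegative integer coefficients whose sum of coefficients vanishes is identically zero; hence $a^W_{W^1,W^2}(t)=0$ for $W\neq W^1\oplus W^2$, while $a^{W^1\oplus W^2}_{W^1,W^2}(t)$ has coefficient sum $1$, i.e.\ equals $t^k$ for a single integer $k\in\Z$. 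Thus
\[
 L(W^1)\otimes L(W^2) \;=\; t^k\, L(W^1\oplus W^2).
\]

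To identify $k=\varepsilon(W^1,W^2)$ I would apply the algebra homomorphism $\chi_{\vq,t}\colon(\bfR_t,\otimes)\to(\mathscr Y_t,\ast)$ from \thmref{thm:positive}(2) to both sides and compare the coefficient of the monomial $e^{W^1\oplus W^2}=e^{W^1}e^{W^2}$. From formula \eqref{eq:q-char} and the identification $\pi_W(0)=IC_W(0)=1_{\{0\}}$ (since $\N(0,W)$ is a point), one has $a_{0,0;W}(t)=1$, so the coefficient of $e^{W^i}$ in $\chi_{\vq,t}(L(W^i))$ is $1$. Moreover, for $V\neq 0$ the monomial $e^We^V$ strictly contains negative powers of the $Y_{i,a}$'s (each factor $V_{i,a}$ contributes an uncancelled $Y_{i,a\vq^{-1}}^{-1}$ at the $\ell$-highest weight end), so $e^W$ is the unique $\ell$-dominant monomial appearing in $\chi_{\vq,t}(L(W))$. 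Reading off the coefficient of $e^{W^1\oplus W^2}$ on the right via the twist \eqref{eq:twist} gives $e^{W^1}\ast e^{W^2}=t^{\varepsilon(W^1,W^2)}e^{W^1\oplus W^2}$, and on the left gives $t^k$. Hence $k=\varepsilon(W^1,W^2)$.

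The main point that requires a small verification is the uniqueness of the $\ell$-dominant monomial: one must check that no combination of $V^1,V^2$ with some $V^i\neq 0$ produces a cancellation yielding an extra contribution to $e^{W^1\oplus W^2}$. This is routine from the shape of $V_{i,a}$ but is the only genuinely nontrivial step; everything else is a direct bookkeeping consequence of positivity and the fact that $\chi_{\vq,t}$ intertwines $\otimes$ with the twisted product $\ast$.
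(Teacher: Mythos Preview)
Your proof is correct. The paper itself does not supply a proof of this corollary; it merely attributes the statement to \cite{VV2} as a consequence of the positivity in \thmref{thm:positive}(1). Your argument is the standard one: positivity forces all but one structure constant to vanish and the remaining one to be a single power $t^k$, and then the algebra homomorphism $\chi_{\vq,t}$ identifies $k$ by comparing the leading monomials $e^{W^1}\ast e^{W^2}=t^{\varepsilon(W^1,W^2)}e^{W^1\oplus W^2}$. Your verification that no other $(V^1,V^2)$ can contribute to $e^{W^1\oplus W^2}$ is fine (equivalently, $\bC_\vq$ is injective since $\vq$ is not a root of unity, so $e^{V^1}e^{V^2}=1$ forces $V^1=V^2=0$).
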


It is tiresome to keep powers of $t$ when tensor products of simple
modules are simple. From this corollary, there is no loss of
information even if we forget powers.
Therefore we do not write $t^{\varepsilon(W^1,W^2)}$ hereafter.

The restriction functor defines an algebra homomorphism
\begin{multline*}
  H_*(\N(W)\times_{\N_0(W)}\N(W))\to
\\
H_*(\N(W^1)\times_{\N_0(W^1)}\N(W^1))\otimes
H_*(\N(W^2)\times_{\N_0(W^2)}\N(W^2)).
\end{multline*}
It gives us a monoidal structure on the un-graded version of $\mathscr
R_{\mathrm{conv}}$.
\begin{NB}
  The homomorphism is not compatible with the gradings.
\end{NB}

\section{Graded quiver varieties for the monoidal subcategory
  $\mathscr C_1$}\label{sec:C_1}

\subsection{Graded quiver varieties and the decorated quiver}
\label{subsec:deco}

The monoidal subcategory $\mathscr C_1$ introduced in \cite{HerLec} is, in
fact, the first (or second) of series of subcategories $\mathscr
C_\ell$ indexed by $\ell\in\Z_{\ge 0}$.
Let us describe all of them in terms of the category $\mathscr
R_{\mathrm{conv}}$.

We suppose that $(I,E)$ contains no odd cycles and take a {\it
  bipartite\/} partition $I = I_0\sqcup I_1$, i.e.\ every edge
connects a vertex in $I_0$ with one in $I_1$. We set
\begin{equation*}
  \xi_i =
  \begin{cases}
    0 & \text{if $i\in I_0$},
\\
    1 & \text{if $i\in I_1$}.
  \end{cases}
\end{equation*}
Fix a nonnegative integer $\ell$.
We consider the graded quiver varieties $\N(V,W)$, $\N_0(V,W)$ under the
following condition
\begin{equation}
   \text{$W_i(a) = 0$ unless $a = q^{\xi_i}$, $q^{\xi_i+2}$, \dots,
     $q^{\xi_i + 2\ell}$}.
\tag{$\ast_\ell$}
\end{equation}
It is clear that if $W$ satisfies $(\ast_\ell)$, both $W^1$ and $W^2$
satisfy $(\ast_\ell)$ in the convolution product $\res\colon \mathscr
Q_W\to \mathscr Q_{W^1}\times\mathscr Q_{W^2}$. 
Also from the proof of \propref{prop:ast_1}(1) below, it is clear that
$\Nreg(V,W)\neq\emptyset$ implies $V_i(a) = 0$ unless
$a=\vq^{\xi_i+1},\dots, \vq^{\xi_i+2\ell-1}$. Since $W^\perp_{i}(a)$
in \subsecref{subsec:slice} is the middle cohomology of the complex
\eqref{eq:taut_cpx_fixed}, $W^\perp_i(a)$ also satisfies
$(\ast_\ell)$.
Therefore the condition $(\ast_\ell)$ is also compatible with the
projective system $\mathcal K(\mathscr Q_W)\to \mathcal K(\mathscr
Q_{W^\perp})$.
Therefore we have the subring $\bfR_{t,\ell}$ of $\bfR_t$. We set
$\bfR_\ell = \left.\bfR_{t,\ell}\right|_{t=1}$. It is also clear that
the definition in \cite{HerLec} in terms of roots of Drinfeld polynomials
corresponds to our definition when $\g$ is of type $ADE$ from the
theory developed in \cite{Na-qaff}.

\begin{Example}
  Consider the simplest case $\ell = 0$.
  By \cite[4.2.2]{Na-qaff} or the argument below we have $\N_0(V,W) =
  \{ 0\}$ if $W$ satisfies $(\ast_0)$. Therefore $\mathscr Q_W$
  consists of finite direct sums of shifts of a single object
  $1_{\N_0(0,W)}$. We have $\res(1_{\N_0(0,W)}) =
  1_{\N_0(0,W^1)}\boxtimes 1_{\N_0(0,W^2)}$. This corresponds to the
  fact that any tensor product of simple modules in $\mathscr C_0$
  remains simple. (See \cite[3.3]{HerLec}.)
\end{Example}

We now start to analyze the condition $(\ast_{\ell=1})$. Let
\begin{equation}
  \label{eq:E_W}
  \bE_W \defeq
    \bigoplus_i \Hom(W_i(q^{\xi_i+2}),W_i(q^{\xi_i}))
    \oplus
    \bigoplus_{h : \vout(h)\in I_1, \vin(h)\in I_0}
    \Hom(W_{\vout(h)}(q^3), W_{\vin(h)}(1))
\end{equation}
This vector space $\bE_W$ is the space of representations of the {\it
  decorated quiver\/}.

\begin{Definition}\label{def:decorated}
  Suppose that a finite graph $\cG = (I,E)$ together with a bipartite
  partition $I = I_0\sqcup I_1$ is given. We define the {\it decorated
    quiver\/} $\widetilde \cQ = (\widetilde I, \widetilde\Omega)$ by
  the following two steps.

  (1) We put an orientation to each edge in $E$ so that vertexes in
  $I_0$ (resp.\ $I_1$) are sinks (resp.\ sources).
  Let $\Omega$ be the set of all oriented edges and $\cQ = (I,\Omega)$
  be the corresponding quiver.

  (2) Let $I_\fr$ be a copy of $I$. For $i\in I$, we denote by $i'$
  the corresponding vertex in $I_\fr$.
  Then we add a new vertex $i'$ and an arrow $i'\to i$ (resp.\ $i\to
  i'$) if $i\in I_0$ (resp.\ $i\in I_1$) for each $i\in I$.
  Let $\Omega_\dec$ be the set of these arrows. The decorated quiver
  is $\widetilde\cQ = (\widetilde I,\widetilde\Omega_\dec) = (I\sqcup
  I_\fr, \Omega\sqcup\Omega_\dec)$.

  We call $\cQ = (I,\Omega)$ the {\it principal part\/} of the
  decorated quiver.
\end{Definition}

For example, for type $A_3$ with $I_0 = \{1,3\}$, we get the following
quiver:
\begin{equation}\label{eq:decorated}
  \begin{CD}
    W_1(1) @<{\by_{1,2} = \beta_{1,\vq}B_{1,2,\vq^2}\alpha_{2,\vq^3}}<< W_2(\vq^3) 
    @>{\by_{3,2}=\beta_{3,\vq}B_{3,2,\vq^2}\alpha_{2,\vq^3}}>> W_3(1)
\\
    @A{\bx_1 = \beta_{1,\vq}\alpha_{1,\vq^2}}AA
    @VV{\bx_2 = \beta_{2,\vq^2}\alpha_{2,\vq^3}}V
    @AA{\bx_3 = \beta_{3,\vq}\alpha_{3,\vq^2}}A
\\
    W_1(\vq^2) @. W_2(\vq) @. W_3(\vq^2)
  \end{CD}
\end{equation}
The maps attached with arrows will soon be explained in the proof of
\propref{prop:ast_1}.

The following is a variant of a variety corresponding to a monomial in
$F_i$ in Lusztig's theory \cite[9.1.3]{Lu-book}.
\begin{Definition}
  \textup{(1)} Let $\nu = (\nu_i)\in \Z_{\ge 0}^I$. Let $\mathcal
  F(\nu,W)$ be the variety parametrizing collections of vector spaces
  $X = (X_i)_{i\in I}$ indexed by $I$ such that $\dim X_i = \nu_i$ and
\begin{equation*}
  X_i\subset W_i(1) \ (i\in I_0),
\qquad
  X_i\subset W_i(\vq)\oplus \bigoplus_{h\in\Omega:\vout(h)=i} X_{\vin(h)}
  \ (i\in I_1)
\end{equation*}
It is a kind of a partial flag variety and nonsingular projective.

\textup{(2)} Let $\Tilde{\mathcal F}(\nu,W)$ be the variety of all
triples $(\bigoplus \bx_i,\bigoplus \by_h,X)$ where $(\bigoplus
\bx_i,\bigoplus \by_h)\in \bE_W$ and $X\in\mathcal F(\nu,W)$
such that
\begin{equation*}
  \Ima \bx_i \subset X_i\ (i\in I_0),
\qquad
  \Ima \left(\bx_i \oplus \bigoplus_{h\in\Omega: \vout(h)=i} \by_h\right)
  \subset X_i\  (i\in I_1).
\end{equation*}
This is a vector bundle over $\mathcal F(\nu,W)$, and hence
nonsingular. Let $\pi_\nu\colon\Tilde{\mathcal F}(\nu,W)\to \bE_W$ be
the natural projection. It is a proper morphism.
\end{Definition}

\begin{Proposition}\label{prop:ast_1}
  Suppose $W$ satisfies $(\ast_\ell)$ with $\ell = 1$.

  \textup{(1)} If $\Nreg(V,W)\neq\emptyset$, we have
  \begin{equation}
    \label{eq:V1}
    \text{$V_i(a) = 0$ unless $a = \vq^{\xi_i+1}$.}
  \end{equation}
Moreover we have an isomorphism
\(
    \N_0(W) \cong \bE_W
\)
given by
\begin{equation*}
  [B,\alpha,\beta]
  \mapsto (\bigoplus_{i\in I} \bx_i ,
  \bigoplus_{h\in\Omega} \by_h);
  \qquad
  \bx_i = \beta_{i,\vq^{\xi_i+1}}\alpha_{i,\vq^{\xi_i+2}}, \quad
  \by_h =  \beta_{\vin(h),\vq} B_{h,\vq^2}\alpha_{\vout(h),\vq^3}.
\end{equation*}

\textup{(2)} Suppose that $V$ satisfies \eqref{eq:V1}.
Let us define $\nu\in\Z_{\ge 0}^I$ by $\nu_i = \dim V_i(\vq^{\xi_i+1})$.
Then $\N(V,W)$ is isomorphic to $\Tilde{\mathcal F}(\nu,W)$ and the
following diagram is commutative:
\begin{equation*}
  \begin{CD}
     \N(V,W) @>\cong>> \Tilde{\mathcal F}(\nu,W)
\\
     @V{\pi}VV  @VV{\pi_\nu}V
\\
     \N_0(W) @>\cong>> \bE_W
  \end{CD}
\end{equation*}
\end{Proposition}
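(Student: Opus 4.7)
The plan is to prove part (1) in two stages — first the vanishing of $V_i(a)$ at non-admissible grades, then the identification $\N_0(W)\cong\bE_W$ — and then to construct the isomorphism in part (2) by hand via the stability condition. For the vanishing I exploit the $l$-dominance condition \eqref{eq:l-dom}, which reads
\begin{equation*}
  \dim W_i(a)+\sum_{j\ne i}a_{ij}\dim V_j(a)\ge \dim V_i(a\vq)+\dim V_i(a\vq^{-1})
  \qquad\text{for every }(i,a).
\end{equation*}
Under $(\ast_1)$ the $W$-term is nonzero only when $a\in\{\vq^{\xi_i},\vq^{\xi_i+2}\}$. Let $n_0$ (resp.\ $n_1$) be the smallest (resp.\ largest) integer $n$ such that $V_j(\vq^n)\ne 0$ for some $j$. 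Applied at $(j,\vq^{n_0-1})$ for a minimizer $j$, every right-hand term vanishes by minimality of $n_0$ except $\dim V_j(\vq^{n_0})$, forcing $W_j(\vq^{n_0-1})\ne 0$ and hence $n_0\in\{\xi_j+1,\xi_j+3\}$; symmetrically $n_1\in\{\xi_{j'}-1,\xi_{j'}+1\}$. Because $\xi\in\{0,1\}$ and $n_0\le n_1$, only $n_0=\xi_j+1$ and $n_1=\xi_{j'}+1$ survive, so the support of $V$ is confined to the band $\{\vq,\vq^2\}$. Within this band, $l$-dominance at $(j,1)$ for $j\in I_1$ forces $V_j(\vq)=0$ (every right-hand contribution has already been ruled out), and then $l$-dominance at $(i,\vq)$ for $i\in I_0$ yields $\dim V_i(\vq^2)\le\sum_j a_{ij}\dim V_j(\vq)=0$, completing the vanishing.

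With the vanishing in hand, of the tuple $(B,\alpha,\beta)$ only $\alpha_i:=\alpha_{i,\vq^{\xi_i+2}}$, $\beta_i:=\beta_{i,\vq^{\xi_i+1}}$ and $B_h:=B_{h,\vq^2}$ for $h\in\Omega$ (which goes $I_1\to I_0$ by Definition~\ref{def:decorated}) can be nonzero; every other component vanishes for grading reasons, and the same inspection makes each summand of $\mu=0$ automatically zero. To identify $\N_0(W)$ with $\bE_W$ I use that $\C[\N_0(W)]$ is generated by the invariants \eqref{eq:generator}; under the support constraints, any admissible chain $\beta_jB_{h_n}\cdots B_{h_1}\alpha_i$ collapses to either the length-zero composite $\bx_i=\beta_i\alpha_i$ or the length-one composite $\by_h=\beta_{\vin(h)}B_h\alpha_{\vout(h)}$ for some $h\in\Omega$, because after one $B$-step we land in a $V_j$ with $j\in I_0$ from which no further $B$-step is admissible. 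This gives a surjection $\C[\bE_W]\twoheadrightarrow\C[\N_0(W)]$; injectivity (equivalently, surjectivity of the induced map on points) will follow from the explicit construction in part (2) below.

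For part (2), given $[B,\alpha,\beta]\in\N(V,W)$ I define
\begin{equation*}
  X_i=\im\beta_i\subset W_i(1)\ \ (i\in I_0),\qquad
  X_i=\im\Bigl(\beta_i\oplus\bigoplus_{\substack{h\in\Omega\\ \vout(h)=i}}\beta_{\vin(h)}B_h\Bigr)
  \subset W_i(\vq)\oplus\bigoplus_h X_{\vin(h)}\ \ (i\in I_1).
\end{equation*}
The stability condition of Definition~\ref{def:stable} is equivalent to the injectivity of $\beta_i$ when $i\in I_0$ (no outgoing $B$ exits such a $V_i$) and of the enlarged map when $i\in I_1$, so $\dim X_i=\nu_i$ and $X\in\mathcal F(\nu,W)$; the containments $\im\bx_i\subset X_i$ and $\im(\bx_i\oplus\bigoplus\by_h)\subset X_i$ follow by factoring through $V_i$. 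For the inverse, given $((\bx,\by),X)$ I set $V_i:=X_i$, let $\beta_i$ and the $B_h$'s be the projections from $X_i$ onto the summands of its ambient space, and define $\alpha_i$ as the unique lift of $\bx_i$ (resp.\ $\bx_i\oplus\bigoplus\by_h$) through the inclusion $X_i\hookrightarrow W_i(1)$ (resp.\ $W_i(\vq)\oplus\bigoplus X_{\vin(h)}$); stability is built into the construction, and the commutativity with $\pi$ and $\pi_\nu$ is immediate from the formulas defining $\bx_i$ and $\by_h$.

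The main obstacle is the vanishing step of the first paragraph: once it is in place the rest is an essentially mechanical unpacking of definitions, but confining the support of $V$ requires combining the global extremal bound with a careful local rerun that exploits the bipartite structure at the two remaining grades.
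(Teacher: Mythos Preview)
Your proof is correct, and for part (2) it is essentially identical to the paper's argument (the paper also characterizes stability as injectivity of $\beta_i$ for $i\in I_0$ and of $\sigma_{i,\vq}$ for $i\in I_1$, then builds the two mutually inverse maps exactly as you do). A couple of small slips: the parenthetical ``every right-hand contribution has already been ruled out'' at $(j,1)$ should read \emph{left-hand} (it is $W_j(1)$ and the $V_k(1)$ that vanish, leaving only $V_j(\vq)$ on the right); and in the inverse construction for $i\in I_0$ the map $\beta_i$ is an inclusion rather than a projection.

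The genuine difference is in part (1). The paper does \emph{not} invoke $l$-dominance; instead it argues directly from the generators \eqref{eq:generator} of $\C[\N_0(V,W)]$: since a path $\beta B_{h_n}\cdots B_{h_1}\alpha$ must begin and end in $W$, the hypothesis $(\ast_1)$ on $W$ alone forces $n\le 1$ and pins down the grades, regardless of $V$. This simultaneously shows $\N_0(W)=\N_0(V,W)$ for $V$ supported at $\vq^{\xi_i+1}$ (yielding the isomorphism with $\bE_W$ via the standard fact $\Hom(W,V)\oplus\Hom(V,W')\dslash\GL(V)\cong\Hom(W,W')$) and, since a closed orbit with nonzero $V_i(a)$ at an extraneous grade cannot be free, the vanishing. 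Your route instead proves the vanishing first via \eqref{eq:l-dom} through an extremal-support argument, then feeds it into the coordinate-ring analysis, and finally defers surjectivity onto $\bE_W$ to the explicit lift in part~(2). Your $l$-dominance argument is cleaner and more self-contained for the vanishing step; the paper's approach has the advantage that the coordinate-ring analysis needs no hypothesis on $V$ at all, so the isomorphism $\N_0(W)\cong\bE_W$ comes out in one stroke without appealing to part~(2).
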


\begin{NB}
  We assume $\bx_i \oplus \bigoplus_{h: \vout(h)=i} \by_h$ is injective
  and apply the reflection functor at the vertex $W_i(\vq^3)$
  for $i\in I_0$. We denote the resulting data by putting
  $\setbox5=\hbox{A}\overline{\rule{0mm}{\ht5}\hspace*{\wd5}}$.  We
  set 
\[
   \overline{W}_i(\vq^3) := 
   W_i(\vq)\oplus \bigoplus_h W_{\vin(h)}(1)/
   \Ima \left(\bx_i \oplus \bigoplus_{h: \vout(h)=i} \by_h\right)
\]
and define natural linear maps $\overline{\bx}_i\colon W_i(\vq)\to
\overline{W}_i(\vq^3)$, $\overline{\by}_{\overline{h}}\colon
W_{\vin(h)}(1)\to \overline{W}_i(\vq^3)$ as compositions of inclusions
and projections. From the last condition, we can view $X_i''$ as a subspace
of $\overline{W}_i(\vq^3)$.
\end{NB}

\begin{NB}
  If $V_i(a)$ is as in (2), the only relevant complexes are
  \begin{gather*}
   C_{i,\vq^3}^\bullet(V,W):
  0 = V_i(\vq^4)
\xrightarrow{\sigma_{i,a}}
  \displaystyle{\bigoplus_{h:\vin(h)=i}}
     V_{\vout(h)}(\vq^3)
    \oplus W_i(\vq^3)
    = W_i(\vq^3)
\xrightarrow{\tau_{i,a}}
  V_i(\vq^2)
  \qquad (i\in I_1),
\\
   C_{i,\vq^2}^\bullet(V,W):
  0 = V_i(\vq^3)
\xrightarrow{\sigma_{i,a}}
  \displaystyle{\bigoplus_{h:\vin(h)=i}}
     V_{\vout(h)}(\vq^2)
    \oplus W_i(\vq^2)
\xrightarrow{\tau_{i,a}}
  V_i(\vq)
  \qquad (i\in I_0),
\\
   C_{i,\vq}^\bullet(V,W):
  V_i(\vq^2)
\xrightarrow{\sigma_{i,a}}
  \displaystyle{\bigoplus_{h:\vin(h)=i}}
     V_{\vout(h)}(\vq)
    \oplus W_i(\vq)
\xrightarrow{\tau_{i,a}}
  V_i(1) = 0
  \qquad (i\in I_1),
\\
   C_{i,1}^\bullet(V,W):
   V_i(\vq)
\xrightarrow{\sigma_{i,a}}
  \displaystyle{\bigoplus_{h:\vin(h)=i}}
     V_{\vout(h)}(1)
    \oplus W_i(1)
    = W_i(1)
\xrightarrow{\tau_{i,a}}
  V_i(\vq^{-1}) = 0
  \qquad (i\in I_0).
  \end{gather*}
Therefore $(V,W)$ is {\it l\/}-dominant if and only if
\begin{gather*}
  \dim W_i(\vq^3)\ge \dim V_i(\vq^2)\ (i\in I_1),
\quad
  \sum_j a_{ij} \dim V_j(\vq^2) + \dim W_i(\vq^2) \ge \dim V_i(\vq)\ (i\in I_0),
\\
  \sum_j a_{ij} \dim V_j(\vq) + \dim W_i(\vq) \ge \dim V_i(\vq^2)\ (i\in I_1),
\quad
  \dim V_i(\vq) \le \dim W_i(1)\ (i\in I_0).
\end{gather*}
\end{NB}

\begin{NB}
  We give the formula of $d(V^1,W^1;V^2,W^2)$ under the assumption
($\ast_{\ell=1}$) and \eqref{eq:V1}:
\begin{equation*}
  \begin{split}
  & d(V^1,W^1;V^2,W^2) =
    \boldsymbol\langle \dim V^1,
  \vq^{-1}(\dim W^2 - \bC_\vq\dim V^2)\boldsymbol\rangle
  + \boldsymbol\langle \dim V^2, \vq\dim W^1\boldsymbol\rangle
\\
  =\; &
  \begin{gathered}[t]
  \sum_{i\in I_0} v^1_i(\vq) (w^2_i(1) - v^2_i(\vq))
  + \sum_{i\in I_1} v^1_i(\vq^2)\left(w^2_i(\vq) - v^2_i(\vq^2)
  + \sum_j a_{ij} v^2_j(\vq)\right)
\\
  + \sum_{i\in I} v^2_i(\vq^{\xi_i+1}) w^2_i(\vq^{\xi_i+2})
  \end{gathered}
  \end{split}
\end{equation*}
If we further assume $W_i(\vq^2) = 0 = W_j(\vq)$, we have
\begin{equation*}
  \sum_{i\in I_0} v^1_i(\vq) (w^2_i(1) - v^2_i(\vq))
  + \sum_{i\in I_1} v^1_i(\vq^2)\left(- v^2_i(\vq^2)
  + \sum_j a_{ij} v^2_j(\vq)\right)
  + \sum_{i\in I_1} v^2_i(\vq^{2}) w^2_i(\vq^{3})
\end{equation*}
\end{NB}

\begin{proof}
  (1) Recall that the coordinate ring of $\N_0(V,W)$ is generated by
  functions given by \eqref{eq:generator}.

Consider a map
  \begin{equation*}
    \beta_{j,a \vq^{-n-1}} B_{h_n,\vq^{-n}}
      \dots
      B_{h_1,a\vq^{-1}}\alpha_{i,a}
      \colon
    W_i(a) \to W_j(a q^{-n-2})
  \end{equation*}
  with $\vin(h_a) = \vout(h_{a+1})$ for $a=1,\dots, n-1$. From the
  assumption $(\ast_1)$, this is nonzero only when $i=j$, $n=0$, $a =
  q^{\xi_i+2}$ or $n=1$, $i\in I_1$, $j\in I_0$, $a = q^3$. From this
  observation we have
  \begin{equation*}
    \N_0(W) = \N_0(V,W),
  \end{equation*}
  for some $V$ with $V_i(a) = 0$ unless $a = q^{\xi_i+1}$. 
  Thus we obtain the first assertion.
  Moreover, the equation $\mu(B,\alpha,\beta) = 0$ is automatically
  satisfied, and the second assertion follows from a standard fact
  $\Hom(W,V)\oplus \Hom(V,W')\dslash \GL(V)\cong \Hom(W,W')$ for $V$
  with $\dim V \ge \min(\dim W,\dim W')$.
  \begin{NB}
    First consider the action by $\prod_{i\in I_0}\GL(V_i(2))$. Then
    from the above fact, the quotient is
    \begin{equation*}
      \begin{split}
      & \bigoplus_{i\in I_0} \Hom(W_i(3), W_i(1))
      \oplus \bigoplus_{\substack{i\in I_0 \\j\in I_1}}
      \Hom(W_i(3), V_j(1))^{\oplus a_{ij}} \oplus
      \bigoplus_{j\in I_1} \Hom(V_j(1), W_j(0))
      \oplus \Hom(W_j(2), V_j(1))
\\        
     =\; &
     \bigoplus_{i\in I_0} \Hom(W_i(3), W_i(1)) \oplus
     \bigoplus_{j\in I_1}      
      \Hom(W_j(2)\oplus \bigoplus_{i\in I_0} W_i(3)^{\oplus a_{ij}}, 
      V_j(1))
      \oplus \Hom(V_j(1), W_j(0)).
      \end{split}
    \end{equation*}
    Now we take the quotient by $\prod_{j\in I_1} \GL(V_j(1))$.
  \end{NB}

(2) We first observe the following:
\begin{Claim}
  Under the assumption $(B,\alpha,\beta)$ is stable if and only if the
  following linear maps are all injective:
  \begin{equation*}
    \beta_{i,\vq}\colon V_i(\vq)\to W_i(1) \ (i\in I_0),
    \qquad
    \sigma_{i,\vq} \colon V_i(\vq^2)
    \to \bigoplus_{h:\vout(h)=i} V_{\vin(h)}(\vq)\oplus W_i(\vq)
    \ (i\in I_1).
  \end{equation*}
(See \eqref{eq:taut_cpx_fixed} and the subsequent formula for the
definition of $\sigma_{i,\vq}$.)
\end{Claim}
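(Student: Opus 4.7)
The plan is to reduce the stability condition to the two injectivity statements by exploiting how drastically \eqref{eq:V1} constrains where $V$ lives. First I will catalogue which components of $B$, $\alpha$, $\beta$ can be nonzero under $(\ast_1)$ together with \eqref{eq:V1}. Since $V_i$ is supported at $\vq$ for $i\in I_0$ and at $\vq^2$ for $i\in I_1$, the only $B$-components $B_{h,a}\colon V_{\vout(h)}(a)\to V_{\vin(h)}(a\vq^{-1})$ that are not automatically zero are $B_{h,\vq^2}\colon V_{\vout(h)}(\vq^2)\to V_{\vin(h)}(\vq)$ with $h\in\Omega$ (so $\vout(h)\in I_1$, $\vin(h)\in I_0$); for $h\in\overline\Omega$ the target $V_{\vin(h)}(1)$ vanishes. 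Similarly the surviving $\beta$-components are $\beta_{i,\vq}\colon V_i(\vq)\to W_i(1)$ for $i\in I_0$ and $\beta_{i,\vq^2}\colon V_i(\vq^2)\to W_i(\vq)$ for $i\in I_1$. Substituting into \eqref{eq:taut_cpx_fixed} at $a=\vq$ and reindexing $h\leftrightarrow\overline h$, the map $\sigma_{i,\vq}$ becomes exactly $\bigoplus_{h\in\Omega,\ \vout(h)=i} B_{h,\vq^2}\oplus\beta_{i,\vq^2}$ appearing in the claim.

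For the forward direction, assume stability. Given $v\in\Ker\beta_{i,\vq}$ with $i\in I_0$, set $V'_i(\vq)=\C v$ and all other components zero. Every outgoing $B$-component from $V_i(\vq)$ targets $V_{\vin(h)}(1)=0$, and every incoming $B$-component lands in $V'_i(\vq)$ trivially because the sources have zero $V'$-part, so $V'$ is $B$-invariant. By construction $V'\subset\Ker\beta$, so stability yields $V'=0$, i.e.\ $v=0$. Next, given $v\in\Ker\sigma_{i,\vq}$ with $i\in I_1$, put $V'_i(\vq^2)=\C v$; the hypothesis $\sigma_{i,\vq}(v)=0$ says precisely that $B_{h,\vq^2}(v)=0$ for every $h\in\Omega$ with $\vout(h)=i$ and $\beta_{i,\vq^2}(v)=0$, so $V'$ is again $B$-invariant, lies in $\Ker\beta$, and stability forces $v=0$.

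For the converse, assume both injectivity conditions and let $V'\subset V$ be a $B$-invariant graded subspace with $V'\subset\Ker\beta$. For $i\in I_0$ the condition $\beta_{i,\vq}(V'_i(\vq))=0$ together with injectivity of $\beta_{i,\vq}$ gives $V'_i(\vq)=0$. For $i\in I_1$, $B$-invariance combined with the vanishing just obtained implies $B_{h,\vq^2}(V'_i(\vq^2))\subset V'_{\vin(h)}(\vq)=0$ for every $h\in\Omega$ with $\vout(h)=i$; this together with $\beta_{i,\vq^2}(V'_i(\vq^2))=0$ yields $V'_i(\vq^2)\subset\Ker\sigma_{i,\vq}=0$. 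Hence $V'=0$, proving stability.

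No step presents a real obstacle. The only delicate point is the bookkeeping of spectral degrees and of the two orientations, in particular identifying the sum $\bigoplus_{\vout(h)=i}$ in the claim with the sum $\bigoplus_{\vin(h)=i}$ in \eqref{eq:taut_cpx_fixed} via $h\leftrightarrow\overline h$; both descriptions parametrise the same set of edges incident to $i\in I_1$, and the map $B_{\overline h,\vq^2}$ from the complex matches the map $B_{h,\vq^2}$ in the claim under this relabelling.
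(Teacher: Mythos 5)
Your proof is correct and takes essentially the same route as the paper: in the forward direction you build an admissible graded subspace from the kernel and invoke stability, and in the converse you first kill $V'_i(\vq)$ for $i\in I_0$ and then feed $B$-invariance into the injectivity of $\sigma_{i,\vq}$. The only cosmetic difference is that the paper sets $V'_i(\vq)=\Ker\beta_{i,\vq}$ (the whole kernel) rather than a single line, and leaves the $h\leftrightarrow\overline h$ relabelling of $\sigma_{i,\vq}$ implicit.
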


Consider the $I\times\C^*$-graded vector space given by $V'_i(\vq) =
\Ker\beta_{i,\vq}$ and all other $V'_j(a) = 0$. Then the stability
condition implies $V'_i(\vq) = 0$. Therefore $\beta_{i,\vq}$ is injective.
The same argument shows the injectivity of $\sigma_{i,\vq}$.
Conversely suppose all the above maps are injective. Take an
$I\times \C^*$-graded subspace $V'$ of $V$ as in \defref{def:stable}.
First consider $V'_i(\vq)$ for $i\in I_0$. We have $\beta_{i,\vq}|
V'_i(\vq) = 0$. Therefore the injectivity of $\beta_{i,\vq}$ implies
$V'_i(\vq) = 0$. Next consider $V'_j(\vq^2)\subset V_j(\vq^2)$ for $j\in
I_0$. We have $\beta_{j,\vq^2}|V'_j(\vq^2) = 0$ from the assumption. We
also have $B_{\overline h,\vq^2}(V'_j(\vq^2))\subset V'_i(\vq) = 0$ from
what we have just proved. Therefore the injectivity of
$\sigma_{j,\vq}$ implies that $V'_j(\vq^2) = 0$. This completes the
proof of the claim.

Suppose $[B,\alpha,\beta]\in\N(V,W)$ is given. We set 
\begin{gather*}
   \widetilde\sigma_{i,\vq}
   := 
   \left(\bigoplus_{h: \vout(h)=i} \beta_{\vin(h),\vq}\oplus \id_{W_i(\bq)}
    \right)\circ \sigma_{i,\vq}\colon
    V_i(\vq^2)
    \to \bigoplus_{h:\vout(h)=i} W_{\vin(h)}(1)\oplus W_i(\vq),
\\  
   X_i :=\Ima\beta_{i,\vq}\ (i\in I_0),
\qquad
   X_i :=\Ima\widetilde\sigma_{i,\vq}\  (i\in I_1).
\end{gather*}
The spaces $X_i$ are independent of the choice of a representative
$(B,\alpha,\beta)$ of $[B,\alpha,\beta]$.
From the above claim, we have $\dim X_i = \dim V_i(\vq)$ ($i\in I_0$)
and $\dim X_i = \dim V_i(\vq^2)$ ($i\in I_1$). The remaining
properties are automatically satisfied by the construction.
\begin{NB}
  We have
  $\Ima \bx_i = \Ima (\beta_{i,\vq}\alpha_{i,\vq^2})
  \subset \Ima \beta_{i,\vq} = X_i$ for $i\in I_0$. We also have
  \begin{equation*}
    \begin{split}
      & \Ima\left(\bx_i \oplus \bigoplus_{h: \vout(h)=i} \by_h\right)
  = \Ima\left(\beta_{i,\vq^2}\alpha_{i,\vq^3}\oplus\bigoplus
  _{h: \vout(h)=i} \beta_{\vin(h),\vq} B_{h,\vq^2}\alpha_{i,\vq^3}\right)
\\
   \subset\; &
    \Ima\left(\beta_{i,\vq^2}\oplus\bigoplus
  _{h: \vout(h)=i} \beta_{\vin(h),\vq} B_{h,\vq^2}\right)
    = X_i.
    \end{split}
  \end{equation*}
\end{NB}%

Conversely suppose that $(\bigoplus \bx_i,\bigoplus \by_h, X)$ is
given. We set $V_i(\vq) := X_i$ ($i\in I_0$), $V_i(\vq^2) := X_i$
($i\in I_1$) and define linear maps $(B,\alpha,\beta)$ by
\begin{gather*}
   \beta_{i,\vq} := (\text{the inclusion $X_i\subset W_i(1)$}),
   \quad \alpha_{i,\vq^2} := \bx_i \ (i\in I_0),
\\
   \beta_{i,\vq^2} \oplus \bigoplus_{h:\vout(h)=i} B_{h,\vq^2}
   := \left(\text{the inclusion $X_i\subset W_i(\vq) \oplus\bigoplus
     X_{\vin(h)}$}\right),
   \quad \alpha_{i,\vq^3} := \bx_i \ (i\in I_1).
\end{gather*}
From the claim, the data $(B,\alpha,\beta)$ is stable and defines a
point in $\N(V,W)$. These two assignments are inverse to each other,
hence they are isomorphisms.
\end{proof}

\subsection{A contravariant functor $\sigma$}\label{subsec:sigma} 
For a later application we study the description in
\propref{prop:ast_1}(2) further.
By (2) $\N(V,W)\cong\Tilde{\mathcal F}(\nu,W)$ can be considered as a
vector bundle over $\mathcal F(\nu,W)$. It is naturally a subbundle of
the trivial bundle $\mathcal F(\nu,W)\times \bE_W$. Let
$\Tilde{\mathcal F}(\nu,W)^\perp$ be its annihilator in the dual
trivial bundle $\mathcal F(\nu,W)\times\bE_W^*$ and let $\pi^\perp\colon
\Tilde{\mathcal F}(\nu,W)^\perp\to\bE_W^*$ be the natural projection.
We denote the dual variables of $\bx_i$, $\by_h$ by $\bx_i^*$,
$\by^*_{\overline{h}}$ respectively, i.e.
\begin{equation*}
   \bx_i^*\in \Hom(W_i(\vq^{\xi_i}), W_i(\vq^{\xi_i+2})),
\qquad
   \by^*_{\overline{h}}\in \Hom(W_{\vin(h)}(1), W_{\vout(h)}(\vq^3)).
\end{equation*}
By (2) $((\bigoplus \bx_i^*,\bigoplus \by^*_{\overline{h}}),X)$ is
contained in $\Tilde{\mathcal F}(\nu,W)^\perp$ if and only if
\begin{equation}\label{eq:dual}
   \bx_i^*({X_i}) = 0 \ (i\in I_0),
\qquad
   \left(\bx_i^* + \sum_{h:\vout(h)=i} \by^*_{\overline{h}} \right)({X_i}) = 0
   \ (i\in I_1).
\end{equation}
\begin{NB}
  \begin{equation*}
  \begin{CD}
    W_1(1) @>{\by_{1,2}^{*}}>> W_2(\vq^3) 
    @<{\by_{3,2}^{*}}<< W_3(1)
\\
    @V{\bx_1^*}VV
    @AA{\bx_2^{*}}A
    @VV{\bx_3^*}V
\\
    W_1(\vq^2) @. W_2(\vq) @. W_3(\vq^2)
  \end{CD}
\end{equation*}
\end{NB}

It will be important to understand a fiber of $\pi^\perp$ on a general
point $(\bigoplus \bx^*_i,\bigoplus \by^*_{\overline{h}})$ in
$\bE_W^*$.
Since considering a subspace $X_i$ in $W_i(\vq)\oplus W_{\vin(h)}(1)$
looks slightly strange, let us apply the Bernstein-Gelfand-Ponomarev
reflection functors \cite{BGP} (see \cite[VII.5]{ASS}) to $(\bigoplus
\bx^*_i,\bigoplus \by^*_{\overline{h}})$ at all the vertexes $i\in I_1$
(where $W_i(\vq^3)$ is put).
First observe that $(\pi^\perp)^{-1}(\bigoplus \bx^*_i,\bigoplus
\by^*_{\overline{h}})$ is unchanged even if we replace $W_i(\vq^3)$ by
the image of the map
\begin{equation}\label{eq:surj}
     \bx_i^* + \sum_{h:\vout(h)=i} \by^*_{\overline{h}}
     \colon W_i(\vq) \oplus\bigoplus_{h:\vout(h)=i} W_{\vin(h)}(1)
     \to W_i(\vq^3)
\end{equation}
for all $i\in I_1$.
\begin{NB}
  We have
  \begin{equation*}
    \dim \Ima(\bx_i^* + \sum_{h:\vout(h)=i} \by^*_{\overline{h}})
    = \min
    \left(\dim W_i(\vq^3), \dim W_i(\vq)+\sum_{h:\vout(h)=i} \dim W_{\vin(h)}(1)
      \right).
  \end{equation*}
\end{NB}%
Then we may assume $\bx_i^* + \sum_{h:\vout(h)=i}
\by^*_{\overline{h}}$ is surjective. Then we can go back to
$(\bigoplus \bx_i^*, \bigoplus \by^*_{\overline{h}})$ by the inverse
reflection functor. Hence the following operation gives an isomorphism
between the relevant varieties.

We set
\begin{equation*}
   \lsp{\sigma}W_i(\vq^3) \defeq 
   \Ker \left(\bx_i^* + \sum_{h:\vout(h)=i} \by^*_{\overline{h}}\right),
\end{equation*}
and define linear maps
\(
   \lsp{\sigma}\bx_i\colon \lsp{\sigma}W_i(\vq^3)\to W_i(\vq)
\)
($i\in I_1$),
\(
   \lsp{\sigma}\by_h\colon \lsp{\sigma}W_i(\vq^3)\to W_{\vin(h)}(1)
\)
($h\in H$ with $\vout(h) = i\in I_1$) as the compositions of the
inclusion 
\(
  \lsp{\sigma}W_i(\vq^3)\to
  W_i(\vq) \oplus\bigoplus_{h:\vout(h)=i} W_{\vin(h)}(1)
\)
and the projections to factors.
We have
\begin{NB}
\begin{equation*}
   \dim \lsp{\sigma}W_i(\vq^3) = 
   \dim W_i(\vq)+\sum_{h:\vout(h)=i} \dim W_{\vin(h)}(1) -
   \min
    \left(\dim W_i(\vq^3), \dim W_i(\vq)+\sum_{h:\vout(h)=i} \dim W_{\vin(h)}(1)
      \right)   
\end{equation*}
\end{NB}
\begin{equation}\label{eq:sigmaW}
   \dim \lsp{\sigma}W_i(\vq^3) = \max\left(
        \dim W_i(\vq)+\sum_{h:\vout(h)=i} \dim W_{\vin(h)}(1) - \dim W_i(\vq^3),
        0\right).
\end{equation}
We denote by $\lsp{\sigma}{W}$ the new $I\times\C^*$-graded vector
space given obtained from $W$ by replacing $W_i(\vq^3)$ by
$\lsp{\sigma}W_i(\vq^3)$ for all $i\in I_1$. We also 
set $\lsp{\sigma}\bx_i = \bx_i^*$ for $i\in I_0$.
We do not change $W_i(1)$, $W_i(\vq^2)$ for $i\in I_0$ and $W_i(\vq)$
for $i\in I_1$.

We consider $X_i$ ($i\in I_1$) as a subspace of
$\lsp{\sigma}{W}_i(\vq^3)$ thanks to the second equation of
\eqref{eq:dual}.
Since $X_i$ was originally a subspace of
$W_i(q)\oplus\bigoplus_{h:\vout(h)=i} X_{\vin(h)}$, the above
definition implies $\lsp{\sigma}\by_h(X_{\vout(h)})\subset
X_{\vin(h)}$.

For the convenience we change the notation for a subspace from $X_i$
to $X_i(1)$ ($i\in I_0$) or $X_i(\vq^3)$ ($i\in I_1$) to indicate the
$\C^*$-grading. We also set $X_i(\vq^2) = 0$ for $i\in I_0$ and
$X_i(\vq) = \lsp{\sigma}{W}_i(\vq)$ for $i\in I_1$. Under these
definition, $\lsp{\sigma}\bx_i(X_i(1))\subset X_i(\vq^2)$ is nothing
but the first equation in \eqref{eq:dual}, and
$\lsp{\sigma}\bx_i(X_i(\vq^3))\subset X_i(\vq)$ is automatically true.
Thus the conditions can be phrased simply as `$X$ is invariant under
$(\bigoplus_{i\in I} \lsp{\sigma}\bx_i, \bigoplus_{h\in\Omega}
\lsp{\sigma}\by_h)$'.

\begin{Lemma}\label{lem:reflect}
Let $\lsp{\sigma}\bx_i$, $\lsp{\sigma}\by_h$ be as above.
Then $(\pi^\perp)^{-1}(\bigoplus \bx^*_i,\bigoplus
\by^*_{\overline{h}})$ is isomorphic to the variety of
$I\times\C^*$-graded subspaces $X$ of $\lsp{\sigma}{W}$ satisfying
\begin{gather*}
  X_i(\vq^2) = 0\ (i\in I_0), \qquad X_i(\vq) = \lsp{\sigma}{W}_i(\vq)
  \ (i\in I_1),
  \\
  \dim X_i(1) = \dim V_i(\vq)\ (i\in I_0), \qquad \dim X_i(\vq^3) =
  \dim V_i(\vq^2)\ (i\in I_1),
  \\
  \text{$X$ is invariant under $(\bigoplus_{i\in I} \lsp{\sigma}\bx_i,
    \bigoplus_{h\in\Omega} \lsp{\sigma}\by_h)$}.
\end{gather*}
\end{Lemma}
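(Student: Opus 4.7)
The plan is to translate the two conditions in \eqref{eq:dual} defining the fiber of $\pi^\perp$ directly into the invariance conditions on the graded subspace $X \subset \lsp{\sigma}{W}$, using only the definitions of $\lsp{\sigma}{W}_i(\vq^3)$, $\lsp{\sigma}\bx_i$, $\lsp{\sigma}\by_h$. I will set up a bijection at the level of sets and then observe it is algebraic in both directions.

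First I would deal with the vertexes $i \in I_0$. By \propref{prop:ast_1}(2), a point in the fiber $(\pi^\perp)^{-1}(\bigoplus \bx^*_i,\bigoplus \by^*_{\overline{h}})$ picks out a subspace $X_i \subset W_i(1)$ of dimension $\dim V_i(\vq)$, and the first equation of \eqref{eq:dual} reads $\bx_i^*(X_i) = 0$. With the relabeling $X_i(1) := X_i$, $X_i(\vq^2) := 0$ and $\lsp{\sigma}\bx_i := \bx_i^*$, this is exactly $\lsp{\sigma}\bx_i(X_i(1)) \subset X_i(\vq^2)$, i.e.\ invariance of $X$ under $\lsp{\sigma}\bx_i$ at $i \in I_0$.

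Next I turn to $i \in I_1$, where the statement is slightly subtler because the original $X_i$ lives in $W_i(\vq)\oplus\bigoplus_{h:\vout(h)=i} X_{\vin(h)}$, a subspace that itself depends on the other $X_{\vin(h)}$'s. The second equation of \eqref{eq:dual} says $X_i$ is killed by $\bx_i^* + \sum_{h:\vout(h)=i} \by^*_{\overline h}$, which is by definition the inclusion $X_i \subset \lsp{\sigma}{W}_i(\vq^3)$. Setting $X_i(\vq^3) := X_i$, $X_i(\vq) := \lsp{\sigma}{W}_i(\vq) = W_i(\vq)$, the invariance $\lsp{\sigma}\bx_i(X_i(\vq^3)) \subset X_i(\vq)$ is automatic, while $\lsp{\sigma}\by_h(X_i(\vq^3))$ is precisely the projection of $X_i \subset W_i(\vq)\oplus\bigoplus W_{\vin(h)}(1)$ onto the $W_{\vin(h)}(1)$-factor; requiring this to lie in $X_{\vin(h)}(1) = X_{\vin(h)}$ recovers the original membership condition $X_i \subset W_i(\vq)\oplus\bigoplus X_{\vin(h)}$ from \propref{prop:ast_1}(2). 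The dimension constraints $\dim X_i(1) = \dim V_i(\vq)$ and $\dim X_i(\vq^3) = \dim V_i(\vq^2)$ match the ones built into $\mathcal F(\nu,W)$ via the identification $\nu_i = \dim V_i(\vq^{\xi_i+1})$.

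This gives a set-theoretic bijection between the fiber and the variety in the statement, and both sides are cut out of products of Grassmannians by the same algebraic equations, so the bijection is an isomorphism of schemes. The only point requiring care is the observation (already recorded in the paragraph preceding the lemma) that quotienting $W_i(\vq^3)$ to the image of $\bx_i^* + \sum \by^*_{\overline h}$ does not change the fiber, which justifies that replacing $W_i(\vq^3)$ by $\lsp{\sigma}{W}_i(\vq^3)$ in the ambient space is harmless; I would record this explicitly and then the proof reduces to the bookkeeping above. I do not expect a genuine obstacle — the content is really the correct packaging of \eqref{eq:dual} in terms of a reflected quiver, so the main task is to state the grading and labeling conventions precisely enough that the two descriptions are visibly the same.
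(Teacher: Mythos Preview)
Your proposal is correct and follows essentially the same approach as the paper: the paper's argument is contained entirely in the discussion preceding the lemma statement, where it translates the two conditions in \eqref{eq:dual} into the invariance conditions for $X \subset \lsp{\sigma}W$ via exactly the same case analysis ($i\in I_0$ versus $i\in I_1$) and relabeling you describe. The lemma is stated as a summary of that discussion rather than given a separate proof, so your write-up simply makes explicit what the paper leaves implicit.
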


This variety is what people call the {\it quiver Grassmannian\/}
associated with the quiver representation $(\bigoplus_{i}
\lsp{\sigma}\bx_i, \bigoplus_{h\in\Omega} \lsp{\sigma}\by_h)$. Its
importance in the cluster algebra theory was first noticed in
\cite{CalderoChapoton}.
We will be interested only in its Poincar\'e polynomial, which is
independent of the choice of a general point, we denote this variety
simply by $\Gr_{V}(\lsp{\sigma}{W})$, suppressing the choice
$(\bigoplus_{i} \lsp{\sigma}\bx_i, \bigoplus_{h\in\Omega}
\lsp{\sigma}\by_h)$.
\begin{NB}
  I am not sure that the variety itself is independent of the choice.
\end{NB}%
Note also that the $I$-grading is only relevant in
$\Gr_V(\lsp{\sigma}W)$. Therefore we use this notation also for
an $I$-graded vector space $V$.

Note that the orientation is different from the decorated quiver
\eqref{eq:decorated}.
This corresponds to the cluster algebra with principal coefficients
considered in \subsecref{subsec:F-pol}. Therefore we call
it the quiver with {\it principal decoration}.
For example, in type $A_3$ with $I_0 = \{1,3\}$, we get the following
quiver:
\begin{equation}\label{eq:ori'}
  \begin{CD}
    W_1(1) @<{\lsp{\sigma}\by_{1,2}}<< \lsp{\sigma}W_2(\vq^3) 
    @>{\lsp{\sigma}\by_{3,2}}>> W_3(1)
\\
    @V{\lsp{\sigma}\bx_1=\bx_1^*}VV
    @VV{\lsp{\sigma}\bx_2}V
    @VV{\lsp{\sigma}\bx_3=\bx_3^*}V
\\
    W_1(\vq^2) @. W_2(\vq) @. W_3(\vq^2)
  \end{CD}
\end{equation}

\begin{Remark}
  The quiver Grassmannian is a fiber of a projective morphism, which
  played a fundamental role in Lusztig's construction of the canonical
  base. It is denoted by $\pi_{\boldsymbol\nu}\colon\Tilde{\mathcal
    F}_{\boldsymbol\nu}\to \bE_{\mathbf V}$ in
  \cite[Part~II]{Lu-book}. But note that Lusztig considered more
  generally various spaces of {\it flags\/} not only subspaces.
\end{Remark}

Later it will be useful to view $\sigma$ as a functor between category
of representations of quivers.
Let $\rep\widetilde\cQ$ be the category of finite dimensional
representations of the decorated quiver $\widetilde\cQ$.
Let $\lsp{\sigma}{\!\widetilde\cQ}$ be the quiver with the principal
decoration obtained by reversing the arrows between $i$ and $i'$ for
$i\in I_0$ as above. Let $\rep\lsp{\sigma}{\!\widetilde\cQ}$ be the
corresponding category and
$\rep\lsp{\sigma}{\!\widetilde\cQ^{\mathrm{op}}}$ be its opposite
category. Then $\sigma$ is the functor
\begin{equation*}
  \lsp{\sigma}(\bullet) = \prod_{i\in I_1} \Phi_i^- \circ D(\bullet)\colon 
  \rep\widetilde\cQ \to \rep\lsp{\sigma}{\!\widetilde\cQ^{\mathrm{op}}},
\end{equation*}
where $\Phi_i^-$ is the reflection functor at the vertex for
$W_i(\vq^3)$ and $D$ is the duality operator
\begin{equation*}
   D(\bullet) = \Hom_\C(\bullet,\C).
\end{equation*}
In order to make an identification with the above picture, we fix an
isomorphism $W\cong W^*$ of $(I\sqcup I_\fr)$-graded vector spaces.
\begin{NB}
  But it is for the Fourier transform, so the pairing between $\bE_W$
  and $\bE_W^*$ is important.
\end{NB}

Let $\rep^-\widetilde\cQ$ be the full subcategory of
$\rep\widetilde\cQ$ consisting of representations having no direct
summands isomorphic to simple modules corresponding to vertexes $i\in
I_1$. Similarly we define
$\rep^-\lsp{\sigma}{\!\widetilde\cQ^{\mathrm{op}}}$.  Then $\sigma$
defines an equivalence between $\rep^-\widetilde\cQ$ and
$\rep^-\lsp{\sigma}{\!\widetilde\cQ^{\mathrm{op}}}$.
We write the quasi-inverse functor $\sigma_- = D\circ \prod_{i\in
  I_1}\Phi^+_i$.

In fact, it is more elegant to consider $\sigma$ as a functor between
derived categories of $\rep\widetilde\cQ$ and
$\rep\lsp{\sigma}{\!\widetilde\cQ^{\mathrm{op}}}$ as in
\cite[IV.4.Ex.~6]{GM}. See also \remref{rem:derived}.

\section{From Grothendieck rings to cluster algebras}\label{sec:hom}

Since $W$ always satisfies ($\ast_{\ell=1}$) hereafter, we denote
$W_i(\vq^{3\xi_i})$ and $W_i(\vq^{2-\xi_i})$ by $W_i$ and $W_{i'}$
respectively. This is compatible with the notation in
\defref{def:decorated} as $W_i(\vq^{2-\xi_i})$ is on the new vertex
$i'$.

We denote the simple modules of the decorated quiver by $S_i$, $S_{i'}$
corresponding to vertexes $i\in I$, $i'\in I_\fr$.
We will consider modules of two completely different algebras,
(a) modules in $\mathscr R_{\mathrm{conv}}$ (or of $\Ulq$) and
(b) modules of the decorated quiver.
Simple modules for the former will be denoted by $L(W)$, while $S_i$,
$S_{i'}$ for the latter. We hope there will be no confusion. We denote
the underlying $\widetilde I = (I\sqcup I_\fr)$-graded vector space of
$S_i$, $S_{i'}$ also by the same letter.

The Grothendieck ring $\bfR_\ell$ is a polynomial ring in the classes
$L(W)$ with $\dim W = 1$ satisfying $(\ast_\ell)$ ({\it
  l\/}-fundamental representations in $\mathscr C_\ell$ when $\g$ is
of type $ADE$). This result was proved as a consequence of the theory
of $q$-characters in \cite[Prop.~3.2]{HerLec} for $\g$ of type
$ADE$. Since $q$-characters make sense for arbitrary $\g$, the same
argument works.
The corresponding result for the whole category $\mathscr R$ is
well-known.

For $\bfR_{\ell=1}$, we have $2\# I$ variables corresponding to {\it
  l\/}-fundamental representations. We denote them by $x_i$ and
$x_{i}'$ exchanging $i$ and $i'$ from the index of the decorated
quiver (\defref{def:decorated}):
\begin{equation}\label{eq:name}
   x_i = L(W) \longleftrightarrow 
   W = S_{i'},
\qquad
   x_i' = L(W) \longleftrightarrow 
   W = S_{i}.
\end{equation}
This is confusing, but we cannot avoid it to get a correct statement.
\begin{NB}
\begin{equation*}
   x_i \longleftrightarrow 
   L_i(\vq^{2-\xi_i}) =
   \begin{cases}
   L_i(\vq^2) & \text{if $i\in I_0$},
\\
   L_i(\vq) & \text{if $i\in I_1$},
   \end{cases}
\qquad
   x_i' \longleftrightarrow 
   L_i(\vq^{3\xi_i}) = 
   \begin{cases}
   L_i(1) & \text{if $i\in I_0$},
\\
   L_i(\vq^3) & \text{if $i\in I_1$},
   \end{cases}
\end{equation*}
where $L_i(\vq^n)$ is the class $L(W)\in\bfR_t$ corresponding to the
$1$-dimensional $I\times\C^*$-graded vector space $W$ whose nonzero
entry is $W_i(\vq^n)$. In the isomorphism $\N_0(W)\cong \bE_W$ in
\propref{prop:ast_1} it corresponds to a simple module of the
decorated quiver, putting the $1$-dimensional vector space on the
vertex corresponding to $W_i(\vq^n)$ and $0$ on the other vertexes.
Note that $x_i$ (resp.\ $x_i'$) corresponds to the new (resp.\ old)
vertex $i'$ (resp.\ $i$).
\end{NB}

We denote the class of the Kirillov-Reshetikhin module in $\mathscr
C_1$ by $f_i$. It corresponds to the class $L(W)$, where $W$ is a
$2$-dimensional $\widetilde I$-graded vector space with $\dim W_i =
\dim W_{i'} = 1$, and $0$ at other gradings.
We have
\begin{equation}\label{eq:T-system}
  f_i = x_i x'_i - \prod_{h\in H:\vout(h)=i} x_{\vin(h)}.
\end{equation}
This is an example of the $T$-system proved in \cite{MR1993360}, but
in fact, easy to check by studying the convolution diagram as
$\bE_W\cong\C$ has only two strata, the origin and the complement.
It is also a simple consequence of \thmref{thm:main} below.
It is a good exercise for the reader.

\begin{Remark}\label{rem:T-system}
  In \cite{MR1993360} a more precise relation at the level of modules,
  not only in the Grothendieck group, was shown: for $i\in I_0$, there
  exists a short exact sequence
  \begin{equation*}
    0 \to \bigotimes_{h\in H:\vout(h)=i} x_{\vin(h)} \to 
    x_i'\otimes x_i \to f_i \to 0
  \end{equation*}
  and we replace the middle term by $x_i\otimes x_i'$ if $i\in I_1$.
\end{Remark}

\begin{NB}
\begin{Remark}
  In \cite[\S4]{HerLec} the modules corresponding to $x_i$, $x_i'$, $f_i$
  are denoted by $S[-\alpha_i]$, $S[\alpha_i]$, $F_i$ respectively.
\end{Remark}
\end{NB}

We have an algebra embedding
\begin{equation*}
  \bfR_{\ell=1} = \Z[x_i,x_i']_{i\in I} \to {\mathscr F} = \Q(x_i,f_i)_{i\in I}.
\end{equation*}
We now put the cluster algebra structure on the right hand side.
It is enough to specify the initial seed. We take $x_i$, $f_i$ as
cluster variables of the initial seed. 
We make $f_i$ as a frozen variable.
We call the quiver for the initial seed the {\it $\mathbf
  x$-quiver}. It looks almost the same as the decorated quiver in
\defref{def:decorated}, but is a little different, and is given as
follows.

\begin{Definition}\label{def:x-quiver}
  Suppose that a finite graph $\cG = (I,E)$ together with a bipartite
  partition $I = I_0\sqcup I_1$ is given. We define the {\it
    $\bx$-quiver\/} $\widetilde\cQ_{\bx} = (\widetilde
  I,\widetilde\Omega_\bx)$ by the following two steps.

  (1) The underlying graph is the same as one of the decorated quiver:
  $\cG = (I\sqcup I_\fr, E \sqcup \bigcup \{ i
  \nolinebreak[4]-\nolinebreak[4]i'\})$. The variable $x_i$
  corresponds to the vertex $i$ in the original quiver, while $f_i$
  corresponds to the new vertex $i'$.

  (2) The rule for drawing arrows is
\begin{equation}\label{eq:rule}
   f_i\to x_i\ (i\in I_0), \qquad
   x_i\to f_i\ (i\in I_1), \qquad
   x_{\vout(h)} \xrightarrow{h} x_{\vin(h)} 
   \ (\text{if $\vout(h)\in I_0$, $\vin(h)\in I_1$}).
\end{equation}
\end{Definition}

For our favorite example, $A_3$ with $I_0 = \{1,3\}$, we get the
following quiver.
\begin{equation*}
  \begin{CD}
    x_1 @>>> x_2 @<<< x_3
\\
    @AAA
    @VVV
    @AAA
\\
    f_1 @. f_2 @. f_3
  \end{CD}
\end{equation*}
Note that the orientation differs from the decorated quiver
\eqref{eq:decorated} and the principal decoration
\eqref{eq:ori'}. Also the vertex $f_i$ corresponds to $W_{i'}$, and
$x_i$ corresponds to $W_i$. This is different from the identification
\eqref{eq:name}.
If we look at the principal part, the orientation is reversed.

If we make a mutation in direction $x_i$, the new variable given
by the exchange relation \eqref{eq:exchange} is nothing but
\begin{equation*}
  x_i' = \frac{f_i + \prod_{h\in H:\vout(h)=i} x_{\vin(h)}}{x_i}
\end{equation*}
from \eqref{eq:T-system}. Note the exchange relation is correct for
the $\bx$-quiver given by our rule \eqref{eq:rule}, but {\it wrong\/}
for the decorated quiver. Thus this confusion cannot be avoided.

We thus have
\begin{Proposition}
  The Grothendieck ring $\bfR_{\ell=1}$ is a subalgebra of the cluster
  algebra ${\mathscr A}(\widetilde{\bB})$.
\end{Proposition}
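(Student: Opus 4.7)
The plan is to verify that both families of generators $\{x_i\}_{i\in I}$ and $\{x_i'\}_{i\in I}$ of the polynomial ring $\bfR_{\ell=1}=\Z[x_i,x_i']_{i\in I}$ sit inside $\mathscr A(\widetilde\bB)\subset\mathscr F$. Since the $x_i$ are, by construction, cluster variables of the initial seed, the real content is to produce each $x_i'$ inside $\mathscr A(\widetilde\bB)$. I will obtain $x_i'$ as the single mutation of the initial seed in direction $x_i$, and the whole argument reduces to matching the exchange relation \eqref{eq:exchange} for the $\bx$-quiver with the T-system identity \eqref{eq:T-system}.

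First I would read off the $x_i$-column of $\widetilde\bB$ from Definition~\ref{def:x-quiver}. For $i\in I_0$ the arrows incident to $x_i$ are $f_i\to x_i$ together with $x_i\to x_{\vin(h)}$ for each $h\in\Omega$ with $\vout(h)=i$, so $f_i$ sits on the ``negative'' side of the exchange while every $x_{\vin(h)}$ sits on the ``positive'' side; for $i\in I_1$ the picture is symmetric, with $f_i$ again on the negative side and the neighbors $x_{\vout(h)}$ (where $\vin(h)=i$) on the positive side. In either case the exchange relation at $k=x_i$ takes the form
\begin{equation*}
   x_i x_i^* \;=\; \prod_{h\in H:\,\vout(h)=i} x_{\vin(h)} \;+\; f_i.
\end{equation*}
Invoking the T-system \eqref{eq:T-system} in the form $x_i x_i' = f_i + \prod_{h\in H:\,\vout(h)=i} x_{\vin(h)}$, I conclude $x_i'=x_i^*$, so each $x_i'$ is a cluster variable of $\mathscr A(\widetilde\bB)$. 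Since $\bfR_{\ell=1}$ is generated as a $\Z$-algebra by the $x_i$ and $x_i'$, it follows that $\bfR_{\ell=1}\subset\mathscr A(\widetilde\bB)$.

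The only subtle point is the orientation bookkeeping: the principal part of the $\bx$-quiver is oriented $I_0\to I_1$, opposite to that of the decorated quiver in Definition~\ref{def:decorated}, and this opposite convention is forced precisely so that the signs in \eqref{eq:exchange} match those in \eqref{eq:T-system}. Apart from that, the argument is a direct substitution; the T-system itself is either quoted from \cite{MR1993360} or, as the paper notes, obtained by hand from the two-stratum convolution diagram on $\bE_W\cong\C$. Thus the main ``obstacle'' is not an obstacle at all but the discipline of keeping the arrow conventions of Definition~\ref{def:x-quiver} aligned with the sign conventions of the exchange relation.
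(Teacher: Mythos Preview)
Your proof is correct and follows exactly the paper's own argument: the $x_i$ lie in the initial seed, and a single mutation at $x_i$ produces $x_i'$ via the $T$-system \eqref{eq:T-system}, so both families of generators of $\bfR_{\ell=1}$ are cluster variables. One harmless slip: for $i\in I_1$ the arrow $x_i\to f_i$ actually places $f_i$ on the \emph{positive} side and the neighbors on the negative side, but since the exchange relation is symmetric in $m_+$ and $m_-$ this does not affect the conclusion.
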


The argument in \cite[4.4]{HerLec} (based on \cite[1.21]{FomZel3})
implies that $\bfR_{\ell=1}\cong{\mathscr A}(\widetilde{\bB})$, but we
will see that all cluster monomials come from simple modules in
$\bfR_{\ell=1}$, so we have a different proof later.

We also need the seed obtained by applying the sequence of mutations
$\prod_{i\in I_1} \mu_i$. (See \cite[\S7.1]{HerLec}.)
Then (1) $x_i$ ($i\in I_1$) is replaced by
$x'_i$, (2) the orientation of arrows are reversed in the principal part
and $i\to i'$ ($i\in I_1$), and (3) add $a_{ij}$ arrows from $i$ to $j'$.
In our $A_3$ example, we obtain
\begin{equation}\label{eq:z-quiver}
 \xymatrix{
  x_1 \ar[rd] & \ar[l] x_2' \ar[r] & \ar[ld] x_3
\\
  \ar[u] f_1 & \ar[u] f_2 & f_3 \ar[u]
}
\end{equation}
We set
\begin{equation}\label{eq:z}
  z_i \defeq 
  \begin{cases}
    x_i & \text{if $i\in I_0$},
\\
    x_i'  & \text{if $i\in I_1$}.
  \end{cases}
\end{equation}
We call above one the {\it $\mathbf z$-quiver}.

\section{Cluster character and prime factorizations of simple modules}
\label{sec:prime}

\subsection{An almost simple module}

Fix an $\widetilde I$-graded vector space $W$. Let $\Psi$ be the
Fourier-Sato-Deligne functor for the vector space $\bE_W\cong\N_0(W)$
(\cite{KaSha,Lau}).
We define a subset $\mathscr L_W\subset\mathscr P_W$ by
\begin{equation*}
   L\in \mathscr L_W \Longleftrightarrow
   \text{the support of $\Psi(L)$ is the whole space $\bE_W^*$}.
\end{equation*}
If $L\in \mathscr L_W$, $\Psi(L)$ is an IC complex associated with a
local system defined over an open set in $\bE_W$. We denote
its rank by $r_W(L)\in\Z_{> 0}$.

Since the Fourier transform of $IC_W(0) = 1_{\{0\}}$ is
$1_{\bE_W^*}[\dim \bE_W^*]$, we always have $IC_W(0)\in\mathscr L_W$.
We have $r_W(IC_W(0)) = 1$.

We extend this definition for a condition on simple modules $L(W')$.
Recall that $IC_W(V)$ is identified with $IC_{W^\perp}(0)$ such that
$\dim W^\perp = \dim W-\bC_\vq \dim V$. We say $L(W')\in\mathscr L_W$
if $IC_W(V)\in\mathscr L_W$ with $W' = W^\perp$. We similarly define
$r_W(L(W'))$.

We define the {\it almost simple module\/} associated with $W$ by
\begin{equation*}
   {\mathbb L}(W) = \sum_{L(W')\in \mathscr L_W} r_W(L(W')) L(W').
\end{equation*}
This is an element in $\bfR_t$.

From the definition of $L(W')\in\mathscr L_W$ we have $W'\le
W$. Therefore almost simple modules $\{ {\mathbb L}(W) \}$ form a
basis of $\bfR_t$ such that the transition matrix between it and $\{
L(W)\}$ is upper triangular with diagonal entries $1$.

We will see that an almost simple module is not necessarily simple
later. There will be also a simple sufficient condition guaranteeing
an almost simple module is simple.

\begin{Remark}
  As we will see soon, almost simple modules are given in terms of
  quiver Grassmannian for a general representation of $\bE_W^*$. This,
  at first sight, looks similar to the set of generic variables
  considered by Dupont \cite{Dupont}. (See also \cite{DXX}.)
  But there is a crucial difference. We consider the total sum of
  Betti numbers of the quiver Grassmannian, while Dupont consider
  Euler numbers.
  There is an example with nontrivial odd degree cohomology groups
  \cite[Ex.~3.5]{DWZ2}, so this is really different.

  Note that from the representation theory of $\Ulq$, it is natural to
  specialize as $t=1$, since $t$-analog becomes the ordinary
  ${\vq}$-character (and the positivity is preserved). This difference
  cannot be seen for cluster monomials, thanks to
  \remref{rem:negative}.
  \begin{NB}
    I need to consider $\chi_{\vq,t=-1}({\mathbb L}(W))_{\le 2}
    = \chi_{\vq,t=1}({\mathbb L}'(W))_{\le 2})$ for some virtual 
    module ${\mathbb L}'(W)$.
  \end{NB}%

  In fact, we can also consider a specialization at $t=-1$, but then
  the positivity is lost and the proof of the factorization
  (\propref{prop:Schur}) breaks.
\end{Remark}

\subsection{Truncated $q$-character}

In \cite[\S6]{HerLec} Hernandez-Leclerc introduced the {\it truncated
  $\vq$-cha\-ra\-cter\/} $\chi_\vq(M)_{\le 2}$ from the ordinary
$q$-character $\chi_\vq(M)$ by setting variables $V_{i,\vq^r} = 0$ for
$r \ge 3$. From the geometric definition of the $q$-character reviewed
in \subsecref{subsec:q-char}, it just means that we only consider
nonsingular quiver varieties $\N(V,W)$ satisfying \eqref{eq:V1},
i.e.~those studied in \propref{prop:ast_1}(2). In particular, its
$t$-analog also makes sense:
\begin{equation}
  \label{eq:trunc}
  \begin{split}
  & \chi_{\vq,t}(M(W))_{\le 2} \defeq \sum_{\text{$V$ satisfies \eqref{eq:V1}}}
  \sum_k t^{-k} \dim H^k(i_0^! \pi_W({V})) e^W e^V,
\\
  & \chi_{\vq,t}(L(W))_{\le 2} =  \sum_{\text{$V$ satisfies \eqref{eq:V1}}}
     a_{V,0;W}(t) e^W e^V,
  \end{split}
\end{equation}
where $a_{V,0;W}(t)$ is the coefficient of $IC_{W}(0)=1_{\{0\}}$ in
$\pi_{W}(V)$ in $\mathcal K(\mathscr Q_W)$.
Since $V$ satisfies \eqref{eq:V1} if $(V,W)$ is {\it l\/}-dominant,
the truncated $q$-character still embed $\bfR_{\ell=1}$ to $\mathscr
Y_t$. (See \cite[Prop.~6.1]{HerLec} for an algebraic proof.)

The following is one of main results in this paper.

\begin{Theorem}\label{thm:main}
  Suppose $W$ satisfies $(\ast_\ell)$ with $\ell = 1$. Then the
  truncated $t$-analog of $q$-character of an almost simple module is
  given by
  \begin{equation*}
    \chi_{\vq,t}(\mathbb L(W))_{\le 2} 
    = \sum_V P_t(\Gr_{V}(\lsp{\sigma}{W})) e^W e^V, 
  \end{equation*}
  where the summation runs over all $I\times\C^*$-graded vector spaces
  $V$ with \eqref{eq:V1} and $P_t(\ )$ is the normalized Poincar\'e
  polynomial for the Borel-Moore homology group
  \begin{equation*}
     P_t(\Gr_{V}(\lsp{\sigma}{W}))
     = \sum_i t^{i-\dim \N(V,W)} \dim H_i(\Gr_{V}(\lsp{\sigma}{W})).
  \end{equation*}
\end{Theorem}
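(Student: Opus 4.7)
The plan is to Fourier-transform both sides of the claimed equality. By \propref{prop:ast_1}, $\N_0(W) \cong \bE_W$ and $\N(V,W) \cong \widetilde{\mathcal F}(\nu,W)$, the latter sitting as a vector subbundle of the trivial bundle $\mathcal F(\nu,W)\times\bE_W$ whose second projection is $\pi$. Standard Fourier-Sato-Deligne theory (\cite{KaSha,Lau}) sends the constant sheaf on a subbundle to the constant sheaf on the annihilator subbundle, so up to the canonical shift
\begin{equation*}
   \Psi\bigl(\pi_W(V)\bigr) \;\cong\; \pi^\perp_!\bigl(1_{\widetilde{\mathcal F}(\nu,W)^\perp}[\dim\widetilde{\mathcal F}(\nu,W)^\perp]\bigr).
\end{equation*}

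I would then compute the stalk of this object at a generic point of $\bE_W^*$ in two ways. Geometrically, by \lemref{lem:reflect} the fiber of $\pi^\perp$ over such a point is the quiver Grassmannian $\Gr_V(\lsp{\sigma}{W})$; proper base change gives the stalk as the Borel-Moore homology of this Grassmannian, and a dimension count---using that $\widetilde{\mathcal F}(\nu,W)$ and its annihilator are complementary subbundles over $\mathcal F(\nu,W)$---identifies the associated Poincar\'e polynomial, with the normalization used in the statement, as $P_t(\Gr_V(\lsp{\sigma}{W}))$. On the other hand, the decomposition $\pi_W(V) = \sum_{V'} a_{V,V';W}(t)\,IC_W(V')$ from \eqref{eq:a} is preserved by $\Psi$, and the only summands whose Fourier transforms have nonzero generic stalk on $\bE_W^*$ are those with $IC_W(V') \in \mathscr L_W$; each contributes its multiplicity times the local system rank $r_W(IC_W(V'))$. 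Hence
\begin{equation*}
   P_t\bigl(\Gr_V(\lsp{\sigma}{W})\bigr) = \sum_{IC_W(V')\in\mathscr L_W} r_W\bigl(IC_W(V')\bigr)\,a_{V,V';W}(t).
\end{equation*}

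It then remains to identify this sum with the coefficient of $e^W e^V$ in $\chi_{\vq,t}(\mathbb L(W))_{\le 2}$. Expanding $\mathbb L(W) = \sum_{L(W')\in\mathscr L_W} r_W(L(W'))\,L(W')$ and using \eqref{eq:trunc}, I need the coefficient of $e^W e^V$ in $\chi_{\vq,t}(L(W'))_{\le 2} = \sum_{V''} a_{V'',0;W'}(t)\,e^{W'}e^{V''}$. Writing $L(W') = IC_W(V')$ with $W' = W - \bC_\vq V'$, the definitions of $e^W$ and $e^V$ yield $e^{W'}e^{V''} = e^W e^{V'+V''}$, while the transversal slice (\thmref{thm:slice}) applied at a point of $\Nreg(V',W)$ gives $a_{V'',0;W'}(t) = a_{V'+V'',V';W}(t)$. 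Setting $V = V'+V''$ and summing, the coefficient in question is exactly the right-hand side of the displayed equation above, concluding the argument.

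The main obstacle in executing this plan is the careful bookkeeping of cohomological shifts: the normalization $t^{-\dim \N(V,W)}$ in $P_t(\ )$, the shift $[\dim\widetilde{\mathcal F}(\nu,W)^\perp]$ on the Fourier side, the shift $[\dim\bE_W^*]$ coming from $\Psi(1_{\{0\}}) = 1_{\bE_W^*}[\dim\bE_W^*]$, and the $t$-grading by shifts in $\mathcal K(\mathscr Q_W)$ must all be aligned once and for all before the comparison above becomes a genuine equality of polynomials in $t$. One also has to verify that the generic-stalk contribution from $\Psi(IC_W(V'))$ for $IC_W(V')\in\mathscr L_W$ produces precisely the integer $r_W(IC_W(V'))$ (rather than a nontrivial polynomial in $t$), which follows because the Fourier transform of such an $IC$-complex is an IC-extension of a local system supported in a single cohomological degree.
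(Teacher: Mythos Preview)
Your proposal is correct and follows essentially the same route as the paper: Fourier-transform $\pi_W(V)$ to $\pi^\perp_!$ of the constant sheaf on the annihilator bundle, read off the generic stalk on $\bE_W^*$ both geometrically (via \lemref{lem:reflect}) and via the decomposition \eqref{eq:a}, obtain $P_t(\Gr_V(\lsp{\sigma}W)) = \sum_{IC_W(V')\in\mathscr L_W} r_W(IC_W(V'))\,a_{V,V';W}(t)$, and then identify $a_{V,V';W}(t)$ with the coefficient of $e^W e^V$ in $\chi_{\vq,t}(L(W'))_{\le 2}$ using the transversal slice. Your final paragraph spells out the slice identification $a_{V'',0;W'}(t)=a_{V'+V'',V';W}(t)$ a bit more explicitly than the paper does, but the argument is the same.
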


\begin{NB}
The following is wrong ! It is only true for standard modules.

Note that the virtual Poincar\'e polynomial and the actual Poincar\'e
polynomial coincide as there are no odd cohomology groups by
\cite[\S5]{MR2144973}.
\end{NB}

Since $\Gr_V(\lsp{\sigma}W)$ is a fiber of $\pi^\perp\colon
\Tilde{\mathscr F}(\nu,W)^\perp\to \bE_W^*$ over a general point in
$\bE_W^*$ and $\Tilde{\mathscr F}(\nu,W)^\perp$ is nonsingular,
$\Gr_V(\lsp{\sigma}W)$ is nonsingular by the generic smoothness
theorem. Since $\pi^\perp$ is projective, it is also projective.
Therefore the Poincar\'e polynomial is essentially equal to the
virtual one defined by Danilov-Khovanskii \cite{DanKho} using a mixed
Hodge structure of Deligne \cite{Deligne3}:
\begin{equation*}
  P^{\mathrm{vert}}_t(X)
  \defeq
    \sum_k (-1)^k t^{p+q}
    h^{p,q}(H^k_c(X)).
\end{equation*}
(See \cite{DanKho} for the notation $h^{p,q}(H^k_c(X))$.)
Since our Poincar\'e polynomial is normalized, we have
\begin{equation*}
  P_t(\Gr_V(\lsp{\sigma}W)) =
  t^{-\dim\Nreg(V,W)}P^{\mathrm{vert}}_{-t}(\Gr_V(\lsp{\sigma}W)).
\end{equation*}

\begin{Remark}\label{rem:E8}
  Recall that $\chi_{\vq,t}(L(W))$ was computed in \cite{MR2144973}.
  More precisely, a purely combinatorial algorithm to compute
  $\chi_{\vq,t}(L(W))$ was given in [loc.\ cit.].
  If we are interested in simple modules in $\mathscr C_1$, the same
  algorithm works by replacing every `$\chi_{\vq,t}(\ )$' in [loc.\
  cit.] by '$\chi_{\vq,t}(\ )_{\le 2}$'.
  Thus the computation is drastically simplified.
  The algorithm consists of 3 steps. The first step is the computation
  of $\chi_{\vq,t}$ for {\it l\/}-fundamental representation. The
  actual computation of $\chi_{\vq,t}$ was performed by a
  supercomputer \cite{Na:E8}. But this is certainly unnecessary for
  $\chi_{\vq,t}(\ )_{\le 2}$. The second step is the computation of
  $\chi_{\vq,t}$ for the standard modules. This is just a twisted
  multiplication of $\chi_{\vq,t}$'s given in the first step. This
  step is simple. The third step is analog of the definition of
  Kazhdan-Lusztig polynomials. It is still hard computation if we take
  large $W$. It is probably interesting to compare this algorithm with
  one given by the mutation, e.g., for $W$ corresponding to the
  highest root of $E_8$. In this case we have ${\mathbb L}(W) = L(W)$
  as we will see soon in \propref{lem:real}.

  In general, if ${\mathbb L}(W) \neq L(W)$, we need to compute
  $r_W(L(W'))$.
\end{Remark}

\begin{Example}\label{ex:KR}
  For the Kirillov-Reshetikhin module $f_i$, we have $\dim W_{i} = 1 =
  \dim W_{i'}$.  If $i\in I_1$, we have $\lsp{\sigma}W_i = 0$. Therefore
  $\Gr_V(\lsp{\sigma}{W})$ is a point if $V=0$ and $\emptyset$
  otherwise.
  If $i\in I_0$, a general $\lsp{\sigma}\bx_i^{*}\colon W_i\to W_{i'}$
  is an isomorphism. Therefore $\Gr_V(\lsp{\sigma}{W})$ is again a
  point if $V=0$ and $\emptyset$ otherwise.
  Thus we must have ${\mathbb L}(W) = L(W)$ in this case, and
  $\chi_{q,t}(f_i)_{\le 2}$ contains only the first term:
\begin{equation*}
  \chi_{q,t}(f_i)_{\le 2} = Y_{i,\vq^{\xi_i}} Y_{i,\vq^{\xi_i+2}}.
\end{equation*}
This can be shown in many ways, say using the main result of
\cite{MR1993360}.

Next consider $x_i$. If $i\in I_0$, then $\lsp{\sigma}{W}$ is
$1$-dimensional with nonzero entry at $\lsp{\sigma}{W}_{i'}$.  But
since we can put only $0$-dimensional space $X_{i'}$, we only allow
$V = 0$.
Thus ${\mathbb L}(W) = L(W)$ and $\chi_{\vq,t}(x_i)_{\le 2} = Y_{i,\vq^2}$.

If $i\in I_1$, then $\lsp{\sigma}{W}$ is $2$-dimensional with nonzero
entries at $\lsp{\sigma}{W}_i$ and $\lsp{\sigma}{W}_{i'}$. Therefore
we have either $V=0$ or $1$-dimensional $V$ with nonzero entry at
$V_{i'}$. The corresponding varieties are a single point in both
cases. Thus ${\mathbb L}(W) = L(W)$ and $\chi_{\vq,t}(x_i)_{\le 2} =
Y_{i,\vq}(1 + V_{i,\vq^2})$.

Similarly we can compute $x_i'$. We have ${\mathbb L}(W) = L(W)$
always and the $\vq$-character is
\begin{equation*}
  \chi_{\vq,t=1}(x_i')_{\le 2} =
  \begin{cases}
    Y_{i,1}(1 + V_{i,\vq} \prod_{j}(1 + V_{j,\vq^2})^{a_{ij}})
    & \text{if $i\in I_0$},
\\
    Y_{i,\vq^3} & \text{if $i\in I_1$}.
  \end{cases}
\end{equation*}
This gives an answer to the exercise we mentioned after
\eqref{eq:T-system}.
\begin{NB}
  Recall $x_i'$ corresponds to $S_i$.  First suppose $i\in I_1$. Then
  $\lsp{\sigma}S_i = 0$. Therefore
  \begin{equation*}
    \chi_{\vq,t}(x_i')_{\le 2} = Y_{i,\vq^3}.
  \end{equation*}
Hence
\begin{equation*}
   \chi_{\vq,t=1}(x_i')_{\le 2} \chi_{\vq,t=1}(x_i)_{\le 2} 
   = Y_{i,\vq}Y_{i,\vq^3}(1 + V_{i,\vq^2})
   = Y_{i,\vq} Y_{i,\vq^2} + \prod_{j} Y_{j,\vq^2}^{a_{ij}}
   = \chi_{\vq,t=1}(f_i)_{\le 2} + \chi_{\vq,t=1}(\bigotimes_j x_j^{a_{ij}})_{\le 2}.
\end{equation*}

Next suppose that $i\in I_0$. Then
  $\lsp{\sigma}S_i = S_i \oplus \bigoplus_{h:\vin(h)=i}
  S_{\vout(h)}$. A general representation $\lsp{\sigma}\by_h^*$ is surjective
  to $S_i$, so the only possible subspaces $X$ are
  \begin{itemize}
  \item $0$ for all vertexes,
  \item $X_i = \C$, $X_j = \C^n$ with $n=0,\dots, \# \{ i\leftarrow
    j\} = a_{ij}$.
  \end{itemize}
Therefore
\begin{equation*}
   \chi_{\vq,{t=1}}(x_i')_{\le 2} = Y_{i,1}(1 
   + V_{i,\vq} \prod_{j}(1 + V_{j,\vq^2})^{a_{ij}}).
\end{equation*}
We have
\begin{equation*}
  \begin{split}
  \chi_{\vq,{t=1}}(x_i)_{\le 2} \chi_{\vq,{t=1}}(x_i')_{\le 2} 
  = Y_{i,1} Y_{i,\vq^2}(1 
   + V_{i,\vq} \prod_{j}(1 + V_{j,\vq^2})^{a_{ij}})
  &= Y_{i,1} Y_{i,\vq^2} + \prod_j Y_{j,\vq}^{a_{ij}}(1 + V_{j,\vq^2})^{a_{ij}})
\\
  &= \chi_{\vq,{t=1}}(f_i)_{\le 2} + \chi_{\vq,t=1}(\bigotimes_j x_j^{a_{ij}})_{\le 2}.
  \end{split}
\end{equation*}
\end{NB}%
\end{Example}

\begin{proof}[Proof of \thmref{thm:main}]
  Since $\bE_W$ is a vector space by \propref{prop:ast_1} and
  $IC_W(V)$'s are monodromic (i.e.\ $H^j(IC_W(V))$ is locally constant
  on every $\C^*$-orbit of $\bE_W$), we can apply the
  Fourier-Sato-Deligne functor $\Psi$ (\cite{KaSha,Lau}). For example,
  we have
  \begin{equation*}
    \Psi(IC_W(0)) = 1_{\bE_W^*}[\dim \bE_W].
  \end{equation*}
  Other $\Psi(IC_W(V))$ are simple perverse sheaves on $\bE_W^*$.

  Recall that $\Tilde{\mathcal F}(\nu,W)$ is a vector subbundle of the
  trivial bundle $\mathcal F(\nu,W)\times \bE_W$ by
  \propref{prop:ast_1}. Let $\Psi'$ be the Fourier-Sato-Deligne
  functor for this trivial bundle. We have
  \begin{equation*}
    \Psi'(1_{\Tilde{\mathcal F}(\nu,W)}[\dim \Tilde{\mathcal F}(\nu,W)])
    = 1_{\Tilde{\mathcal F}(\nu,W)^\perp}[\dim\Tilde{\mathcal F}(\nu,W)^\perp],
  \end{equation*}
  where $\Tilde{\mathcal F}(\nu,W)^\perp$ is the annihilator in the
  dual trivial bundle $\mathcal F(V,W)\times\bE_W^*$ as in
  \subsecref{subsec:sigma}. Moreover we have
  \begin{equation*}
     \pi^\perp_! \circ\Psi' = \Psi\circ\pi_!.
  \end{equation*}
  Therefore if we decompose the pushforward as
  \begin{equation*}
     \pi^\perp_!(1_{\Tilde{\mathcal F}(\nu,W)^\perp}[\dim\Tilde{\mathcal
       F}(\nu,W)^\perp])
     \cong \bigoplus_{V',l} L_{V',l}\otimes \Psi(IC_W(V'))[l],
  \end{equation*}
  we have $\sum_l t^l \dim L_{V',l} = a_{V,V';W}(t)$.

  Take a general point of $\bE_W^*$ and consider the Poincar\'e
  polynomial of the stalk of the above. In the left hand side we get
  the Poincar\'e polynomial of $\Gr_V(\lsp{\sigma}{W})$ by
  \lemref{lem:reflect}. On the other hand, in the right hand side the
  factor $\Psi(IC_W(V'))$ with $IC_W(V')\notin \mathscr L_W$
  disappears as its support is smaller than $\bE_W^*$. For
  $IC_W(V')\in\mathscr L_W$, we get $ r_W(IC_W(V'))\times
  a_{V,V';W}(t)$, as $\Psi(IC_W(V'))$ is the IC complex associated
  with a local system of rank $r_W(IC_W(V'))$ defined over an open
  subset of $\bE_W^*$. Thus we have
  \begin{equation}\label{eq:Poincare}
    P_t(\Gr_V(\lsp{\sigma}{W}))
    = \sum_{IC_W(V')\in \mathscr L_W}
    r_W(IC_W(V'))\, a_{V,V';W}(t).
  \end{equation}
  We get the assertion by recalling that
  $a_{V,V';W}(t)$ is the coefficient of $e^{W^\perp} e^{V^\perp} =
  e^{W} e^{V}$ in the $\vq$-character of $L(W^\perp)$, where $\dim
  W^\perp = \dim W - \bC_\vq \dim V'$, $\dim V^\perp = \dim V - \dim
  V'$ (\subsecref{subsec:slice}).
  \begin{NB}
    Therefore we know that $a_{V,V';W}(t)\in t^{\dim
      \N(V^\perp,W^\perp)}\Z_{\ge 0}[t^2]$ by \remref{rem:negative}.
  \end{NB}%
\end{proof}

\begin{NB}
Before studying applications to modules in $\mathscr
R_{\mathrm{conv}}$, we give an application to geometry of $\N_0(W)$.

It seems likely that the open stratum in $\N_0(W)$ is $\Nreg(V,W)$
with something close to
\begin{equation*}
  \dim V_i(\vq^{\xi_i+1}) = \lsp{\sigma}W_i.
\end{equation*}
But in general, it seems $(V,W)$ is not {\it l\/}-dominant
though $\pi \colon \N(V,W) \to \bE_W$ is generically 1 to 1.

Let us first check that $(V,W)$ is {\it l\/}-dominant. We have
{\allowdisplaybreaks
  \begin{equation*}
    \begin{split}
      \rank C^\bullet_{i,1} &= \dim W_i - \dim V_i(\vq) = 0 \quad
      (i\in I_0),
      \\
      \rank C^\bullet_{i,\vq} 
      &= \dim W_{i'} + \sum_j a_{ij} \dim
      V_j(\vq) - \dim V_i(\vq^2)
      \\
      &= \min( \dim W_i, \dim W_{i'} +
      \sum_j a_{ij} \dim V_j(\vq)) \ge 0 \quad (i\in I_1),        
\\
      \rank C^\bullet_{i,\vq^2} 
      &= \sum_j a_{ij} \dim V_j(\vq^2) - \dim V_i(\vq) 
\\
      &= \sum_j a_{ij} \dim \lsp{\sigma} W_j
      - \dim \lsp{\sigma} W_i
      \quad (i\in I_0),
\\
      \rank C^\bullet_{i,\vq^3} 
      &= \dim W_i - \dim V_i(\vq^2) = \dim W_i - \dim\lsp{\sigma}W_i
      \quad (i\in I_1).
    \end{split}
  \end{equation*}
Thus} the only question is $\rank C^\bullet_{i,\vq^2} \ge 0$,
$\rank C^\bullet_{i,\vq^3}$.
If we assume $W$ is indecomposable and not equal to $S_i$, $S_{i'}$,
the map
\(
   \sum_{\vin(h)=i} \lsp{\sigma}\by_h 
   \colon\bigoplus \lsp{\sigma}W_{\vout(h)} \to \lsp{\sigma}W_i
\)
is surjective. Hence we have the inequality in $\rank C^\bullet_{i,\vq^2}
\ge 0$. But how we get $\rank C^\bullet_{i,\vq^3} \ge 0$ ?
\end{NB}

\subsection{Factorization of KR modules}
In the remainder of this section, we give several simple applications
of \thmref{thm:main}.

\begin{Proposition}\label{prop:fac}
  \begin{equation*}
     {\mathbb L}(W) \cong
     {\mathbb L}(\lsp{\varphi}{W}) \otimes \bigotimes_{i\in I}
      f_i^{\min(\dim W_i,\dim W_{i'})},
  \end{equation*}
  \begin{NB}
    Earlier version:
  \begin{equation*}
     L(W) \cong
     L(\lsp{\varphi}{W}) \otimes \bigotimes_{i\in I}
      x_i^{\max(\dim W_{i'}- \dim W_i,0)}
      \otimes
      f_i^{\min(\dim W_i,\dim W_{i'})},
  \end{equation*}
  \end{NB}
where $\lsp{\varphi}{W}$ is given by
\begin{equation*}
   \dim \lsp{\varphi}{W}_i = 
    \max(\dim W_i - \dim W_{i'},0),
\quad
   \dim \lsp{\varphi}{W}_{i'} = 
     \max(\dim W_{i'}-\dim W_i,0)
\end{equation*}
\begin{NB}
  Earlier version
\begin{equation*}
   \dim \lsp{\varphi}{W}_i = 
   \dim W_i - \min(\dim W_i,\dim W_{i'}),
\quad
   \dim \lsp{\varphi}{W}_{i'} = 0
\end{equation*}
\end{NB}%
The right hand side is independent of the order of the tensor product.
\end{Proposition}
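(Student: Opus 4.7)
The plan is to apply \thmref{thm:main} to reduce the statement to an identity of truncated $t$-analogs of $\vq$-characters, and then verify that identity geometrically by comparing quiver Grassmannians. An induction on $\sum_i m_i$, where $m_i = \min(\dim W_i, \dim W_{i'})$, lets one consider a single KR pair at a time; so it suffices to prove that if $W = W' \oplus (S_i \oplus S_{i'})$ for some vertex $i \in I$, then $\mathbb L(W) \cong \mathbb L(W') \otimes f_i$. The independence of the order of the tensor product will then be immediate since $\chi_{\vq,t}(f_i)_{\le 2}$ is a monomial by Example~\ref{ex:KR}, hence it commutes with everything up to a power of $t$.

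Since the truncated $\vq$-character $\chi_{\vq,t}(\cdot)_{\le 2}$ is injective on $\bfR_{\ell=1}$ (as noted after \eqref{eq:trunc}) and is an algebra map with respect to the twisted product $\ast$, it is enough to verify
\[
  \chi_{\vq,t}(\mathbb L(W))_{\le 2} = \chi_{\vq,t}(\mathbb L(W'))_{\le 2} \ast \chi_{\vq,t}(f_i)_{\le 2}
\]
up to a power of $t$ which we suppress (cf.\ the convention just before \subsecref{subsec:deco}). By Example~\ref{ex:KR}, $\chi_{\vq,t}(f_i)_{\le 2} = Y_{i,\vq^{\xi_i}}Y_{i,\vq^{\xi_i+2}}$, which is precisely the monomial $e^W/e^{W'}$. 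Hence by \thmref{thm:main} the identity becomes the purely geometric assertion that $P_t(\Gr_V(\lsp\sigma W)) = P_t(\Gr_V(\lsp\sigma W'))$ for every $V$ satisfying \eqref{eq:V1}.

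For this geometric comparison I would analyze how the $\sigma$-construction of \subsecref{subsec:sigma} reacts to removing one KR pair at the vertex $i$. If $i \in I_0$, the map $\lsp\sigma\bx_i = \bx_i^*\colon W_{i'}^*\to W_i^*$ remains a generic (in particular, maximal-rank) linear map after dropping one dimension from both source and target, and the KR summand of the principally decorated representation decouples from the rest as a two-term string. The constraints of \lemref{lem:reflect} --- namely $X_{i'} = 0$ together with invariance under this nondegenerate $\bx_i^*$ --- force any admissible $X$ to avoid the KR summand on its principal component. If $i \in I_1$, formula \eqref{eq:sigmaW} shows that $\dim \lsp\sigma W_i(\vq^3)$ is unchanged, because $\dim W_i(\vq)$ and $\dim W_i(\vq^3)$ both drop by one and the changes cancel. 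In both cases the goal is to exhibit a direct-sum decomposition of the generic representation of $\lsp\sigma W$ as the generic representation of $\lsp\sigma W'$ plus a ``frozen'' KR summand that contributes only a single point to the quiver Grassmannian, producing an isomorphism $\Gr_V(\lsp\sigma W) \cong \Gr_V(\lsp\sigma W')$ over the generic locus.

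The main obstacle is making this splitting rigorous in the $i \in I_1$ case: because $\sigma$ is defined via a kernel, it is not obviously additive under $W \mapsto W' \oplus (S_i \oplus S_{i'})$ at the level of all representations, and the decomposition has to be carried out on the generic locus of $\bE_W^*$ (where, by the generic smoothness theorem invoked after \thmref{thm:main}, the Poincaré polynomial of the fiber is computed). Concretely, one must show that the kernel $\ker(\bx_i^* + \sum_h \by_{\overline h}^*)$ splits compatibly with the KR direct summand of $W$ for a generic choice of dual maps. Once this splitting lemma is in place, both the identity of Poincaré polynomials and the asserted tensor factorization $\mathbb L(W) \cong \mathbb L(W') \otimes f_i$ follow at once, completing the induction.
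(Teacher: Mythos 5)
Your opening reduction is sound and matches the paper's strategy: by \thmref{thm:main}, injectivity of $\chi_{\vq,t}(\cdot)_{\le 2}$, and the computation $\chi_{\vq,t}(f_i)_{\le 2} = e^W/e^{W'}$ from Example~\ref{ex:KR}, the claim does reduce to $P_t(\Gr_V(\lsp{\sigma}W)) = P_t(\Gr_V(\lsp{\sigma}W'))$. The problem is the mechanism you propose for proving this geometric identity, namely exhibiting a direct-sum decomposition of the generic $\lsp{\sigma}W$ as $\lsp{\sigma}W'$ plus a frozen KR summand. That decomposition does not hold, and the case you think is easy ($i\in I_0$) already fails. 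In the quiver $\lsp{\sigma}{\!\widetilde\cQ}$ one has arrows $j\to i$ from each neighbouring $j\in I_1$ into $i\in I_0$, so a KR direct summand at $i$ would require a complement $A_i\subset W_i$ with $\Ima(\lsp{\sigma}\by_h)\subset A_i\subset\Ker\bx_i^*$; for generic data $\Ima(\lsp{\sigma}\by_h)$ is not contained in $\Ker\bx_i^*$, so no such $A_i$ exists (check $A_2$ with $\dim W_1 = 2$, $\dim W_{1'}=\dim W_2=\dim W_{2'}=1$). For $i\in I_1$ the decomposition is impossible by dimension count alone: you observe correctly via \eqref{eq:sigmaW} that $\dim\lsp{\sigma}W_i = \dim\lsp{\sigma}W'_i$, so there is no room for a summand carrying a $1$ at the principal vertex $i$, and the "splitting lemma" you would need is false, not merely hard. (Also, for $i\in I_0$ the direction is $\bx_i^*\colon W_i\to W_{i'}$, not the other way.)

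What the paper actually does sidesteps any splitting of the representation. For $i\in I_1$, the point is that $\Gr_V(\lsp\sigma W)$ is literally insensitive to the frozen component at $i'$: by \lemref{lem:reflect} the subspace there is pinned to the whole of $\lsp{\sigma}W_i(\vq)$, and since $\dim\lsp\sigma W$ at all vertices other than these $i'$ agrees with $\dim\lsp\sigma W'$ and reflection functors preserve genericity, the two quiver Grassmannians agree; the factor $Y_{i,\vq}Y_{i,\vq^3}$ comes out of $e^W/e^{\Tilde W}$. For $i\in I_0$, the argument works with the tuple $(\bigoplus\bx_i^*,\bigoplus\by^*_{\overline h})\in\bE_W^*$ \emph{before} applying the reflection functor $\sigma$: there all $\by^*_{\overline h}$ leave $W_i$, the only constraint on $X_i$ from \eqref{eq:dual} is $X_i\subset\Ker\bx_i^*$, and one may replace $W_i$ by $\Ker\bx_i^*$ and restrict the $\by^*_{\overline h}$, which stay generic and yield exactly the smaller Grassmannian. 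In short, both reductions are kernel/constraint arguments, not direct-sum arguments, and rewriting your proof to use those would close the gap; as written, the proof has a genuine hole at the "splitting lemma" step.
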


From this proposition it becomes enough to understand
${\mathbb L}(\lsp{\varphi}{W})$.
Notice that either $\lsp{\varphi}W_i$ or $\lsp{\varphi}W_{i'}$ is zero
for each $i\in I$. If $\lsp{\varphi}W_i = 0$, then
$\lsp{\varphi}W_{i'}$ is not connected to any other vertexes, and is
easy to factor out it.
Thus we eventually reduce to study the case when all
$\lsp{\varphi}{W}_{i'}= 0$, i.e.\ $\bE_{\lsp{\varphi}{W}}$ is the
vector space of representations of the principal part of the decorated
quiver obtained by deleting all frozen vertexes $i'$.

\begin{NB}
Earlier version:

From this proposition it becomes enough to understand
$L(\lsp{\varphi}{W})$. We have $\lsp{\varphi}{W}_{i'}= 0$, i.e.\
$\bE_{\lsp{\varphi}{W}}$ is the vector space of representations of the
principal part of the decorated quiver obtained by deleting all frozen
vertexes $i'$.
Moreover, if $x_i$ actually appears in the factorization, we have
$\dim W_{i'} > \dim W_i$. Then we have $\dim \lsp{\varphi}{W}_i =
0$. It means that we can further delete the vertex $i$ from the
quiver.
This will be important in the further analysis in \secref{sec:exchange}.
\end{NB}

\begin{proof}
  From the definition of $\lsp{\sigma}{W}$ in the formula
  \eqref{eq:sigmaW} it is clear that $\lsp{\sigma}{W}_i$ is unchanged
  even if we add $\pm (1,1)$ to $(\dim W_i,\dim W_{i'})$ for $i\in
  I_1$. And the change of $\dim \lsp{\sigma}{W}_{i'}$ does not affect
  the quiver Grassmannian.
  Therefore we can subtract $\min(\dim W_i, \dim W_{i'})$ from both
  $\dim W_i$ and $\dim W_{i'}$ for each $i\in I_1$. Let $\Tilde W$ be
  the resulting $(I\sqcup I_\fr)$-graded vector space. We have
\begin{equation*}
  \chi_{\vq,t}({\mathbb L}(W))_{\le 2} = 
  \chi_{\vq,t}({\mathbb L}(\Tilde W))_{\le 2}\;
  \prod_{i\in I_1} (Y_{i,\vq} Y_{i,\vq^3})^{\min(\dim W_i,\dim W_{i'})}.
\end{equation*}
Since the truncated $q$-character of the Kirillov-Reshetikhin module
is equal to $Y_{i,\vq} Y_{i,\vq^3}$ by Example~\ref{ex:KR}, we have
\begin{equation*}
  \chi_{\vq,t}({\mathbb L}(W))_{\le 2} = 
  \chi_{\vq,t}({\mathbb L}(\Tilde W))_{\le 2}\;
  \prod_{i\in I_1} f_i^{\min(\dim W_i,\dim W_{i'})}.
\end{equation*}

\begin{NB}
Note that either $\Tilde W_i$ or $\Tilde W_{i'}$ is $0$ for each
$i$. Assume the former
\begin{NB2}
  $\Tilde W_i(\vq^3) = \Tilde W_i$
\end{NB2}%
is $0$. Then $\lsp{\sigma}{\Tilde W}_i \cong \Tilde W_{i'}\oplus
\bigoplus_{h:\vout(h)=i} W_{\vin(h)}$, and $\lsp{\sigma}{\bx}_i^{*}$,
$\lsp{\sigma}{\by}_h^{*}$ are projections. We consider the
$\C^*$-action with weight $1$ on $\Tilde W_{i'}$ and its copy in
$\lsp{\sigma}{\Tilde W}_i$.  Then $\lsp{\sigma}{\bx}_i^{*}$,
$\lsp{\sigma}{\by}_h^{*}$ are preserved and hence we have the induced
action on $\Gr_V(\lsp{\sigma}{\Tilde W})$.
Recall that the Euler characteristic of a variety is the same as that
of the $\C^*$-fixed point. The $\C^*$-fixed points of
$\Gr_V(\lsp{\sigma}{\Tilde W})$ is the disjoint union of
$\Gr_{V^1}(\lsp{\sigma}{\Tilde{\Tilde W}})\times \Gr_{V^2}(\Tilde
W_{i'})$ for various $V^1$, $V^2$. Here (1) $\Tilde{\Tilde W}$ is
obtained from $\Tilde W$ by replacing $\Tilde W_{i'}$ by $0$, (2)
$\Tilde W_{i'}$ is considered an $I\sqcup I_\fr$-vector space by
setting $0$ on other entries.
Therefore we have
\[
   \chi_q(L(\Tilde W))_{\le 2} 
   = \chi_q(L(\Tilde{\Tilde W}))_{\le 2}\; x_i^{\dim \Tilde W_i(\vq)}.
\]
This can be proved more directly in terms of the convolution diagram
by observing that the corresponding variety $\bE_{\Tilde W}$ is
isomorphic to $\bE_{\Tilde{\Tilde W}}$.
\begin{NB2}
  \begin{equation*}
    \chi_{\vq}(L(W))_{\le 2} = \chi_{\vq}(L(\Tilde{\Tilde W}))_{\le 2}\;
    \prod_{i\in I_1} x_i^{\max(\dim W_i(\vq)- \dim W_i(\vq^3),0)}
    f_i^{\min(W_i(\vq),\dim W_i(\vq^3))}.
  \end{equation*}
Note also that $\Tilde{\Tilde W}_i(\vq^3) = 0$ if
$\dim W_i(\vq) \ge \dim W_i(\vq^3)$.
\end{NB2}
\end{NB}

Next we study a similar but slightly different reduction for $i\in
I_0$. We consider the variety $(\pi^\perp)^{-1}(\bigoplus
\bx^*_i,\bigoplus \by^*_{\overline{h}})$ as in the statement of
\lemref{lem:reflect}. Here $(\bigoplus \bx^*_i,\bigoplus
\by^*_{\overline{h}})$ is the representation considered in 
the proof of \lemref{lem:reflect} before applying the
reflection functors.
From the condition $X_i\subset \Ker x_i^*$ it is isomorphic to
$(\bar\pi^\perp)^{-1}(\bigoplus
\bar\bx^*_i,\bigoplus\bar\by^*_{\overline{h}})$, where (1) $\Tilde W$
obtained from $W$ by replacing $W_i$ by $\Ker x_i^*$,
\begin{NB}
  Earlier version: and $W_{i'}$ by $0$,
\end{NB}%
(2) $\bar\by^*_{\overline{h}}$ is the restriction of
$\by^*_{\overline{h}}$ and other maps are obvious ones. We have
\begin{equation*}
   \dim \Tilde W_{i} = \max(\dim W_{i} - \dim W_{i'},0).
\end{equation*}
Therefore we have
\begin{equation*}
   \chi_{\vq,t}({\mathbb L}(W))_{\le 2} =
   \chi_{\vq,t}({\mathbb L}(\Tilde W))_{\le 2}\;
   \prod_{i\in I_0} (Y_{i,1} Y_{i,\vq^2})^{\min (\dim W_{i}, \dim W_{i'})}.
\end{equation*}
\begin{NB}
Earlier version:
\begin{equation*}
   \chi_{\vq}(L(W))_{\le 2} =
   \chi_{\vq}(L(\Tilde W))_{\le 2}\;
   \prod_{i\in I_0} (Y_{i,1} Y_{i,\vq^2})^{\min (\dim W_{i}, \dim W_{i'})}
    \,  Y_{i,\vq^2}^{\max(\dim W_{i'} - \dim W_i,0)}.
\end{equation*}
\end{NB}%
Note again that $Y_{i,1} Y_{i,\vq^2}$ is the truncated $q$-character
of the Kirillov-Reshetikhin module $f_i$.
\begin{NB}
Earlier version :
 We also note that
$Y_{i,\vq^2}$ is the truncated $q$-character of $x_i$.   
\end{NB}%
Therefore the above equality can be written as
\begin{equation*}
   \chi_{\vq,t}({\mathbb L}(W))_{\le 2} =
   \chi_{\vq,t}({\mathbb L}(\Tilde W))_{\le 2}\;
   \prod_{i\in I_0} f_i^{\min (\dim W_{i}, \dim W_{i'})}.
\end{equation*}
\begin{NB}
Earlier version:
\begin{equation*}
   \chi_{\vq}(L(W))_{\le 2} =
   \chi_{\vq}(L(\Tilde W))_{\le 2}\;
   \prod_{i\in I_0} f_i^{\min (\dim W_{i}, \dim W_{i'})}
   \,  x_{i}^{\max(\dim W_{i'}- \dim W_i,0)}.
\end{equation*}
\end{NB}%
Combining these two reductions we obtain the assertion.
\end{proof}

\subsection{Factorization and canonical decomposition}

Take a general representation $(\bigoplus\by_h)$ of
$\bE_{\lsp{\varphi}{W}}$. We decompose it into a sum of indecomposable
representations.
We have a corresponding decomposition
\begin{equation*}
   \lsp{\varphi}{W} = 
   \begin{NB}
   \bigoplus_{i\in I_1} S_i^{\oplus m_i} \oplus 
   \end{NB}%
   W^1\oplus W^2\oplus\cdots \oplus W^s
\end{equation*}
of the $I$-graded graded vector space.
\begin{NB}
, where we assume $W^1$, \dots,
$W^s$ are not isomorphic to $S_i$.
\end{NB}%
It is known \cite[p.85]{Kac-quiver} that $W^1$, \dots, $W^s$ are independent
of a choice of general representation of $\bE_{\lsp{\varphi}{W}}$ up to
permutation. This is called the {\it canonical decomposition\/} of
$\lsp{\varphi}{W}$ (or $\dim \lsp{\varphi}{W}$).
It is known that all $\dim W^\alpha\in\Z_{\ge 0}^I$ are {\it Schur
  roots\/} and $\ext^1(W^k, W^l) = 0$ for $k\neq l$
\begin{NB}
$\ext^1(S_i, W^k) = 0 = \ext^1(W^k,S_i)$   
\end{NB}%
(\cite[Prop.~3]{Kac-quiver2}).
Here $\dim W^k$ is a Schur root if a general representation in
$\bE_{W^k}$ has only trivial endomorphisms, i.e.\ scalars. It is known
that this is equivalent to a general representation is indecomposable
([loc.~cit., Prop.~1]).
And $\ext^1(W^k, W^l)$ is the dimension of $\Ext^1$ between general
representations in $\bE_{W^k}$ and $\bE_{W^l}$.
Basic results on the canonical decomposition were obtained by
Schofield \cite{Schofield}, which will be used in part below.

Note that the frozen part play no role in the canonical decomposition,
as $\lsp{\varphi}W_{i'}\neq 0$ implies $\lsp{\varphi}W_i =
0$. Therefore we simply have factors
$\underbrace{S_{i'}\oplus\dots\oplus S_{i'}}_{\text{$\dim
    \lsp{\varphi}W_{i'}$ factors}}$ in the canonical decomposition.
If $\lsp{\varphi}W$ contains a factor $S_i^{\oplus m_i}$ for $i\in
I_1$, it is killed by $\lsp{\sigma}(\ )$.
We thus have

\begin{Proposition}\label{prop:fac-1}
  Suppose that the canonical decomposition of $\lsp{\varphi}W$
  contains factors as
  \begin{equation*}
     \lsp{\varphi}W = \lsp{\psi}W \oplus 
     \bigoplus_{i\in I} S_{i'}^{\oplus \dim \lsp{\varphi}W_{i'}}
     \oplus
     \bigoplus_{i\in I_1} S_{i}^{\oplus m_i}.
  \end{equation*}
  Then we have a factorization
  \begin{equation*}
    {\mathbb L}(\lsp{\varphi}W) = {\mathbb L}(\lsp{\psi}W)
    \otimes 
    \bigotimes_{i\in I}   L(S_{i'})^{\otimes \dim \lsp{\varphi}W_{i'}}\otimes
    \bigotimes_{i\in I_1} L(S_{i})^{\otimes m_{i}}.
  \end{equation*}
\end{Proposition}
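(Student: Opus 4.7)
The plan is to match the truncated $t$-analog $q$-characters of both sides of the asserted identity and then invoke the injectivity of $\chi_{\vq,t}(\cdot)_{\le 2}$ on $\bfR_{\ell=1}$ (noted after \eqref{eq:trunc}). By \thmref{thm:main},
\begin{equation*}
  \chi_{\vq,t}(\mathbb L(\lsp{\varphi}W))_{\le 2}
  = e^{\lsp{\varphi}W}\sum_V P_t(\Gr_V(\lsp{\sigma}(\lsp{\varphi}W)))\, e^V,
\end{equation*}
the quiver Grassmannian being formed at a generic point of $\bE^*_{\lsp{\varphi}W}$.

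First I would exploit the canonical decomposition hypothesis together with $\ext^1(W^k,W^l)=0$ between distinct summands to conclude that a general $\bx^*\in\bE^*_{\lsp{\varphi}W}$ is isomorphic to a direct sum of general representations of each canonical summand. The functor $\lsp{\sigma}(\cdot)=\prod_{i\in I_1}\Phi_i^-\circ D(\cdot)$ from \subsecref{subsec:sigma} commutes with direct sums on $\rep^-\widetilde\cQ$ and annihilates every copy of $S_i$ for $i\in I_1$ (as $\Phi_i^- S_i=0$ for $i\in I_1$, a source of $\widetilde\cQ^{\mathrm{op}}$'s reflection partner). Consequently
\begin{equation*}
  \lsp{\sigma}\bx^* \cong \lsp{\sigma}\bx^*_\psi \oplus
  \bigoplus_{i\in I}(\lsp{\sigma}S_{i'})^{\oplus\dim\lsp{\varphi}W_{i'}},
\end{equation*}
where $\lsp{\sigma}\bx^*_\psi$ is a generic representation corresponding to $\lsp{\sigma}\lsp{\psi}W$; the factors from $S_i^{\oplus m_i}$ with $i\in I_1$ disappear after $\lsp{\sigma}$ and contribute only the monomial $\prod_{i\in I_1}Y_{i,\vq^3}^{m_i}=\prod_{i\in I_1}\chi_{\vq,t}(L(S_i))_{\le 2}^{m_i}$ (by Example~\ref{ex:KR}) to $e^{\lsp{\varphi}W}$. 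The vanishing of $\Ext^1$ between distinct summands is preserved by $\lsp{\sigma}$ since it is an equivalence on $\rep^-$.

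The central step is the multiplicativity
\begin{equation*}
  \sum_V P_t(\Gr_V(M_1\oplus M_2))\,e^V
  = \Bigl(\sum_{V_1}P_t(\Gr_{V_1}(M_1))\,e^{V_1}\Bigr)
  \ast\Bigl(\sum_{V_2}P_t(\Gr_{V_2}(M_2))\,e^{V_2}\Bigr)
\end{equation*}
for $\Ext^1(M_1,M_2)=\Ext^1(M_2,M_1)=0$, with $\ast$ the twisted product \eqref{eq:twist}. At $t=1$ this is the classical Caldero-Reineke multiplicativity of $F$-polynomials for $\Ext^1$-orthogonal direct sums; the $t$-refinement follows from smoothness and projectivity of $\Gr_V$ on the generic stratum (by generic smoothness of $\pi^\perp$, as noted after \thmref{thm:main}) combined with a Bialynicki-Birula argument for the $\C^*$-action that scales the second summand, whose fixed components are products $\Gr_{V_1}(M_1)\times\Gr_{V_2}(M_2)$ and whose attracting-cell codimensions are governed by the Euler form, producing exactly the twist encoded in $\ast$. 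Iterating this along the canonical decomposition of $\lsp{\sigma}\lsp{\varphi}W$, inserting the computations of Example~\ref{ex:KR} (which incidentally give $\mathbb L(S_{i'})=L(S_{i'})$ and $\mathbb L(S_i)=L(S_i)$), and using the algebra-homomorphism property of $\chi_{\vq,t}$ from \thmref{thm:positive}, I identify the truncated $q$-character of the right hand side of the proposition with that of $\mathbb L(\lsp{\varphi}W)$, and injectivity finishes the proof. The principal obstacle is the $t$-refinement of $F$-polynomial multiplicativity; should the Bialynicki-Birula bookkeeping prove delicate, one may specialize to $t=1$, apply the classical identity, and then bootstrap to $\bfR_t$ using \remref{rem:negative} together with the convention (explained after \thmref{thm:positive}) that $t^{\varepsilon(\cdot,\cdot)}$ factors are omitted in equalities of tensor products.
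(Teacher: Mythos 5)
Your route is genuinely different from the paper's. The paper does not give a direct proof of \propref{prop:fac-1} at the spot it is stated; the ``We thus have'' defers the substance to \propref{prop:canfac} (whose proof applies to any $\Ext^1$-orthogonal decomposition, not just Schur-root pieces), which is established via the convolution diagram for perverse sheaves on $\bE_W^*$ and \cite[10.1]{Lu-book}. You instead want to prove the $\ast$-multiplicativity of $\sum_V P_t(\Gr_V(\cdot))e^V$ over $\Ext^1$-orthogonal direct sums directly, on the Grassmannian side, and then compare truncated $q$-characters. If it worked, this would be an attractive, more elementary argument, and it would also re-prove \propref{prop:canfac} as a byproduct.

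The gap is the one you flag yourself: the ``central step'' is asserted, not proved. Your setup steps are fine (additivity of $\sigma$, killing of $S_i$ for $i\in I_1$, decoupling of $S_{i'}$ for $i\in I_0$, and the observation that $\Ext^1$-orthogonality passes through $\sigma$). But for $i\in I_1$ the image $\lsp{\sigma}S_{i'}=F_i$ has a nonzero principal component at $W_i(\vq^3)$, and $\lsp{\sigma}\lsp{\psi}W$ can also be nonzero there even though $\lsp{\psi}W_i=0$, so those factors are \emph{not} vertex-disjoint after $\sigma$ and one really needs the multiplicativity. Bialynicki--Birula does give a cell decomposition of $\Gr_V(M_1\oplus M_2)$ over the fixed loci $\Gr_{V_1}(M_1)\times\Gr_{V_2}(M_2)$ with fiber $\Hom(X_2,M_1/X_1)$, and the $\Ext^1$-vanishing argument (passing $\Ext^1(M_2,M_1)=0$ to $\Ext^1(X_2,M_1)=0$ to $\Ext^1(X_2,M_1/X_1)=0$) shows the fiber dimension is constant, equal to the Euler form $\langle\dim V_2,\dim W_1-\dim V_1\rangle$. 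So one obtains
\begin{equation*}
P_t(\Gr_V(M_1\oplus M_2))=\sum_{V_1+V_2=V}t^{2\langle\dim V_2,\dim W_1-\dim V_1\rangle}\,P_t(\Gr_{V_1}(M_1))\,P_t(\Gr_{V_2}(M_2)).
\end{equation*}
What remains, and what you do not do, is to reconcile this exponent with (a) the normalization $t^{-\dim\N(V,W)}$ hidden in $P_t$ and (b) the definition of $\varepsilon(\vec m_1,\vec m_2)$ via $\bC_\vq^{-1}$ and $\vq$-shifts from \eqref{eq:twist}. This matching is not a formality; it is exactly the content that in the paper falls out of \cite[Lemma~3.2, 4.1]{VV2} on the perverse-sheaf side, and it needs to be carried out if one takes the Grassmannian route.

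Your fallback is also insufficient as stated. The $t=1$ multiplicativity of the $F$-polynomial is indeed classical (BB at the level of Euler characteristics), but passing back to $\bfR_t$ is not a ``bootstrap'': knowing an identity in $\bfR_{t=1}$ together with the no-odd-degree constraint of \remref{rem:negative} does not single out a lift to $\Z[t,t^{-1}]$, and the convention (after \thmref{thm:positive}) of suppressing $t^{\varepsilon(\cdot,\cdot)}$ is a notational simplification, not a rigidity theorem for almost simple modules. To make your proof complete you either have to do the BB bookkeeping against \eqref{eq:twist} honestly, or fall back on the paper's own perverse-sheaf proof of \propref{prop:canfac}, which your argument is implicitly trying to replace.
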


We consider the following condition (C):
\begin{equation}
  \text{The canonical decomposition of $\lsp{\varphi}W$
    contains only {\it real\/} Schur roots.}
  \tag{C}
\end{equation}

\begin{Proposition}\label{lem:real}
\textup{(1)} Assume the condition {\rm (C)}.
Then $\mathscr L_{\lsp{\varphi}W} = \{ IC_{\lsp{\varphi}W}(0)\}$ and
hence ${\mathbb L}(W) = L(W)$.

\textup{(2)} If $\mathscr L_{\lsp{\varphi}W} = \{ IC_{\lsp{\varphi}W}(0)\}$,
$\Gr_V(\lsp{\sigma}W)$ has no odd cohomology.
\end{Proposition}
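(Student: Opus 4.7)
For part (1), the plan is to exploit condition~(C) geometrically. When the canonical decomposition of $\lsp{\varphi}W$ consists entirely of real Schur roots with pairwise vanishing $\operatorname{Ext}^1$, a general point $y\in\bE_{\lsp{\varphi}W}^*$ is rigid, so its $G_{\lsp{\varphi}W}$-orbit is Zariski-open and dense in $\bE_{\lsp{\varphi}W}^*$. Every element of $\mathscr L_{\lsp{\varphi}W}$ is, by definition, a simple perverse sheaf $IC_{\lsp{\varphi}W}(V)$ whose Fourier-Sato-Deligne transform $\Psi(IC_{\lsp{\varphi}W}(V))$ has full support on $\bE_{\lsp{\varphi}W}^*$. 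Since the Fourier transform preserves $G_{\lsp{\varphi}W}$-equivariance (the group acts linearly on $\bE_{\lsp{\varphi}W}$), and since the constructions producing $IC_{\lsp{\varphi}W}(V)$ are manifestly $G_{\lsp{\varphi}W}$-equivariant, each $\Psi(IC_{\lsp{\varphi}W}(V))$ is the IC-extension of a $G_{\lsp{\varphi}W}$-equivariant local system on the open orbit $Gy \cong G/\operatorname{Aut}(y)$. But $\operatorname{Aut}(y)$ is a Zariski-open subset of the finite-dimensional associative algebra $\operatorname{End}(y)$, and in particular is connected, so the only such local system is trivial. Therefore $\Psi(IC_{\lsp{\varphi}W}(V)) \cong 1_{\bE_{\lsp{\varphi}W}^*}[\dim \bE_{\lsp{\varphi}W}^*] = \Psi(IC_{\lsp{\varphi}W}(0))$, forcing $V=0$. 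This proves $\mathscr L_{\lsp{\varphi}W}=\{IC_{\lsp{\varphi}W}(0)\}$, equivalently $\mathbb L(\lsp{\varphi}W)=L(\lsp{\varphi}W)$; combined with \propref{prop:fac} and Example~\ref{ex:KR} (which gives $\mathbb L(f_i)=L(f_i)=f_i$), this yields $\mathbb L(W)=L(W)$.

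For part~(2), the plan is first to upgrade the hypothesis on $\lsp{\varphi}W$ to the analogous statement $\mathscr L_W=\{IC_W(0)\}$, and then to invoke the identity
\begin{equation*}
P_t(\Gr_V(\lsp{\sigma}W))=\sum_{IC_W(V')\in\mathscr L_W} r_W(IC_W(V'))\,a_{V,V';W}(t)
\end{equation*}
derived inside the proof of \thmref{thm:main}. The hypothesis gives $\mathbb L(\lsp{\varphi}W)=L(\lsp{\varphi}W)$, and \propref{prop:fac} combined with $\mathbb L(f_i)=L(f_i)=f_i$ propagates this to $\mathbb L(W)=L(W)$; since $IC_W(0)$ contributes $L(W)$ with coefficient $r_W=1$ in $\mathbb L(W)=\sum r_W(L(W'))L(W')$ and simple classes are linearly independent in $\bfR_t$, we must have $\mathscr L_W=\{IC_W(0)\}$. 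The identity above then collapses to $P_t(\Gr_V(\lsp{\sigma}W))=a_{V,0;W}(t)$. By \remref{rem:negative} this lies in $t^{\dim \N(V,W)}\Z_{\ge 0}[t^{-2}]$, so the exponents of $t$ that can occur are $\dim\N(V,W), \dim\N(V,W)-2,\dots$, all of one parity. Reading off from $P_t(\Gr_V(\lsp{\sigma}W))=\sum_i t^{i-\dim\N(V,W)}\dim H_i(\Gr_V(\lsp{\sigma}W))$, this forces $i$ to be even, i.e., $H_i(\Gr_V(\lsp{\sigma}W))=0$ for odd $i$.

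The main obstacle will be the equivariance/connectedness step in part~(1): one must verify that $\Psi$ genuinely preserves $G_{\lsp{\varphi}W}$-equivariance in the monodromic setting, and that the connectedness of $\operatorname{Aut}(y)$ suffices to rule out every equivariant local system with full support, so that no exotic IC sheaf sneaks into $\mathscr L_{\lsp{\varphi}W}$. A secondary technical point is the propagation $\mathbb L(\lsp{\varphi}W)=L(\lsp{\varphi}W)\Rightarrow \mathbb L(W)=L(W)$ used in (2): it relies on the compatibility of $\otimes$ with $\mathbb L$ encoded in \propref{prop:fac}, together with $\mathbb L(f_i)=L(f_i)$, and should be checked carefully via the convolution diagram in the Kirillov-Reshetikhin directions.
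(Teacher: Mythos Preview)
Your approach to part~(1) is essentially identical to the paper's: condition~(C) forces an open $G_{\lsp{\varphi}W}$-orbit in $\bE_{\lsp{\varphi}W}^*$, equivariance of the Fourier transform plus connectedness of stabilizers rules out nontrivial local systems, so the only full-support IC sheaf is the constant one.

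For part~(2), your core idea also matches the paper's (collapse \eqref{eq:Poincare} to a single term and invoke \remref{rem:negative}), but you take an unnecessary detour. You try to upgrade the hypothesis from $\mathscr L_{\lsp{\varphi}W}=\{IC_{\lsp{\varphi}W}(0)\}$ to $\mathscr L_W=\{IC_W(0)\}$ by arguing that $L(\lsp{\varphi}W)\otimes\prod_i f_i^{n_i}=L(W)$. You correctly flag this as something ``to be checked carefully,'' but in fact the simplicity of this tensor product is not obvious from what you cite, and your sentence ``since $IC_W(0)$ contributes $L(W)$ with coefficient $r_W=1$\dots'' does not close the gap: it only says that \emph{if} $\mathbb L(W)=L(W)$ then $\mathscr L_W$ is trivial, which is the direction you already had. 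The paper avoids this issue entirely. The proof of \propref{prop:fac} shows, at the level of truncated $q$-characters (hence of Poincar\'e polynomials via \thmref{thm:main}), that $P_t(\Gr_V(\lsp{\sigma}W))=P_t(\Gr_V(\lsp{\sigma}(\lsp{\varphi}W)))$ for every $V$: the quiver Grassmannian is insensitive to the Kirillov--Reshetikhin factors. So one applies \eqref{eq:Poincare} directly to $\lsp{\varphi}W$, where the hypothesis makes it a single term $a_{V,0;\lsp{\varphi}W}(t)$, and \remref{rem:negative} finishes. Read the paper's one-line proof of~(2) this way and no propagation lemma is needed.
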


\begin{proof}
  (1) From the definition, $\bE_{\lsp{\varphi}W}$ contains the
  $\prod_i \GL(W_i)\times \GL(W_{i'})$ orbit of a general
  representation as a Zariski open subset. The same is true for
  $\bE_{\lsp{\varphi}W}^*$.  Since all $\Psi(IC_W(V))$ are $\prod_i
  \GL(W_i)\times \GL(W_{i'})$-equivariant, we cannot have IC complexes
  associated with nontrivial local systems as stabilizers are always
  connected.
  Therefore we only have $\mathscr L_{\lsp{\varphi}W} = \{
  IC_{\lsp{\varphi}W}(0)\}$.

  (2) Since \eqref{eq:Poincare} is a single sum, the assertion follows
  from \remref{rem:negative}.
\end{proof}

\begin{Proposition}\label{prop:canfac}
\begin{NB}
Assume {\rm (C)}. Then
\begin{equation}
   {L}(\lsp{\varphi}{W}) \cong
   \begin{NB2}
   \bigotimes_{i\in I_1} x_{i'}^{\otimes m_i} \otimes 
   \end{NB2}%
   {L}(W^1)\otimes\cdots \otimes {L}(W^s).     
\end{equation}
\end{NB}
\begin{equation}\label{eq:Schur}
   {\mathbb L}(\lsp{\varphi}{W}) \cong
   \begin{NB}
   \bigotimes_{i\in I_1} x_{i'}^{\otimes m_i} \otimes 
   \end{NB}%
   {\mathbb L}(W^1)\otimes\cdots \otimes {\mathbb L}(W^s).     
\end{equation}
\end{Proposition}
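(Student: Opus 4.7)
The plan is to reduce the asserted tensor factorization to a Poincaré-polynomial identity for quiver Grassmannians via \thmref{thm:main}, and then to deduce that identity from a torus-action argument of Caldero--Reineke type, refined from Euler characteristics to all Betti numbers.

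First I would translate \eqref{eq:Schur} into a character identity. Since truncated $\vq$-characters $\chi_{\vq,t}(\bullet)_{\le 2}$ embed $\bfR_{\ell=1}$ into $\mathscr Y_t$ and are (twisted-)multiplicative with respect to $\otimes$, proving \eqref{eq:Schur} amounts to checking
\begin{equation*}
\chi_{\vq,t}(\mathbb L(\lsp{\varphi}W))_{\le 2}
= \chi_{\vq,t}(\mathbb L(W^1))_{\le 2} \ast \cdots \ast \chi_{\vq,t}(\mathbb L(W^s))_{\le 2},
\end{equation*}
where $\ast$ is the twisted product \eqref{eq:twist}. Using \thmref{thm:main} on both sides, and the fact that $e^{\lsp{\varphi}W} = \prod_k e^{W^k}$, $e^V = \prod_k e^{V^k}$ when $V = V^1+\dots+V^s$, this reduces (up to bookkeeping of the $\varepsilon(\bullet,\bullet)$-powers, which are governed by the bilinear form and behave additively on direct sums) to the geometric identity
\begin{equation*}
P_t(\Gr_V(\lsp{\sigma}{\lsp{\varphi}W}))
= \sum_{V = V^1 + \cdots + V^s} \prod_{k=1}^{s} P_t(\Gr_{V^k}(\lsp{\sigma}W^k)).
\end{equation*}

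Next I would analyze the behaviour of $\sigma$ and of a generic representation under the canonical decomposition. A generic $M\in \bE_{\lsp{\varphi}W}^*$ decomposes as $M = M^1\oplus\cdots\oplus M^s$ with each $M^k$ generic indecomposable in $\bE_{W^k}^*$ (since each $W^k$ is Schur), and $\Ext^1(M^k,M^l)=0$ for $k\ne l$ by the basic properties of the canonical decomposition recalled before \propref{lem:real}. Because $\sigma = \prod_{i\in I_1}\Phi_i^-\circ D$ is a composition of the duality and reflection functors, each of which commutes with direct sums on objects containing no $S_i$ ($i\in I_1$) summand (which has already been factored out in passing to $\lsp{\varphi}W$), we obtain $\lsp{\sigma}M \cong N^1\oplus\cdots\oplus N^s$ with $N^k := \lsp{\sigma}M^k$; moreover reflection functors and duality send the $\Ext^1$-vanishing between the $M^k$'s to $\Ext^1(N^k,N^l)=0$ for $k\ne l$ in $\rep\lsp{\sigma}{\!\widetilde\cQ^{\mathrm{op}}}$. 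Hence \lemref{lem:reflect} identifies $\Gr_V(\lsp{\sigma}{\lsp{\varphi}W})$ with the quiver Grassmannian of the split representation $N^1\oplus\cdots\oplus N^s$ whose summands are mutually $\Ext^1$-orthogonal.

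The last step, and the main obstacle, is to upgrade the Caldero--Reineke Euler-characteristic factorization to a factorization of Poincaré polynomials. I would introduce the $(\C^*)^s$-action on $N^1\oplus\cdots\oplus N^s$ scaling the summands independently; the induced action on the quiver Grassmannian has fixed locus $\bigsqcup_{V=\sum V^k}\prod_k \Gr_{V^k}(N^k)$, since any $(\C^*)^s$-fixed subrepresentation must respect the grading. Because $\Gr_V(\lsp{\sigma}{\lsp{\varphi}W})$ is smooth projective (a generic fibre of the projective smooth morphism $\pi^\perp$ of \subsecref{subsec:sigma}, by generic smoothness), Bialynicki--Birula produces a cellular decomposition whose strata fibre by affine spaces over the connected components of the fixed locus. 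The crucial point is that the $\Ext^1$-orthogonality of the $N^k$'s forces the weights of the $(\C^*)^s$-action on the obstruction / tangent bundle to have controlled signs, so that (after choosing a generic $1$-parameter subgroup) the affine-fibre dimensions match the normalization in our $P_t$ precisely, giving the required product identity with no leftover powers of $t$. This Poincaré-polynomial enhancement, rather than just the Euler characteristic version, is the technical heart; once it is in place, substitution back yields \eqref{eq:Schur}.
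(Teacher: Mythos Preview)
Your approach is genuinely different from the paper's, and the paper's route is worth knowing: it never passes through \thmref{thm:main} or quiver Grassmannians at all. Instead it works directly with the definition of $\mathbb L$ and the convolution diagram of \subsecref{subsec:conv}. After Fourier transform (using that $\Tilde\res$ commutes with $\Psi$ up to shift), the $\Ext^1$-vanishing between $W^1$ and $W^2$ implies that over suitable open subsets $U^\alpha\subset\bE_{W^\alpha}^*$ the map $\kappa$ is an isomorphism. One then reads off that $\operatorname{Res}$ sends perverse sheaves outside $\mathscr L_W$ to combinations outside $\mathscr L_{W^1}\boxtimes\mathscr L_{W^2}$, and that the generic ranks $r_W$ multiply correctly; this gives $\mathbb L(W^1)\otimes\mathbb L(W^2)=\mathbb L(W)$ with all $t$-shifts handled automatically by the formalism. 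No Bialynicki--Birula, no dimension-matching computation.

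Your strategy is sound in outline, and the $\Ext^1$-orthogonality does force $\Ext^1(X^k,N^l/X^l)=0$ for $k\neq l$ (via the hereditary long exact sequences), so the normal-bundle ranks are constant and BB applies. But the step you flag as ``the technical heart'' is a genuine gap as written: you assert that the BB affine-cell dimensions $\dim\Hom(X^k,N^l/X^l)$ (for $k>l$, say) combine with the normalizations $t^{-\dim\N(V^k,W^k)}$ to reproduce exactly the twist $t^{\varepsilon(\vec m_1,\vec m_2)}$ of \eqref{eq:twist}, yet you give no argument for this identity. It is a concrete equality between the Euler form $\chi(V^2,\dim\lsp{\sigma}W^1-V^1)$ on the reflected quiver and the quantity $d(V^2,W^2;V^1,W^1)-d(V^1,W^1;V^2,W^2)+\varepsilon(W^1,W^2)$ built from $\bC_\vq^{-1}$; this does not fall out of ``controlled signs'' and must be checked by hand. (It is telling that the author's own first attempt, visible in suppressed remarks, used the torus argument only at the Euler-characteristic level under the extra hypothesis (C), and was replaced by the perverse-sheaf proof precisely to avoid this bookkeeping and to get the full $t$-refined statement.) If you want to complete your route, that dimension identity is what you must prove.
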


\begin{proof}
  We assume $s=2$. Since we do not use the assumption that $W^1$,
  $W^2$ are Schur roots, the proof also gives the proof for general
  case.

  Consider the convolution diagram in \subsecref{subsec:conv}.
  By \cite[10.1]{Lu-book} the restriction functor commutes with the
  Fourier-Sato-Deligne functor up to shift. Therefore we consider
  perverse sheaves defined over $\bE_{W^1}^*$, $\bE_{W^2}^*$,
  $\bE_W^*$.

  We take open subsets $U^1$, $U^2$ in $\bE_{W^1}^*$, $\bE_{W^2}^*$ so
  that perverse sheaves not in $\mathscr L_{W^1}$, $\mathscr L_{W^2}$
  have support outside of $U^1$, $U^2$. Similarly we take an open
  subset $U\subset\bE_W^*$ consisting of modules isomorphic to direct
  sum of modules in $U^1$ and $U^2$, and perverse sheaves not in
  $\mathscr L_W$ have support outside of $U$.

  We may assume that $\Ext$-groups between modules in $U^1$, $U^2$
  vanish. Therefore any module in $\kappa^{-1}(U^1\times U^2)$ is
  isomorphic to direct sum of modules from $U^1$ and $U^2$. Therefore
  $\kappa^{-1}(U^1\times U^2)\subset U$ and $\kappa$ is an
  isomorphism.
  Therefore for $L\in \mathscr P_W\setminus \mathscr L_W$,
  $\operatorname{Res}L$ does not have factors in
  $IC_{W^1}(V^1)\boxtimes IC_{W^2}(V^2)$ with
  $IC_{W^\alpha}(V^\alpha)\in\mathscr L_{W^\alpha}$
  ($\alpha=1,2$). Therefore the product of $L(W^{\prime 1})\in\mathscr
  L_{W^1}$ and $L(W^{\prime 2})\in\mathscr L_{W^2}$ is a linear
  combination of elements in $\mathscr L_W$.

  If $IC_W(V)\in \mathscr L_W$, the restriction of $\kappa_!\iota^*
  \Psi(IC_W(V))$ to $U^1\times U^2$ is a local system of rank
  $r(IC_W(V))$. Thus if we write
  \begin{multline*}
     \operatorname{Res} IC_W(V)
     = \sum_{IC_{W^1}(V^1)\in\mathscr L_{W^1}, IC_{W^2}(V^2)\in \mathscr L_{W^2}}
     a^{V^1,V^2}_V IC_{W^1}(V^1)\boxtimes IC_{W^2}(V^2)
\\
     + (\text{linear combination of
       $L\in \mathscr P_W\setminus \mathscr L_W$}),
  \end{multline*}
then $a^{V^1,V^2}_V$ is an integer (up to shift). And we have
\[
   r(IC_W(V)) = \sum_{V^1,V^2} a^{V^1,V^2}_V r(IC_{W^1}(V^1))r(IC_{W^2}(V^2)).
\]
\begin{NB}
  Let $L_{V^1}$, $L_{V^2}$, $L_V$ be the dual elements of
  $IC_{W^1}(V^1)$, $IC_{W^2}(V^2)$, $IC_W(V)$. Then
  \begin{equation*}
    L_{V^1}\otimes L_{V^2} = \sum_V a^{V^1,V^2}_V L_V.
  \end{equation*}
Therefore
\begin{equation*}
   \sum_{L_V\in \mathscr L_W} r(L_V) L_V
   = \sum_{L_V\in \mathscr L_W} \sum_{L_{V^1},L_{V^2}} a^{V^1,V^2}_V 
   r(L_{V^1}) r(L_{V^2}) L_V
   = \sum_{L_{V^1},L_{V^2}} r(L_{V^1}) r(L_{V^2}) L_{V^1}\otimes L_{V^2}.
\end{equation*}
\end{NB}%
From this we have $\mathbb L(W^1)\otimes \mathbb L(W^2) = \mathbb L(W)$.
\begin{NB}
Original argument:  

  From the definition, we have ${\mathbb L}(W) = L(W)$ and
  ${\mathbb L}(W^k) = L(W^k)$ for any $k$.
  Then we have
  \begin{equation*}
    P_t(\Gr_V(\lsp{\sigma\varphi}W)) = a_{V,0;W}(t)
  \end{equation*}
  from \eqref{eq:Poincare}. By \remref{rem:negative} the right hand
  side is in $t^{\dim \N(V,W)}\Z[t^{-2}]$.
  Therefore $\Gr_V(\lsp{\sigma\varphi}W)$ has no odd cohomology groups, and
  hence $P_{t=1}(\Gr_V(\lsp{\sigma}W))$ is the Euler number.
  \begin{NB2}
    The degrees of $t$ in the left hand side are in
    $-\dim\N(V,W)$ to $2\dim \Gr_V(\lsp{\sigma}W) -\dim \N(V,W)$.
    Since $\dim \Gr_V(\lsp{\sigma}W)\le \dim \N(V,W)$, we have a
    little better bound.
  \end{NB2}%
  Thus the truncated $\vq$-character is the generating function of
  Euler numbers of quiver Grassmannian.
  The same applies to $\lsp{\sigma\varphi}W^k$.

  If $\lsp{\varphi}W$ contains a factor of the form $S_{i}$ ($i\in
  I_1$), it is killed by $\sigma$. It means that we have a factor
  $Y_{i,\vq^3}$ in $\chi_{\vq}({\mathbb L}(\lsp{\varphi}W))_{\le 2}$
  for each $S_i$.
Since we have
\(
   \chi_{\vq}({L}(S_i))_{\le 2} =
   \chi_{\vq}(x_{i'})_{\le 2} = Y_{i,\vq^3},
\)
we have the corresponding factor in the tensor product decomposition.
Therefore we may assume that $\lsp{\varphi}W$ does not contain a factor
$S_i$ in its canonical decomposition.

Then we have the canonical corresponding decomposition
\(
  \lsp{\sigma}{\lsp{\varphi}{W}} = 
  \lsp{\sigma}{W}^1\oplus \lsp{\sigma}{W}^2\oplus\cdots 
  \oplus \lsp{\sigma}{W}^s
\)
of $\lsp{\sigma}{\lsp{\varphi}{W}}$.
Let us consider the torus $T^s$ action on $\lsp{\varphi\sigma}{W}$
given by $t = t_1\id_{\lsp{\sigma}{W}^1}\oplus\cdots \oplus
t_s\id_{\lsp{\sigma}{W}^s}$ for $t = (t_1,\dots,t_s)\in T^s$. It
induces an action on $\bE_{\lsp{\sigma\varphi}W}$. Since it fixes
$(\bigoplus\by_h)$, we have an induced action on
$\Gr_{V}(\lsp{\sigma\varphi}{W})$. Recall that the Euler
characteristic of a variety is the same as that of the torus fixed
point. We have
\[
   \Gr_{V}(\lsp{\sigma}{\lsp{\varphi}{W}})^{T^s}
   = \bigsqcup \prod_{k=1}^s \Gr_{V^k}(\lsp{\sigma}{W^k}),
\]
where the summation is over various decompositions $V = V^1 \oplus
\cdots \oplus V^s$. This induces the corresponding factorization of
truncated $q$-characters.
\end{NB}%
\end{proof}

Let us show the converse.
\begin{Proposition}\label{prop:Schur}
\textup{(1)}
  Suppose that ${\mathbb L}(\lsp{\varphi}W)$ decomposes as
  \begin{equation*}
     {\mathbb L}(\lsp{\varphi}W) 
     \cong {\mathbb L}(W^1)\otimes {\mathbb L}(W^2).
  \end{equation*}
Then we have $\ext^1(W^1, W^2) = 0 = \ext^1(W^2,W^1)$.

\textup{(2)} The same assertion is true even if the almost simple
modules $\mathbb L(\ )$ are replaced by simple modules $L(\ )$.

\textup{(3)} The factorization of an almost simple module ${\mathbb
  L}(W)$ is exactly given by the canonical decomposition of
$\lsp{\varphi}W$, and we have the bijection
\begin{equation*}
  \left\{ \txt{prime almost simple modules
    \\ with {\rm (C)}}\right\}
  \setminus \{ x_i, f_i \mid i\in I\}
  \longleftrightarrow
  \left\{ \txt{Schur roots of the principal\\
      part $\cQ$ of the decorated quiver}\right\}
\end{equation*}
given by ${\mathbb L}(W)\leftrightarrow\dim W$.
\end{Proposition}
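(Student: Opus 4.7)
The proof naturally splits into three parts, with (1) being the main technical statement and (2), (3) following by fairly direct reductions.

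\textbf{Part (1).} Suppose $\mathbb L(\lsp{\varphi}W) \cong \mathbb L(W^1)\otimes \mathbb L(W^2)$ with $\lsp{\varphi}W = W^1\oplus W^2$. By the multiplicativity of the truncated $\vq$-character and \thmref{thm:main}, this hypothesis is equivalent to the Poincar\'e polynomial identity
\begin{equation*}
  P_t(\Gr_V(\lsp{\sigma}(W^1\oplus W^2)))
  = \sum_{V^1+V^2=V} P_t(\Gr_{V^1}(\lsp{\sigma}W^1))\, P_t(\Gr_{V^2}(\lsp{\sigma}W^2))
\end{equation*}
for every $V$. My plan is to argue geometrically via the convolution diagram of \subsecref{subsec:conv} together with Lusztig's compatibility \cite[10.1]{Lu-book} of Fourier--Sato--Deligne with the restriction functor $\Tilde\res$. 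By definition of $\mathscr L_W$ the sheaf $\Psi(\mathbb L(W))$ is supported on all of $\bE_W^*$. The factorization, combined with the fact that $\kappa^{-1}(U^1\times U^2)\subset U$ is an isomorphism onto the locus of \emph{split} representations (this is exactly the step exploited in the proof of \propref{prop:canfac}), forces a generic point of $\bE_W^*$ to decompose as a direct sum of representations of $\bE_{W^1}^*$ and $\bE_{W^2}^*$. By Kac's characterization of the canonical decomposition \cite{Kac-quiver2} (respectively Schofield's \cite{Schofield}), a general representation with dimension vector $\dim W^1 + \dim W^2$ decomposes as such a direct sum if and only if $\ext^1(W^1,W^2) = 0 = \ext^1(W^2,W^1)$, which gives the desired vanishing.

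\textbf{Part (2).} For simple modules $L$ in place of $\mathbb L$, the same argument applies with almost no change: the hypothesis $L(\lsp{\varphi}W) \cong L(W^1)\otimes L(W^2)$ gives the same coefficient identity \eqref{eq:Poincare} at the level of the coefficients $a_{V,0;W}(t)$, and the Fourier transform argument about the support of the IC sheaf $\Psi(IC_W(0))$ versus that of $\Psi(IC_{W^1}(0)\boxtimes IC_{W^2}(0))$ is identical. Alternatively, one can first deduce from the factorization of $L$ that condition (C) must hold for the relevant summands, apply \propref{lem:real} to conclude $\mathbb L = L$ in each case, and then invoke Part (1).

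\textbf{Part (3).} Iterating (1), any factorization of $\mathbb L(W)$ into primes yields a splitting of $\lsp{\varphi}W$ whose summands have pairwise vanishing $\ext^1$; by Schofield's uniqueness of the canonical decomposition among splittings with this property, such a splitting is necessarily a refinement of the canonical decomposition. The converse refinement statement is \propref{prop:canfac}. Hence the prime factors of $\mathbb L(W)$ are exactly the $\mathbb L(W^k)$ for $W^k$ running over canonical summands, i.e.\ Schur roots. Under condition (C) these are real Schur roots, \propref{lem:real} gives $\mathbb L(W^k)=L(W^k)$, and after factoring out Kirillov--Reshetikhin modules $f_i$ via \propref{prop:fac} and frozen factors $L(S_{i'})=x_i$ via \propref{prop:fac-1}, the remaining primes are in bijection with Schur roots of the principal part $\cQ$.

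The main obstacle will be ensuring rigor in Part (1): one must show that the factorization identity for $P_t$ cannot hold \emph{accidentally} when $\ext^1\neq 0$. The sheaf-theoretic support argument above bypasses any direct combinatorial fight with Betti numbers by using that $\kappa$ fails to be dominant precisely when $\ext^1$ is nonzero, together with positivity (\thmref{thm:positive}, \remref{rem:negative}) which rules out cancellations on the sheaf side; making this last step precise, and verifying that the local-system rank calculation of \propref{prop:canfac} can be read backward, is the one place where some genuine care is required.
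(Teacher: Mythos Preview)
Your Part (3) is essentially correct and matches the paper. The issue is in Part (1), where your sheaf-theoretic support argument is genuinely different from the paper's, and you yourself flag it as incomplete.

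The paper's proof of (1) is more elementary and does not try to reverse the convolution argument of \propref{prop:canfac}. Instead, it works directly with the Poincar\'e polynomial identity you wrote down and extracts two specific terms: on the right-hand side, take $(V^1,V^2)=(\lsp{\sigma}W^1,0)$ and $(V^1,V^2)=(0,\lsp{\sigma}W^2)$. In each case both quiver Grassmannians are single points, so the coefficient is positive, and by positivity of all coefficients these terms survive in the product. Hence on the left-hand side $\Gr_V(\lsp{\sigma}W)\neq\emptyset$ for $V=\lsp{\sigma}W^1$ and for $V=\lsp{\sigma}W^2$. Now apply Schofield's \cite[Th.~3.3]{Schofield}: a general representation of dimension $\dim\lsp{\sigma}W$ has a subrepresentation of dimension $\beta$ if and only if $\ext^1(\beta,\dim\lsp{\sigma}W-\beta)=0$. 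Applying this with $\beta=\dim\lsp{\sigma}W^1$ and $\beta=\dim\lsp{\sigma}W^2$ gives both vanishings. (There is a short separate case for $W^2=S_i$ with $i\in I_1$, handled directly.) You never invoke this Schofield result, and it is the crux.

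Your proposed mechanism---that if $\ext^1\neq 0$ then $\kappa$ is not dominant and the factorization must fail---is plausible in spirit but would require showing that no cancellation or accidental equality can occur at the level of the sum $\sum r_W(L)L$, and that the failure of dominance of $\kappa$ propagates to a strict inequality in the Grothendieck group. This is exactly the step you label as the main obstacle, and you do not resolve it.

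For Part (2), your first sentence (``the same argument applies with almost no change'') is closer to what the paper does than your alternative. The paper sharpens the argument of (1) by noting that for the two extreme terms $V=\lsp{\sigma}W^1$ and $V=\lsp{\sigma}W^2$, the projection $\pi^\perp$ is an isomorphism over the generic locus, so only $IC_W(0)$ contributes to $\pi^\perp_!$; hence the same nonemptiness conclusion holds when one works with $L$ rather than $\mathbb L$. Your alternative route (deduce (C) first, then use $\mathbb L=L$) is circular as stated, since (C) is not known a priori for $W^1,W^2$.
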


Here an almost prime simple module $\mathbb L(W)$ means that it does
not factor as $\mathbb L(W^1)\otimes \mathbb L(W^2)$ of almost simple
modules.

\begin{proof}
(1)  Let us first consider the case $x_{i'} = \mathbb L(W^2) =
  L(W^2)$. Taking the truncated $q$-character, we have
  \begin{equation*}
     \sum_V P_t(\Gr_V(\lsp{\sigma}W)) e^W e^V
     = 
     Y_{i,\vq^3} \ast
     \left(
       \sum_{V^1} P_t(\Gr_{V^1}(\lsp{\sigma}W^1)) e^{W^1} e^{V^1}
     \right), 
  \end{equation*}
where $\ast$ is the twisted multiplication \eqref{eq:twist}.

Since $i\in I_1$ is a source, we have $\ext^1(W^1,S_i) = 0$.
If we have $\ext^1(S_i,W^1)\neq 0$, then
$\dim \lsp{\sigma}W^1 = \dim \lsp{\sigma}W + \dim S_i$.
Therefore the right hand side contains the term for $V^1$ with $\dim
V^1 = \dim \lsp{\sigma}W + \dim S_i$, as the corresponding quiver
Grassmannian $\Gr_{\lsp{\sigma}W^1}(\lsp{\sigma}W^1)$ is a single
point.

But the left hand side obviously cannot contain the corresponding term.
Therefore we must have $\ext^1(S_i,W^1) = 0$.

\begin{NB}
  \begin{Example}
  Consider an example $A_3$. We take $W^1 = S_3$, $S_i = S_2$. Then
  $\Ext^1(S_2,W^1) = \C$, as we have an arrow $2 \to 3$.

  A representation of $W = W^1\oplus S_i$ is $(0 \leftarrow
  \C\rightarrow \C)$. Then $\lsp{\sigma}W = (0 \leftarrow 0
  \rightarrow \C)$. We have
  \begin{equation*}
     \chi_{\vq}(L(W))_{\le 2} = Y_{2,\vq^3} Y_{3,1}(1 + V_{3,\vq}).
  \end{equation*}
But $\lsp{\sigma}W^1 = (0 \leftarrow \C\rightarrow \C)$ and we have
\begin{equation*}
   \chi_{\vq}(L(W^1))_{\le 2} = Y_{3,1}(1 + V_{3,\vq} + V_{2,\vq^2} V_{3,\vq}).
\end{equation*}
  \end{Example}
\end{NB}

Now we suppose general representations of $W^1$ and $W^2$ do not
contain the direct summand $S_i$ for any $i\in I_1$.
Then the vanishing of $\ext^1$ is equivalent to the corresponding
statement after applying the functor $\sigma$. (Since $\sigma$ starts
with taking the {\it dual}, we need to exchange the first and the
second entries $A$, $B$ of $\ext^1(A,B)$, but we are studying both
$\ext^1(A,B)$ and $\ext^1(B,A)$, so it does not matter.)

We again consider the equality for the truncated $\vq$-character:
\begin{equation*}
     \sum_V P_t(\Gr_V(\lsp{\sigma}W)) e^W e^V
     = 
     \left(
       \sum_{V^1} P_t(\Gr_{V^1}(\lsp{\sigma}W^1)) e^{W^1} e^{V^1}
     \right) 
     \ast
     \left(
       \sum_{V^2} P_t(\Gr_{V^2}(\lsp{\sigma}W^2)) e^{W^2} e^{V^2}
     \right).
  \end{equation*}
  The right hand side contain the terms with $V^1 = \lsp{\sigma}W^1$,
  $V^2 = 0$ and $V^1 = 0$, $V^2 = \lsp{\sigma}W^2$, as both
  $\Gr_{V^1}(\lsp{\sigma}W^1)$ and $\Gr_{V^2}(\lsp{\sigma}W^2)$ are
  points in these cases.
  These survive thanks to the positivity
  $P_t(\Gr_{V^1}(\lsp{\sigma}W^1))$,
  $P_t(\Gr_{V^2}(\lsp{\sigma}W^2))\in \Z_{\ge 0}[t]$.
  Therefore the corresponding quiver Grassmannian varieties
  $\Gr_V(\lsp{\sigma}W)$ (two cases) are nonempty in the left hand
  side also.
  Therefore a general representation of $\bE_W$ contains two
  subrepresentations of $\dim \lsp{\sigma}W^1$, $\dim \lsp{\sigma}W^2$
  respectively.
  By \cite[Th.~3.3]{Schofield}, it implies that we have both
  $\ext^1(\lsp{\sigma}W^1,\lsp{\sigma}W^2) = 0$ and
  $\ext^1(\lsp{\sigma}W^2,\lsp{\sigma}W^1) = 0$.
This proves the first assertion.

\begin{NB}
A first attempt of the proof:  

Take the open stratum $\Nreg(V,W)$ of $\bE_W = \N_0(W)$. Then 
\[
   \pi\colon \N(V,W)=\Tilde{\mathcal F}(\nu,W)\to
   \N_0(W) = \bE_W
\]
is an isomorphism on $\pi^{-1}(\Nreg(V,W))$. In particular, we have
$\dim \N(V,W) = \dim \mathcal F(\nu,W) + \rank \Tilde{\mathcal
  F}(\nu,W) = \dim \bE_W$. Therefore
\(
   \dim \Tilde{\mathcal F}(\nu,W)^\perp
   = \dim \mathcal F(\nu,W) + \dim \bE_W - 
   \rank \Tilde{\mathcal F}(\nu,W)
   = 2 \dim \mathcal F(\nu,W).
\)
\begin{quote}
  $\Tilde{\mathcal F}(\nu,W)^\perp = 
  T^*\mathcal F(\nu,W)$ in this case ? 
\end{quote}
And similarly $\Nreg(V^1,W^1)$, $\Nreg(V^2,W^2)$ of $\bE_{W^1} =
\N_0(W^1)$, $\bE_{W^2} = \N_0(W^2)$ respectively.
\end{NB}

(2) We make a closer look to the above argument: Recall that
$P_t(\Gr_V(\lsp{\sigma}W))$ is the sum of contributions from $L(W)$
and other perverse sheaves, as \eqref{eq:Poincare}, and
$r_W(IC_W(V'))\in\Z_{> 0}$, $a_{V,V';W}(t)\in \Z_{\ge 0}[t]$.

For the case $V^1 = \lsp{\sigma}W^1$, $V^2 = 0$ or $V^1 = 0$, $V^2 =
\lsp{\sigma}W^2$, the corresponding subspace is uniquely determined,
and the projection $\pi^\perp\colon \Tilde{\mathcal
  F}(\nu,W)^\perp\to\bE_W^*$ becomes an isomorphism. Therefore other
perverse sheaves do not appear in $\pi_!^\perp(1_{\Tilde{\mathcal
    F}(\nu,W)^\perp})$. Therefore we must have
$\Gr_V(\lsp{\sigma}W)\neq\emptyset$ in the two cases. The remaining
argument is the same.

(3) The assertion follows from the first and the characterization of
the canonical decomposition:
  $\alpha = \sum \beta^i$ is the canonical decomposition if and only
  if each $\beta^i$ is a Schur root and $\ext^1(\beta^i,\beta^j) = 0$
  for $i\neq j$.
(See \cite[Prop.~3]{Kac-quiver2}.)
\end{proof}

\begin{Corollary}
  If $L(W)$ satisfies {\rm (C)}, it is real, i.e.\ $L(W)\otimes L(W)$
  is simple.
\end{Corollary}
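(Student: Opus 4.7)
The plan is to show $L(W)\otimes L(W)\cong L(W\oplus W)$ (up to a power of $t$) in the actual category, which is simple by construction. The strategy is to apply the factorization Propositions \ref{prop:fac}, \ref{prop:canfac}, and \ref{lem:real} to both $W$ and $W\oplus W$, match the resulting expressions in the Grothendieck ring $\bfR_t$, and then invoke the Corollary to \thmref{thm:positive} to upgrade a Grothendieck-ring identity to an isomorphism of modules.

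First I would check that the reduction in Proposition \ref{prop:fac} is compatible with doubling: since $\dim\lsp{\varphi}U_i=\max(\dim U_i-\dim U_{i'},0)$ and $\min(\dim U_i,\dim U_{i'})$ are homogeneous of degree one in $U$, we have $\lsp{\varphi}(W\oplus W)\cong\lsp{\varphi}W\oplus\lsp{\varphi}W$ and the Kirillov--Reshetikhin multiplicities for $W\oplus W$ are exactly twice those for $W$. Hence it suffices to establish
$$\mathbb L(\lsp{\varphi}W\oplus\lsp{\varphi}W)\;=\;\mathbb L(\lsp{\varphi}W)\otimes\mathbb L(\lsp{\varphi}W)$$
in $\bfR_t$, for then squaring the formula of Proposition \ref{prop:fac} yields $\mathbb L(W\oplus W)=\mathbb L(W)^{\otimes 2}$.

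The key step is to verify that condition (C) transfers from $W$ to $W\oplus W$. Writing the canonical decomposition $\lsp{\varphi}W=W^1\oplus\cdots\oplus W^s$ into real Schur roots, each $W^k$ satisfies $\ext^1(W^k,W^k)=0$ because it is real, and $\ext^1(W^k,W^l)=0$ for $k\neq l$ by canonicity. By Kac's criterion (\cite{Kac-quiver2}, Prop.~3) the doubled decomposition $\bigoplus_k(W^k\oplus W^k)$ is itself the canonical decomposition of $\lsp{\varphi}W\oplus\lsp{\varphi}W$, and its summands are still real Schur roots. Applying Proposition \ref{prop:canfac} to both $\lsp{\varphi}W$ and $\lsp{\varphi}W\oplus\lsp{\varphi}W$ then produces the displayed identity, since $\bfR_t$ is commutative.

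Finally, since (C) holds for both $W$ and $W\oplus W$, Proposition \ref{lem:real}(1) identifies $\mathbb L(W)=L(W)$ and $\mathbb L(W\oplus W)=L(W\oplus W)$, so $L(W)\otimes L(W)=L(W\oplus W)$ as classes in $\bfR_t$. The Corollary following \thmref{thm:positive} then upgrades this to an isomorphism $L(W)\otimes L(W)\cong t^{\varepsilon(W,W)}L(W\oplus W)$, which is simple. The only genuinely substantive step is the invariance of (C) under doubling; the remainder is bookkeeping with the factorization identities already in hand.
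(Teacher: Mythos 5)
Your proof is correct and assembles the same ingredients the paper leaves implicit: Proposition~\ref{prop:fac} to peel off Kirillov--Reshetikhin factors compatibly with doubling, the invariance of condition~(C) under doubling via Kac's characterization of the canonical decomposition, Proposition~\ref{prop:canfac} to match $\mathbb L(\lsp{\varphi}W\oplus\lsp{\varphi}W)$ with $\mathbb L(\lsp{\varphi}W)^{\otimes 2}$, and Proposition~\ref{lem:real}(1) to identify $\mathbb L$ with $L$ on both sides. One tiny wording point: the vanishing $\ext^1(W^k,W^k)=0$ uses that $W^k$ is a \emph{real Schur} root (real gives $q=1$, Schur gives $\dim\Hom=1$, hence $\dim\Ext^1=0$); ``real'' alone is not enough, though you plainly have both hypotheses in hand.
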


At this moment, we do not know the converse is true or not.

Next suppose $\cG$ is of type $ADE$. Then all positive roots are real
and Schur. Let $\Delta_+$ be the set of positive roots. Following
\cite{FomZel2} we introduce the set $\Phi_{\ge -1}$ of {\it almost 
positive\/} roots:
\begin{equation*}
   \Phi_{\ge -1} = \Delta_+ \sqcup \{ -\alpha_i \mid i\in I\},
\end{equation*}
where $\alpha_i$ is the simple root for $i$.

\begin{Corollary}\label{cor:ADE}
  \textup{(1)} There are only finitely many prime simple modules in
  $\bfR_{\ell=1}$ if and only if the underlying graph $\cG$ of the
  principal part is of type $ADE$.

  \textup{(2)} Suppose that $\cG$ is of type $ADE$. Then all simple
  modules are real, and there is a bijection
\begin{equation*}
  \{ \text{prime simple modules}\}
  \setminus \{ f_i \mid i\in I\}
  \xrightarrow[\mathrm{1:1}]{\dim(\bullet)}
  \Phi_{\ge -1}.
\end{equation*}
Here the bijection is given by \propref{prop:Schur}(3) together with
$x_i\mapsto (-\alpha_i)$.
\end{Corollary}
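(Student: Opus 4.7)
The plan is to deduce both parts from \propref{prop:Schur}(3) and \propref{lem:real}, together with the classical dichotomy that the set of Schur roots of a connected quiver is finite precisely when the underlying graph is of type $ADE$.

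For part (1) in the $ADE$ direction, I would first observe that because every positive root of a Dynkin graph is real and Schur, the canonical decomposition of $\lsp{\varphi}W$ consists only of real Schur roots for every $W$, so condition (C) holds universally. Hence \propref{lem:real}(1) gives $\mathbb L(W) = L(W)$ throughout $\bfR_{\ell=1}$, and prime simple modules coincide with prime almost simple modules. \propref{prop:Schur}(3) then identifies the latter, outside the finite exceptional set $\{x_i, f_i\}_{i\in I}$, with the finite set of Schur roots of the principal part $\cQ$, yielding finiteness. For the converse, if $\cG$ is not of type $ADE$, I would invoke Kac's theorem together with the fact that the Auslander--Reiten translate acts with infinite order on the preprojective component of a non-Dynkin quiver; this produces infinitely many real roots, and every real root of a quiver is automatically Schur. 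For each such real Schur root $\beta$, taking $W$ supported on the principal vertices with $\dim W_i = \beta_i$ and $\dim W_{i'} = 0$ yields a $W$ satisfying (C), so by \propref{prop:Schur}(3) and \propref{lem:real}(1) the module $L(W) = \mathbb L(W)$ is a prime simple module; distinct $\beta$ give non-isomorphic $L(W)$, whence there are infinitely many prime simple modules.

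For part (2), under the $ADE$ hypothesis condition (C) is again automatic, so the Corollary immediately preceding applies and every simple module is real. The bijection then follows by combining \propref{prop:Schur}(3) (prime almost simples with (C), modulo $\{x_i, f_i\}_{i\in I}$, match the Schur roots of $\cQ$) with the Dynkin identification $\{\text{Schur roots of }\cQ\} = \Delta_+$, and by appending the $x_i$'s with the assignment $x_i \mapsto -\alpha_i$. Deleting the $f_i$'s as in the statement, the right-hand side becomes exactly $\Delta_+ \sqcup \{-\alpha_i\}_{i\in I} = \Phi_{\ge -1}$.

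The main subtlety I anticipate is not in the skeleton of the argument but in the bookkeeping around vertex labels. Because the identification in \eqref{eq:name} swaps the roles of $i$ and $i'$, one must verify that $x_i$, which corresponds to the frozen simple $S_{i'}$, genuinely sits outside the set of Schur roots of the principal quiver $\cQ$ and must be counted separately, matching the extra negative simple root $-\alpha_i$ in $\Phi_{\ge -1}$. Once this is checked, the rest of the argument is essentially a repackaging of previously established statements.
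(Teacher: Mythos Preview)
Your proposal is correct and follows essentially the same approach as the paper: reduce everything to \propref{prop:Schur}(3) plus the count of real Schur roots, using \propref{lem:real} to collapse the distinction between $\mathbb L(W)$ and $L(W)$ in type $ADE$. The only cosmetic difference is in how you justify that a non-$ADE$ quiver has infinitely many real Schur roots---you use the infinite preprojective component under the Auslander--Reiten translate, whereas the paper embeds an affine subdiagram and checks that real roots with nonzero defect are Schur; both are standard, and your treatment of part~(2) is in fact more self-contained than the paper's, which defers to \cite{FomZel2} and a forward reference to the next section.
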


The first assertion is a simple consequence of the fact that there are
infinitely many real Schur roots for non $ADE$ quivers. This can be
shown for example, by observing non $ADE$ graph always contains an
affine graph.
\begin{NB}
\cite[VII.2.1]{ASS}.
\end{NB}%
Then for an affine graph, real roots $\alpha$ with the defects
\(
   \chi(\delta, \alpha)
   = \dim \Hom(\delta,\alpha) - \dim \Ext^1(\delta,\alpha)
\)
are nonzero are Schur. Here $\delta$ is the generator of positive
imaginary roots and the above is Euler form for a representation $N$
with $\dim N = \delta$ and $M$ with $\dim M=\alpha$, which is
independent of the choice of $M$, $N$.

This Corollary is nothing but \cite{FomZel2} after identifying prime
simple modules with cluster variables in the next section.

Now we consider the affine case.

\begin{Example}
  Suppose that $(I,E)$ of type $A_1^{(1)}$. The corresponding quiver
  $(I,\Omega)$ is called the Kronecker quiver. Positive roots are $(n
  \rightrightarrows n+1)$, $(n+1\rightrightarrows n)$,
  $(n\rightrightarrows n)$ ($n\in\Z_{\ge 0}$). The vector
  $(1\rightrightarrows 1)$ is the generator of positive imaginary
  roots, and denoted by $\delta$ as above.

  For $n\in \Z_{>0}$ let $nW$ denote an $(I\sqcup I_\fr)$-graded
  vector space with $\C^n$ at the entry $i$ and $0$ at ${i'}$
  ($i=0,1$): $(nW)_0 = \C^n \rightrightarrows
  (nW)_1 = \C^n$.
  Thus $\dim (nW) = n\delta$.
  Then $nW = W\oplus\cdots \oplus W$ is the canonical decomposition of
  $nW$, where $W$ means $1 W$: It is well-known that a general
  representation in $\bE_{W}$ corresponds to a point in
  $\proj^1(\C)$. And a general representation in $\bE_{nW}$
  corresponds to distinct $n$ points in $\proj^1(\C)$.

  For a real positive root $(n\rightrightarrows n+1)$ or $(n+1
  \rightrightarrows n)$, there is the unique indecomposable module
  $M$. It is known that either $\Ext^1(M,W)$ or $\Ext^1(W,M)$ are
  nonvanishing. Therefore $M$ and $W$ cannot appear in a canonical
  decomposition simultaneously.
  It is also known that extensions between $(n\rightrightarrows n +
  1)$ and $((n+1)\rightrightarrows (n+2))$ vanish. It is also true for
  $((n+1)\rightrightarrows n)$ and $((n+2)\rightrightarrows (n+1))$.
  All other pairs, one of extensions does not vanish.

  From these observations, the canonical decomposition only have real
  Schur roots, except the case $nW$.
  \begin{NB}
    For example, consider $W$ with $\dim W = (w_0\rightrightarrows
    w_1)$, $w_1 > w_0$. Put
    \begin{equation*}
      n \defeq \left[ \frac{w_0}{w_1 - w_0} \right],
\quad
      a \defeq (n+1) (w_1 - w_0) - w_0,
\quad
      b \defeq w_0 - n(w_1 - w_0),
    \end{equation*}
where $[\ ]$ denotes the integral part. From the definition,
$n\le w_0/(w_1 - w_0) < n + 1$, we have $a > 0$, $b\ge 0$. We also have
\begin{equation*}
  (w_0\rightrightarrows w_1)
  = a   (n\rightrightarrows (n+1)) + b ((n+1)\rightrightarrows (n+2)).
\end{equation*}
  \end{NB}%
  We consider the case $n=2$. If we consider $\pi^\perp \colon
  \Tilde{\mathcal F}(\nu,2W)^\perp\to\bE_{2W}^*$ in
  \subsecref{subsec:sigma}, the perverse sheaves appearing (up to shift) in
the pushforward
\(
     \pi^\perp_!(1_{\Tilde{\mathcal F}(\nu,2W)^\perp}[\dim\Tilde{\mathcal
       F}(\nu,2W)^\perp])
\)
was studied in \cite{Lu-affine}. If we take $\nu = (1,1)\in\Z_{\ge
  0}^I$, then $\pi^\perp$ is the principal $\{ \pm 1\}$ cover over the
open set $\bE_{2W}^{*\mathrm{reg}}$ corresponding to distinct pairs of
points in $\proj^1(\C)$.
Then from [loc.\ cit.] we have
\begin{equation*}
  {\mathscr L}_{2W} = \{ 1_{\{0\}}, \Psi^{-1}(IC(\bE_{2W}^*,\rho)) \},
\end{equation*}
where $IC(\bE_{2W}^*,\rho)$ is the IC complex associated with the
nontrivial local system $\rho$ corresponding to the nontrivial
representation of $\{ \pm 1\}$.
\begin{NB}
  The following is probably true:
  $\Psi^{-1}(IC(\bE_{2W}^*,\rho))$ is the IC complex associated with
  the stratum $\{ \text{regular} \oplus S_1\oplus S_2 \}$.
\end{NB}%
In particular, the almost simple module $\mathbb L(2W)$ is not
the simple module $L(2W)$. 
On the other hand $\mathscr L_W = \{ 1_{\{0\}}\}$.

The coefficient of $\chi_\vq(L(2W))$ at $Y_{1,1}^2 Y_{2,\vq^3}^2\times
V_{1,\vq} V_{2,\vq^2}$ is $1$. The coefficients of $\chi_\vq(L(W))$ at
$Y_{1,1}Y_{2,\vq^3}V_{1,\vq}$, $Y_{1,1}Y_{2,\vq^3}V_{2,\vq^2}$ are
both $1$.  Therefore $L(2W)\not\cong L(W)\otimes L(W)$, i.e.\ $L(W)$
is {\it not\/} real. On the other hand, we have
$\mathbb L(2W) \cong \mathbb L(W)\otimes \mathbb L(W)$.

There are many attempts to construct a base for the cluster algebra
corresponding to this example in the cluster algebra literature
(\cite{ShermanZelevinsky,CalderoZelevinsky,Dupont,DXX} and \cite{GLS2}
in a wider context). The problem is how to understand imaginary root
vectors, and the solution is not unique.
Relationship between various bases are studied by
Leclerc~\cite{Leclerc}.
\end{Example}

More generally if $W$ corresponds to an indivisible isotropic
imaginary root (i.e.\ in the Weyl group orbit of $\delta$ of a
subdiagram of affine type in $\cG$) in an arbitrary $\cQ$, we have
\begin{equation*}
    {\mathbb L}(nW) \cong {\mathbb L}(W)^{\otimes n}. 
\end{equation*}
This can be generalized thanks to the results by Schofield
\cite{Schofield}. First we have if $\alpha$ is a non-isotropic
imaginary Schur root, $n\alpha$ is also a Schur root for $n\in\Z_{>
  0}$ ([loc.\ cit., Th.~3.7]).
It is also known that an isotropic Schur root must be indivisible
([loc.\ cit., Th.~3.8].)
Therefore we introduce the following notation: For a $W$ as above and
$n\in\Z_{>0}$ let $nW$ be an $I$-graded vector space with $\dim (nW)_i
= n \dim W_i$.
For a factor $\mathbb L(W^k)$ in \eqref{eq:Schur} let $(n\mathbb
L)(W^k)$ be $\mathbb L(nW^k)$ if $\dim W^k$ is a non-isotropic Schur
imaginary root, and $\mathbb L(W^k)^{\otimes n}$ otherwise, i.e.\
$\dim W^k$ is a real or indivisible isotropic Schur root.

\begin{Corollary}\label{cor:real}
  Let $W$ be as above. Let $\lsp{\varphi}W = W^1\oplus W^2\oplus\cdots
  \oplus W^s$ be the canonical decomposition.
   Then we have
  \begin{equation*}
     {\mathbb L}(nW) \cong 
     (n\mathbb L)(W^1)\otimes\cdots \otimes (n\mathbb L)(W^s)
     \otimes \bigotimes_{i\in I}
      \begin{NB}
      x_i^{n \max(\dim W_{i'}- \dim W_i,0)}
      \otimes
      \end{NB}
      f_i^{n \min(\dim W_i,\dim W_{i'})}.
  \end{equation*}
\end{Corollary}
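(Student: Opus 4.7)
The plan is to reduce the statement to $\mathbb L(n\lsp{\varphi}W)$ via \propref{prop:fac}, then analyze the canonical decomposition of $n\lsp{\varphi}W$ using Schofield's results, and finally apply \propref{prop:canfac}.

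First I would observe the compatibility of $\lsp{\varphi}$ with scaling: directly from the definition,
\begin{equation*}
  \dim \lsp{\varphi}(nW)_i = \max(n\dim W_i - n\dim W_{i'}, 0) = n\dim\lsp{\varphi}W_i,
\end{equation*}
and similarly for $i'$, so $\lsp{\varphi}(nW) = n\lsp{\varphi}W$. Since also $\min(n\dim W_i, n\dim W_{i'}) = n\min(\dim W_i,\dim W_{i'})$, \propref{prop:fac} applied to $nW$ reduces the claim to
\begin{equation*}
  \mathbb L(n\lsp{\varphi}W) \cong (n\mathbb L)(W^1)\otimes\cdots\otimes (n\mathbb L)(W^s).
\end{equation*}

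Next I would compute the canonical decomposition of $n\lsp{\varphi}W$. The hypothesis $\ext^1(W^k,W^l) = 0$ for $k\neq l$ is invariant under rescaling, so the canonical decomposition of $n\lsp{\varphi}W$ is the concatenation of the canonical decompositions of the $nW^k$. These are described by Schofield \cite{Schofield}: if $\dim W^k$ is a real Schur root, then $nW^k$ decomposes canonically as the sum of $n$ copies of $W^k$; if $\dim W^k$ is an isotropic imaginary Schur root (which by \cite[Th.~3.8]{Schofield} is necessarily indivisible), then $nW^k$ is not itself Schur for $n>1$, so again its canonical decomposition consists of $n$ copies of $W^k$; and if $\dim W^k$ is a non-isotropic imaginary Schur root, then by \cite[Th.~3.7]{Schofield} the dimension vector $n\dim W^k$ is itself a Schur root, so the canonical decomposition of $nW^k$ is just $nW^k$ (a single factor).

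Finally I would apply \propref{prop:canfac} to the representation $n\lsp{\varphi}W$, whose canonical decomposition we have just identified, to obtain
\begin{equation*}
  \mathbb L(n\lsp{\varphi}W) \cong \bigotimes_{k=1}^s \mathbb L\bigl(\text{canonical decomposition of } nW^k\bigr),
\end{equation*}
which matches $(n\mathbb L)(W^k)$ case by case: $\mathbb L(W^k)^{\otimes n}$ when $\dim W^k$ is real or indivisible isotropic Schur, and $\mathbb L(nW^k)$ when $\dim W^k$ is non-isotropic imaginary Schur. The argument is structurally routine once the reductions are in place; the only real input is Schofield's criterion for when a multiple of a Schur root remains Schur, and this is the step most worth flagging.
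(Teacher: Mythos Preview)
Your proposal is correct and follows essentially the same route as the paper, which states this result as a corollary without proof: reduce via \propref{prop:fac} (using $\lsp{\varphi}(nW)=n\,\lsp{\varphi}W$), invoke Schofield's results on multiples of Schur roots to determine the canonical decomposition of $n\,\lsp{\varphi}W$, and apply \propref{prop:canfac}. One small point worth tightening: in the isotropic case, the inference ``$nW^k$ is not Schur, so its canonical decomposition is $n$ copies of $W^k$'' is not quite a logical implication as written; what you really use (and what Schofield provides) is that $\ext^1(W^k,W^k)=0$ for real or isotropic Schur roots, so Kac's criterion directly gives $nW^k = W^k\oplus\cdots\oplus W^k$.
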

\begin{NB}
The following is wrong.

  Therefore $L(W)$ is \/{\rm real} in the sense of \subsecref{subsec:HerLec}
  if and only if there are no non-isotropic imaginary Schur roots in the
  canonical decomposition of $\lsp{\varphi}W$.
\end{NB}

Mimicking the definition in \subsecref{subsec:HerLec}, we say $\mathbb
L(W)$ is {\it real\/} if $\mathbb L(2W)\cong \mathbb L(W)\otimes
\mathbb L(W)$.
The above implies $\mathbb L(W)$ is {\it real\/} in this sense if and
only if there are no non-isotropic imaginary Schur roots in the
canonical decomposition of $\lsp{\varphi}W$.

If $L(2W) \cong L(W)\otimes L(W)$ (i.e.\ $L(W)$ is real), we have
$\ext^1(W,W) = 0$ by \propref{prop:Schur}(2). By the result of
Schofield \cite{Schofield} used above, this can happen only when the
canonical decomposition does not contain non-isotropic imaginary Schur
root.
This is a step towards proving that $\mathscr C_1$ is a monoidal
categorification.

\section{Cluster algebra structure}\label{sec:cluster}

In this section we prove that cluster monomials are dual canonical
base elements after some preparation.

In the previous sections, we use the notation $W$ for an $(I\sqcup
I_\fr)$-graded representation. In this section we also use it for its
general representation.
Or if we first take a representation, its underlying $(I\sqcup
I_\fr)$-graded vector space will be denoted by the same notation.

\subsection{Tilting modules}

We first review the theory of tilting modules. (See \cite[VI]{ASS} and
\cite{HappelUnger}.)

Let $\cQ = (I,\Omega)$ be a quiver as in \secref{sec:cluster_pre}. Let
$\C\cQ$ be its path algebra defined over $\C$.
We consider the category $\rep\cQ$ of finite dimensional
representations of $\cQ$ over $\C$, which is identified with
the category of finite dimensional $\C\cQ$-modules.

A module $M$ of the quiver is said to be a {\it tilting module\/} if
the following two conditions are satisfied:
\begin{enumerate}
\item $M$ is {\it rigid}, i.e.\ $\Ext^1(M,M) = 0$.
\item There is an exact sequence $0\to \C\cQ \to M_0 \to M_1\to 0$
  with $M_0$, $M_1\in\add M$, where $\add M$ denotes the additive
  category generated by the direct summands of $M$.
\end{enumerate}
We usually assume $M$ is multiplicity free.

It is known that the number of indecomposable summands of $M$
equals to the number of vertexes $\# I$, i.e.\ rank of $K_0(\C\cQ)$.

A rigid module $M$ always has a module $X$ so that $M\oplus X$ is a
tilting module.

A module $M$ is said to be an {\it almost complete tilting module\/}
if it is rigid and the number of indecomposable summands of $M$
is $\# I - 1$. We say an indecomposable module $X$ is {\it
  complement\/} of $M$ if $M\oplus X$ is a tilting module.

We have the following structure theorem:

\begin{Theorem}[\protect{Happel-Unger~\cite{HappelUnger}}]
  Let $M$ be an almost complete tilting module.

  \textup{(1)} If $M$ is sincere, there exists two nonisomorphic
  complements $X$, $Y$ which are related by an exact sequence
  \begin{equation*}
     0 \to X \to E\to Y \to 0
  \end{equation*}
  with $E\in\add M$. Moreover, we have $\Ext^1(Y,X) \cong \C$,
  $\Ext^1(X,Y) = 0$, $\Hom(Y,X) = 0$.

  \begin{NB}
    I have thought that the above is an almost splitting sequence, but
    it is not correct.
  \end{NB}

  \textup{(2)} If $M$ is not sincere, there exists only one
  complement $X$ up to isomorphism.
\end{Theorem}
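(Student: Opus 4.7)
The plan is to prove the two parts using the Bongartz lemma and the theory of minimal approximations in $\rep \cQ$.

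For part (1), the existence of some complement $X$ follows from the Bongartz lemma: every rigid module embeds in a tilting module. Given one such $X$, I would construct the second complement $Y$ via the minimal left $\add M$-approximation $f \colon X \to E$ of $X$. The first nontrivial step is to verify that $f$ is injective, and this is where sincerity enters: if $\Ker f \neq 0$, one uses the exact sequence $0 \to \C\cQ \to M_0 \to M_1 \to 0$ witnessing the tilting property of $M \oplus X$ to derive a contradiction with $\Ext^1(M \oplus X, M \oplus X) = 0$. Set $Y := \Coker f$. The remaining checks are then (i) $Y$ is indecomposable, (ii) $M \oplus Y$ is a tilting module (which reduces to $\Ext^1(Y, M \oplus Y) = 0$ via a long exact sequence argument against $M \oplus Y$), and (iii) $Y \not\cong X$.

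For the three cohomological identities, apply $\Hom(-, X)$ and $\Hom(X, -)$ to the exchange sequence $0 \to X \to E \to Y \to 0$. Using $\Ext^1(M \oplus X, M \oplus X) = 0$ together with $E \in \add M$, the computation of $\Ext^1(Y, X)$ reduces to identifying the cokernel of $\Hom(E, X) \to \Hom(X, X)$. By the approximation property and minimality of $f$, the image of this map is exactly the radical of $\End(X)$, hence the cokernel is $\End(X)/\operatorname{rad} \End(X) \cong \C$ since $X$ is indecomposable over $\C$. The vanishing $\Ext^1(X, Y) = 0$ comes from $\Ext^1(X, E) = 0$ together with surjectivity of $\Hom(X, E) \to \Hom(X, Y)$, which itself is another consequence of $f$ being an $\add M$-approximation once $\Hom(X, -)$ is applied. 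Finally, $\Hom(Y, X) = 0$ falls out of the minimality of $f$: a nonzero $g \colon Y \to X$ would compose with $E \to Y$ to give a nonzero map $E \to X$ factoring nontrivially through $Y$, contradicting the characterization of minimal approximations.

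For part (2), assume $M$ is not sincere, so there is a vertex $i$ with $M_i = 0$. The dimension vectors of the indecomposable summands of any tilting module form a $\Z$-basis of $K_0(\rep \cQ) \cong \Z^I$, so the class of any complement $X$ is forced to be the unique lattice element completing the $\#I - 1$ classes of summands of $M$ to a basis, lying in the $i$-direction modulo the span of those classes. Uniqueness of $X$ itself follows by observing that the conditions $\Ext^1(X, M) = \Ext^1(M, X) = \Ext^1(X, X) = 0$ together with the prescribed dimension vector and non-sincerity at $i$ pin down $X$ as the unique indecomposable with these properties (in the tamest case $i$ a source, $X$ is just the simple projective $S_i = P_i$; the general case is reduced to this via tilting-theoretic induction). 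The main obstacle throughout will be the indecomposability of $Y$ in part (1): one must exclude a splitting of $Y$ by analyzing idempotents of $\End(E)$ and their interaction with the minimality of $f$, and this is classically the technical heart of the Happel–Unger theorem.
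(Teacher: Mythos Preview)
The paper does not prove this theorem; it is quoted from Happel--Unger \cite{HappelUnger} and used as a black box in \S\ref{subsec:tilt}. So there is no proof in the paper to compare against. Your sketch for part~(1) follows the standard Happel--Unger argument (Bongartz complement, minimal left $\add M$-approximation, analysis of the cokernel), though the justification you give for injectivity of $f$ is vague: the exact sequence $0\to\C\cQ\to M_0\to M_1\to 0$ is not what drives the contradiction. The cleaner argument is that if $f$ is not a monomorphism then one shows every map $X\to M$ restricts to zero on $\Ker f$, leading eventually to $X$ being a simple projective at some vertex, contradicting sincerity of $M$.

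Your part~(2) has a genuine gap. Knowing the dimension vector of the complement does not by itself determine an indecomposable rigid module, and the sentence ``the conditions \ldots\ together with the prescribed dimension vector \ldots\ pin down $X$'' is not justified. The standard argument is instead a direct consequence of part~(1): if $M$ is not sincere at vertex $i$ and there were two complements $X,Y$, then the exchange sequence $0\to X\to E\to Y\to 0$ with $E\in\add M$ forces $X_i\le E_i = 0$; but since the dimension vectors of the summands of the tilting module $M\oplus X$ form a basis of $\Z^I$, we must have $X_i\neq 0$. This contradiction shows there is only one complement. Your lattice observation is the right ingredient, but it should be combined with the embedding $X\hookrightarrow E$ rather than with an unproved uniqueness claim for rigid indecomposables of given dimension.
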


Here a module $M$ is said to be {\it sincere\/} if $M_i\neq 0$ for any
vertex $i$.

\subsection{Cluster tilting sets}\label{subsec:tilt}

When the quiver $\cQ$ does not contain an oriented cycle (i.e.\
acyclic quiver), combinatorics of the cluster algebra can be
understood from the cluster category theory. Since we only need the
statement, we explain the theory only very briefly following
\cite{Hubery}. We only consider the case when there are no frozen
variables.

Let $n = \# I$. A collection $\mathbf L = \{ W^1,\dots, W^n\}$ is said
to be a {\it cluster-tilting set\/} if the following conditions are
satisfied:
\begin{enumerate}
\setcounter{enumi}{-1}
\item $W^i$ is either an indecomposable representation of the quiver
  $\cQ$ or a vertex. Let $\mathbf L_{\mathrm{mod}}$ be the subset of
  indecomposable representations, $\mathbf L_{\mathrm{ver}} = \mathbf
  L\setminus \mathbf L_{\mathrm{mod}}$.

\item $W^k\in\mathbf L_{\mathrm{mod}}$ are pairwise
  nonisomorphic. $W^i\in\mathbf L_{\mathrm{ver}}$ are pairwise
  distinct.

\item Delete all arrows incident to a vertex $W^i\in\mathbf
  L_{\mathrm{ver}}$. Remove the vertex $W^i$. Let $\lsp{\psi}\cQ$ be
  the resulting quiver.

\item The entry for $W^k\in\mathbf L_{\mathrm{mod}}$ is $0$ for a
  vertex $W^j\in\mathbf L_{\mathrm{ver}}$. Hence $W^k$ is a
  representation of $\lsp{\psi}\cQ$.

\item $\lsp{\psi}W \defeq \bigoplus_{W^k\in\mathbf
    L_{\mathrm{mod}}}W^k$ is a tilting module as a representation of
  $\lsp{\psi}\cQ$.
\end{enumerate}

Note that $\# \mathbf L_{\mathrm{mod}} = \#
(\lsp{\psi}{I})$. Therefore $\lsp{\psi}W$ is tilting if and only if
$\ext^1(W^k,W^l) = 0$ for any $k$, $l$ (including the case
$k=l$). Thus this is stronger than the canonical decomposition and
means that $\dim W^k$ is a real Schur root.

The initial cluster-tilting set is the collection $\mathbf L = I$ with
$\mathbf L_{\mathrm{mod}}=\emptyset$. In this case $\lsp{\psi}{I} =
\emptyset$ and the condition is trivially satisfied.

If we identify $W^i\in\mathbf L_{\mathrm{ver}}$ with $P_{W^i}[1]$ the
shift of the indecomposable projective module associated corresponding
to the vertex $W^i$, the above definition is nothing but the
definition of a cluster-tilting set for the cluster category
\cite{BMRRT}.

For $k\in \{1,\dots, n\}$ we define the {\it mutation\/} $\mu_k(\bL)$
of $\bL$ in direction $k$ as follows:
\begin{enumerate}
\item Suppose $W^k$ is a vertex. We add it again, together with all
  arrows incident to it, to the quiver $\lsp{\psi}\cQ$. Let
  $\lsp{+\psi}\cQ$ be the resulting quiver. Since $\lsp{\psi}W$ is an
  almost tilting non-sincere module as a representation of
  $\lsp{+\psi}\cQ$, we can add the unique indecomposable $\lsp{*}W^k$
  to $\lsp{\psi}W$ to get a tilting module.

\item Next suppose $W^k$ is a module. We consider an almost tilting
  module $\lsp{-\psi}W$ which is obtained from $\lsp{\psi}W$ by
  subtracting the summand $W^k$.
  \begin{aenume}
  \item If it is sincere, there is another indecomposable module
    $\lsp{*}W^k\neq W^k$ such that $\lsp{*}W^k\oplus \lsp{-\psi}W$ is a
    tilting module.

  \item If it is not sincere, there exists the unique simple module
    $S_i$, not appearing in the composition factors of
    $\lsp{-\psi}W$. Then we set $\lsp{*}W^k = {i}$.
  \end{aenume}
\end{enumerate}
Let
  \begin{equation*}
     \mu_k(\bL) \defeq \bL \cup \{\lsp{*}W^k\}
     \setminus \{ W^k \}.
  \end{equation*}

  In all cases $\mu_k(\bL)$ is again a cluster-tilting set.
  We can iterate this procedure and obtain new clusters starting from
  the initial cluster $\bL = I$.

\subsection{Cluster character}\label{subsec:cc}
We still continue to assume that the quiver $\cQ$ does not contain an
oriented cycle.
It is known that cluster monomials can be expressed in terms of
generating functions of Euler numbers of quiver Grassmannian
varieties.
This important result was first proved by Caldero-Chapoton in type
$ADE$ \cite{CalderoChapoton}. Later it was generalized to any acyclic
quiver by Caldero-Keller~\cite{Caldero-Keller} using various results
in the cluster category theory (see \cite{Keller} for the
reference). We recall the formula in this subsection.

Let $(\bx,\bB)$ be the initial seed of the cluster algebra $\mathscr
A(\bB)$. We assume there is no frozen part for simplicity.
Let $W$ be a representation of the quiver $\cQ$ corresponding to
$\bB$. Let $\Gr_V(W)$ be the corresponding quiver Grassmannian
variety, where $V$ is an $I$-graded vector space. Though we soon
assume $W$ is a general representation in $\bE_W$, it is not necessary
for the definition. Let $e(\Gr_V(W))$ be its Euler number. We define
\begin{equation*}
   X_W \defeq
   \frac1{\bx^{\dim W}}
   \sum_V e(\Gr_V(W)) \bx^{\dim V\cdot R}\bx^{(\dim W - \dim V)R'},
\end{equation*}
where
\begin{equation*}
  \begin{gathered}
  \bx^{\dim W} = \prod_i x_i^{\dim W_i},
\\
  \bx^{\dim V\cdot R} = \prod_{h\in\Omega} x_{\vin(h)}^{\dim V_{\vout(h)}},
\quad
  \bx^{(\dim W - \dim V)R'}
  = \prod_{h\in\Omega} x_{\vout(h)}^{(\dim W_{\vin(h)} - \dim V_{\vin(h)})}.
  \end{gathered}
\end{equation*}

For a vertex $i$, we set $X_i = x_i$.

Then it is known that the correspondence $W\to X_W$ gives the
followings:
\begin{itemize}
\item the correspondence $W\to X_W$ defines a bijection between the
  set of isomorphism classes of rigid indecomposable modules with
  cluster variables minus $\{ x_i\}$;
\item the correspondence $\mathbb L\to \{ X_{W^1},\dots, X_{W^n}\}$ gives
  a bijection between cluster tilting sets and clusters;
\item the mutation on cluster tilting sets corresponds to the cluster
  mutation.
\end{itemize}

\subsection{Piecewise-linear involution}

We give one more preparation before applying results from the cluster
category theory to our setting. This last preliminary is not necessary
for our argument, but helps to make a relation to
\cite[\S12.3]{HerLec}.

We recall the piecewise-linear involution $\tau_-$ on the root lattice
considered in \cite[\S7]{HerLec}: for $\gamma = \sum_i \gamma_i
i\in\Z^I$, we define $\tau_-(\gamma) = \sum_i \tau_-(\gamma)_i i$ by
\begin{equation}\label{eq:tau_-}
  \tau_-(\gamma)_i =
  \begin{cases}
    - \gamma_i - \sum_{j\neq i} c_{ij} \max(0, \gamma_j) & \text{if $i\in I_1$},
\\
    \gamma_i & \text{if $i\in I_0$},
  \end{cases}
\end{equation}
where $(c_{ij})$ is the Cartan matrix.

Let
\begin{equation}\label{eq:gamma}
    \gamma = \sum_i (\dim W_i - \dim W_{i'}) i.  
\end{equation}
If $i\in I_0$, we have
\begin{equation*}
   \tau_-(\gamma)_i = \dim W_i - \dim W_{i'}
   = \dim \lsp{\varphi}W_i - \dim \lsp{\varphi}W_{i'}.
\end{equation*}
If $i\in I_1$, we have
\begin{equation*}
  \begin{split}
   \tau_-(\gamma)_i & = 
   \dim W_{i'} - \dim W_i - \sum_{j\neq i} c_{ij} \max(\dim W_j - \dim W_{j'},0)
\\
   & = \dim \lsp{\varphi}W_{i'} - \dim \lsp{\varphi}W_i
   - \sum_{j\neq i} c_{ij} \dim \lsp{\varphi}W_j.
  \end{split}
\end{equation*}
Therefore we have
\begin{equation*}
  \dim \lsp{\sigma\varphi}W_i
  = \max(\tau_-(\gamma)_i, 0).
\end{equation*}
where $\lsp{\sigma\varphi}W = \lsp{\sigma}(\lsp{\varphi}W)$ is obtained by
applying $\sigma$ to $\lsp{\varphi}W$. 
\begin{NB}
We do not have a simple relation between
\(
  \dim \lsp{\sigma\varphi}W_{i'}
\)
and
$\tau_-(\gamma)$.
\end{NB}

\begin{Remark}
  In \cite[\S12.3]{HerLec} the quiver Grassmannian
  $\Gr_V(M[\tau_-(\gamma)])$ was considered where $M[\tau_-(\gamma)]$
  is a general representation with
\begin{equation*}
  \dim M[\tau_-(\gamma)]_i = \max(\tau_-(\gamma)_i, 0).
\end{equation*}
Here the quiver is the principal part $\cQ$ of our decorated quiver.
From the above computation
\(
  M[\tau_-(\gamma)]
\)
is nothing but the principal quiver part of $\lsp{\sigma\varphi}W$.
The frozen part of $\lsp{\sigma\varphi}W$ does not play any role in
the quiver Grassmannian, by \propref{prop:fac-1}. Therefore
\(
   \Gr_V(M[\tau_-(\gamma)])
\)
in [loc.\ cit., \S12.3] is isomorphic to our
\(
   \Gr_V(\lsp{\sigma\varphi}W)
\)
under \eqref{eq:gamma}.
\end{Remark}

\subsection{Cluster monomials}

We start to put the cluster algebra structure on $\bfR$ from this
subsection. 

\begin{Proposition}\label{prop:variable}
  \textup{(1)} Let $W$ be an $I$-graded vector space such that $\dim
  W$ is a real Schur root of the principal part of the decorated
  quiver. Then $L(W)$ is a cluster variable.

  \textup{(2)} This correspondence defines a bijection between the set
  of real Schur roots and the set of cluster variables except
  variables in the initial seed, i.e.\ $x_i$, $f_i$ \textup(${i\in
    I}$\textup).
\end{Proposition}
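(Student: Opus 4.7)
The plan is to combine \thmref{thm:main} with the Caldero--Chapoton/Caldero--Keller cluster character formula recalled in \subsecref{subsec:cc}. First I would reduce, via \propref{prop:fac} and \propref{prop:fac-1}, to the case where $W$ is supported on the principal vertices $I$ of the decorated quiver; when $\dim W$ is itself a real Schur root of $\cQ$, this reduction is automatic, since any direct summand supported at a frozen vertex would contradict the Schur property. The hypothesis that $\dim W$ is a real Schur root then gives condition {\rm (C)} trivially, so \propref{lem:real} ensures both $L(W) = \mathbb L(W)$ and that $\Gr_V(\lsp{\sigma}W)$ has no odd cohomology. Hence, after setting $t=1$, \thmref{thm:main} reads
\[
  \chi_{\vq}(L(W))_{\le 2} = \sum_V e(\Gr_V(\lsp{\sigma}W))\, e^W e^V.
\]

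Next I would set up the Caldero--Chapoton side. The principal part of the $\bz$-quiver \eqref{eq:z-quiver} is the quiver $\cQ$, which is acyclic since every arrow of the principal part of the decorated quiver goes from $I_1$ to $I_0$. The Caldero--Chapoton formula therefore associates to every rigid indecomposable representation $W$ of $\cQ$ an explicit element $X_W \in \mathscr F$, and the Caldero--Keller theorem shows that $X_W$ is a cluster variable of $\mathscr A(\widetilde{\bB})$ and that $W \mapsto X_W$ restricts to a bijection between rigid indecomposable $\cQ$-modules (equivalently, real Schur roots) and cluster variables outside the initial $\bz$-cluster. Part (1) will follow if I can identify $L(W)$ with $X_W$ inside $\mathscr F$.

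This identification is the heart of the proof and also the main technical obstacle. Both $L(W)$ and $X_W$ are Laurent expressions of the shape $\sum_V e(\Gr_V(\lsp{\sigma}W)) \cdot (\text{monomial in } \bz \text{ and } \mathbf{f})$; the underlying quiver Grassmannians already agree by \lemref{lem:reflect}, since the principal decoration quiver governing \thmref{thm:main} is exactly the underlying acyclic quiver governing the Caldero--Chapoton formula in the $\bz$-cluster. Only the monomial weights have to be matched. For this I would rewrite each $Y_{i,a}$ and each $V_{i,a}$ occurring in the truncated $\vq$-character as a Laurent monomial in the initial variables $x_i, f_i$ using the explicit fundamental characters of Example~\ref{ex:KR} and the $T$-system \eqref{eq:T-system}, and then pass to $\bz$-coordinates via \eqref{eq:z}. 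The resulting exponents should recover the Caldero--Chapoton weights $\bz^{\dim V \cdot R}\bz^{(\dim W - \dim V)R'}/\bz^{\dim W}$; this bookkeeping is routine but is where the essential combinatorial check lies.

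Granting the identification $L(W) = X_W$, part (2) is then immediate. Injectivity of $W \mapsto L(W)$ on real Schur roots follows from the linear independence of classes of distinct simple $\Ulq$-modules in $\bfR_{\ell=1}$, and surjectivity onto cluster variables outside $\{z_i, f_i\}$ is exactly the Caldero--Keller bijection recalled above. The excluded initial variables $x_i$ and $f_i$ correspond respectively to vertices (shifts of indecomposable projectives) in the cluster category of $\cQ$ and to frozen variables, neither of which is indexed by a real Schur root.
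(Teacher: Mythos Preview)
Your overall strategy is right and mirrors the paper's, but there is a genuine gap in how you handle the frozen (coefficient) variables.

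You claim that since the principal part of the $\mathbf z$-quiver is the acyclic quiver $\cQ$, the Caldero--Chapoton formula of \subsecref{subsec:cc} applies and produces a cluster variable $X_W\in\mathscr F$ whose monomial weights are $\mathbf z^{\dim V\cdot R}\,\mathbf z^{(\dim W-\dim V)R'}/\mathbf z^{\dim W}$. But the formula in \subsecref{subsec:cc} is for a cluster algebra \emph{without} frozen variables; the element it produces lies in $\Q(z_i)_{i\in I}$ and carries no $f_i$ at all. On the other side, the truncated $\vq$-character $\chi_{\vq}(L(W))_{\le 2}=\sum_V e(\Gr_V(\lsp{\sigma}W))\,e^W e^V$ genuinely involves the frozen variables: for $W$ supported on $I$ one has $e^W=\prod_{i\in I_0}Y_{i,1}^{w_i}\prod_{i\in I_1}Y_{i,\vq^3}^{w_i}$, and these $Y$'s are \emph{not} $\chi_\vq$ of monomials in the $z_i$ alone (e.g.\ $Y_{i,1}$ for $i\in I_0$ is not $\chi_\vq(z_i)_{\le 2}$). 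So the monomials cannot possibly match the coefficient-free CC weights you wrote down. The obstruction is exactly that the full $\mathbf z$-quiver (with its frozen part) contains oriented cycles, as the paper points out explicitly, so the CC/CK theory with coefficients does not apply to it directly.

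The paper's fix is to pass through the cluster algebra with \emph{principal coefficients} $\mathscr A_{\mathrm{pr}}$, whose quiver is acyclic and coincides with the principal-decoration quiver governing $\Gr_V(\lsp{\sigma}W)$. One applies the CC formula there to obtain the $F$-polynomial $F_{\lsp{\sigma}W}$ and the $\mathbf g$-vector, and then uses the Fomin--Zelevinsky separation formula of \subsecref{subsec:F-pol} to transport the cluster variable into $\mathscr A(\widetilde{\mathbf B})$ with its actual coefficients. The explicit computation $\chi_\vq(\widehat y_j)=V_{j,\vq^{\xi_j+1}}$ and $\chi_\vq\bigl(\mathbf z^{\mathbf g_{\lsp{\sigma}W}}/F_{\lsp{\sigma}W}|_{\mathbb P}(\mathbf y)\bigr)=e^W$ then gives exactly $\chi_\vq(z[\lsp{\sigma}W])_{\le 2}=\sum_V e(\Gr_V(\lsp{\sigma}W))\,e^W e^V$, completing the identification. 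Your ``routine bookkeeping'' step needs this detour; without it the argument does not close. A smaller point: in part~(2), the CC bijection excludes the variables of the $\mathbf z$-seed, namely $z_i$ and $f_i$, not $x_i$ and $f_i$; the paper handles the discrepancy for $i\in I_1$ (where $z_i=x_i'$ and $x_i$ corresponds to the simple root $\alpha_i$) by a separate boundary analysis.
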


For type $ADE$, this together with \corref{cor:ADE} shows the
condition (2) in the monoidal
categorification~\ref{def:categorification}.

\begin{proof}
  Roughly this is a consequence of results reviewed in
  \subsecref{subsec:cc}. However, our quiver Grassmannian is for
  $\lsp{\sigma}W$, not for $W$. Correspondingly we need to replace the
  initial seed of $\mathscr A(\widetilde \bB)$ by the $\mathbf
  z$-quiver in \eqref{eq:z}.
  When we mutate from $\bx$-quiver to $\mathbf z$-quiver, the set of
  cluster variables does not change by definition, but variables in
  the initial seed change. So let us first consider this effect.
  The functor $\lsp{\sigma}(\bullet)$ induces an involution on the
  set 
\[
  \{ \text{real Schur roots}\}\setminus \{ \alpha_i \mid i\in I_1\}.
\]
Therefore we only need to study cluster variables corresponding to
$\alpha_i$ in either $\bx$-quiver or $\mathbf z$-quiver.

\begin{itemize}
\item 
In $\bx$-quiver, $\alpha_i$ corresponds to $W = S_i$. We have $L(S_i)
= x_i' = z_i$. This is a cluster variable of the seed for the $\mathbf
z$-quiver, but not for the original $\bx$-quiver.
Note also that $\lsp{\sigma}W = 0$ in this case.

\item
In $\mathbf z$-quiver, $\alpha_i$ corresponds to the cluster variable
obtained as $z_i^*$. But this is nothing but $x_i$. The corresponding
simple module is $L(S_{i'})$. We do not consider since it has support
in the frozen part.
\end{itemize}
We now may assume $\dim \lsp{\sigma}W$ is a real Schur root different
from $\alpha_i$ ($i\in I_1$).

We cannot apply the formula in \subsecref{subsec:cc} directly as the
$\mathbf z$-quiver contains an oriented cycle in general. (See
\eqref{eq:z-quiver}.) We thus first consider the quiver with principal
coefficients, and write down $F$-polynomials and $\mathbf g$-vectors
by using the formula in \subsecref{subsec:cc}. Then we apply the
result in \subsecref{subsec:F-pol} to get the formula for cluster
variables in the original cluster algebra.

We take $\mathbf u$, $\mathbf f$ as cluster variables for the initial
seed of $\mathscr A_\pr$ and define
\begin{equation*}
  X_{\lsp{\sigma}W}(\mathbf u,\mathbf f) \defeq
  \frac1{
  \prod_{i\in I} u_i^{\lsp{\sigma}w_i}}
    \sum_V e(\Gr_V(\lsp{\sigma}W))
  \prod_{i\in I_0} u_i^{\sum_j a_{ij} v_j}
  \prod_{i\in I_1} u_i^{\sum_j a_{ij} (\lsp{\sigma}w_j - v_j)}
  \prod_{i\in I} f_i^{v_i},
\end{equation*}
where $v_i = \dim V_i$, $w_i = \dim W_i$, $\lsp{\sigma}w_i = \dim
\lsp{\sigma}W_i$.
By \subsecref{subsec:cc} this is a cluster variable $\alpha$ for
$\mathscr A_\pr$, and hence above gives the Laurent polynomial
$X_\alpha(\mathbf u,\mathbf f)$ in \subsecref{subsec:F-pol}.

Hence the $F$-polynomial is
\begin{equation*}
   F_{\lsp{\sigma}W}(\mathbf f) = \sum_V e(\Gr_V(\lsp{\sigma}W))
  \prod_{i\in I} f_i^{v_i}.
\end{equation*}
And the $\mathbf g$-vector is
\begin{NB}
\begin{equation*}
  \sum_{i\in I_0} \sum_j a_{ij} v_j i 
  + \sum_{i\in I_1} \sum_j a_{ij} (\lsp{\sigma} w_j - v_j)i 
  - \sum_{i,j} \varepsilon_i a_{ij} v_j i
  - \sum_i \lsp{\sigma}w_i i =
\end{equation*}
\end{NB}%
\begin{equation*}
   \mathbf g_{\lsp{\sigma}W} =
   - \sum_{i\in I_0} \lsp{\sigma}w_i i
   - \sum_{i\in I_1} \left(\lsp{\sigma} 
     w_i - \sum_j a_{ij} \lsp{\sigma}w_j\right) i
   \begin{NB}
   = 
   \begin{cases}
   0 & \text{if $W = S_i$ for $i\in I_1$},
     \\    
   - \sum_i w_i \varepsilon_i i & \text{otherwise}
   \end{cases}
   \end{NB}
   = - \sum_i w_i \varepsilon_i i,
\end{equation*}
where $\ve_i = (-1)^{\xi_i}$.

Now we return back to our original cluster algebra. Since our initial
seed is given by the $\mathbf z$-quiver, we change the notation in 
\subsecref{subsec:F-pol} and use $z$-variables instead of $x$-variables.
We denote the cluster variable corresponding to above
$X_{\lsp{\sigma}W}$ by $z[{\lsp{\sigma}W}]$. We have
\begin{equation*}
  z[{\lsp{\sigma}W}] = 
  \frac{F_{\lsp{\sigma}W}(\widehat{\mathbf y})}
  {\left.F_{\lsp{\sigma}W}\right|_{\mathbb P}(\mathbf y)}
  {\mathbf z}^{\mathbf g_\alpha},
\end{equation*}
where
\begin{equation*}
  y_j = 
  \begin{cases}
  f_j^{-1} \prod_{i\in I} f_i^{a_{ij}} & \text{if $j\in I_0$},
\\
  f_j^{-1} & \text{if $j\in I_1$},
  \end{cases}
\qquad
  \widehat y_j = y_j \prod_{i\in I} z_i^{\ve_i a_{ij}}
\qquad
  (j\in I).
\end{equation*}
in this situation.
A direct calculation shows (see \cite[Lem.~7.2]{HerLec})
\begin{equation*}
  \chi_{\vq}(\widehat y_j) = V_{j,\vq^{\xi_j+1}}.
\end{equation*}
\begin{NB}
First suppose $j\in I_1$. We have
  \begin{equation*}
     \widehat y_j = f_j^{-1} \prod_i z_i^{a_{ij}}
     = Y_{j,\vq^1}^{-1} Y_{j,\vq^3}^{-1} \prod_j Y_{i,\vq^2}^{a_{ij}}
     = V_{i,\vq^2}.
  \end{equation*}
Next suppose $j\in I_0$. We have
\begin{equation*}
     \widehat y_j = f_j^{-1} \prod_i f_i^{a_{ij}} z_i^{-a_{ij}}
     = Y_{j,1}^{-1} Y_{j,\vq^2}^{-1} \prod_j Y_{i,\vq}^{a_{ij}}
     = V_{i,\vq}.
\end{equation*}
\end{NB}%
We note that $F_{\lsp{\sigma}W}$ contains the monomial $\prod_i
f_i^{\lsp{\sigma}w_i}$ for $V = \lsp{\sigma}W$ with the coefficient
$1$, and all other terms are its factor. If we evaluate it
at $y_j$, we have
\begin{equation*}
  \prod_{i\in I} f_i^{-\lsp{\sigma}w_i}
  \prod_{i\in I_1} f_i^{\sum a_{ij} \lsp{\sigma}w_j}
  = \prod_{i\in I_0} f_i^{-\lsp{\sigma}w_i}
  \prod_{i\in I_1} f_i^{-\lsp{\sigma}w_i+\sum a_{ij} \lsp{\sigma}w_j}
  = \prod_{i\in I_0} f_i^{-w_i} \prod_{i\in I_1} f_i^{w_i}.
\end{equation*}
We also have the constant term $1$ for $V = 0$. Therefore
\begin{equation*}
  {\left.F_{\lsp{\sigma}W}\right|_{\mathbb P}(\mathbf y)}
  = \prod_{i\in I_0} f_i^{-w_i}.
\end{equation*}
Thus combining with the above calculation of $\mathbf
g_{\lsp{\sigma}W}$, we get (\cite[Lem.~7.3]{HerLec})
\begin{equation*}
  \frac{\mathbf z^{\mathbf g_{\lsp{\sigma}W}}}
  {\left.F_{\lsp{\sigma}W}\right|_{\mathbb P}(\mathbf y)}
  = \prod_{i\in I_0} f_i^{w_i} \prod_{i\in I} z_i^{-w_i\ve_i}.
\end{equation*}
Its $\vq$-character is
\begin{equation*}
  \chi_{\vq}\left(
    \frac{\mathbf z^{\mathbf g_{\lsp{\sigma}W}}}
  {\left.F_{\lsp{\sigma}W}\right|_{\mathbb P}(\mathbf y)}\right)
   = \prod_{i\in I_0} Y_{i,1}^{w_i} \prod_{i\in I_1} Y_{i,\vq^3}^{w_i}.
\end{equation*}
We thus get
\begin{equation*}
   \chi_{\vq}(z[\lsp{\sigma}W])_{\le 2}
   = \sum_V e(\Gr_V(\lsp{\sigma}W))\,
   e^W e^V.
\end{equation*}
Hence we have $z[\lsp{\sigma}W] = {\mathbb L}(W) = L(W)$, where the
first equality follows from \thmref{thm:main} and the second equality
from \propref{lem:real}.
\end{proof}

\begin{Proposition}\label{prop:monomial}
  Let $L(W^1)$, \dots, $L(W^s)$ be simple modules corresponding to
  cluster variables $w_1$, \dots, $w_s$ \textup(either via
  \propref{prop:variable} or $x_i$, $f_i$\textup). Then $L(W^1)\otimes
  \cdots\otimes L(W^s)$ is simple if and only if all $w_1$, \dots,
  $w_s$ live in a common cluster.
\end{Proposition}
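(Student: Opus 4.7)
I will prove both implications by translating simplicity of the tensor product into the vanishing of $\Ext^{1}$ between quiver representations, and then invoking the bijection between cluster-tilting sets and clusters reviewed in \subsecref{subsec:tilt}. Throughout I will work with $\lsp{\varphi}W^{k}$ instead of $W^{k}$ so as to reduce to the principal part $\cQ$ of the decorated quiver; the frozen Kirillov-Reshetikhin modules $f_{i}$ belong to every cluster, so by \propref{prop:fac} they may be stripped off on both sides without affecting the statement. Among the remaining cluster variables, $x_{i}$ and $x_{i}'$ correspond either to a simple module $S_{i}$ or $S_{i'}$, i.e.\ to a vertex of the decorated quiver; in the language of cluster-tilting sets these are exactly the members of $\mathbf L_{\mathrm{ver}}$. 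After this reduction, we may assume each $W^{k}$ corresponds (via \propref{prop:variable}) either to a real Schur root $\dim W^{k}$ of $\cQ$ or to one of the simple modules $S_{i}$ that played the role of vertices.

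\textbf{The ``if'' direction.} Suppose $w_{1},\dots,w_{s}$ all belong to a common cluster, corresponding by \subsecref{subsec:tilt} to a cluster-tilting set $\bL$ (after replacing the $\bx$-quiver by the $\mathbf z$-quiver and passing through $\sigma$ as in the proof of \propref{prop:variable}). By definition, $\lsp{\psi}W \defeq \bigoplus_{W^{k}\in\bL_{\mathrm{mod}}}\lsp{\sigma}W^{k}$ is a tilting module over the quotient quiver $\lsp{\psi}\cQ$, so in particular $\Ext^{1}(\lsp{\sigma}W^{k},\lsp{\sigma}W^{l})=0$ for all $k\ne l$. Applying the quasi-inverse functor $\sigma_{-}$ of \subsecref{subsec:sigma} yields $\ext^{1}(W^{k},W^{l})=0=\ext^{1}(W^{l},W^{k})$ for all $k\ne l$, and each $\dim W^{k}$ is a real Schur root (or $W^{k}=S_{i}$, $i\in I_{1}$, which is swallowed by $\sigma$ harmlessly). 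The characterization of the canonical decomposition recalled before \propref{lem:real} then identifies $W^{1}\oplus\cdots\oplus W^{s}$ as the canonical decomposition of $W\defeq\bigoplus W^{k}$, and all of its summands are real Schur roots, so condition (C) holds. Hence \propref{lem:real} gives $\mathbb L(W)=L(W)$ and also $\mathbb L(W^{k})=L(W^{k})$, and \propref{prop:canfac} yields $L(W)\cong L(W^{1})\otimes\cdots\otimes L(W^{s})$, which is simple.

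\textbf{The ``only if'' direction.} Conversely, assume $L(W^{1})\otimes\cdots\otimes L(W^{s})$ is simple, call it $L(W)$ with $W=\bigoplus W^{k}$. Applied to any pair of factors obtained by grouping, \propref{prop:Schur}(2) forces $\ext^{1}(W^{k},W^{l})=0=\ext^{1}(W^{l},W^{k})$ for every $k\ne l$. Each $w_{k}$ is a cluster variable, so \propref{prop:variable} (together with the evident case of $x_{i}$, $f_{i}$) guarantees that $\dim W^{k}$ is a real Schur root of $\cQ$, in particular $\ext^{1}(W^{k},W^{k})=0$. Passing to $\lsp{\sigma}W^{k}$ we obtain a rigid multiplicity-free representation of $\lsp{\sigma}\cQ$. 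By a standard result on rigid modules we can complete this collection to a cluster-tilting set $\bL\supset\{W^{1},\dots,W^{s}\}$ in the sense of \subsecref{subsec:tilt}, and the bijection between cluster-tilting sets and clusters shows that $w_{1},\dots,w_{s}$ belong to a common cluster.

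\textbf{Main obstacle.} The principal delicate point is the interaction between the functor $\sigma$ of \subsecref{subsec:sigma} (which reverses orientations of the decorated quiver into the principal-decoration orientation), the $\mathbf z$-quiver used for the initial seed in \propref{prop:variable}, and the acyclic hypothesis required by the cluster-category / cluster-tilting-set theory of \subsecref{subsec:tilt}; one must check that a common cluster on the $\bx$-quiver side translates, after the mutations $\prod_{i\in I_{1}}\mu_{i}$ and the functor $\sigma$, to a cluster-tilting set in the acyclic framework, and conversely. A secondary (purely bookkeeping) obstacle is the inclusion of frozen variables $f_{i}$ and of the ``vertex'' cluster variables $x_{i}$, $x_{i}'$ in the collection $\{w_{1},\dots,w_{s}\}$: one must verify in each case that the reduction via \propref{prop:fac} and \propref{prop:fac-1} does not destroy the property of belonging to a common cluster.
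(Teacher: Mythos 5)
Your proposal follows the same overall strategy as the paper: strip off the $f_i$ by \propref{prop:fac}, use Propositions~\ref{prop:canfac} and~\ref{prop:Schur} to translate simplicity of the tensor product into the simultaneous vanishing $\ext^1(W^k,W^l)=0=\ext^1(W^l,W^k)$, and then match this with the cluster-tilting description of clusters from \subsecref{subsec:tilt}. The ``if''/``only if'' skeleton is the same as the paper's.

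However, there is a genuine gap, and it is exactly at the ``main obstacle'' you flag at the end. The blanket assertion that the cluster variables $x_i$ and $x_i'$ ``are exactly the members of $\mathbf L_{\mathrm{ver}}$'' is false. After mutating $\prod_{i\in I_1}\mu_i$ to pass to the $\mathbf z$-quiver, the initial cluster variables are $z_i=x_i$ ($i\in I_0$) and $z_i=x_i'$ ($i\in I_1$); only these play the role of vertices. The complementary variables $x_i'$ ($i\in I_0$) and $x_i$ ($i\in I_1$) are the cluster variables $z[S_i]=z_i^*$ obtained by one mutation, and they correspond to genuine modules $S_i$, not vertices. Because of this, your implication ``common cluster $\Rightarrow$ cluster-tilting set $\Rightarrow$ $\Ext^1(\lsp{\sigma}W^k,\lsp{\sigma}W^l)=0$ $\Rightarrow$ apply $\sigma_-$'' does not literally make sense whenever one of the $W^k$ equals $S_{i'}$ (i.e.\ $L(W^k)=x_i$): in that case $\lsp{\sigma}W^k$ either vanishes or degenerates, and the relevant compatibility with the partner $W^l$ is not an $\Ext^1$-vanishing on the $\sigma$-side but rather the condition $W^l_i=0$ (equivalently $\lsp{\sigma}W^l_i=0$ for $i\in I_0$, or a $\Hom$/$\Ext^1$ statement at $S_i$ for $i\in I_1$). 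This is why the paper's proof, after the common step reducing to pairs, carries out a careful case analysis: $W^1=W^2$; neither $L(W^1)$ nor $L(W^2)$ among $x_i,x_i'$; $L(W^1)=x_i$ and $L(W^2)=x_i'$ (using the $T$-system); and then separately $L(W^1)=x_i$ with $i\in I_0$, $L(W^1)=x_i'$ with $i\in I_0$ (reduces to $z[S_i]$), $L(W^1)=x_i$ with $i\in I_1$, and $L(W^1)=x_i'$ with $i\in I_1$. Each case translates the common-cluster condition into the precise $\Ext^1$/$\Hom$ statement at the appropriate simple module, and several of them use the source/sink position of $i$ or $i'$ in an essential way. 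To complete your argument you need to carry out exactly this bookkeeping; as written the proof of both implications silently assumes all $W^k$ are in the ``module'' part of the cluster-tilting set and thus loses the boundary cases.
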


For type $ADE$, this shows the condition (1) in the monoidal
categorification~\ref{def:categorification}.

\begin{proof}
  The assertion is trivial for the factor $f_i$ by
  \propref{prop:fac}. So we may assume any $W^1$,\dots, $W^s$ is not
  $f_i$. Therefore we have $W^1 = \lsp{\varphi}W^1$, \dots, $W^s =
  \lsp{\varphi}W^s$.

  By Propositions~\ref{prop:canfac},\ref{prop:Schur} $L(W^1)\otimes
  \cdots\otimes L(W^s)$ is simple if and only if $\ext^1(W^k,W^l) =
  \ext^1(W^l,W^k) = 0$ for $k\neq l$.
  Thus we need to show that this is equivalent to the condition that
  the corresponding $w_k$ and $w_l$ are in a common cluster.
  Therefore we may assume $k=1$, $l=2$.

  When $W^1 = W^2$, then $w_1 = w_2$ is in a common cluster. But
  $\ext^1(W^1,W^2) = 0$ is also true since $\dim W^1 = \dim W^2$ is a
  real Schur root.
  
  If neither $L(W^1)$ nor $L(W^2)$ is one of $x_i$ and $x_i'$, then
  $L(W^1) = z[\lsp{\sigma}W^1]$, $L(W^2) = z[\lsp{\sigma}W^2]$ as in
  the proof of \propref{prop:variable}. We have $\ext^1(W^1,W^2) =
  \ext^1(W^2,W^1) = 0$ if and only if
  $\ext^1(\lsp{\sigma}W^1,\lsp{\sigma}W^2) =
  \ext^1(\lsp{\sigma}W^2,\lsp{\sigma}W^1) = 0$.
  This happens if and only if $\lsp{\sigma}W^1\oplus \lsp{\sigma}W^2$
  is rigid, and hence can be extended to a tilting module.
  From \S\S\ref{subsec:tilt},\ref{subsec:cc}, this is equivalent to that
  the corresponding cluster variables live in a common cluster.

  If $L(W^1) = x_i$, $L(W^2) = x'_i$, then $L(W^1)\otimes L(W^2)$ is
  not simple by the $T$-system \eqref{eq:T-system}. They are not in
  any cluster simultaneously.
  Any other pairs from $x_i$, $x'_j$, they are always in a common
  cluster. It is also clear that $L(W^1)\otimes L(W^2)$ is always
  simple.
  Therefore we may assume $L(W^1)$ is one of $x_i$, $x'_i$, and
  $L(W^2)$ is not.

  Consider the case $L(W^1) = x_i$ with $i\in I_0$. We have $W^1 =
  S_{i'}$. From Propositions~\ref{prop:fac},\ref{prop:fac-1}
  $L(S_{i'})\otimes L(W^2)$ is simple if and only if $W^2_i = 0$.  In
  this case $x_i = z_i$ is a cluster variable from the seed for
  $\mathbf z$-quiver. From \subsecref{subsec:tilt} the cluster
  variable $w_2$ is in a common cluster with $z_i$ if and only if
  $\lsp{\sigma}W^2_i = 0$. This is equivalent to $W^2_i = 0$, since
  $i\in I_0$.

  The case $L(W^1) = x'_i$ with $i\in I_0$ is not necessary to
  consider since we have $L(W^1) = L(S_{i}) = z[S_i]$, which is
  already studied.

  Next suppose $L(W^1) = x_i$ with $i\in I_1$. We have $W^1 =
  S_{i'}$. From Propositions~\ref{prop:fac},\ref{prop:fac-1}
  $L(S_{i'})\otimes L(W^2)$ is simple if and only if $W^2_i = 0$ as
  above. Since $i$ is a source, this is equivalent to $\Hom(W^2,S_i) =
  0$. From the definition of the reflection functor, it is equivalent
  to $\Ext^1(S_i, \lsp{\sigma}W^2) = 0$.
  \begin{NB}
    We have $\lsp{\sigma}S_i = S_i[-1]$. Therefore
    $\Hom(W^2,S_i) = D\Hom(\lsp{\sigma}S_i, \lsp{\sigma}W^2)
    = D\Ext^1(S_i,\lsp{\sigma}W^2)$.
  \end{NB}%
  Since we have $x_i = z_i^*$, the corresponding rigid module for the
  $\mathbf z$-quiver is $S_i$. Therefore $x_i$ and $w$ is in a common
  cluster if and only $\Ext^1(S_i,\lsp{\sigma}W^2) = 0 =
  \Ext^1(\lsp{\sigma}W^2, S_i)$ by \subsecref{subsec:tilt}. But the
  latter equality is trivial since $i$ is source. Thus we have checked
  the assertion in this case.

  Finally suppose $L(W^1) = x'_i$ for $i\in I_1$. This is $z_i$ and
  corresponds to a vertex $i$ in the cluster-tilting set for $\mathbf
  z$-quiver. Therefore $w$ is in a same cluster with $z_i$ if and only
  if $\lsp{\sigma}W^2_i = 0$. By the same argument as above, 
  this is equivalent to $\Ext^1(S_i,W^2) = 0 = \Ext^1(W^2,S_i)$.
  Thus we have checked the final case.
\end{proof}

\begin{Remark}\label{rem:derived}
  As indicated in the proof, it is more natural to define
  $\lsp{\sigma}S_i$ as $S_i[-1]$, an object in the derived category
  $\mathscr D(\rep\lsp{\sigma}{\!\widetilde\cQ^{\mathrm{op}}})$. This
  is also compatible with the cluster category theory, as $S_i[-1] =
  I_i[-1]$ for $i\in I_1$, where $I_i$ is the indecomposable injective
  module corresponding to the vertex $i$.
\end{Remark}

\subsection{Exchange relation}

Consider an exchange relation \eqref{eq:exchange2}. Thanks to
Propositions~\ref{prop:variable}, \ref{prop:monomial} we have the
corresponding equality in $\bfR_{\ell=1}$:
\begin{equation*}
   L(x_k)\otimes L(x_k^*) = L(m_+) + L(m_-).
\end{equation*}

Since $L(m_\pm)$ are simple, this inequality in the Grothendieck group
implies either of the followings:
\begin{equation*}
  0 \to L(m_+) \to L(x_k)\otimes L(x_k^{*}) \to L(m_-)\to 0,
\end{equation*}
or
\begin{equation*}
  0 \to L(m_-) \to L(x_k)\otimes L(x_k^{*}) \to L(m_+)\to 0
\end{equation*}
in the level of modules.
It is natural to conjecture that we always have the above one.
For the $T$-system, this is true thanks to \remref{rem:T-system}.

This conjecture follows from a refinement of the exchange relation:
\begin{equation*}
 \chi_{\vq,t}(L(x_k)\otimes L(x_k^*))
 = t^{-l+n} \chi_{\vq,t}(L(m_+)) + t^n\chi_{\vq,t}(L(m_-))
\end{equation*}
for some $l > 0$, $n\in\Z$.
If we write the corresponding perverse sheaves by $P(x_k)$,
$P(x_k^*)$, $P(m_+)$, $P(m_-)$, the above means that 
\begin{equation*}
  \begin{split}
  \operatorname{Res}(P(m_+)) 
  & = P(x_k)\boxtimes P(x_k^*)[l-n] \oplus \cdots,
\\
  \operatorname{Res}(P(m_-)) 
  &= P(x_k)\boxtimes P(x_k^*)[-n] \oplus \cdots,
  \end{split}
\end{equation*}
where $\cdots$ means sum of (shifts of) other perverse sheaves.
Since $\Hom(P(x_k)\boxtimes P(x_k^*)[l], P(x_k)\boxtimes P(x_k^*))$
vanishes for $l>0$ by a property of perverse sheaves
\cite[8.4.4]{CG}, we see that $L(m_+)$ is a submodule
of $L(x_k)\otimes L(x_k^*)$.

This refinement of the exchange relation might be proved directly, but
it should be proved naturally if we make an isomorphism of the quantum
cluster algebra \cite{BerZel} with $\bfR_{t,\ell=1}$.

\begin{NB}
\begin{equation*}
  \begin{split}
   & d(\lsp{\sigma}W^2,W^2;0,W^1) - d(0,W^1;\lsp{\sigma}W^2,W^2)
\\
   =\;&
   \boldsymbol\langle \dim \lsp{\sigma}W^2,\vq^{-1}\dim W^1\boldsymbol\rangle
   - \boldsymbol\langle \dim \lsp{\sigma}W^2,\vq \dim W^1\boldsymbol\rangle
\\
   =\; &
   \sum_{i\in I_0} \dim W^2_i(1) \dim W^1_i(1)
   - \sum_{i\in I_1} \dim \lsp{\sigma}W^2_i(\vq^3) \dim W^1_i(\vq^3)
\\   
   =\; &
   \sum_{i\in I_0} \dim W^1_i(1)\dim W^2_i(1) 
   + \sum_{i\in I_1} \dim W^1_i(\vq^3) \dim W^2_i(\vq^3) 
   - \sum_{i\in I_1} a_{ij} \dim W^1_i(\vq^3) \dim W^2_j(1) 
\\
   =\; &
   \chi(W^1,W^2)
  \end{split}
\end{equation*}
\end{NB}

\begin{NB}
  \begin{NB2}
    The following section is far from complete.....
  \end{NB2}

\section{Exchange relation}\label{sec:exchange}

In this section we use several results from the representation theory
of quivers to prove the exchange relation and construct a cluster
structure on $\bfR_{\ell=1}$.

\subsection{Rigid modules}

Let $W$ be a representation of the decorated quiver in
\defref{def:decorated}. Let $\bE_W$ be the corresponding vector space
as in \propref{prop:ast_1}. We have an action of $\prod \GL(W_i(a))$
on $\bE_W$. Let $\mathcal O_W$ be the orbit through $W$ in
$\bE_W$. The following property holds for arbitrary quiver and
well-known:

\begin{Proposition}
The followings are equivalent:
\begin{enumerate}
\item The orbit $\mathcal O_W$ is a Zariski open subset of $\bE_W$;

\item $W$ is rigid, i.e.\ $\Ext^1(W,W) = 0$.

\item $W$ is a general representation of $\bE_W$ such that the
  canonical decomposition contains only \/{\rm real} Schur roots.
\end{enumerate}
\end{Proposition}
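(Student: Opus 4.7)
The proposal is to prove this by the standard orbit dimension argument for representations of acyclic quivers, applied to the decorated quiver $\widetilde\cQ$, which is acyclic because its principal part is bipartite and the added frozen arrows all go from (or to) the new vertexes which have no other neighbors.

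First I would establish $(1)\Leftrightarrow(2)$ via the classical formula
\[
  \dim \bE_W - \dim \mathcal O_W = \dim \Ext^1(W,W).
\]
This follows by computing $\dim \mathcal O_W = \dim G_W - \dim\End(W)$, where $G_W = \prod \GL(W_i(a))$ and the stabilizer of $W$ is the open subgroup $\Aut(W)\subset \End(W)$; combining with the Euler form identity
\[
  \dim\Hom(W,W) - \dim\Ext^1(W,W) = \dim G_W - \dim \bE_W,
\]
valid because $\widetilde\cQ$ is acyclic, the claim falls out. Since $\mathcal O_W$ is always locally closed of dimension at most $\dim \bE_W$, openness is equivalent to equality of dimensions, i.e.\ to $\Ext^1(W,W)=0$.

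Next I would deduce $(2)\Rightarrow(3)$. If $W$ is rigid then by $(1)$ the orbit $\mathcal O_W$ is Zariski open, so $W$ is a general point of $\bE_W$, and its decomposition into indecomposable summands $W=W^1\oplus\cdots\oplus W^s$ is the canonical decomposition of $\dim W$. Each $W^k$ is a direct summand of the rigid module $W$, hence is itself rigid and indecomposable; it therefore has trivial endomorphism ring (so $\dim W^k$ is Schur) and $\Ext^1(W^k,W^k)=0$ forces $\dim W^k$ to be a real root (as imaginary Schur roots $\alpha$ satisfy $1-\langle\alpha,\alpha\rangle = \dim\Ext^1 \ge 1$ on the Schur locus).

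For $(3)\Rightarrow(2)$, if $W$ is general with canonical decomposition $W = W^1\oplus\cdots\oplus W^s$ into real Schur roots, then each $W^k$ is the unique (up to isomorphism) indecomposable of its dimension vector and is rigid, so $\Ext^1(W^k,W^k)=0$; the defining property of the canonical decomposition \cite[Prop.~3]{Kac-quiver2} gives $\ext^1(W^k,W^l)=0$ for $k\ne l$, and since $W^k$, $W^l$ are general in $\bE_{W^k}$, $\bE_{W^l}$ this means $\Ext^1(W^k,W^l)=0$. Summing over $k,l$ yields $\Ext^1(W,W)=0$, completing the cycle of implications.

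The argument is essentially routine; the only point requiring a little care is the Euler form identity in step one, which uses that $\widetilde\cQ$ has no oriented cycles, and the identification of the canonical decomposition of $\dim W$ with the indecomposable decomposition of a general $W$ in step two (which is just the definition).
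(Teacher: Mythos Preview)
Your proof is correct and follows essentially the same route as the paper's: the codimension formula $\codim\mathcal O_W=\dim\Ext^1(W,W)$ for $(1)\Leftrightarrow(2)$, then passing to the indecomposable summands and using the Euler form to force each root to be real for $(2)\Rightarrow(3)$, with the converse following from the defining $\ext^1$-vanishing of the canonical decomposition. The only minor difference is in the justification that $\End(W^k)\cong\C$: the paper infers it from $\dim W^k$ being a Schur root (a property of the canonical decomposition), whereas you deduce it directly from ``rigid and indecomposable'' via Happel--Ringel; both are valid.
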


In fact, we have
\begin{equation*}
  \codim \mathcal O_W = \dim \Ext^1(W,W).
\end{equation*}
\begin{NB2}
  This holds for an arbitrary quiver. We have
  \begin{equation*}
    \begin{split}
    & \dim \mathcal O_W = \dim \prod \GL(W_i(a)) 
      - \dim \operatorname{Stab}(W)
      = \dim \prod \GL(W_i(a)) - \dim \Hom(W,W),
\\
    & \dim \bE_W = \sum_h \dim W_{\vin(h)} \dim W_{\vout(h)}
    = \Ext^1(\{ B=0\}, \{B = 0\})
    = - \chi(W,W) + \dim \prod \GL(W_i(a)),
    \end{split}
  \end{equation*}
  where $\chi(W,W)$ is the Euler characteristic $\dim \Hom(W,W) - \dim
  \Ext^1(W,W) = \dim \Hom(\{B=0\},\{B=0\})$. Therefore
\begin{equation*}
  \codim\mathcal O_W = -\chi(W,W) + \dim \Hom(W,W) = \dim \Ext^1(W,W).
\end{equation*}
\end{NB2}%
Since $\bE_W$ is irreducible and smooth, the equivalence between (1)
and (2) is clear. If $\shfO_W$ is a Zariski open subset in $\bE_W$,
$W$ is a general representation in $\bE_W$. If $W^k$ is a factor of
the canonical decomposition of $W$, we have $\Ext^1(W^k,W^k) = 0$.
It can be shown $\Hom(W^k,W^k) \cong \C$. *** Therefore
\begin{equation*}
  \dim \Hom(W^k,W^k) - \dim \Ext^1(W^k,W^k) = 1.
\end{equation*}
Since this is equal to the half of the length of the root $\dim W^k$,
it means that $\dim W^k$ is real.
Converse is also clear.


\subsection{Multiplication formulas}

Suppose that
\begin{equation}\label{eq:extmp}
  0 \to W^1 \xrightarrow{\alpha} W \xrightarrow{\beta} W^2 \to 0
\end{equation}
is a nonsplit exact sequence of representations of the quiver in
\eqref{eq:ori'}. Let $\Gr_{V}(W)$ be the quiver Grassmannian
consisting of submodules $X$ of $W$ with $\dim V = \dim X$, and the
same for $\Gr_{V^1}(W^1)$, $\Gr_{V^2}(W^2)$.

We divide the quiver Grassmannian $\Gr_{V}(W)$ as
\begin{equation*}
  \begin{split}
   &\Gr_{V}(W) = \bigsqcup_{V=V^1\oplus V^2}
   \Gr_{V^1,V^2}(W),
\\
   & \Gr_{V^1,V^2}(W) = \{ X\in \Gr_{V}(W) \mid
   \dim (X\cap \Ima\alpha) = \dim V^1\},
  \end{split}
\end{equation*}
We have a morphism
\begin{equation*}
  \begin{split}
   \Pi\colon \Gr_{V^1,V^2}(W) &\to 
   \Gr_{V^1}(W^1)\times \Gr_{V^2}(W^2),
\\
   X&\mapsto (\alpha^{-1}(X),\beta(X)).
  \end{split}
\end{equation*}

\begin{Lemma}[\protect{\cite[Lemma~3.11]{CalderoChapoton}}]\label{lem:CC}
  Assume \eqref{eq:extmp} is an almost split sequence and $W^1$ is
  rigid, i.e.\ $\Ext^1(W^1,W^1) = 0$.
  Then $\Gr_{V^1,V^2}(W)$ is the empty set if $(V^1,V^2) = (0,W^2)$,
  and $\Pi$ is a fiber bundle whose fiber at $(X^1,
  X^2)\in\Gr_{V^1}(W^1)\times \Gr_{V^2}(W^2)$ is
  \(
    \Hom(X^2, W^1/X^1).
  \)
\end{Lemma}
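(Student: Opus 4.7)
The plan has four steps, mirroring the structure of the statement.

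First, the exceptional locus is easy. If $X \in \Gr_{0,W^2}(W)$ then $X \cap \Ima\alpha = 0$ and $\beta(X) = W^2$, so $\beta|_X$ is an isomorphism whose inverse splits $\beta$, contradicting the nonsplit character of an almost split sequence. Hence $\Gr_{0,W^2}(W) = \emptyset$.

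Second, I will recast the fiber $\Pi^{-1}(X^1,X^2)$ above a general pair. Set $Y = \beta^{-1}(X^2) \subset W$. A submodule $X \subset W$ lies in the fiber exactly when $\alpha(X^1) \subset X \subset Y$ and the induced map $X/\alpha(X^1) \to X^2$ is an isomorphism, i.e., $X/\alpha(X^1)$ is a splitting of
$$\xi\colon 0 \to W^1/X^1 \to Y/\alpha(X^1) \to X^2 \to 0,$$
the sequence obtained by pulling \eqref{eq:extmp} back along $X^2 \hookrightarrow W^2$ and then pushing out along $W^1 \twoheadrightarrow W^1/X^1$. Once $\xi$ is split, the set of splittings is a torsor over $\Hom(X^2, W^1/X^1)$, which is the claimed fiber.

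Third, I will verify that $\xi$ splits in every case other than the excluded one by invoking the two recalled properties of almost split sequences from Section~\ref{subsec:almost}. If $X^2 \subsetneq W^2$ with $X^2 \neq 0$, the first property forces $0 \to W^1 \to Y \to X^2 \to 0$ to split, so its push-out $\xi$ is split. If $X^2 = W^2$ and $X^1 \neq 0$, then $Y = W$ and the second property directly gives splitness of $\xi$. The degenerate cases $X^2 = 0$ or $X^1 = W^1$ are trivial since one term of $\xi$ vanishes. The only combination not covered is $(X^1,X^2) = (0,W^2)$, corresponding precisely to the empty stratum handled in the first step.

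Fourth, to upgrade the pointwise description to a fiber bundle I will globalize the construction over $U = \Gr_{V^1}(W^1) \times \Gr_{V^2}(W^2)$: form the relative sequence $\widetilde\xi$ using the tautological subbundles $\mathcal X^1$, $\mathcal X^2$, observe that it is fiberwise split by the previous step, and build local splittings over an open cover of $U$, identifying $\Gr_{V^1,V^2}(W)$ with a torsor under the sheaf $\mathcal H\!\textit{om}(\mathcal X^2, W^1/\mathcal X^1)$. The main obstacle will be local triviality, since $\dim\Hom(X^2, W^1/X^1)$ may jump along $U$; the rigidity hypothesis $\Ext^1(W^1,W^1) = 0$ ought to enter here to control these Ext-jumps (in particular to secure sufficient smoothness of $\Gr_{V^1}(W^1)$) and to ensure that the total space carries a genuine affine bundle structure with the stated fibers.
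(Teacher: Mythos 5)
Your treatment of the fiber in steps one through three is correct, reaching the same conclusion as the paper's argument but along a genuinely cleaner route. Nakajima's note parametrizes the fiber over $(X^1, W^2)$ by lifts of the extension class $[W]\in\Ext^1(W^2,W^1)$ to $\Ext^1(W^2,X^1)$, and it is precisely there that rigidity is used: $\Hom(W^2,W^1)=D\Ext^1(W^1,\tau W^2)=D\Ext^1(W^1,W^1)=0$ is needed so that the set of lifts forms a torsor under $\Hom(W^2,W^1/X^1)$ and so that lifts correspond bijectively to submodules $X\subset W$. Your route identifies the fiber directly with splittings of the pull-push sequence $\xi$; since splittings of a split short exact sequence form a torsor under $\Hom$ of the quotient into the subobject with no further hypothesis, this sidesteps the $\Hom(W^2,W^1)$ bookkeeping entirely. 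You thus establish the fiber description using only the two almost-split-sequence properties, and rigidity never enters your steps one through three. That makes your step-four guess that rigidity ``ought to enter'' for local triviality the weakest point of the proposal: in the paper's version rigidity is already consumed in the $\Hom(W^2,W^1)=0$ step, which your own approach renders unnecessary. As for the fiber-bundle assertion itself, the paper's proof does not address it either, and it is looser than it looks (the dimension of $\Hom(X^2,W^1/X^1)$ can jump over the base, so $\Pi$ is not a locally trivial bundle in the naive sense); what is actually used downstream is only that each fiber is an affine space, which your steps one through three already deliver.
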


We give a proof for a sake of the reader.

\begin{proof}
If $(V^1,V^2) = (0,W^2)$, then $\Pi^{-1}(0,W^2) = \emptyset$,
since \eqref{eq:extmp} splits otherwise.

Next suppose $(V^1,V^2)\neq (0,W^2)$. If $X^2 \subsetneq W^2$, then
from the almost splitting assumption, the short exact sequence $0\to
W^1 \to \beta^{-1}(X^2)\to X^2\to 0$ split (see \subsecref{subsec:almost}).
The assertion is clear.

So suppose $X^2 = W^2$. For a submodule $X^1\subsetneq W^1$, the
homomorphism $\Ext^1(W^2,W^1)\to \Ext^1(W^2,W^1/X^1)$ vanishes
(\subsecref{subsec:almost}). Therefore we have an exact sequence
\begin{equation*}
  \Hom(W^2, W^1) \to \Hom(W^2, W^1/X^1) \to
  \Ext^1(W^2,X^1)\to \Ext^1(W^2,W^1) 
  \to 0.
\end{equation*}
We have $\Hom(W^2, W^1) = D\Ext^1(W^1,\tau W^2) = D\Ext^1(W^1,W^1) =
0$ from the assumption.
Therefore \eqref{eq:extmp} can be lifted to an exact sequence $0 \to
X^1 \to X \to W^2\to 0$ and the choice is unique up to $\Hom(X^2,
W^1/X^1)$.
\end{proof}

If we have an exact sequence \eqref{eq:extmp} which is an almost
splitting sequence for a full subquiver, we have the same assertion.
Here a {\it full subquiver\/} $\cQ'\subset\cQ$ is a subset $I'\subset
I$ together with $\Omega' \defeq \{ h\in \Omega \mid
\vout(h),\vin(h)\in I'\}$.

\begin{NB2}
  In the following two propositions we still omit the $t$-deformation.
\end{NB2}

\begin{Proposition}\label{prop:ex1}
  Suppose that 
  \begin{equation*}
      0 \to W^1 \xrightarrow{\alpha} W \xrightarrow{\beta} W^2 \to 0
  \end{equation*}
  is an almost split sequence of representations of the principal
  part $\cQ$ of the decorated quiver.
  We assume $W^2$ is rigid and $W^2\neq S_i$ for any $i\in I_1$.
  Then
\begin{equation*}
   L(W^1)\otimes L(W^2) = L(W) + L(C^\bullet(\lsp{\sigma}W^1,W)),
\end{equation*}
where $C^\bullet(\ ,\ )$ is defined as in \eqref{eq:C(V,W)}.

\begin{NB2}
\textup{(2)} We assume $W^2 = S_i$ for $i\in I_1$. The the same
formula holds.
\begin{equation*}
   L(W^1)\otimes L(W^2) = L(W) + L(C^\bullet(\lsp{\sigma}W^1,W)),
\end{equation*}
\end{NB2}%
\end{Proposition}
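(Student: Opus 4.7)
The plan is to compare truncated $q$-characters of both sides via \thmref{thm:main}, reducing the identity to a geometric statement about quiver Grassmannians that follows from the Caldero--Chapoton lemma (\lemref{lem:CC}). More precisely, by the theorem it suffices to show
\begin{equation*}
  \chi_{\vq,t}(L(W^1))_{\le 2}\ast \chi_{\vq,t}(L(W^2))_{\le 2}
  = \chi_{\vq,t}(L(W))_{\le 2} + \chi_{\vq,t}(L(C^\bullet(\lsp{\sigma}W^1,W)))_{\le 2},
\end{equation*}
each factor being a generating series $\sum_V P_t(\Gr_V(\lsp{\sigma}(\cdot)))e^{(\cdot)}e^V$ (where the relevant almost simple modules equal the simple ones by the rigidity hypothesis and \propref{lem:real}).

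First, I would transport the almost split sequence $0\to W^1\to W\to W^2\to 0$ through the contravariant equivalence $\sigma$ of \subsecref{subsec:sigma}. Since $W^2\neq S_i$ for $i\in I_1$, the summand killed by $\sigma$ does not interfere and we obtain an almost split sequence
\begin{equation*}
   0\to \lsp{\sigma}W^2\to \lsp{\sigma}W\to \lsp{\sigma}W^1\to 0
\end{equation*}
in $\rep\lsp{\sigma}\!\widetilde\cQ^{\mathrm{op}}$, in which $\lsp{\sigma}W^2$ is rigid. This is exactly the setup of \lemref{lem:CC}. Decomposing $\Gr_V(\lsp{\sigma}W)=\bigsqcup \Gr_{V^2,V^1}(\lsp{\sigma}W)$ and using that $\Pi$ is an affine bundle with fiber $\Hom(X^1,\lsp{\sigma}W^2/X^2)$ (of dimension constant on each component for a general representation), I obtain a Poincar\'e polynomial identity
\begin{equation*}
  P_t(\Gr_V(\lsp{\sigma}W)) + \delta_{V,\lsp{\sigma}W^1}\,t^{2d}
  = \sum_{V=V^1\oplus V^2} t^{2d(V^1,V^2)}
  P_t(\Gr_{V^2}(\lsp{\sigma}W^2))\,P_t(\Gr_{V^1}(\lsp{\sigma}W^1)),
\end{equation*}
where the $t^{2d(V^1,V^2)}$ factors record the fiber dimensions, and the correction on the left accounts for the missing stratum $(V^1,V^2)=(\lsp{\sigma}W^1,0)$ in the Caldero--Chapoton decomposition (which translates, under $\sigma$, to the excluded case $(0,W^2)$ of \lemref{lem:CC}).

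Next, I would identify the right-hand product with the twisted multiplication $\ast$ on $\chi_{\vq,t}$: the exponents $d(V^1,V^2)$ arising from the fibers should be exactly those appearing in the definition of $\ast$ in \eqref{eq:twist}, via the formula for $\varepsilon(W^1,W^2)$ and the ranks recorded in \subsecref{subsec:conv}. With this bookkeeping in place, the displayed identity becomes
\begin{equation*}
  \chi_{\vq,t}(L(W^1))_{\le 2}\ast \chi_{\vq,t}(L(W^2))_{\le 2}
  = \sum_V P_t(\Gr_V(\lsp{\sigma}W))\,e^W e^V + t^{2d}\,e^W e^{\lsp{\sigma}W^1}.
\end{equation*}
The first term is $\chi_{\vq,t}(L(W))_{\le 2}$ by \thmref{thm:main}. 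For the correction term, the graded vector space $C^\bullet(\lsp{\sigma}W^1,W)$ was designed precisely so that $e^W e^{\lsp{\sigma}W^1}$ (viewed in the dominance order through \subsecref{subsec:slice}) equals $e^{C^\bullet(\lsp{\sigma}W^1,W)}$; since the corresponding quiver Grassmannian is a point (by the $(V^1,V^2)=(\lsp{\sigma}W^1,0)$ stratum being a single orbit), \thmref{thm:main} again identifies this with $\chi_{\vq,t}(L(C^\bullet(\lsp{\sigma}W^1,W)))_{\le 2}$.

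The main obstacle will be the careful matching of the shift exponents: on the geometric side one gets $t$-powers from the affine-bundle structure in \lemref{lem:CC}, while on the algebraic side one gets them from $\varepsilon(W^1,W^2)$ in the twisted product $\ast$. Verifying that these agree (up to the dimension formula $\dim \N(V,W)$ in the normalization of $P_t$) requires comparing the rank formula for $\Zm(V^1,W^1;V^2,W^2)$ from \subsecref{subsec:conv} with $\dim \Hom$ between general subrepresentations, an essentially homological computation that I expect to be straightforward but tedious. The rigidity of $W^2$ (equivalently of $\lsp{\sigma}W^2$) is used both to apply \lemref{lem:CC} and to force $\mathbb L(W^2)=L(W^2)$ via \propref{lem:real}, and the assumption $W^2\neq S_i$ ($i\in I_1$) ensures that $\sigma$ remains an equivalence on the relevant subcategory.
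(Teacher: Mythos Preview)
Your overall strategy---apply $\sigma$ to the almost split sequence and invoke \lemref{lem:CC}---matches the paper's. The essential difference is that the paper works entirely at $t=1$: it proves
\[
   \chi_{\vq}(L(W^1))_{\le 2}\,\chi_{\vq}(L(W^2))_{\le 2}
   = \chi_{\vq}(L(W))_{\le 2} + e^W e^{\lsp{\sigma}W^1},
\]
using only Euler characteristics, and then observes that a single $l$-dominant monomial appearing as the truncated $q$-character of an element of $\bfR_{\ell=1}$ which is a \emph{nonnegative} combination of simples (by \thmref{thm:positive}) must be the truncated $q$-character of a single simple module, namely $L(C^\bullet(\lsp{\sigma}W^1,W))$.

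Your attempt to carry the argument at general $t$ has a genuine gap. In \lemref{lem:CC} the fiber of $\Pi$ over $(X^1,X^2)$ is the affine space $\Hom(X^2,W^1/X^1)$ \emph{of modules}, and its dimension is not constant on $\Gr_{V^1}(W^1)\times\Gr_{V^2}(W^2)$ in general (it jumps with $\dim\Ext^1$); so $\Pi$ need not be a Zariski-locally-trivial affine bundle, and your Poincar\'e-polynomial factorization with a single power $t^{2d(V^1,V^2)}$ is unjustified. At $t=1$ this problem disappears, since an affine fibration preserves Euler characteristic regardless of fiber dimension. Your expected ``straightforward but tedious'' matching of the affine-fiber shifts with $\varepsilon(W^1,W^2)$ is therefore not merely bookkeeping: it would require proving constancy of these $\Hom$-dimensions, which does not follow from the hypotheses. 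Likewise, your identification of the correction term via \thmref{thm:main} would require knowing a priori that $\Gr_V(\lsp{\sigma}C^\bullet(\lsp{\sigma}W^1,W))$ is a point for $V=0$ and empty otherwise; the paper instead deduces this a posteriori from the single-monomial argument above.
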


The explicit computation of $L(C^\bullet(\lsp{\sigma}W^1,W))$ will be given
in \subsecref{subsec:compute}.

\begin{NB2}
\begin{Example}
  Consider type $A_3$. We write $L(1_0)$, $W(1_0)$, etc for the simple
  module in $\mathcal K(\mathscr Q_W)$ and a general representation of
  $\widetilde\cQ$.

(1) Consider the cluster mutation
\begin{equation*}
  \{ \alpha_3, \alpha_2 + \alpha_3, \alpha_1+\alpha_2+\alpha_3 \}
  \longleftrightarrow
  \{ \alpha_1+\alpha_2, \alpha_2 + \alpha_3, \alpha_1+\alpha_2+\alpha_3 \}.
\end{equation*}
We have the corresponding almost split sequence $0\to W(3_0)\to W\to
W(1_0 2_3) \to 0$.
Then
\begin{gather*}
  \chi_{\vq}(L(3_0))_{\le 2} = 3_0 + 3_2^{-1} 2_1 + 1_2 2_3^{-1},
\quad
  \chi_{\vq}(L(1_02_3))_{\le 2}  = 1_0 2_3 + 1_2^{-1} 2_1 2_3,
\\
  \chi_{\vq}(L(1_0 2_3 3_0))_{\le 2} 
  = 1_0 2_3 3_0 + 1_2^{-1} 2_1 2_3 3_0 + 1_0 2_1 2_3 3_2^{-1}
   + 1_2^{-1} 2_1^2 2_3 3_2^{-1} + 2_1,
\quad
  \chi_{\vq}(L(1_0 1_2))_{\le 2} = 1_0 1_2.
\end{gather*}
Thus we check
\begin{equation*}
   L(3_0) \otimes L(1_02_3) = L(W) + L(1_0 1_2).
\end{equation*}

(2) Consider the cluster mutation
\begin{equation*}
  \{ -\alpha_3,\alpha_1+\alpha_2,\alpha_1 \}
  \longleftrightarrow 
  \{ -\alpha_3,\alpha_1+\alpha_2,\alpha_2 \}.
\end{equation*}
We have $0\to W(1_0)\to W \to W(2_3) \to 0$. This is almost splitting
sequence for the quiver $1 \leftarrow 2$ ($3$ deleted). But it is not
for $1\leftarrow 2 \rightarrow 3$ as 
$W(1_0)\oplus W(2_3 3_0) \to W(2_3)$ does not factor through
$W = W(1_0 2_3)$.
Then
\begin{gather*}
  \chi_\vq(L(1_0))_{\le 2}  = 1_0 + 1_2^{-1} 2_1 + 2_3^{-1} 3_2,
\quad
  \chi_\vq(L(2_3))_{\le 2}  = 2_3,
\\
  \chi_\vq(L(1_02_3))_{\le 2}  = 1_0 2_3 + 1_2^{-1} 2_1 2_3,
\quad
  \chi_\vq(L(3_2))_{\le 2}  = 3_2.
\end{gather*}
Thus we check
\begin{equation*}
   L(1_0)\otimes L(2_3) = L(W) + L(3_2).
\end{equation*}

(3) Consider the cluster mutation
\begin{equation*}
  \{ \alpha_1 + \alpha_2, \alpha_2, \alpha_2+\alpha_3 \}
  \longleftrightarrow
  \{ \alpha_1+\alpha_2, \alpha_2 + \alpha_3, \alpha_1+\alpha_2+\alpha_3 \}.
\end{equation*}
We have $0\to W(1_02_33_0)\to W \to W(2_3) \to 0$. Then
\begin{gather*}
  \chi_{\vq}(L(1_0 2_3 3_0))_{\le 2} 
  = 1_0 2_3 3_0 + 1_2^{-1} 2_1 2_3 3_0 + 1_0 2_1 2_3 3_2^{-1}
   + 1_2^{-1} 2_1^2 2_3 3_2^{-1} + 2_1
\quad
  \chi_\vq(L(2_3))_{\le 2}  = 2_3,
\\
  \chi_\vq(L(1_02_3^2 3_0))_{\le 2}  = 1_0 2_3^2 3_0 + 1_2^{-1} 2_1 2_3^2 3_0
  + 1_0 2_1 2_3^2 3_2^{-1} + 1_2^{-1} 2_1^2 2_3^2 3_2^{-1},
\quad
  \chi_\vq(L(2_1 2_3))_{\le 2}  = 2_1 2_3.
\end{gather*}
Thus we check
\begin{equation*}
   L(1_0 2_3 3_0)\otimes L(2_3)
   = L(1_02_3^2 3_0) + L(2_1 2_3).
\end{equation*}
\end{Example}
\end{NB2}

\begin{proof}
  Note first that  $W^1$ cannot be $S_i$ since $i\in I_1$ is a source.

  We apply the functor $\sigma$ to the exact sequence. From the
  assumption, we have
  \begin{equation*}
    0\to \lsp{\sigma} W^2\to \lsp{\sigma}W \to \lsp{\sigma}W^1\to 0.
  \end{equation*}
  This is again an almost splitting sequence (\cite[VI.5.3]{ASS}). Now
  we apply \lemref{lem:CC} to get
  \begin{equation*}
     \chi_{\vq}(L(W^1))_{\le 2}\,\chi_{\vq}(L(W^2))_{\le 2}
     = \chi_{\vq}(L(W))_{\le 2} + e^W e^{\lsp{\sigma}W^1}.
  \end{equation*}
  Therefore $e^W e^{\lsp{\sigma}W^1}$ must be {\it l\/}-dominant, and
  it is the $\chi_{\vq}$ of the corresponding simple module, which
  is $L(C^\bullet(\lsp{\sigma}W^1,W))$.
\begin{NB2}
  (2) We next suppose $W^2 = S_i$ for $i\in I_1$. Then the almost
  splitting sequence is of the form
  \begin{equation*}
    0 \to W^1 = \tau S_i \to 
    W = \bigoplus_{\vout(h)=i} I_{\vin(h)} \to S_i \to 0,
  \end{equation*}
where $I_j$ is the indecomposable injective module corresponding to $j$.
(See \cite[VII.5.3]{ASS}.)
We apply $\sigma$ to get
\begin{equation*}
  0 \to \lsp{\sigma}W = \bigoplus_{\vout(h)=i} \lsp{\sigma}I_{\vin(h)}
  \xrightarrow{\alpha} \lsp{\sigma}W^1 \xrightarrow{\beta} S_i \to 0.
\end{equation*}
Note here that $\lsp{\sigma}S_i = 0$, but returns in the rightmost term.
From this exact sequence, we have an embedding
$\Gr_V(\lsp{\sigma}W)\subset \Gr_V(\lsp{\sigma}W^1)$.

We have
\( 
  \bigoplus_{\vout(h)=i} \lsp{\sigma}I_{\vin(h)}
  = \bigoplus_{\vout(h)=i} S_{\vin(h)}.
\)
From the above description, it is clear that if a submodule
$X\subset\lsp{\sigma}W^1$ satisfies $\beta(X) \neq 0$ (i.e.\ $\beta(X)
= S_i$), we have $X = \lsp{\sigma}W^1$. Therefore the complement
$\Gr_V(\lsp{\sigma}W^1)\setminus \Gr_V(\lsp{\sigma}W)$ is a
single point $\{ \lsp{\sigma}W^1 \}$ if $V = \lsp{\sigma}W^1$
and empty otherwise.
Using $\chi_{\vq}(L(S_i))_{\le 2} = \chi_{\vq}(x_i')_{\le 2} = Y_{i,\vq^3}$, we have
the assertion also in this case.
\end{NB2}%
\end{proof}

\begin{Remarks}\label{rem:almostsplit}
(1)
  It is clear from the proof that it is enough to assume
\(
    0\to \lsp{\sigma} W^2\to \lsp{\sigma}W \to \lsp{\sigma}W^1\to 0
\)
is an almost splitting sequence for a full subquiver $\cQ'\subset\cQ$.

(2)
  It is also possible to prove the same formula even in the case $W^2
  = S_i$, using the explicit construction of the almost splitting
  sequence for $S_i$. (See \cite[VII.5.3]{ASS}.) The argument is
  similar to one for the following proposition.
\end{Remarks}

\begin{Proposition}
  Let $i\in I_1$.
  Let $W^1$ be the representation of the decorated quiver such that
  $\lsp{\sigma}W^1$ is the indecomposable projective module $P_i$
  corresponding to $i$ of a full subquiver $\cQ'$ of the principal
  part $\cQ$ containing $i$.
  Let $W$ be the representation of $\widetilde\cQ$ given by
  \begin{equation*}
    0 \to W^1 \to W \to S_{i} \to 0.
  \end{equation*}
  Then
  \begin{equation*}
    L(W^1)\otimes x'_i = L(W) + 
    f_i^{\otimes (-1+\sum_{j\in I'} a_{ij}^2)}
    \otimes
    \bigotimes_{j\notin I'} x_j^{\otimes a_{ij}}
    \otimes
    \bigotimes_{k\in I_1\setminus\{ i\}}
    f_k^{\otimes \sum_{j\in I'} a_{ij}a_{jk}}
    ,
  \end{equation*}
where  $I'$ is the set of vertexes in the subquiver $\cQ'\subset\cQ$.
\end{Proposition}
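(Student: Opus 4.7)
The plan follows the strategy of \propref{prop:ex1}, but must be adapted because for $W^2 = S_i$ with $i \in I_1$, no almost split sequence ends at $S_i$ (which is injective, since $i$ is a source in $\widetilde\cQ$). First I would compute the truncated $\vq$-characters of both sides and use injectivity. By Example~\ref{ex:KR}, $\chi_{\vq,t}(x'_i)_{\le 2} = Y_{i,\vq^3}$ (a single monomial). By \thmref{thm:main} and \propref{lem:real} (applicable since $\dim P_i$ is a real Schur root, whence $\mathbb L(W^1) = L(W^1)$), the LHS character equals $e^{W^1} Y_{i,\vq^3}\sum_V P_t(\Gr_V(P_i))\, e^V$. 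Since $i \in I_1$ is a source in the bipartite $\cQ'$, $P_i$ has semisimple radical $\operatorname{rad} P_i = \bigoplus_{j \in I'} S_j^{\oplus a_{ij}}$; hence every submodule of $P_i$ either lies in $\operatorname{rad} P_i$ (yielding $\Gr_V(P_i) = \prod_j \Gr(v_j,a_{ij})$, a product of ordinary Grassmannians) or equals $P_i$ itself (a single point). This splits the character as
\[
  \Bigl[\,e^{W^1} Y_{i,\vq^3}\!\!\sum_{V \subset \operatorname{rad} P_i}\!\!\! P_t(\Gr_V(\operatorname{rad} P_i))\, e^V\Bigr] + e^{W^1} Y_{i,\vq^3}\, e^{P_i},
\]
and I would match the two summands with $\chi_{\vq,t}(L(W))_{\le 2}$ and $\chi_{\vq,t}(M)_{\le 2}$ respectively.

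For the first summand, I would unwind formula~\eqref{eq:sigmaW} applied to $W = W^1 \oplus S_i$ (as dimension vectors). Using that $W^1_i(\vq^3) = \sum_{j \in I'} a_{ij}^2 - 1$ --- forced by $\lsp{\sigma}W^1|_{(i,\vq^3)} = 1 = \dim P_i|_i$ --- one finds $\lsp{\sigma}W_k(\vq^3) = 0$ for all $k \in I_1$, so $\lsp{\sigma}W \cong \operatorname{rad} P_i$ as a semisimple module. Then \thmref{thm:main} identifies the first summand with $\chi_{\vq,t}(L(W))_{\le 2}$, once $L(W) = \mathbb L(W)$ is verified via \propref{lem:real} by checking that the canonical decomposition of $\lsp{\varphi}W = W$ consists of real Schur roots (illustrated by the decomposition $(1,2,1) = (1,1,0)+(0,1,1)$ in the type $A_3$ instance).

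For the second summand, I would determine $W^1 = \sigma_-(P_i)$ explicitly from~\eqref{eq:sigmaW}: one obtains $W^1_j(1) = a_{ij}$ for $j \in I'$, $W^1_i(\vq^3) = \sum_{j \in I'} a_{ij}^2 - 1$, and $W^1_k(\vq^3) = \sum_{j \in I'} a_{ij}a_{jk}$ for $k \in I_1 \setminus \{i\}$ (other entries zero). Then pairing this with $e^{P_i} = V_{i,\vq^2}\prod_{j \in I'}V_{j,\vq}^{a_{ij}}$ and $Y_{i,\vq^3}$ and cancelling the $Y_{j,1}^{\pm a_{ij}}$, $Y_{j,\vq^2}^{\pm a_{ij}}$ factors (using $a_{ii}=0$ and that neighbors of $i \in I_1$ lie in $I_0$) yields
\[
  (Y_{i,\vq}Y_{i,\vq^3})^{\sum_{j\in I'} a_{ij}^2 - 1}\prod_{k \in I_1\setminus\{i\}}(Y_{k,\vq}Y_{k,\vq^3})^{\sum_{j\in I'} a_{ij}a_{jk}}\prod_{l \in I_0\setminus I'}Y_{l,\vq^2}^{a_{il}},
\]
which equals $\chi_{\vq,t}(M)_{\le 2}$ by Example~\ref{ex:KR}, since $\chi(f_k)_{\le 2} = Y_{k,\vq}Y_{k,\vq^3}$ for $k \in I_1$ and $\chi(x_l)_{\le 2} = Y_{l,\vq^2}$ for $l \in I_0$. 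The main obstacle is the explicit determination of $W^1 = \sigma_-(P_i)$ --- particularly the second-neighbor count $W^1_k(\vq^3) = \sum_{j\in I'}a_{ij}a_{jk}$, which is what produces the $f_k$-factors in $M$ --- together with a uniform verification of $L(W) = \mathbb L(W)$ via canonical decomposition; as indicated in Remark~\ref{rem:almostsplit}, one may alternatively mimic \propref{prop:ex1} using the projective presentation $0 \to \operatorname{rad} P_i \to P_i \to S_i \to 0$ in place of an almost split sequence on the $\sigma$-side.
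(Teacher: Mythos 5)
Your proposal is correct and follows essentially the same route as the paper's proof: apply $\sigma$ to obtain $0\to\lsp{\sigma}W\to P_i\to S_i\to 0$, use projectivity of $P_i$ to split $\Gr_V(P_i)$ into $\Gr_V(\operatorname{rad}P_i)$ plus the single point $\{P_i\}$ when $V=\dim P_i$, and compute the extra monomial $e^{W}e^{\lsp{\sigma}W^1}$ explicitly. Your explicit verification of $\mathbb L(W)=L(W)$ is a welcome addition that the paper's terse proof elides, and the "uniform" argument you flag as remaining is immediate: $\lsp{\sigma}W=\operatorname{rad}P_i$ is a direct sum of simple modules at sink vertices, hence rigid, so condition~(C) holds for $\lsp{\varphi}W$ and \propref{lem:real}(1) applies.
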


Explicitly we have
\begin{equation*}
   \dim W^1_j = 
   \begin{cases}
     \sum_{j\in I'} a_{ij}^2 - 1 & \text{if $j=i$},
\\
    \sum_{j\in I'} a_{ij}a_{jk} & \text{if $k\in I_1\setminus\{i\}$},
\\
     a_{ij} & \text{if $j\in I'$},
\\
     0 & \text{if $j\notin I'$}.
   \end{cases}
\end{equation*}

\begin{proof}
We apply $\sigma$ to the exact sequence to get
\begin{equation*}
  0 \to \lsp{\sigma}W
  \xrightarrow{\alpha} \lsp{\sigma}W^1 = P_i \xrightarrow{\beta} S_i \to 0.
\end{equation*}
Note here that $\lsp{\sigma}S_i = 0$, but returns in the rightmost term.
From this exact sequence, we have an embedding
$\Gr_V(\lsp{\sigma}W)\subset \Gr_V(\lsp{\sigma}W^1)$.

Since $P_i$ is the projective module, if a submodule
$X\subset\lsp{\sigma}W^1$ satisfies $\beta(X) \neq 0$ (i.e.\ $\beta(X)
= S_i$), we have $X = \lsp{\sigma}W^1$. Therefore the complement
$\Gr_V(\lsp{\sigma}W^1)\setminus \Gr_V(\lsp{\sigma}W)$ is a
single point $\{ \lsp{\sigma}W^1 \}$ if $V = \lsp{\sigma}W^1$
and empty otherwise.
Therefore we have
\begin{equation*}
   \chi_{\vq}(L(W^1)\otimes x_i')_{\le 2}  = 
   \chi_{\vq}(L(W))_{\le 2} + e^{W} e^{\lsp{\sigma}W^1},
\end{equation*}
as $\chi_{\vq}(x_i')_{\le 2} = Y_{i,\vq^3}$.

Now we calculate everything explicitly:
\begin{equation*}
  \begin{split}
  & e^W = \prod_{j\in I'} Y_{j,1}^{a_{ij}} 
    \prod_{k\in I_1} Y_{k,\vq^3}^{\sum_{j\in I'} a_{ij}a_{jk}},
\\
  & e^{\lsp{\sigma}W^1} = V_{i,\vq^2} \prod_{j\in I'} V_{j,\vq}^{a_{ij}}
  = 
  Y_{i,\vq}^{-1} Y_{i,\vq^3}^{-1} \prod_{j\in I} Y_{j,\vq^2}^{a_{ij}}
  \times
  \prod_{j\in I'}
  \left(
  Y_{j,1}^{-1} Y_{j,\vq^2}^{-1} \prod_{k\in I} Y_{k,\vq}^{a_{jk}}
  \right)^{a_{ij}}.
  \end{split}
\end{equation*}
Therefore
\begin{equation*}
  e^W e^{\lsp{\sigma}W^1}
  = \left( Y_{i,\vq} Y_{i,\vq^3} \right)^{-1 + \sum_{j\in I'}a_{ij}^2}
   \prod_{j\notin I'} Y_{j,\vq^2}^{a_{ij}}
   \prod_{k\in I_1\setminus\{i\}}
     (Y_{k,\vq}Y_{k,\vq^3})^{\sum_{j\in I'} a_{ij}a_{jk}}.
\end{equation*}
Thus we have the assertion.
\end{proof}

\begin{Remark}\label{rem:derived}
As indicated in the proof of the above proposition, it is more natural
to define $\lsp{\sigma}S_i$ as $S_i[1]$, an object in the derived
category $\mathscr D(\rep\lsp{\sigma}{\!\widetilde\cQ^{\mathrm{op}}})$. When
$W^1$, $W^2$, $W$ are modules in $\rep\cQ$, we say 
\begin{equation*}
  0\to \lsp{\sigma}W^2\to \lsp{\sigma}W \to\lsp{\sigma}W^1\to 0 
\end{equation*}
is an exact sequence, if it is so in the usual sense if $W^2\neq S_i$
and if $0\to \lsp{\sigma}W \to\lsp{\sigma}W^1 \to S_i\to 0$ is so if
$W^2 = S_i$. This is also compatible with the cluster category theory,
as $S_i[1] = I_i[1]$ for $i\in I_1$, where $I_i$ is the indecomposable
injective module corresponding to the vertex $i$.
\end{Remark}

\begin{Proposition}
  Let $i\in I_0$.
  Let $W^1$ be the representation of the decorated quiver such that
  $\lsp{\sigma}W^1$ is the indecomposable injective module $I_i$
  corresponding to $i$ of a full subquiver $\cQ'$ of the principal
  part $\cQ$ containing $i$.
  Let $W$ be the representation of $\widetilde\cQ$ given by
  \begin{equation*}
    0 \to W^1 \to W \to S_{i'} \to 0.
  \end{equation*}
  Then
  \begin{equation*}
    L(W^1)\otimes x_i = L(W) + 
      \bigotimes_{j\notin I'} f_j^{\otimes a_{ij}}
      \otimes \bigotimes_{j\in I'} x_j^{\otimes a_{ij}},
  \end{equation*}
where  $I'$ is the set of vertexes in the subquiver $\cQ'\subset\cQ$.
\end{Proposition}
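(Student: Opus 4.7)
The plan is to mirror the proof of the preceding proposition, exchanging the roles of $I_0$ and $I_1$ (i.e.\ sinks vs.\ sources in $\cQ$) and of projectives vs.\ injectives. First I would apply the contravariant functor $\sigma$ to the short exact sequence $0 \to W^1 \to W \to S_{i'} \to 0$. Because $i\in I_0$, the frozen vertex $i'$ corresponds to the component $W_i(\vq^2)$, which is untouched by the reflections $\Phi_j^-$ ($j\in I_1$) that build $\sigma$, so $\lsp{\sigma}S_{i'}$ is simply the simple at $i'$ in the reoriented quiver $\lsp{\sigma}{\!\widetilde\cQ^{\mathrm{op}}}$. Interpreted in the sense of Remark~\ref{rem:derived}, applying $\sigma$ yields an exact sequence
\begin{equation*}
   0 \to S_{i'} \to \lsp{\sigma}W \to \lsp{\sigma}W^1 = I_i \to 0,
\end{equation*}
where $I_i$ is the indecomposable injective module of the reversed subquiver $\cQ'^{\mathrm{op}}$ supported on $I'$.

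Next I would analyze the quiver Grassmannian $\Gr_V(\lsp{\sigma}W)$ using this extension and compare it with $\Gr_V(\lsp{\sigma}W^1) = \Gr_V(I_i)$. Submodules $X\subset\lsp{\sigma}W$ split into two types according to whether $S_{i'}\subset X$ or $X\cap S_{i'} = 0$. Dualizing the projectivity argument of the preceding proposition: since the socle of $I_i$ is $S_i$ and $I_i$ is injective, any full lift of $I_i$ back into $\lsp{\sigma}W$ must contain $S_{i'}$; for all other dimension vectors, the two families of submodules match bijectively. This produces an inclusion $\Gr_V(\lsp{\sigma}W)\hookrightarrow \Gr_V(I_i)\sqcup(\text{submodules with }S_{i'})$ whose complement reduces to a single exceptional point for exactly one dimension vector, namely $V = \lsp{\sigma}W^1$ (the dimension accounting for $S_{i'}$ being absorbed).

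Third, I would translate the count into truncated $\vq$-characters through \thmref{thm:main}, obtaining
\begin{equation*}
  \chi_{\vq}(L(W^1))_{\le 2}\,\chi_{\vq}(x_i)_{\le 2}
   = \chi_{\vq}(L(W))_{\le 2} + e^W e^{\lsp{\sigma}W^1},
\end{equation*}
using $\chi_{\vq}(x_i)_{\le 2}=Y_{i,\vq^2}$ from Example~\ref{ex:KR}. Since the left-hand side is the character of a genuine class in $\bfR_{\ell=1}$, the extra monomial $e^W e^{\lsp{\sigma}W^1}$ must be $l$-dominant and equal to the character of the corresponding simple. Finally I would compute $e^W e^{\lsp{\sigma}W^1}$ explicitly in the $Y_{j,a}$ coordinates using $\dim I_i$ as a representation of $\cQ'^{\mathrm{op}}$ and the formulas for $V_{j,a}$, and match it against $\prod_{j\notin I'}\chi_{\vq}(f_j)_{\le 2}^{a_{ij}} \cdot \prod_{j\in I'}\chi_{\vq}(x_j)_{\le 2}^{a_{ij}}$; the exponents of $Y_{i,1}$ and $Y_{i,\vq^2}$ contributed by the injective $I_i$ should cancel precisely those of the initial $e^W$, so no $f_i$ factor survives (in contrast with the $i\in I_1$ case, where a projective contributed a surplus $f_i^{-1+\sum a_{ij}^2}$).

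The main obstacle I anticipate is the careful interpretation of $\sigma$ on the frozen simple $S_{i'}$ and on $W^1$ as a module over the subquiver $\cQ'$ embedded in the decorated quiver, so that the exact sequence displayed above is valid in the (possibly derived) sense of Remark~\ref{rem:derived}; once this bookkeeping is correct, the injectivity-of-$I_i$ argument and the explicit monomial computation are routine parallels of the $i\in I_1$ case.
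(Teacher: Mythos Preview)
Your dualization from the $i\in I_1$ case breaks down at the key step. You claim that ``any full lift of $I_i$ back into $\lsp{\sigma}W$ must contain $S_{i'}$; for all other dimension vectors, the two families of submodules match bijectively,'' so that the discrepancy is a single exceptional point and the extra term is the monomial $e^W e^{\lsp{\sigma}W^1}$. This is false. The injectivity of $I_i$ says something about quotients, not about submodules: there are many proper submodules $X'\subsetneq I_i$ with $X'_i=\C$ (indeed every submodule containing the socle $S_i$), and each of these fails to lift to a submodule of $\lsp{\sigma}W$ with $X_{i'}=0$. So the complement is not one point for one $V$; it is the whole locus $\{X'\subset I_i: X'_i=\C\}$, which lives over every dimension vector with $V_i=1$.

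The paper's argument is genuinely different from the $i\in I_1$ case. It uses the built-in constraint $X_{i'}=0$ from \lemref{lem:reflect} (for $i\in I_0$ the frozen component must vanish), which in $\lsp{\sigma}W$ forces $X_i=0$ because $\lsp{\sigma}\bx_i\colon (\lsp{\sigma}W)_i\to(\lsp{\sigma}W)_{i'}$ is an isomorphism. Hence $\Gr_V(\lsp{\sigma}W)$ embeds into $\Gr_V(\lsp{\sigma}W^1)$ as the sublocus $\{X'_i=0\}$, and the complement $\{X'_i=\C\}$ is identified with the quiver Grassmannian of $I_i/S_i=\bigoplus_{j\in I'}S_j^{\oplus a_{ij}}$. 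The resulting extra term is therefore a \emph{sum} over many $V$, namely
\[
  Y_{i,\vq^2}\sum_{V_i=\C} e(\Gr_V(\lsp{\sigma}W^1))\, e^{W^1} e^V
  = \prod_{j\notin I'}\chi_\vq(f_j)_{\le 2}^{a_{ij}}\prod_{j\in I'}\chi_\vq(x_j)_{\le 2}^{a_{ij}},
\]
which is a genuine polynomial (each $x_j$ with $j\in I_1\cap I'$ contributes a factor $(1+V_{j,\vq^2})$), not the single monomial your argument would produce.
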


Explicitly we have
\begin{equation*}
   \dim W^1_j = 
   \begin{cases}
     1 & \text{if $j=i$},
\\
     a_{ij} & \text{if $j\notin I'$},
\\
     0 & \text{if $j\in I'$}.
   \end{cases}
\end{equation*}

\begin{NB2}
The following example is for the decorated quiver $\widetilde\cQ$, not
for $\lsp{\sigma}{\!\widetilde\cQ}$.
  \begin{Example}
    First note
  \begin{equation*}
     \chi_{\vq,t}(x_i)_{\le 2} =
     \begin{cases}
       Y_{i,\vq^2} & \text{if $i\in I_0$},
\\
       Y_{i,\vq}(1+V_{i,\vq^2}) & \text{if $i\in I_1$}.
     \end{cases}
  \end{equation*}

Consider type $A_3$ with $\dim W = (1 \leftarrow 1 \rightarrow 1)$.
($\dim \lsp{\sigma}W = (1 \leftarrow 1 \rightarrow 1)$.)
\begin{equation*}
  \begin{split}
   &\chi_{\vq,t}(L(W))_{\le 2}
   = Y_{1,1}Y_{2,\vq^3} Y_{3,1} \{ ( 1 + V_{1,\vq})(1 + V_{2,\vq^2})(1 + V_{3,\vq})
    - (V_{2,\vq^2} + V_{2,\vq^2} (V_{1,\vq} + V_{3,\vq}) \}
\\
   = \; & Y_{1,1}Y_{2,\vq^3} Y_{3,1} + \cdots + Y_{2,\vq}.
  \end{split}
\end{equation*}
If $i=1$, or $3\in I_0$, we multiply $Y_{i,\vq^2}$. If $i=2$, we
multiply $Y_{2,\vq}(1 + V_{2,\vq^2}) = Y_{2,\vq^3}^{-1}
Y_{1,\vq^2}Y_{3,\vq^2}$. The {\it l\/}-highest weights of the simple
modules different from $L(W\oplus S_{i'})$ are
\begin{equation*}
  i=1,3 : Y_{2,\vq} Y_{i,\vq^2},
\quad
  i=2 : Y_{1,1}Y_{1,\vq^2} Y_{3,1} Y_{3,\vq^2}.
\end{equation*}

Next consider $\dim W = (1 \leftarrow 1 \rightarrow 0)$.
($\dim \lsp{\sigma}W = (1 \leftarrow 0 \rightarrow 0)$.)
 We have
\begin{equation*}
   \chi_{\vq,t}(L(W))_{\le 2}
   = Y_{1,1}Y_{2,\vq^3} ( 1 + V_{1,\vq})
   = Y_{1,1}Y_{2,\vq^3} + Y_{1,\vq^2}^{-1} Y_{2,\vq}Y_{2,\vq^3}.
\end{equation*}
If $i=1$, we multiply $Y_{1,\vq^2}$. If $i=2$, we multiply
$Y_{2,\vq}(1 + V_{2,\vq^2}) = Y_{2,\vq^3}^{-1} Y_{1,\vq^2}Y_{3,\vq^2}$.
We cannot multiply $i=3$, since $W_3 = 0$. In fact, we cannot kill
$Y_{1,\vq^2}^{-1}$.

Finally consider $\dim W = (1 \leftarrow 0 \rightarrow 0)$.
($\dim \lsp{\sigma}W = (1 \leftarrow 1 \rightarrow 0)$.)
We have
\begin{equation*}
   \chi_{\vq,t}(L(W))_{\le 2}
   = Y_{1,1} ( 1 + V_{1,\vq})
   = Y_{1,1} + Y_{1,\vq^2}^{-1} Y_{2,\vq}.
\end{equation*}
If $i=1$, we multiply $Y_{1,\vq^2}$ to get $Y_{2,\vq}$.
\end{Example}
\end{NB2}

\begin{proof}
From the given exact sequence, we have
\begin{equation*}
  0\to S_{i'} \to \lsp{\sigma}W \to \lsp{\sigma}W^1 \to 0.
\end{equation*}
Since we must put $X_{i'} = 0$ for a submodule $X\subset
\lsp{\sigma}W$, we have an embedding
\(
  \Gr_V(\lsp{\sigma}W)\subset \Gr_V(\lsp{\sigma}W^1)
\)
giving by the image under $\lsp{\sigma}W\to \lsp{\sigma}W^1$.
Conversely if $X\in \Gr_V(\lsp{\sigma}W^1)$ is given, we consider
it an $I$-graded subspace of $\lsp{\sigma}W$ by setting $0$ on $X_{i'}$.
Then it is a submodule if and only if $\lsp{\sigma}\bx_i(X_i) = 0$. This
is true if and only if $X_i = 0$, since $\lsp{\sigma}W_i = \C$ and
$\bx_i$ is an isomorphism.
Therefore the complement $\Gr_V(\lsp{\sigma}W^1)\setminus
\Gr_V(\lsp{\sigma}W)$ is empty if $V_i = 0$, and the whole space if
$V_i = \C$. Hence
\begin{equation*}
  \chi_{\vq}(L(W^1)\otimes x_i)_{\le 2}
  = \chi_{\vq}(L(W))_{\le 2} + 
  Y_{i,\vq^2}\sum_{V_i=\C} \Gr_V(\lsp{\sigma}W^1) e^{W^1} e^V,
\end{equation*}
as $\chi_{\vq}(x_i)_{\le 2} = Y_{i,\vq^2}$.
A submodule $X$ of $\lsp{\sigma}W^1$ with $X_i = \C$ is nothing but a
submodule of $\lsp{\sigma}W^1/S_i = \bigoplus_{j\in I'} S_j^{\oplus
  a_{ij}}$.
Since we put the whole space as $X_{j'}$ at the frozen vertex $j'$,
the corresponding Grassmannian is the same as that for
$\bigoplus_{j\in I'} F_j^{\oplus a_{ij}}$, where $F_j$ is the
representation corresponding to $f_j$. Then the latter module is the
image under $\sigma$ of $\bigoplus_{j\in I'} S_{j'}^{\oplus a_{ij}}$.
Therefore the second term is equal to
\begin{equation*}
  \begin{split}
  & Y_{i,\vq^2} \sum_{V_i = \C} \Gr_V(\lsp{\sigma}W^1) e^{W^1} e^V
\\
  =\; & \prod_{j\notin I'} (Y_{j,\vq} Y_{j,\vq^3})^{a_{ij}}
  \sum_{V/S_i} \Gr_{V/S_i}(\lsp{\sigma}{\left(
  \bigoplus_{j\in I'} S_{j'}^{\oplus a_{ij}}\right)})
  e^{\bigoplus_{j\in I'} S_{j'}^{\oplus a_{ij}}}
  e^{V/S_i}
\\
  =\; &
   \prod_{j\notin I'} \chi_{\vq}(f_j)_{\le 2}^{a_{ij}}
   \prod_{j\in I'} \chi_{\vq}(x_j)_{\le 2}^{a_{ij}},
  \end{split}
\end{equation*}
as $\chi_{\vq}(f_j)_{\le 2} = Y_{j,\vq}Y_{j,\vq^3}$.
\begin{NB2}
  \begin{equation*}
      \begin{split}
   Y_{i,\vq^2} e^{W^1} e^V &=
  Y_{i,1} Y_{i,\vq^2} \prod_{j\notin I'} Y_{j,\vq^3}^{a_{ij}}
    V_{i,\vq} e^{V/S_i}
\\
  &= \prod_{j\notin I'} (Y_{j,\vq}Y_{j,\vq^3})^{a_{ij}}
  \prod_{j\in I} Y_{j,\vq}^{a_{ij}}
  \,
  = \prod_{j\notin I'} (Y_{j,\vq}Y_{j,\vq^3})^{a_{ij}}
  \,
  e^{\bigoplus_{j\in I'} S_{j'}^{\oplus a_{ij}}} e^{V/S_i}
  \end{split}
  \end{equation*}
\end{NB2}%
\end{proof}

The following proof contains the mistake. But I keep it for the record.
\begin{proof}
\begin{NB2}
For completeness, we first consider the case $\dim W_i = 0$,
though it is already contained in \propref{prop:fac}, as
$\lsp{\varphi}(W\oplus S_{i'}) = W$.

  Recall that $x_i$ corresponds to the simple module $S_{i'}$ at the
  frozen vertex $i$ of the decorated quiver.
  
  We consider the $(I\sqcup I_\fr)$-graded vector space $W\oplus
  S_{i'}$. Let $W' \defeq \lsp{\varphi}(W\oplus S_{i'})$. Then
  \begin{equation*}
    \dim W'_j =
    \begin{cases}
      \max(\dim W_i - 1,0) & \text{if $j=i$},
      \\
      \dim W_j & \text{if $j\neq i$}.
    \end{cases}
  \end{equation*}
  If $\dim W_i = 0$, we clearly have $\Gr_V(\lsp{\sigma\varphi}W) =
  \Gr_V(\lsp{\sigma}W')$, as $\lsp{\varphi}W = W = W'$. Therefore
\begin{equation*}
  L(W\oplus S_{i'}) \underset{\text{Prop.\ref{prop:fac}}}{\cong}
  L(W')\otimes x_i \cong L(W)\otimes x_i.
\end{equation*}

So suppose $\dim W_i \neq 0$ and $\dim W'_i = \dim W_i - 1$.
\end{NB2}%
Recall that $x_i$ corresponds to the simple module $S_{i'}$ at the
frozen vertex $i$ of the decorated quiver.
If $W = S_i$ (and $L(W) = x'_i$), the assertion is contained in
\eqref{eq:T-system}. Therefore we may assume $W\neq S_i$.

We consider the $(I\sqcup I_\fr)$-graded vector space $W' = W\oplus
S_{i'}$.

First suppose that $i\in I_0$.
\begin{NB2}
  $i$ is sink, $i'$ is source.
\end{NB2}
Taking a general representation for $W$ and add an isomorphism $\C =
W'_{i'} \to W'_{i} = \C$ to get a general representation for $W'$. We
have an exact sequence
\begin{equation*}
   0 \to  W \xrightarrow{} W' \xrightarrow{} S_{i'} \to 0.
\end{equation*}
We have $\C\cong\Hom(W,W) \cong \Hom(W,W') \cong \Hom(W',W')$.
\begin{NB2}
  A $\xi\in \Hom(W,W')$ is a collection $\xi_i\colon W_i\to W'_i$
and $\xi_{i'}\colon W_{i'}\to W'_{i'}$ commuting with $\by_h$ and $\by_h'$.
But as $i'$ is source and $W_{i'} = 0$, it is nothing but $\Hom(W,W)$.

For the second isomorphism, we use
\begin{equation*}
   \C = \Hom(W,W')
   \leftarrow \Hom(W',W')
   \leftarrow \Hom(S_{i'},W') = 0.
\end{equation*}
\end{NB2}%
We apply $\sigma$ to get
\begin{equation*}
   0 \to \lsp{\sigma}S_{i'}=S_{i'} \xrightarrow{\alpha}
   \lsp{\sigma}W'\xrightarrow{\beta} \lsp{\sigma}W\to 0.
\end{equation*}
Since $\End(\lsp{\sigma}W')=\C$, $\beta$ is right minimal.
It is also right almost split as $\lsp{\sigma}W'_{i'} = \C$.
\begin{NB2}
  Take a non-retraction homomorphism $v\colon M\to \lsp{\sigma}W$. If
  $M_{i'}\neq 0$, then $M_i\to M_{i'}$ is nonzero, since $v$ is not a
  retraction. From $v$, we have $M_i\to \lsp{\sigma}W_i =
  \lsp{\sigma}W'_i$. We compose $\lsp{\sigma}W'_i\to
  \lsp{\sigma}W'_{i'}$ and define $M_{i'}\to \lsp{\sigma}W'_{i'}$ so
  that the following diagram is commutative.
  \begin{equation*}
    \begin{CD}
      M_i @>{v_i}>> \lsp{\sigma}W'_i
\\
      @VVV           @VVV
\\
      M_{i'} @>>\exists> \lsp{\sigma}W'_{i'} = \C
    \end{CD}
  \end{equation*}
\end{NB2}%
\begin{NB2}
This is not correct, as the following example for $A_2$ shows:
\begin{equation*}
  0\to
  \begin{pmatrix}
    0 & & 0
\\
    \downarrow && 
\\
   \C &&
  \end{pmatrix}
\to
  \begin{pmatrix}
    \C & \leftarrow & \C
\\
    \downarrow &&
\\
   \C &&
  \end{pmatrix}
\to
  \begin{pmatrix}
    \C & \leftarrow & \C
\\
    \downarrow && 
\\
   0 &   
  \end{pmatrix}
\to 0.
\end{equation*}
Then we cannot lift a homomorphism
\begin{equation*}
  \begin{pmatrix}
    \C & & 0
\\
    \downarrow && 
\\
   0 &   
  \end{pmatrix}
\to  
  \begin{pmatrix}
    \C & \leftarrow & \C
\\
    \downarrow && 
\\
   0 &   
  \end{pmatrix}
\end{equation*}
to the middle term.
\end{NB2}%
Therefore the above is almost split (\cite[IV, Th.~1.13]{ASS}).
Hence we apply \lemref{lem:CC} to get the assertion.
\begin{NB2}
  The only missing piece in $\Gr_V(\lsp{\sigma}W')$ is
  $\Gr_{0}(\lsp{\sigma}S_{i'})\times \Gr_{\lsp{\sigma}W}(\lsp{\sigma}W)$.
\end{NB2}

Next suppose $i\in I_1$.
  \begin{NB2}
    So $i$ is source, $i'$ is sink.
  \end{NB2}%
  Taking a general representation for $W$ and add an isomorphism $\C =
  W'_{i} \to W'_{i'} = \C$ to get a general representation for
  $W'$. We have an exact sequence
\begin{equation*}
   0 \to  S_{i'} \xrightarrow{} W' \xrightarrow{} W \to 0.
\end{equation*}
We have $\C\cong\Hom(W,W) \cong \Hom(W',W) \cong \Hom(W',W')$.
\begin{NB2}
The first isomorphism is obvious as
$\Hom(S_{i'},W) = 0$.
For the second, use $0 = \Hom(W',S_{i'}) \to \Hom(W',W')\to \Hom(W',W)$.
\end{NB2}%
This is almost split as above.

We apply $\sigma$ to get
\begin{equation*}
  0\to \lsp{\sigma}W\to \lsp{\sigma}W' \to \lsp{\sigma}S_{i'}\to 0,
\end{equation*}
which is still an almost split sequence by \cite[VI.5.3]{ASS}. Then we
apply \lemref{lem:CC} to get the assertion.
\begin{NB2}
  The only missing piece in $\Gr_V(\lsp{\sigma}(W'))$
is $\Gr_{0}(\lsp{\sigma}W)\times \Gr_{\lsp{\sigma}S_i}(\lsp{\sigma}S_{i'})$.
\end{NB2}%
\end{proof}

\subsection{Computation}\label{subsec:compute}

We compute $L(C^\bullet(\lsp{\sigma}W^1,W))$ in \propref{prop:ex1}
in this subsection.

Recall that the Euler characteristic is defined by
\(
   \chi(M,N) = \dim \Hom(M,N) - \dim \Ext^1(M,N).
\)
We consider the principal part $\cQ$ of the decorated quiver. We have
\begin{equation*}
  \chi(S_i, S_j) = 
  \begin{cases}
    - a_{ij} & \text{if $i\in I_1$, $j\in I_0$},
\\
    0 & \text{if $i\in I_0$, $j\in I_1$},
\\
    \delta_{ij} & \text{if $i,j\in I_0$ or $i,j\in I_1$}.
  \end{cases}
\end{equation*}

Since we are assuming $W_{i'} = 0$, the nontrivial terms in
$C^\bullet_{i,\vq^n}$ are as follows:
\begin{equation*}
  \begin{CD}
   C_{i,\vq^3}^\bullet(\lsp{\sigma}W^1,W)\  (i\in I_1):\qquad
   @. @. W_i@>>> \lsp{\sigma}W_i^1
\\
   C_{i,\vq^2}^\bullet(\lsp{\sigma}W^1,W)\  (i\in I_0):\qquad
   @. @.
   \displaystyle{\bigoplus_{h:\vin(h)=i}}
     \lsp{\sigma}W^1_{\vout(h)}
     @>>> \lsp{\sigma}W^1_i
\\
   C_{i,\vq}^\bullet(\lsp{\sigma}W^1,W)\  (i\in I_1) :\qquad
   @. \lsp{\sigma}W_i^1 @>>> 
  \displaystyle{\bigoplus_{h:\vin(h)=i}}
     \lsp{\sigma}W_{\vout(h)}^1
   @.
\\
   C_{i,1}^\bullet(\lsp{\sigma}W^1,W)\ (i\in I_0): \qquad @.
   \lsp{\sigma}W^1_i @>>>  W_i @.
  \end{CD}
\end{equation*}
Since we are in the situation of \propref{prop:ex1}, we may further
assume that $W$, $W^1$, $W^2$ do not contain $S_i$ ($i\in I_1$) as
direct summands, $W^1$, $W^2$ are indecomposable, and $W^2$ is
nonprojective. Therefore
\begin{NB2}
\begin{gather*}
  \rank C^\bullet_{i,\vq^3}(\lsp{\sigma}W^1,W)
  = \dim W_i - \dim \lsp{\sigma}W^1_i,
\\
  \rank C^\bullet_{i,\vq^2}(\lsp{\sigma}W^1,W)
  = \sum_{j} a_{ij} \dim \lsp{\sigma}W^1_j - \dim \lsp{\sigma}W^1_i,
\\
  \rank C^\bullet_{i,\vq}(\lsp{\sigma}W^1,W)
  = \sum_{j} a_{ij} \dim \lsp{\sigma}W^1_j - \dim \lsp{\sigma}W^1_i,
\\
  \rank C^\bullet_{i,1}(\lsp{\sigma}W^1,W)
  = \dim W_i - \dim \lsp{\sigma}W^1_i.
\end{gather*}
and hence
\end{NB2}%
\begin{NB2}
$i\in I_1$ : source  
\end{NB2}
\begin{equation*}
  \begin{split}
  & \rank C^\bullet_{i,\vq^3}(\lsp{\sigma}W^1,W)
  -   \rank C^\bullet_{i,\vq}(\lsp{\sigma}W^1,W)
  = \dim W_i - \sum_{j} a_{ij} \dim W^1_j
\\
  =\; & \dim W^2_i + \dim W^1_i - \sum_{j} a_{ij} \dim W^1_j
  = \chi(W^2,S_i) +  \chi(S_i,W^1)
\\
  =\; & \chi(W^2,S_i) + \chi(S_i,\tau W^2) = 0
  \end{split}
\end{equation*}
and \begin{NB2}
$i\in I_0$ : sink
\end{NB2}
\begin{equation*}
  \begin{split}
    & \rank C^\bullet_{i,1}(\lsp{\sigma}W^1,W) - \rank
    C^\bullet_{i,\vq^2}(\lsp{\sigma}W^1,W) = \dim W_i - \sum_{j}
    a_{ij} \dim \lsp{\sigma}W^1_j
    \\
    =\; & \dim \lsp{\sigma}W^2_i + \dim \lsp{\sigma}W^1_i - \sum_{j}
    a_{ij} \dim \lsp{\sigma}W^1_j = \chi(\lsp{\sigma}W^2,S_i) +
    \chi(S_i,\lsp{\sigma}W^1)
    \\
    =\; & \chi(\lsp{\sigma}W^2,S_i) + \chi(S_i, \tau \lsp{\sigma}W^2)
    = 0,
  \end{split}
\end{equation*}
where we have used \eqref{eq:Serre_dual}. Therefore
\begin{equation*}
  \begin{split}
   L(C^\bullet(\lsp{\sigma}W^1,W))
   & \cong \bigotimes_{i\in I_1}n
   f_i^{\otimes \rank C^\bullet_{i,\vq^3}(\lsp{\sigma}W^1,W)}
   \otimes
   \bigotimes_{i\in I_0}
   f_i^{\otimes \rank C^\bullet_{i,1}(\lsp{\sigma}W^1,W)}
\\
   & \cong \bigotimes_{i\in I_1}
   f_i^{\otimes \dim W^1_i}
   \otimes
   \bigotimes_{i\in I_0}
   f_i^{\otimes \dim W^2_i}
  \end{split}
\end{equation*}
by \propref{prop:fac}.
\begin{NB2}
  \begin{equation*}
    \rank C^\bullet_{i,\vq^3}(\lsp{\sigma}W^1,W)
    = \dim W_i - \dim \lsp{\sigma}W^1_i 
    = \dim W_i + \dim W^1_i 
    - \sum_j a_{ij} \dim W^1_j
    = \dim W^1_i
  \end{equation*}
from the computation above.
\end{NB2}

When we only assume $0\to W^1\to W\to W^2\to 0$ is an almost splitting
sequence for a full subquiver $\cQ'\subset \cQ$, the above computation
is true only if $i\in I'$, a vertex in the subquiver.
For a vertex $i\notin I'$, we have
\begin{equation*}
  \rank C^\bullet_{i,\vq} \text{ or }
  \rank C^\bullet_{i,\vq^2}
  = \sum_j a_{ij} \dim \lsp{\sigma}W^1_j,
\qquad
  \rank C^\bullet_{i,\vq^3} \text{ or }
  \rank C^\bullet_{i,1}
  = 0
\end{equation*}
from the form of the complexes above. In summary we have

\begin{Proposition}
  \begin{equation*}
       L(C^\bullet(\lsp{\sigma}W^1,W))
       \cong
       \bigotimes_{i\in I_1\cap I'}
   f_i^{\otimes \dim W^1_i}
   \otimes
   \bigotimes_{i\in I_0\cap I'}
   f_i^{\otimes \dim W^2_i}
   \otimes
   \bigotimes_{i\notin I'}
   (x_i')^{\otimes  \sum_j a_{ij} \dim \lsp{\sigma}W^1_j}.
  \end{equation*}
\end{Proposition}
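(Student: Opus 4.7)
The plan is to read off $L(C^\bullet(\lsp{\sigma}W^1,W))$ directly from the dimensions of the graded vector space $C^\bullet(\lsp{\sigma}W^1,W)$ using \eqref{eq:C(V,W)}, and then to decompose this simple module into its elementary factors by iterating \propref{prop:fac}. Concretely, \propref{prop:fac} peels off a factor $f_i^{\otimes \min(\dim W_i,\dim W_{i'})}$ for each $i\in I$ and leaves an almost simple module whose underlying graded space $\lsp{\varphi}(\cdot)$ is supported at a single one of the two grading positions $\vq^{3\xi_i}$ or $\vq^{2-\xi_i}$ above each $i$. Since the graded vector space $C^\bullet(\lsp{\sigma}W^1,W)$ we care about only involves those two positions (the other terms of \eqref{eq:taut_cpx_fixed} are trivially zero under our hypotheses, as seen from the four diagrams displayed in \subsecref{subsec:compute}), this reduction is complete and produces only KR factors $f_i$ and elementary simples of the form $x_i'$.

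The key computational input, already carried out in \subsecref{subsec:compute}, is the comparison of the two ranks at each vertex. For $i\in I_1\cap I'$, the almost split character of $0\to W^1\to W\to W^2\to 0$ (restricted to the full subquiver $\cQ'$) together with the Serre-duality identity $\chi(W^2,S_i)+\chi(S_i,\tau W^2)=0$ forces $\rank C^\bullet_{i,\vq^3}=\rank C^\bullet_{i,\vq}$, and the common value is $\dim W^1_i$. Similarly, for $i\in I_0\cap I'$ one obtains $\rank C^\bullet_{i,1}=\rank C^\bullet_{i,\vq^2}$ with common value $\dim W^2_i$. Consequently, at each such $i$, the graded vector space has equal dimensions at its two relevant positions, so \propref{prop:fac} produces exactly a factor $f_i$ to that power and leaves no residue: this yields the first two tensor products in the statement. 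For $i\notin I'$, the inspection of the complex diagrams shows that one of the two ranks is zero while the other equals $\sum_j a_{ij}\dim \lsp{\sigma}W^1_j$; hence $\min(\cdot,\cdot)=0$, no KR factor is produced, and the residue is a multiple of the one-dimensional graded space corresponding to the surviving grading position, which is precisely $x_i'$ (respectively identified via \eqref{eq:name} and the discussion after it).

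The main obstacle I anticipate is purely bookkeeping: one has to verify unambiguously that for $i\notin I'$ the single surviving rank really sits at the grading position corresponding to $x_i'$ and not at the position corresponding to $x_i$. This comes down to carefully matching the four complexes $C^\bullet_{i,\vq^n}$ for $n=0,1,2,3$ displayed in \subsecref{subsec:compute} against the naming convention \eqref{eq:name} together with the identification $W_i=W_i(\vq^{3\xi_i})$, $W_{i'}=W_i(\vq^{2-\xi_i})$. Once this identification is made, the assertion is immediate from the tensor factorization in \propref{prop:fac}. No further geometric input is needed, and in particular the hypothesis that the sequence is almost split only for the subquiver $\cQ'$ enters solely through the rank computation at vertices $i\in I'$.
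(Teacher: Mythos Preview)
Your proposal is correct and follows essentially the same route as the paper: the argument in \subsecref{subsec:compute} is precisely the rank computation of the four complexes $C^\bullet_{i,\vq^n}$, the Serre-duality identity \eqref{eq:Serre_dual} forcing the two ranks at each $i\in I'$ to coincide, and then an application of \propref{prop:fac} to peel off the KR factors, with the residue for $i\notin I'$ read directly from the vanishing of one of the two ranks.

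One remark on the bookkeeping you rightly flag as the main obstacle. Carrying it out, for $i\notin I'$ the nonzero rank sits at $C^\bullet_{i,\vq}$ (if $i\in I_1$) or $C^\bullet_{i,\vq^2}$ (if $i\in I_0$), i.e.\ at the grading $\vq^{2-\xi_i}$, which under the identification at the start of \secref{sec:hom} is the position $W_{i'}$. By \eqref{eq:name} this gives the factor $x_i=L(S_{i'})$, \emph{not} $x_i'=L(S_i)$. This is consistent with the explicit monomial computations in the neighbouring propositions of the same subsection (where the factor for $j\notin I'$ is written $x_j$), so the $(x_i')$ in the displayed statement appears to be a slip for $x_i$; the subsection is in any case marked as incomplete in the paper's source. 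Your method is unaffected by this.
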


\subsection{Cluster-tilting sets}

\begin{NB2}
Earlier version:  

Let $n = \# I$. A collection of ($2n$) simple modules 
\[
  \bL = \{ L(W^1),\dots, L(W^n)\} \sqcup \{ f_i\}_{i\in I}
\]
is said to be a {\it cluster-tilting set\/} if the following
conditions are satisfied:
\begin{enumerate}
\setcounter{enumi}{-1}
\item Each $W^k$ satisfies $(\ast_{\ell=1})$.
\item $L(W^k)$'s are distinct simple modules different from any $f_i$.
\item Each $W^k$ is an {\it indecomposable\/} module of the
  decorated quiver.
 
\item Remove $L(W^k)$ from $\bL$ if $L(W^k) = x_i$, i.e.\ $W^k =
  S_{i'}$ the simple module for a new vertex $i'\in I_\fr$. Delete the
  corresponding vertex $i$ from the principal part of the decorated
  quiver $(I,\Omega)$. (It is an {\it old\/} vertex.)

\item Remove also $f_i$ from $\bL$. Let $\lsp{\varphi}{\bL}$ be the
  resulting collection. Let $\lsp{\varphi}\cQ =
  (\lsp{\varphi}{I},\lsp{\varphi}{\Omega})$ be the resulting quiver:
  \begin{equation*}
    \begin{split}
    & \lsp{\varphi}{\bL} = \bL\setminus \bigcup \{ f_i\}_{i\in I}
    \setminus
    \{ L(W^k) \mid \text{$L(W^k) = x_i$ for some $i$}\},
\\
   & \lsp{\varphi}{I} = I \setminus \{ i \mid 
   \text{$x_i = L(W^k)$ for some $k$}\},
\quad
   \lsp{\varphi}\Omega = \Omega\setminus
   \{ h \mid \text{$\vout(h)$ or $\vin(h)\in I\setminus\lsp{\varphi}{I}$}\}
    \end{split}
  \end{equation*}

\item If $L(W^k)\in \lsp{\varphi}{\bL}$, then $W^k_{i'} = 0$
  for all new vertexes $i'$ and $W^k_i = 0$ if $i\in I\setminus
  \lsp{\varphi}{I}$. Therefore $W^k$ is a module of the quiver
  $\lsp{\varphi}{(I,\Omega)}$.

\item $\bigoplus_{L(W^k)\in\lsp{\varphi}{\bL}} W^k$ is a tilting module as
  a representation of $\lsp{\varphi}(I,\Omega)$.
\end{enumerate}
\end{NB2}

Let $n = \# I$. A collection of $n$ indecomposable representations
of the modified quiver $\lsp{\sigma}{\!\widetilde\cQ}$ plus
$n$ KR modules
\[
  \bL = \{ \lsp{\bar{\sigma}}W^1,\dots, \lsp{\bar{\sigma}}W^n\}
  \sqcup \{ f_i\}_{i\in I}
\]
is said to be a {\it cluster-tilting set\/} if the following
conditions are satisfied:
\begin{enumerate}
\setcounter{enumi}{-1}
\item Each $\lsp{\bar{\sigma}}W^k$ satisfies $(\ast_{\ell=1})$.
\item $\lsp{\bar{\sigma}}W^k$'s are pairwise nonisomorphic.
 
\item Remove $\lsp{\bar{\sigma}}W^k$ from $\bL$ if $\lsp{\bar{\sigma}}W^k =
  S_{i'}$. Delete the corresponding vertex $i$ from the principal part
  of the decorated quiver $(I,\Omega)$. (It is an {\it old\/} vertex.)

\item Let $\lsp{\psi}{\bL}$ be the resulting collection. Let
  $\lsp{\psi}\cQ = (\lsp{\psi}{I},\lsp{\psi}{\Omega})$ be the resulting
  principal part of the quiver:
  \begin{equation*}
    \begin{split}
    & \lsp{\psi}{\bL} = \bL\setminus
    \{ \lsp{\bar{\sigma}}W^k \mid \text{$\lsp{\bar{\sigma}}W^k = S_{i'}$ for some
      $i$}\},
    \\
    & \lsp{\psi}{I} = I \setminus \{ i \mid \text{$S_{i'} =
      \lsp{\bar{\sigma}}W^k$ for some $k$}\}, \quad \lsp{\psi}\Omega =
    \Omega\setminus \{ h \mid \text{$\vout(h)$ or $\vin(h)\in
      I\setminus\lsp{\psi}{I}$}\}.
    \end{split}
  \end{equation*}

\item If $\lsp{\bar{\sigma}}W^k\in \lsp{\psi}{\bL}$, then
  $\lsp{\bar{\sigma}}W^k_{i'} = 0$ for all new vertexes $i'$ and
  $\lsp{\bar{\sigma}}W^k_i = 0$ if $i\in I\setminus
  \lsp{\psi}{I}$. Therefore $\lsp{\bar{\sigma}}W^k$ is a module of the
  quiver $\lsp{\psi}{(I,\Omega)}$.

\item $\bigoplus_{\lsp{\bar{\sigma}}W^k\in\lsp{\psi}{\bL}} \lsp{\bar{\sigma}}W^k$
  is a tilting module as a representation of $\lsp{\psi}(I,\Omega)$.
  
\end{enumerate}
Note that $\# (\lsp{\psi}{\bL}) = \# (\lsp{\psi}{I})$. Therefore
$\bigoplus_{\lsp{\bar{\sigma}}W^k\in\lsp{\psi}{\bL}}
\lsp{\bar{\sigma}}W^k$ is tilting if and only if
$\ext^1(\lsp{\bar{\sigma}}W^k,\lsp{\bar{\sigma}}W^l) = 0$ for any
$\lsp{\bar{\sigma}}W^k,\lsp{\bar{\sigma}}W^l\in \bL$ (including the
case $k=l$).

If we identify $\lsp{\bar{\sigma}}W^k = S_{i'}\notin \lsp{\psi}\bL$
with $I_i[1]$ the shift of the indecomposable injective module
associated corresponding to the vertex $i$, the above definition is
nothing but the definition of a cluster-tilting set for the cluster
category \cite{BMRRT}.

Here $\bar\sigma$ is not a functor, and $\lsp{\bar{\sigma}}W^k$ is
just a notation for an indecomposable representation of the modified
quiver $\lsp{\sigma}{\!\widetilde\cQ}$. We choose a representation
$W^k$ of the decorated quiver $\widetilde\cQ$ by the following rule:
\begin{aenume}
\item if $\lsp{\bar{\sigma}}W^k = S_{i'}$ with $i\in I_1$, then we put
  $W^k = S_i$;
\item if $\lsp{\bar{\sigma}}W^k = S_{i}$ with $i\in I_1$, then we put
  $W^k = S_{i'}$;
\item otherwise we take $W^k$ so that $\lsp{\sigma}W^k =
  \lsp{\bar{\sigma}}W^k$.
\end{aenume}
We have $\lsp{\sigma}W^k = 0$ in case (a), and $\lsp{\sigma}W^k = F^i$
(representation corresponding to $f_i$) in case (b). They are
different from $\lsp{\bar\sigma}W^k$. But the quiver Grassmannian
varieties $\Gr_V(\lsp{\sigma}W^k)$, $\Gr_V(\lsp{\bar\sigma}W^k)$ are
isomorphic (they are either points or empty sets), so we have no
problem.
Note that $W^k$ are pairwise nonisomorphic also.

Here is the table of the exceptional rules including the corresponding
cluster variables.

\begin{center}
\begin{tabular}{c|c|c|c}
cluster variables & $W$ & $\lsp{\bar{\sigma}}W$ & $\lsp{\sigma}W$
\\
\hline
$z_i = x_i'$ ($i\in I_1$) & $S_{i}$ & $S_{i'}$ & 0
\\
$x_i$ ($i\in I_1$) & $S_{i'}$ & $S_i$ & $F^i$
\\
$f_i$ ($i\in I_1$) & $F_{i}$ & $\times$ & $S_{i'}$
\end{tabular}
\end{center}
The bottom line is just a remark. We do not assign a representation of
$\lsp{\sigma}{\!\widetilde\cQ}$ to $f_i$.

We identify $\bL$ with 
$\{ L(W^1),\dots, L(W^n)\} \sqcup \{ f_i\}_{i\in I}$ a collection
of $(2n)$ simple modules.

\begin{Proposition}
  If $\bL = \{ L(W^1),\dots, L(W^n)\} \sqcup \{ f_i\}_{i\in I}$ is a
  cluster-tilting set, the tensor product
\begin{equation*}
   L(W^1)^{\otimes m_1}\otimes\cdots\otimes
   L(W^n)^{\otimes m_n} \otimes \bigotimes_{i\in I} f_i^{\otimes n_i}
\end{equation*}
is simple for any $m_k, n_i\in \Z_{\ge 0}$.
\end{Proposition}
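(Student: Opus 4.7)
The strategy is to reduce to Proposition~\ref{prop:monomial}, which already handles tensor products with multiplicities: the proof of that proposition shows that whenever $w_k = w_l$ the required vanishing $\ext^1(W^k,W^l)=0$ is automatic because $\dim W^k$ is a real Schur root. Consequently, once the collection
\[
\{L(W^1),\dots,L(W^n)\}\sqcup\{f_i\}_{i\in I}
\]
is identified with a cluster of $\mathscr A(\widetilde{\bB})$, the given tensor product is a cluster monomial, and therefore simple by Proposition~\ref{prop:monomial}.

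First, I would establish the correspondence at the level of individual elements. Condition~(5) in the definition of cluster-tilting set implies in particular $\ext^1(\lsp{\bar\sigma}W^k,\lsp{\bar\sigma}W^k)=0$, so each $\dim\lsp{\bar\sigma}W^k$ is a real Schur root of $\lsp{\sigma}{\!\widetilde\cQ}$. Applying Proposition~\ref{prop:variable} (or directly the identifications in the table following the cluster-tilting set definition for the exceptional cases $\lsp{\bar\sigma}W^k = S_{i'}$ and $\lsp{\bar\sigma}W^k = S_i$), we see that each $L(W^k)$ is a cluster variable of $\mathscr A(\widetilde{\bB})$, while each $f_i$ is by definition a frozen cluster variable.

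Next comes the main step: showing that the cluster variables attached to $\bL$ form a single cluster. Here I would invoke the theory reviewed in \S\ref{subsec:tilt} and \S\ref{subsec:cc}. The definition of cluster-tilting set used in this paper was set up precisely so that, after identifying the vertices $S_{i'}\in\bL$ with the shifted indecomposable injectives $I_i[1]$ in the cluster category (as in Remark~\ref{rem:derived}), it coincides with the standard notion from \cite{BMRRT}. The Caldero-Chapoton/Caldero-Keller formula then identifies the associated cluster character values with cluster variables, and identifies cluster-tilting sets with clusters of $\mathscr A(\bB)$ for the principal part. Adjoining the frozen $f_i$'s yields a cluster of $\mathscr A(\widetilde{\bB})$.

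The main obstacle will be the book-keeping needed to match three pictures simultaneously: (i) the $\bx$-quiver initial seed of $\mathscr A(\widetilde{\bB})$; (ii) the $\mathbf z$-quiver seed used in the proof of Proposition~\ref{prop:variable}, in terms of which the cluster-character formula for $L(W^k)$ was derived; and (iii) the quiver $\lsp{\psi}\cQ$ obtained by deleting vertices corresponding to $\lsp{\bar\sigma}W^k = S_{i'}$. One must verify that the deletion/inclusion operations in the definition of cluster-tilting set correspond to the standard reduction of cluster combinatorics when certain cluster variables coincide with initial-seed variables, so that a cluster-tilting set for $\lsp{\psi}\cQ$ indeed extends uniquely to a full cluster of $\mathscr A(\widetilde{\bB})$. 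Once this identification is verified, all simple factors of the tensor product lie in a common cluster, and Proposition~\ref{prop:monomial} gives simplicity for arbitrary multiplicities.
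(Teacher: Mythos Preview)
Your approach is correct but genuinely different from the paper's. You reduce to Proposition~\ref{prop:monomial} by invoking the cluster-category bijection between cluster-tilting objects and clusters (from \cite{BMRRT,Caldero-Keller}), so that the given collection is literally a cluster and the tensor product is a cluster monomial. This is sound: the paper itself remarks that its cluster-tilting set, after the identification $S_{i'}\leftrightarrow I_i[1]$, coincides with the standard notion, and the required bijection with clusters is established in the cited literature.

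The paper instead argues directly, without going through Proposition~\ref{prop:monomial} or the cluster bijection. It sets $W=(W^1)^{\oplus m_1}\oplus\cdots\oplus(W^n)^{\oplus m_n}\oplus\bigoplus_i(F^i)^{\oplus n_i}$ and shows that $L(W)$ factors exactly as claimed. First it checks $\min(\dim W_i,\dim W_{i'})=n_i$ for every $i$, so Proposition~\ref{prop:fac} peels off $\bigotimes_i f_i^{\otimes n_i}$ and leaves $L(\lsp{\varphi}W)$ with $\lsp{\varphi}W=\bigoplus_k(W^k)^{\oplus m_k}$. Then it verifies, using conditions~(4),(5) on the cluster-tilting set, that $\lsp{\varphi}W=\bigoplus_k(W^k)^{\oplus m_k}$ is the canonical decomposition, and Proposition~\ref{prop:canfac} yields the remaining factorization. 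Your route is more conceptual and shorter once the external bijection is granted; the paper's route is more self-contained, using only the factorization machinery already developed in \S\ref{sec:prime}, and avoids the book-keeping you correctly flag as the main obstacle in matching the $\bx$-quiver, $\mathbf z$-quiver, and $\lsp{\psi}\cQ$ pictures.
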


\begin{proof}
  Let $F^i$ be the $(I\sqcup I_\fr)$-graded vector space corresponding
  to the Kirillov-Reshetikhin module $f_i$. We consider
  \begin{equation*}
    W \defeq (W^1)^{\oplus m_1}\oplus\cdots\oplus (W^n)^{\oplus m_n}
    \oplus \bigoplus_{i\in I} (F^i)^{\oplus n_i}
  \end{equation*}
  and the corresponding simple module $L(W)$. Let us apply
  \propref{prop:fac} to $L(W)$. 
  We fix a vertex $i\in I$ and want to show $\min(\dim W_i, \dim
  W_{i'}) = n_i$. We separate the situation into two cases:

  Case 1. $\bL$ contains $S_{i'}$, say $\lsp{\bar{\sigma}}W^1$.

  All other $\lsp{\bar{\sigma}}W^k$ ($k\in \{2,\dots,n\}$) do not have
  entries at $i$ and $i'$ from the condition~(4). Thus $\dim
  \lsp{\bar{\sigma}}W_i = n_i$, $\dim \lsp{\bar{\sigma}}W_{i'} = n_i + m_1$.
  If $i\in I_0$, we have $\dim W_i = n_i$, $\dim W_{i'} = n_i + m_1$.
  Therefore $\min(\dim W_i, \dim W_{i'}) = n_i$.
  If $i\in I_1$, we have $\dim W_i \ge n_i + m_1$, $\dim W_{i'} = n_i$.
  We also have $\min(\dim W_i, \dim W_{i'}) = n_i$.

  Case 2. $\bL$ does not contain $z_i$

  We have $\dim W_{i'} = n_i$ and $\dim
  W_i \ge n_i$. Therefore we have $\min(\dim W_i, \dim W_{i'}) = n_i$.

Thus \propref{prop:fac} implies
  \begin{equation*}
    L(W)\cong L(\lsp{\varphi}{W})\otimes
    \bigotimes_{i\in I} f_i^{\otimes n_i}
    ,
\qquad
   \lsp{\varphi}{W} = \bigoplus_{k=1}^n (W^k)^{\oplus m_k}.
  \end{equation*}

  In case 1 with $i\in I_0$, we have $\dim \lsp{\varphi}W_i = 0$,
  $\dim \lsp{\varphi}W_{i'} = m_1$. Then we have the factor
  $S_{i'}^{\oplus m_1}$ in the canonical decomposition of
  $\lsp{\varphi}W$. Therefore $x_i^{\otimes m_1} = L(W^1)^{\otimes
    m_1}$ factors out from $L(\lsp{\varphi}{W})$ by
  \propref{prop:canfac}.

  In case 1 with $i\in I_1$, we have $\Ext^1_{\cQ}(S_i, W^k) = 0$
  ($k\neq 1$) as $\lsp{\sigma}W^k_i = \lsp{\bar\sigma}W^k_i = 0$ by
  the condition~(5).
  \begin{NB2}
    A nonsplit exact sequence
\(
    0 \to W^k \to W \to S_i \to 0
\)
induces
\(
    0  \to \lsp{\sigma} W\to \lsp{\sigma}W^k \to S_i \to 0.
\)
But the third arrow must be zero as $\lsp{\sigma}W^k_i = 0$.
  \end{NB2}%
  We also have $\Ext^1_{\cQ}(W^k,S_i) = 0$ since $i$ is
  sink. Therefore $W = S_i^{\oplus m_1}\oplus \bigoplus_{k\ge 2}
  W_k^{\oplus m_k} = W_1^{\oplus m_1}\oplus \bigoplus_{k\ge 2}
  W_k^{\oplus m_k}$ is the canonical decomposition. Therefore
  $z_i^{\otimes m_1} = x_{i'}^{\otimes m_1} = L(W_1)^{\otimes m_1}$
  factors out from $L(\lsp{\varphi}W)$ also in this case.

  The remaining part also factors as desired, since
  $\bigoplus_{L(W^k)\in\lsp{\psi}{\bL}} \lsp{\bar\sigma}W^k$ is a
  tilting module.
  More precisely we need to have $\bigoplus_{L(W^k)\in\lsp{\psi}{\bL}}
  \lsp{\sigma}W^k$ to be a tilting module, but $\lsp{\sigma}W^k$ and
  $\lsp{\bar\sigma}W^k$ possibly differ only by $S_i$ and $F_i$, as we
  already factored $z_i = x_i'$. (See the table.) Then there is no
  difference for the condition.
\end{proof}

The initial cluster-tilting set $\bL$ is the collection $\bL = \{
S_{i'}, f_i\}_{i\in I}$ or $\{ z_i, f_i\}_{i\in I}$. In this case
$\lsp{\psi}{I} = \emptyset$ and all conditions are satisfied.

For $k\in \{1,\dots, n\}$ we define the {\it mutation\/} $\mu_k(\bL)$
of $\bL$ in direction $k$ as follows:
\begin{enumerate}
\item Suppose $\lsp{\bar\sigma}W^k = S_{i'}$ for $i\in I$. We return
  back the vertex $i$ and all arrows incident to $i$ to the quiver
  $\lsp{\psi}{(I,\Omega)}$. Let $\lsp{+\varphi}(I,\Omega)$ be the
  resulting quiver. Since $\bigoplus_{W^k\in\lsp{\psi}{\bL}}
  \lsp{\bar\sigma}W^k$ is an almost tilting non-sincere module as a
  representation of $\lsp{+\varphi}(I,\Omega)$, we can add the unique
  indecomposable $\lsp{\bar\sigma*}W^k$ to
  $\bigoplus_{W^k\in\lsp{\psi}{\bL}} \lsp{\bar\sigma}W^k$ to get a
  tilting module.

\item Next suppose $\lsp{\bar\sigma}W^k$ is not $S_{i'}$ for any $i\in
  I$. We consider an almost tilting module
  $\bigoplus_{W^l\in\lsp{\psi}{\bL}, l\neq k} \lsp{\bar\sigma}W^l$.
  \begin{aenume}
  \item If it is sincere, there is another indecomposable
  module $\lsp{\bar\sigma*}W^k\neq \lsp{\bar\sigma}W^k$ such that 
  $\lsp{\bar\sigma*}W^k\oplus \bigoplus_{W^l\in\lsp{\psi}{\bL},
    l\neq k} \lsp{\bar\sigma}W^l$ is a tilting module.

\item If it is not sincere, there exists the unique simple module
  $S_i$, not appearing in the composition factors of
  $\bigoplus_{W^l\in\lsp{\psi}{\bL}, l\neq k} \lsp{\bar\sigma}W^l$. Then we set
  $\lsp{\bar\sigma*}W^k = S_{i'}$.
  \end{aenume}
\end{enumerate}
Let
  \begin{equation*}
     \mu_k(\bL) \defeq \bL \cup \{\lsp{\bar\sigma*}W^k\}
     \setminus \{ \lsp{\bar\sigma}W^k \}.
  \end{equation*}

  In all cases $\mu_k(\bL)$ is again a cluster-tilting set. The
  resulting quiver $\lsp{\psi}{(I,\Omega)}$ for $\mu_k(\bL)$ is
  $\lsp{+\varphi}(I,\Omega)$ in case (1), $\lsp{\psi}{(I,\Omega)}$
  in case (2a), and the quiver obtained from
  $\lsp{\psi}{(I,\Omega)}$ by removing $i$ and all incident arrows
  in case (2b).

We can iterate this procedure and obtain new clusters starting from
the initial cluster $\bL = \{x_i,f_i\}_{i\in I}$.

****** formula for $\widetilde\bB$ *********

Suppose that we have arrows $i\to k\to l$.
Since there is no $2$-cycles, there is no arrow $l\to k$.
We consider 
\begin{equation*}
   0\to \slW\to B_l \to \lW\to 0,
\qquad
   0\to \skW\to B_k \to \kW\to 0
\end{equation*}
We suppose that $\kW$ appears in $B_l$ with multiplicity $m > 0$ and
write $B_l = D_l\oplus {\kW}^{\oplus m}$ so that $\kW$ is not a direct
summand of $D_l$.

Starting from the second row and the third column, we have the
commutative diagram exact in columns and rows:
\begin{equation*}
  \begin{CD}
   @. 0 @. 0 @. @.
\\
   @.    @AAA    @AAA @. @.
\\
   0 @>>> \slW @>>> D_l \oplus {\kW}^{\oplus m} @>>> \lW @>>> 0
\\
   @.    @AAA      @AAA @| @. 
\\
   0 @>>> X @>>> D_l \oplus B_k^{\oplus m} @>>> \lW @>>> 0
\\
   @.    @AAA    @AAA @. @.
\\
   @.    {\skW}^{\oplus m} @= {\skW}^{\oplus m} @. @.
\\
   @.    @AAA    @AAA @. @.
\\
   @. 0 @. 0 @. @.
\end{CD}
\end{equation*}

Let $T\defeq \bigoplus_{j} \lsp{\bar\sigma}W^j$ and $T' \defeq
\bigoplus_{j\neq k,l} \lsp{\bar\sigma}W^j\oplus
\lsp{\bar\sigma*}W^k$. We claim $D_l \oplus B_k^{\oplus m}$ is in
$\add T'$. From the definition $D_l$, $B_k\in \add T$. From the
definition of $D_l$, it does not contain either $\kW$ nor $\lW$ as a
direct summand. Moreover $B_k$ does not contain $\lW$ since there is
no arrow $l\to k$. It does not contain $\kW$ by definition.
\begin{NB2}
  We remove $\kW$ from $T$. Then $D_l \oplus B_k^{\oplus m}\in \add
  (T\ominus \kW)$. Hence $D_l \oplus B_k^{\oplus m}\in \add
  ((T\ominus\kW)\oplus \skW)$. Next we remove $\lW$ from 
  $((T\ominus\kW)\oplus \skW)$.
\end{NB2}

We next claim $\Ext^1(X,T') = 0 = \Ext^1(T',X)$.
\end{NB}

\appendix
\section{Odd cohomology vanishing of quiver Grassmannians}
\label{sec:app}

In this appendix, we generalize our proof of the odd cohomology group
vanishing of the quiver Grassmannian of submodules of a rigid module
of a bipartite quiver (\propref{lem:real}(2)) to an acyclic one. Thus
we recover the main result of Caldero-Reineke \cite{caldero-reineke}.
It implies the positivity conjecture for an acyclic cluster algebra
(see \propref{prop:HL}) for the special case of an {\it initial\/}
seed.

After an earlier version of this article was posted on the arXiv, Qin
proved the quantum version of the cluster character formula for an
acyclic cluster algebra \cite{FQ}. As an application, he observed the
odd cohomology group vanishing of quiver Grassmannians. Our proof is
different from his.

Let us first fix the notation.
Let $\cQ = (I,\Omega)$ be a quiver and $W$ be an $I$-graded vector
space. We define
\begin{equation*}
  \bE_W = \bigoplus_{h\in\Omega}\Hom(W_{\vout(h)},W_{\vin(h)}).
\end{equation*}
Its dual space is
\begin{equation*}
  \bE_W^* = \bigoplus_{h\in\Omega}\Hom(W_{\vin(h)},W_{\vout(h)})
  = \bigoplus_{\overline{h}\in\overline{\Omega}}\Hom(W_{\vout(\overline{h})},
  W_{\vin(\overline{h})}).
\end{equation*}
Those are acted by $G_W = \prod_i \GL(W_i)$.

Let $\nu\in\Z_{\ge 0}^I$. Let $\mathcal F(\nu,W)$ be the product of
Grassmanian varieties $\Gr(\nu_i,W_i)$ parametrizing collections
of vector subspaces $X_i\subset W_i$ such that $\dim X_i = \nu_i$.
Let $\Tilde{\mathcal F}(\nu,W)$ be the variety of all pairs
$(\bigoplus \by_h,X)$ where $\bigoplus\by_h \in \bE_W$ and
$X\in\mathcal F(\nu,W)$ such that
\begin{equation*}
   \by_h(X_{\vout(h)}) = 0, \quad
   \by_h(W_{\vout(h)}) \subset X_{\vin(h)}
\end{equation*}
for all $h\in\Omega$. This is a vector bundle over $\mathcal
F(\nu,W)$. Let $\pi\colon \Tilde{\mathcal F}(\nu,W)\to
\bE_W$ be the projection.

Note that $\Tilde{\mathcal F}(\nu,W)$ is a subbundle of the trivial
bundle $\mathcal F(\nu,W)\times \bE_W$.
Let $\Tilde{\mathcal F}(\nu,W)^\perp$ be its annihilator in the dual
trivial bundle $\mathcal F(\nu,W)\times \bE_W^*$ and let
$\pi^\perp\colon\Tilde{\mathcal F}(\nu,W)^\perp\to\bE_W^*$ be the
projection.
More concretely, $\Tilde{\mathcal F}(\nu,W)^\perp$ is the variety of
all pairs $(\bigoplus \by_{\overline{h}}^*,X)$ where $\bigoplus
\by_{\overline{h}}^*\in\bE_W^*$ and $X\in\mathcal F(\nu,W)$ such that
\begin{equation*}
  \by_{\overline{h}}^*(X_{\vout(\overline{h})})\subset X_{\vin(\overline{h})}
\end{equation*}
for all $\overline{h}\in\overline\Omega$.
Therefore $(\pi^\perp)^{-1}(\bigoplus\by_{\overline{h}}^*)$ is the
quiver Grassmannian associated with the quiver representation
$\bigoplus\by_{\overline{h}}^*$.

\begin{Theorem}
  Assume that $\bE_W^*$ contains an open $G_W$-orbit and let
  $\bigoplus\by^*_{\overline{h}}\in\bE_W^*$ be a point in the
  orbit. Then the quiver Grassmannian
  $(\pi^\perp)^{-1}(\bigoplus\by_{\overline{h}}^*)$ has no odd
  cohomology.
\end{Theorem}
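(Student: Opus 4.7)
The argument will mirror the proof of \propref{lem:real}(2), but replace the input from \remref{rem:negative} (which was specific to the bipartite graded quiver variety setting) by Lusztig's purity theorem for the canonical basis perverse sheaves on $\bE_W^*$.

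First I would observe that $\pi^\perp$ is proper and $\Tilde{\mathcal F}(\nu,W)^\perp$ is smooth, the former because $\Tilde{\mathcal F}(\nu,W)^\perp$ is a closed subvariety of $\mathcal F(\nu,W)\times\bE_W^*$ with $\mathcal F(\nu,W)$ projective, and the latter because $\Tilde{\mathcal F}(\nu,W)^\perp$ is the annihilator of a vector subbundle of the trivial bundle. By the decomposition theorem,
\[
\pi^\perp_!\bigl(1_{\Tilde{\mathcal F}(\nu,W)^\perp}[\dim \Tilde{\mathcal F}(\nu,W)^\perp]\bigr)
 \;\cong\; \bigoplus_{j} L_j[n_j]
\]
is a direct sum of shifts of simple $G_W$-equivariant perverse sheaves on $\bE_W^*$.

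Next I would identify $\bE_W^*$ with the representation space of the opposite quiver $\cQ^{\mathrm{op}}=(I,\overline{\Omega})$, under which $\pi^\perp$ becomes a special case of Lusztig's morphism $\pi_{\boldsymbol\nu}\colon\Tilde{\mathcal F}_{\boldsymbol\nu}\to\bE_{\mathbf V}$ of \cite[\S9.1.3]{Lu-book} (for the one-step flag type $\boldsymbol\nu$). The opposite quiver $\cQ^{\mathrm{op}}$ is again acyclic, hence in particular has no edge loops, so Lusztig's theory applies: the simple constituents $L_j$ lie in Lusztig's family $\mathscr P$ defining the canonical basis, and by his purity theorem these are pure of weight zero with stalks concentrated in cohomological degrees of a single parity; correspondingly, all shifts $n_j$ appearing in the decomposition have the same parity, determined by $\dim\Tilde{\mathcal F}(\nu,W)^\perp$.

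Now fix the given point $y=\bigoplus\by^*_{\overline h}$ in the open $G_W$-orbit $\mathcal O\subset\bE_W^*$. Since each $L_j$ is $G_W$-equivariant, its stalk $(L_j)_y$ vanishes unless $\mathrm{supp}(L_j)\supset\mathcal O$, i.e.\ unless $L_j$ is the IC extension of a $G_W$-equivariant local system on $\mathcal O$. For such $L_j$, the stalk $(L_j)_y$ is concentrated in a single cohomological degree (namely $-\dim\bE_W^*$), so by the parity of the $n_j$, the stalk of the whole pushforward at $y$ is concentrated in degrees of a single parity. Since this stalk computes (up to a shift by $\dim\Tilde{\mathcal F}(\nu,W)^\perp$) the cohomology of the fiber $(\pi^\perp)^{-1}(y)$, and $H^0$ of that fiber is nonzero, the parity is even, and all odd cohomology groups vanish.

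The main technical obstacle is a clean invocation of Lusztig's purity for the perverse sheaves on $\bE_W^*$ so as to extract the single-parity stalk property; once one verifies that $\pi^\perp$ is literally Lusztig's morphism for the opposite (loop-free) quiver, this is a direct consequence of \cite[\S8.1.6]{Lu-book} and the semisimplicity of the decomposition theorem. A secondary point is that the argument gives slightly more than claimed: the Poincar\'e polynomial of $(\pi^\perp)^{-1}(y)$ is, up to an overall monomial in $t$, a nonnegative integer linear combination of ranks $r_W(L_j)$ of the local systems underlying those $L_j$ with full support, directly generalizing \eqref{eq:Poincare}.
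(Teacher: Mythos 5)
Your setup is sound: the decomposition theorem, the $G_W$-equivariance forcing only $1_{\bE_W^*}[\dim\bE_W^*]$ among the full-support constituents (the stabilizer of a point in the open orbit is $\mathrm{Aut}(M)$, an open subset of $\mathrm{End}(M)$, hence connected), and the observation that only those constituents contribute to the stalk at $y$. That much is shared with the paper. Where the argument breaks down is the step ``by his purity theorem these are pure of weight zero with stalks concentrated in cohomological degrees of a single parity; correspondingly, all shifts $n_j$ appearing in the decomposition have the same parity.'' Weight purity (which is all that \S8.1.6 of \cite{Lu-book} — a general review of perverse-sheaf and decomposition-theorem facts — can give you) says nothing about cohomological \emph{parity}: a pure-of-weight-zero complex on $\bE_W^*$ can perfectly well have stalks in both parities. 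Moreover, even granting that each individual $L_j$ has stalks concentrated in a single parity, this does \emph{not} determine the parity of the shifts $n_j$ in the abstract decomposition $\bigoplus_j L_j[n_j]$; the ``correspondingly'' has no content. To pin down the parity of the $n_j$ you must constrain the stalks of the total pushforward somewhere, and this is exactly the nontrivial input.

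The paper supplies precisely that missing input, and in a way that your sketch never uses: it Fourier--Sato--Deligne transforms the pushforward back to $\pi_!(1_{\Tilde{\mathcal F}(\nu,W)}[\dim])$ on $\bE_W$, where the fiber of $\pi$ over $0$ is an explicit product of Grassmannians and hence has no odd cohomology; the multiplicity spaces $L_{1_{\bE_W^*},d}$ sit as direct summands of that stalk and therefore vanish in the wrong parity. This is not a cosmetic difference — it is the very subtlety flagged by the paper's footnote on Caldero--Reineke: identifying Lusztig's $v$ with $q$ (rather than $-\sqrt{q}$) is equivalent to assuming the parity statement you are trying to prove, so appealing to Lusztig's $\mathbb{F}_q$-point-count positivity or to purity alone is circular. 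If you want to cite Lusztig rather than redo the Fourier step, you must cite his actual parity/``evenness'' result for the pushforwards $L_{\boldsymbol\nu}$ (whose proof in \cite{Lu-book} itself rests on the Fourier--Deligne transform of Chapter~10 together with an explicit Grassmannian-fiber computation for a suitable orientation), not the general purity discussion of Chapter~8. As written, the proposal has a gap at exactly the step where the theorem's content lives.
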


\begin{proof}
  Consider the fiber $\pi^{-1}(\bigoplus\by_h)$ of
  $\pi\colon\Tilde{\mathcal F}(\nu,W)\to \bE_W$.
  From the definition of $\Tilde{\mathcal F}(\nu,W)$, it is equal to
  $X\in\mathcal F(\nu,W)$ such that
  \begin{equation*}
    \sum_{h:\vin(h)= i} \Ima \by_{h} \subset X_i
    \subset \bigcap_{h:\vout(h)=i} \Ker\by_{h}.
  \end{equation*}
  Thus it is isomorphic to the product of the usual Grassmannian
  manifolds of subspaces of $\bigcap_{h:\vout(h)=i}
  \Ker\by_{h}/\sum_{h:\vin(h)= i} \Ima \by_{h}$ of dimension $\nu_i -
  \dim\sum_{h:\vin(h)= i} \Ima \by_{h}$. Thus
  $\pi^{-1}(\bigoplus\by_h)$ has no odd homology.

  In the main body of the article, the central fiber was denoted by
  $\NLa(V,W)$, and its odd cohomology vanishing was mentioned in
  \remref{rem:negative}. The remaining part of the proof is the same
  as in \propref{lem:real}(2). Let us sketch it for the sake of the
  reader.

  We consider the pushforward $\pi^\perp_!(1_{\Tilde{\mathcal
      F}(\nu,W)^\perp}[ \dim\Tilde{\mathcal F}(\nu,W)^\perp])$. By the
  decomposition theorem, it is isomorphic to a finite direct sum
  \begin{equation*}
  \bigoplus_{P,d} L_{P,d}\otimes P[d]
  \end{equation*}
  of various simple perverse sheaves $P$ and $d\in\Z$. Here $L_{P,d}$
  is a finite dimensional vector space. Since $\pi^\perp$ is
  $G_W$-equivariant, all $P$'s appearing above are
  equivariant. Therefore under our assumption, only the constant sheaf
  $1_{\bE_W^*}[\dim \bE_W^*]$ is supported on the whole $\bE_W^*$ and
  all other perverse sheaves $P$ have smaller supports. Taking a fiber
  at $\bigoplus y_{\overline{h}}^*$, we find that the cohomology group
  $H_k((\pi^\perp)^{-1}(\bigoplus\by_{\overline{h}}^*))$ is $L_{P,d}$
  with $P = 1_{\bE_W^*}[\dim \bE_W^*]$ and $d = k + \dim \bE_W^* -
  \dim \Tilde{\mathcal F}^\perp(\nu,W)$.

  We apply the Fourier-Sato-Deligne functor $\Psi$ for perverse
  sheaves on $\bE_W$. As in the proof of \thmref{thm:main}, we have
  \begin{equation*}
    \pi_!(1_{\Tilde{\mathcal F}(\nu,W)}[ \dim\Tilde{\mathcal F}(\nu,W)])
    = \bigoplus_{P,d} L_{P,d}\otimes \Psi(P)[d].
  \end{equation*}
  We also have that $\Psi(1_{\bE_W^*}[\dim \bE_W^*])$ is the
  sky-scraper sheaf $1_{\{0\}}$ at the origin of $\bE_W$. The fiber of
  $\pi_!(1_{\Tilde{\mathcal F}(\nu,W)}[ \dim\Tilde{\mathcal
    F}(\nu,W)])$ at $0\in \bE_W$ gives the homology group of
  $\pi^{-1}(0)$ which vanishes in odd degree as we have already
  observed.
  Therefore $L_{P,d}$ for $P = 1_{\bE_W^*}[\dim \bE_W^*]$ vanishes if
  $d+\dim\Tilde{\mathcal F}(\nu,W)$ is odd. Thus we have the assertion.
\end{proof}

The odd homology vanishing of the fiber over $\bigoplus \by_h = 0$ is
enough for the above argument. We give an analysis of arbitrary cases
to show that any fiber is isomorphic to the fiber of $0$ for a
different choice of $\nu$, $W$. In the main body of this article, this
is a consequence of \thmref{thm:slice}.
  
We also remark that we do not assume that the quiver contains no
oriented cycles in the above proof. However it is implicitly assumed
since we only consider the case when $\bE_W^*$ contains an open
orbit. Thus our result applies only to acyclic cluster algebras.

\bibliographystyle{myamsplain}
\bibliography{nakajima,mybib,cluster}

\end{document}